\def\@cite#1#2{{\m@th\upshape\bfseries%
[{#1\if@tempswa{\m@th\upshape\mdseries, #2}\fi}]}}
\let\secsymb=\S
\theoremstyle{plain}
\newtheorem{thm}{Theorem}[section]
\newtheorem{cor}[thm]{Corollary}
\newtheorem{prop}[thm]{Proposition}
\newtheorem{lem}[thm]{Lemma}
\theoremstyle{definition}
\newtheorem{rem}[thm]{Remark}
\newtheorem{defn}[thm]{Definition}
\newtheorem{eg}[thm]{Example}
\newcommand{\bB}{{\mathds{B}}}
\newcommand{\bC}{{\mathds{C}}}
\newcommand{\bD}{{\mathds{D}}}
\newcommand{\bF}{{\mathds{F}}}
\newcommand{\bN}{{\mathds{N}}}
\newcommand{\bT}{{\mathds{T}}}
\newcommand{\bZ}{{\mathds{Z}}}
  \newcommand{\A}{{\mathcal{A}}}
  \newcommand{\B}{{\mathcal{B}}}
  \newcommand{\C}{{\mathcal{C}}}
  \newcommand{\D}{{\mathcal{D}}}
  \newcommand{\E}{{\mathcal{E}}}
\renewcommand{\H}{{\mathcal{H}}}
  \newcommand{\I}{{\mathcal{I}}}
  \newcommand{\J}{{\mathcal{J}}}
  \newcommand{\K}{{\mathcal{K}}}
\renewcommand{\L}{{\mathcal{L}}}
  \newcommand{\M}{{\mathcal{M}}}
  \newcommand{\N}{{\mathcal{N}}}
\renewcommand{\O}{{\mathcal{O}}}
\renewcommand{\P}{{\mathcal{P}}}
  \newcommand{\R}{{\mathcal{R}}}
\renewcommand{\S}{{\mathcal{S}}}
  \newcommand{\T}{{\mathcal{T}}}
  \newcommand{\U}{{\mathcal{U}}}
  \newcommand{\V}{{\mathcal{V}}}
  \newcommand{\X}{{\mathcal{X}}}
  \newcommand{\Y}{{\mathcal{Y}}}
  \newcommand{\Z}{{\mathcal{Z}}}
\renewcommand{\phi}{\varphi}
\newcommand{\upchi}{{\raise.35ex\hbox{\ensuremath{\chi}}}}
\newcommand{\fA}{{\mathfrak{A}}}
\newcommand{\fB}{{\mathfrak{B}}}
\newcommand{\fD}{{\mathfrak{D}}}
\newcommand{\fJ}{{\mathfrak{J}}}
\newcommand{\fK}{{\mathfrak{K}}}
\newcommand{\fL}{{\mathfrak{L}}}
\newcommand{\fM}{{\mathfrak{M}}}
\newcommand{\fN}{{\mathfrak{N}}}
\newcommand{\fR}{{\mathfrak{R}}}
\newcommand{\fU}{{\mathfrak{U}}}
\newcommand{\fs}{{\mathfrak{s}}}
\newcommand{\ft}{{\mathfrak{t}}}
\newcommand{\fu}{{\mathfrak{u}}}
\newcommand{\rA}{{\mathrm{A}}}
\newcommand{\rC}{{\mathrm{C}}}
\newcommand{\qand}{\quad\text{and}\quad}
\newcommand{\qfor}{\quad\text{for}\ }
\newcommand{\qforal}{\quad\text{for all}\ }
\newcommand{\AND}{\text{ and }}
\newcommand{\FOR}{\text{ for }}
\newcommand{\IF}{\text{ if }}
\newcommand{\OR}{\text{ or }}
\newcommand{\ad}{\operatorname{ad}}
\newcommand{\Alg}{\operatorname{Alg}}
\newcommand{\Aut}{\operatorname{Aut}}
\newcommand{\End}{\operatorname{End}}
\newcommand{\id}{{\operatorname{id}}}
\newcommand{\ran}{\operatorname{Ran}}
\newcommand{\spn}{\operatorname{span}}
\newcommand{\AD}{\mathrm{A}(\mathds{D})}
\newcommand{\ADD}{\mathrm{A}(\mathds{D}^2)}
\newcommand{\CT}{{\mathrm{C}(\mathds{T})}}
\newcommand{\rCD}{{\mathrm{C}(\overline{\mathds{D}})}}
\newcommand{\ca}{\mathrm{C}^*}
\newcommand{\cenv}{\mathrm{C}^*_{\text{e}}}
\newcommand{\Fn}{\mathds{F}_n^+}
\newcommand{\Fock}{\ell^2(\Fn)}
\newcommand{\lip}{\langle}
\newcommand{\rip}{\rangle}
\newcommand{\ip}[1]{\langle #1 \rangle}
\newcommand{\bip}[1]{\big\lip #1 \big\rip}
\newcommand{\mt}{\varnothing}
\newcommand{\ol}{\overline}
\newcommand{\sot}{\textsc{sot}}
\newcommand{\wot}{\textsc{wot}}
\newcommand{\ltwo}{{\ell^2}}
\newenvironment{sbmatrix}{\left[\begin{smallmatrix}}{\end{smallmatrix}\right]}
\begin{document}
\strut\vspace{-4ex}

\title[Dilation theory]{Dilation theory, commutant lifting and semicrossed products}

\author[K.R.Davidson]{Kenneth R. Davidson}
\thanks{First author partially supported by an NSERC grant.}
\address{Pure Mathematics Department\\
University of Waterloo\\Waterloo, ON\; N2L--3G1\\CANADA}
\email{krdavids@uwaterloo.ca}

\author[E.G.Katsoulis]{Elias~G.~Katsoulis}
\thanks{Second author was partially supported by a grant from ECU}
\address{ Department of Mathematics\\
University of Athens\\ 
15784 Athens \\GREECE \vspace{-2ex}}
\address{\textit{Alternate address:} Department of Mathematics\\
East Carolina University\\ Greenville, NC 27858\\USA}
\email{katsoulise@ecu.edu}

\begin{abstract}
We take a new look at dilation theory for nonself-adjoint operator algebras.
Among the extremal (co)extensions of a representation,
there is a special property of being fully extremal.  This allows a refinement
of some of the classical notions which are important when one moves away
from standard examples.  We show that many algebras including graph algebras
and tensor algebras of C*-correspondences have the semi-Dirichlet property
which collapses these notions and explains why they have a better dilation theory.
This leads to variations of the notions of commutant lifting and Ando's theorem.
This is applied to the study of semicrossed products by automorphisms, and 
endomorphisms which lift to the C*-envelope. In particular, we obtain several
general theorems which allow one to conclude that semicrossed products of
an operator algebra naturally imbed completely isometrically into the 
semicrossed product of its C*-envelope, and the C*-envelopes of these
two algebras are the same.
\end{abstract}

\subjclass[2000]{47L55, 47L40, 46L05, 37B20}
\keywords{dilation, extremal coextension, semicrossed product, commutant lifting, 
Fuglede property, C*-envelopes, Ando's theorem}

\date{}
\maketitle

\begin{itemize}
\item[{1.}] {Introduction} \hfill{2}\qquad\strut
\item[{2.}] {A review of dilations} \hfill{7}\qquad\strut
\item[{3.}] {Fully Extremal Coextensions} \hfill{11}\qquad\strut
\item[{4.}] {Semi-Dirichlet algebras} \hfill{21}\qquad\strut
\item[{5.}] {Commutant Lifting} \hfill{28}\qquad\strut
\item[{6.}] {A $2 \times 2$ matrix example.} \hfill{39}\qquad\strut
\item[{7.}] {Relative Commutant Lifting and Ando's Theorem} \hfill{46}\qquad\strut
\item[{8.}] {Incidence Algebras} \hfill{56}\qquad\strut
\item[{9.}] {The Fuglede Property} \hfill{65}\qquad\strut
\item[{10.}] {Completely Isometric Endomorphisms} \hfill{69}\qquad\strut
\item[{11.}] {A Review of Semicrossed Products} \hfill{74}\qquad\strut
\item[{12.}] {Dilating Covariance Relations} \hfill{76}\qquad\strut
\item[{13.}] {Further Examples} \hfill{82}\qquad\strut
\item[{}] {References} \hfill{88}\qquad\strut

\end{itemize}

\section{Introduction} \label{S:intro}

This paper is a general study of dilation theory for arbitrary nonself-adjoint
operator algebras.  It began with an attempt to formalize those properties
need to obtain dilation theorems for covariant representations of an
operator algebra and an endomorphism, in order to understand the
semicrossed product and its C*-envelope.
In this paper, we discuss versions of commutant lifting and Ando's theorem
and consider when they allow us to determine the structure of a semicrossed product
and its C*-envelope.
This forced us to revisit basic notions in dilation theory, and to introduce
a notion stronger than that of extremal (co)extensions.
We feel that certain notions in dilation theory are too closely modelled
on what happens for the disk algebra. This algebra has been shown to have
many very strong properties, and they are often not perfectly reflected
in the general case. Certain refinements should be considered to clarify
the various dilation properties in a general context.

\subsection*{Dilation theory}
Dilation theory for a single operator has its roots in the seminal work of
Sz.Nagy \cite{SzN} which is developed in the now classical book that
he wrote with Foia\c{s} \cite{SF}. 
Dilation theory for more general operators was initiated by the deep work
of Arveson \cite{Arv1,Arv2}.
The ideas have evolved over the past six decades.
The basic ideas are nicely developed in Paulsen's book \cite{Paul}. 

In formulating general properties related to commutant lifting and Ando's theorem,
we were strongly motivated, in part, by the general module formulation
expounded by Douglas and Paulsen \cite{DougPaul} and the important study
by Muhly and Solel \cite{MS_hilbert}.  
The language used there is a module theoretic approach, while we will mostly
talk about representations instead.  But the general constructs can, of course,
be formulated in either language.
Douglas and Paulsen focus on Shilov modules as a primary building block.
Muhly and Solel adopt this view, but focus more on a somewhat stronger property
of orthoprojective modules.
They may have gone further, as we do, had they known what we do today. 
We will argue that these are more central to the theory.

Another important influence is the Dritschel--McCullough \cite{DritsMcC}
proof of the existence of Arveson's C*-envelope \cite{Arv1,Arv2}, first established
using different methods by Hamana \cite{Ham}.
They provide a proof strongly influenced by ideas of Agler \cite{Ag}.
What they show is: given a completely contractive 
representation of a unital operator algebra $\A$,
that among all dilations of this representation, there are always certain representations
which are maximal in the sense that any further dilation can only be obtained by
appending a direct sum of another representation.
These dilations always exist, as they show, and they are precisely those representations
which extend to a $*$-representation of the C*-envelope.
It is in this manner that they establish that the existence of the C*-envelope.

This fact was anticipated by Muhly and Solel in \cite{MS_bound} where they show, 
assuming Hamana's theorem, that every representation has a dilation which
is both orthoprojective and orthoinjective.
It is easy to see that this is a reformulation of the maximal dilation property.
Indeed, one can see that a representation $\rho$ is orthoprojective
if and only if it is maximal as a coextension (called an extremal coextension)---meaning 
that any coextension can be obtained only by appending a direct sum 
of another representation.
Dritschel and McCullough proved that these exist as well.
The dual version shows that orthoinjective representations coincide
with the extremal extensions.

An extremal (co)extension of a representation $\rho$ on $\H$ is called \textit{minimal}
provided that the whole space is the smallest reducing subspace containing $\H$.
This is a weaker notion than saying that $\H$ is cyclic.
However, there can be many extremal coextensions which are
minimal but $\H$ is not cyclic.
Among extremal (co)extensions, there are some preferred (co)extensions
which we call \textit{fully extremal} because they satisfy a stronger maximality property.
While in many classical cases, this notion reduces to the usual extremal property,
we argue that in general they are preferred.
The existence of fully extremal (co)extensions is established by an argument
similar to Arveson's proof \cite{Arv3} of the existence of maximal dilations.

\subsection*{Commutant lifting}
The classical commutant lifting theorem was established by
Sz.Nagy and Foia\c{s} \cite{SF_CLT}. Many variations on this theorem
have been established in various contexts for a variety of operator algebras.
Douglas and Paulsen \cite{DougPaul} formulate a version for arbitrary operator
algebras, and we propose a modification of their definition.

Shilov representations of an operator algebra $\A$ are those which are
obtained by taking a $*$-representation of the C*-envelope and restricting
it to an invariant subspace for the image of $\A$.
All extremal coextensions (orthoprojective representations) are Shilov.  
The converse holds in some of the classical situations, but is not valid in general.
As we will argue, the notions of commutant lifting are better expressed in terms
of fully extremal coextensions rather than Shilov coextensions.
Limiting the family of coextensions for which lifting occurs increases the
family of algebras with this property.
Indeed the \textit{strong} version of commutant lifting can only hold when
there is a unique minimal \textit{fully} extremal coextension (of $\rho$).

The Douglas-Paulsen formulation of commutant lifting starts with a
(completely contractive) representation $\rho$ of an operator algebra $\A$, an
operator $X$ in the commutant of $\rho(\A)$, and a Shilov coextension $\sigma$ of $\rho$;
and they ask for a coextension of $X$ to an operator $Y$ of the same norm
commuting with $\sigma(\A)$.  
As remarked in the previous paragraph, this only holds when the minimal
Shilov extension is unique.
We show that this holds when $\A$ is \textit{semi-Dirichlet}, meaning that 
\[ \A^*\A \subset\ol{\A+\A^*} , \]
such as the disk algebra, the non-commutative
disk algebras, and all tensor algebras of graphs and C*-correspondences.
The fact that this large class of popular algebras has this remarkable property
has perhaps kept us from looking further for an appropriate definition of 
commutant lifting in other contexts.

We were also influenced by a different approach of Paulsen and Power
\cite{PP_nest,PP_tensor} and subsequent work of theirs with the first
author \cite{DPP_tree, D_local}.
In this version, the coextension $\sigma$ is not specified, and one looks for
common coextensions $\sigma$ and $Y$ which commute.
We will use extremal coextensions only, rather than arbitrary
Shilov coextensions, with the obvious parallel definitions.
The first version will be called \textit{strong commutant lifting}, and the latter
\textit{commutant lifting}.
A crucial point is that strong commutant lifting turns out to be equivalent
commutant lifting plus uniqueness of the minimal fully extremal coextension.

The intertwining version of commutant lifting proved to be challenging
in this context. The resolution of this problem was critical to obtaining
good dilation theorems for semicrossed products.

\subsection*{Ando's theorem}
Ando's Theorem \cite{Ando} states that if $A_1$ and $A_2$ are 
commuting contractions, then they
have coextensions $V_i$ which are commuting isometries.
For us, an Ando theorem for an operator algebra $\A$ will be formulated as
follows: given a (completely contractive) representation $\rho$ of an operator algebra $\A$
and a contraction $X$ in the commutant of $\rho(\A)$ , there is a fully 
extremal coextension $\sigma$ of $\rho$ and an isometric coextension $V$
of $X$ which commutes with it.
Even in the case of the disk algebra, our formulation is stronger than the
original, as it asks that one of the isometries, say $V_1$, should have the form
\[ V_1 \simeq V_{A_1} \oplus U \]
where $V_{A_1}$ is the minimal isometric coextension
of $A_1$ and $U$ is unitary (Corollary~\ref{C:ando}).

In the classical case of the disk algebra, the universal algebra of a contraction,
the generator of a representation, $A = \rho(z)$, plays a role parallel to the
operator $X$ which commutes with it.  
For this reason, commutant lifting can be applied recursively to
$A$ and $X$, alternating between them, in order to obtain Ando's Theorem.
So in this context, the Sz.Nagy-Foia\c{s} Commutant Lifting Theorem 
\cite{SF_CLT} is equivalent to Ando's Theorem.
But for other algebras, there are two distinct aspects, dilating $\rho$ to
an extremal coextension and simultaneously coextending $X$ to a commuting contraction,
and on the other hand coextending $X$ to an isometry and 
simultaneously coextending $\rho$ to a commuting representation. 

Paulsen and Power \cite{PP_tensor} formulate Ando's theorem as a dilation result for
$\A \otimes_{\text{min}} \AD$, or equivalently that 
\[ \A \otimes_{\text{min}} \AD = \A \otimes_{\text{max}} \AD . \]
Such a result holds for a wide class of CSL algebras \cite{PP_nest,DPP_tree, D_local}.
The stronger version of commutant lifting only holds in a restricted class \cite{MS_hilbert}.
See \cite[chapter 5]{MS_hilbert} for a discussion of the differences.
In our language, they start with a representation $\rho$ and a commuting
contraction $X$, and seek a maximal dilation $\pi$ and a simultaneous dilation of $X$
to a unitary $U$ commuting with $\pi(\A)$.
We show that this is equivalent to the weaker property of obtaining some
coextension $\sigma$ of $\rho$ and an isometric coextension $V$ of $X$
which commute.  
This is only `half' of Ando's theorem in our formulation.

Another property that we will consider is an analogue of the Fuglede theorem:
that the commutant of a normal operator is self-adjoint.  
We formulate this for an operator algebra $\A$ with C*-envelope $\cenv(\A)$
as saying that for any $*$-representation $\pi$ of $\cenv(\A)$, the commutant
of $\pi(\A)$ coincides with the commutant of $\pi(\cenv(\A))$.
We show that a number of operator algebras have this property including
all function algebras, the non-commutative disk algebras and more
generally the tensor algebras of all finite directed graphs.

\subsection*{Semicrossed products}
If $\A$ is a unital operator algebra and $\alpha$ is a completely isometric
endomorphism, then the semicrossed product \[\A \times_\alpha \bZ_+\]
is the operator algebra that encodes the system $(\A,\alpha)$ in the sense that
its (completely contractive) representations are in bijective correspondence 
with the covariant representations of the dynamical system.
Concrete versions of these algebras occur in work or Arveson
\cite{Arv_meas,ArvJ}.
When $\A$ is a C*-algebra, the abstract semicrossed product was 
defined by Peters \cite{Pet}.
The extension to arbitrary nonself-adjoint operator algebras is straightforward.

The structure of these semicrossed products can often be better
understood by showing that the C*-envelope is a full C*-algebra crossed product.
Peters \cite{Pet3} does this for the semicrossed product that encodes
a discrete dynamical system.  The operator algebras of multivariable
dynamical systems is developed in \cite{DavKatsMem}.  The C*-envelope
is further explained in \cite{DRoy}, extending Peter's analysis to this context.
More recently, the second author and Kakariadis \cite{KK} develop an
important generalization of these techniques to very general operator algebras.
They show that for nonself-adjoint operator algebras,
one first should try to imbed a general semicrossed product into a C*-semicrossed product.
They show how to accomplish this, and demonstrate that often the two
operator algebras have the same C*-envelope.

When $\alpha$ is the identity map, the semicrossed product is closely tied
to commutant lifting.  What we show here is that commutant lifting theorems
can be sufficient to understand other semicrossed products provided the
algebra has some other nice properties.
We concern ourselves only with endomorphisms that extend to $*$-endomorphisms
of the C*-envelope.  When $\A$ satisfies the Ando property, every
semicrossed product by a completely isometric automorphism is
isometrically isomorphic to a subalgebra of the semicrossed product of $\cenv(\A)$.
These general techniques recover various results in the literature about the 
structure of crossed products, especially of the non-commutative disk algebras 
\cite{DavKatsdilation} and tensor algebras of C*-correspondences \cite{KK}.
To our knowledge, all of these results used the strong commutant lifting property
(SCLT), which implies uniqueness of fully extremal extensions. Indeed, the
theorems relate to algebras with a row contractive condition, the most general of
which are tensor algebras of C*=correspondences. Our new result requires
only commutant lifting, and applies much more widely.

With a stronger commutant lifting property and the Fuglede property, we can do the
same for endomorphisms which lift to the C*-envelope.  
This applies, in particular, for the disk algebra (which has
all of the good properties studied here).  This recovers our results \cite{DKdisk} 
for the semicrossed product of $\AD$ by an endomorphism of the form 
$\alpha(f) = f\circ b$, 
in the case where $b$ is a non-constant finite Blaschke product.
These general results that imbed a semicrossed product into a C*-algebra
crossed product are actually dilation theorems.
Typically one proves a dilation theorem first, and then deduces the
structure of the C*-envelope.
However the papers \cite{KK, DKdisk} actually compute the C*-envelope first
and deduce the dilation theorem afterwards.
One of the original motivations for this paper was an attempt to
identify the C*-envelope of a semicrossed product using general dilation
properties such as commutant lifting.
Three such theorems are obtained in section~\ref{S:covariance}.

\section{A review of dilations} \label{S:diln}

In this paper, an operator algebra will be a unital abstract operator algebra $\A$
in the sense of Blecher, Ruan and Sinclair \cite{BRS}.  
A \textit{representation} of $\A$ will mean a unital completely contractive 
representation $\rho$ on some Hilbert space $\H$.
An \textit{extension} of $\rho$ is a representation $\sigma$ on a Hilbert space
$\K = \H^\perp \oplus \H$ which leaves $\H$ invariant, and thus has the form
\[
 \sigma(a) = \begin{bmatrix} \sigma_{11}(a) & 0 \\ \sigma_{12}(a) & \rho(a) \end{bmatrix} .
\]
Dually, a \textit{coextension} of $\rho$ is a representation $\sigma$ on a Hilbert space
$\K = \H \oplus \H^\perp$ which leaves $\H^\perp$ invariant, and thus has the form
\[
 \sigma(a) = \begin{bmatrix} \rho(a) & 0 \\ \sigma_{12}(a) & \sigma_{22}(a) \end{bmatrix} .
\]
A \textit{dilation} of $\rho$ is a representation $\sigma$ on a Hilbert space $\K$ containing $\H$
so that $\rho(a) = P_\H \sigma(a)|_\H$.  A familiar result of Sarason \cite{Sar} shows 
that $K$ decomposes as $\K = \H_- \oplus \H \oplus \H_+$ so that
\[
 \sigma(a) =
 \begin{bmatrix} 
 \sigma_{11}(a) & 0 & 0 \\ 
 \sigma_{21}(a) & \rho(a) & 0 \\ 
 \sigma_{31}(a) & \sigma_{32}(a) &\sigma_{33}(a)
 \end{bmatrix} .
\]

A representation $\rho$ is an \textit{extremal coextension} if whenever $\sigma$ is
a coextension of $\rho$, it necessarily has the form $\sigma = \rho \oplus \sigma'$.
That is, if $\H$ is a subspace of $\K$ and $\sigma$ is a representation of $\A$
on $\K$ which leaves $\H^\perp$ invariant and $P_\H \sigma(a)|_\H = \rho(a)$ for
$a \in \A$, then $\H$ reduces $\sigma$.
Similarly, a representation $\rho$ is an \textit{extremal extension} if whenever $\sigma$ is
an extension of $\rho$, it necessarily has the form $\sigma = \rho \oplus \sigma'$.
That is, if $\H$ is a subspace of $\K$ and $\sigma$ is a representation of $\A$
on $\K$ which leaves $\H$ invariant and $P_\H \sigma(a)|_\H = \rho(a)$ for
$a \in \A$, then $\H$ reduces $\sigma$.
Finally, a representation $\rho$ is an \textit{extremal representation} or a 
\textit{maximal representation} if whenever $\sigma$ is a dilation of $\rho$, 
it necessarily has the form $\sigma = \rho \oplus \sigma'$.
That is, if $\H$ is a subspace of $\K$ and $\sigma$ is a representation of $\A$
on $\K$ so that $P_\H \sigma(a)|_\H = \rho(a)$ for $a \in \A$, then $\H$ reduces $\sigma$.
A dilation $\sigma$ of $\rho$ is an \textit{extremal dilation} or a 
\textit{maximal dilation} of $\rho$ if it is a maximal representation.

\subsection*{Hilbert modules}
In the module language espoused by Douglas and Paulsen in \cite{DougPaul}, 
a representation $\rho$ makes the Hilbert space $\H$ into a left $\A$ module $\H_\rho$
by $a\cdot h := \rho(a)h$ for $a \in \A$ and $h \in \H$.  
If $\K = \M \oplus \H$ and $\sigma$ is a representation of $\A$ on $\K$
which leaves $\M$ invariant, so that with respect to the decomposition 
$\K = \H \oplus \M$ of $\sigma$ is
\[
 \sigma(a) = \begin{bmatrix} \sigma_{11}(a) & 0 \\ \sigma_{12}(a) & \sigma_{22}(a) \end{bmatrix} ,
\]
then $\K_\sigma$ is an $\A$-module with $\M_{\sigma_{22}}$ as a submodule
and $\H_{\sigma_{11}}$ as a quotient module, and there is a short exact sequence
\[ 0 \to \M_{\sigma_{22}} \to \K_\sigma \to \H_{\sigma_{11}} \to 0 .\]
Here all module maps are completely contractive.
So an extension $\sigma$ of $\sigma_{22}$ on $\M$ corresponds to larger 
Hilbert module $\K_\sigma$ containing $\M_{\sigma_{22}}$
as a submodule; and a coextension $\sigma$ of $\sigma_{11}$ corresponds to the
Hilbert module $\K_\sigma$ having $\H_{\sigma_{11}}$ as a quotient module.

A module $\P_\rho$ is \textit{orthoprojective} if whenever there is a short exact sequence
\[
 \xymatrix{
 0 \ar[r] & \M_{\sigma_{22}} \ar[r]^\iota & \K_\sigma \ar[r]^q & \P_\rho \ar[r] &  0} ,
\]
where the module maps are completely contractive, there is a completely contractive
module map $\phi : \P \to \K$ so that $\K_\sigma = \M \oplus \phi(\P)$
(as an $\A$-module).
It is not difficult to see that this is equivalent to saying that $\rho$ is an
extremal coextension.  The term orthoprojective was coined by
Muhly and Solel \cite{MS_hilbert}, and we think that it is superior to the
Douglas-Paulsen terminology of hypo-projective because of its more positive aspect.
Similarly, one can define \textit{orthoinjective} modules, and observe that
they are equivalent to extremal extensions.
A maximal dilation corresponds to a module which is both orthoprojective
and orthoinjective. 

\subsection*{The C*-envelope}
Every unital operator algebra $\A$ has a completely isometric representation 
$\iota$ on a Hilbert space $\H$ so that the C*-algebra $\ca(\iota(\A)) =: \cenv(\A)$
is minimal in the sense that if $\sigma$ is any other completely isometric 
representation on a Hilbert space $\H'$, then there is a unique $*$-homomorphism
$\pi$ of $\ca(\sigma(\A))$ onto $\cenv(\A)$ so that the following diagram commutes:
\[
 \xymatrix{
 \A \ar[r]^(.45)\iota \ar[d]_\sigma & \cenv(\A)\\ \ca(\sigma(\A)) \ar[ur]^(.45)\pi
 }
\]
The C*-envelope was described by Arveson \cite{Arv1,Arv2} in his seminal work on 
non-commutative dilation theory.  Its existence was established by Hamana \cite{Ham}.

Muhly and Solel \cite{MS_bound} show that maximal dilations exist 
by invoking Hamana's theorem.  They accomplish this by showing:

\begin{thm}[Muhly-Solel] \label{T:MS}
A representation is maximal if and only if it is both orthoprojective and orthoinjective.
Equivalently, a representation is maximal if and only if it is 
both an extremal coextension and an extremal extension.
\end{thm}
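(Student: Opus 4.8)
The plan is to prove both implications of the characterization ``maximal $\iff$ extremal coextension and extremal extension.'' The easy direction is that maximality implies both: if $\rho$ is maximal and $\sigma$ is a coextension of $\rho$, then $\sigma$ is in particular a dilation, so $\sigma = \rho \oplus \sigma'$; since the decomposition $\K = \H \oplus \H^\perp$ with $\H^\perp$ invariant already pins down that $\H$ reduces $\sigma$, this says $\rho$ is an extremal coextension. The same argument with ``extension'' in place of ``coextension'' gives that $\rho$ is an extremal extension. This uses nothing beyond the definitions recalled above.

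For the converse, suppose $\rho$ on $\H$ is both an extremal coextension and an extremal extension, and let $\sigma$ on $\K \supseteq \H$ be an arbitrary dilation; we must show $\H$ reduces $\sigma$. The key tool is Sarason's lemma quoted in the excerpt: $\K$ decomposes as $\K = \H_- \oplus \H \oplus \H_+$, where $\H_- \oplus \H$ is invariant for $\sigma(\A)$ and $\H$ is invariant for the compression of $\sigma$ to $\H_- \oplus \H$, while dually $\H \oplus \H_+$ is co-invariant. Concretely, set $\M := \H_- \oplus \H$. Then $\sigma|_{\M}$ leaves $\M$ invariant and compresses to a representation $\tau$ on $\M$ of which $\rho$ is the compression to the invariant subspace $\H$; in other words, $\tau$ is an extension of $\rho$. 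Since $\rho$ is an extremal extension, $\H$ reduces $\tau$, which forces $\sigma_{21} = 0$ in the $3 \times 3$ block form, i.e.\ $\H_- \perp \H$ in the module structure and in fact $\sigma(a)\H \subseteq \H \oplus \H_+$ for all $a$. Symmetrically, $\H \oplus \H_+$ is co-invariant and the compression of $\sigma$ to it is a coextension of $\rho$; since $\rho$ is an extremal coextension, $\H$ reduces that compression, killing the $\sigma_{32}$ block. Combining, $\sigma(a)\H \subseteq \H$ and $\sigma(a)^* \H \subseteq \H$ for all $a$ — wait, one must be careful that extremality of extensions/coextensions was stated only for the compressions, not directly for $\sigma$ itself, so the argument should be phrased as: the $2\times 2$ principal submatrices of the $3\times 3$ block form are genuine extensions/coextensions of $\rho$, extremality makes the off-diagonal blocks adjacent to $\H$ vanish, and then $\H$ reduces $\sigma$.

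The main obstacle I expect is bookkeeping with the Sarason decomposition: one has to check carefully that compressing $\sigma$ to $\M = \H_-\oplus\H$ really does yield a unital completely contractive representation (it does, being a compression to a semi-invariant subspace, so it is multiplicative there), and that $\rho$ sits inside it as an extension in the precise sense defined in the excerpt. Once the two $2\times 2$ corners are correctly identified as an extension and a coextension of $\rho$, extremality is applied verbatim and the conclusion $\sigma = \rho \oplus \sigma'$ (with $\sigma'$ the representation on $\H_- \oplus \H_+$) follows. No appeal to Hamana's theorem or the existence of the C*-envelope is needed for this equivalence — that ingredient is only used elsewhere to guarantee that maximal representations exist at all.
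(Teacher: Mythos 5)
Your argument is correct. The paper itself gives no proof of this theorem -- it is quoted from Muhly and Solel and used as a black box -- so there is nothing to match your proof against line by line; but the route you take (Sarason's semi-invariance decomposition $\K = \H_-\oplus\H\oplus\H_+$, identify the two $2\times2$ principal corners adjacent to $\H$ as an extension and a coextension of $\rho$, apply extremality twice to kill the two off-diagonal blocks touching $\H$) is the standard one, and it is exactly the bookkeeping the paper itself performs later in the proof of Proposition~\ref{P:coextend and fully extend}, so it is entirely in the spirit of the text. Two small remarks. First, your labelling of which of $\H_-\oplus\H$ and $\H\oplus\H_+$ is invariant versus co-invariant is the reverse of what the paper's displayed lower-triangular matrix encodes (there, $\H\oplus\H_+$ and $\H_+$ are the invariant subspaces, $\H_-$ and $\H_-\oplus\H$ the co-invariant ones); this is purely a naming issue and does not affect the argument, but you should fix the indices $\sigma_{21}$, $\sigma_{32}$ to be consistent with whichever convention you adopt. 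Second, the parenthetical self-correction in the middle of your second paragraph identifies the right precaution -- that extremality is applied to the compressed $2\times2$ representations, which are genuine (completely contractive, unital) representations precisely because the corresponding subspaces are semi-invariant -- and once that is said cleanly the proof is complete. Your closing observation that no appeal to Hamana or to the existence of the C*-envelope is needed for the equivalence itself is also correct; those inputs are only used elsewhere to show maximal dilations exist.
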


Dritschel and McCullough \cite{DritsMcC} establish the existence of maximal 
dilations directly based on ideas of Agler \cite{Ag}.
In this way, they provide a new and more revealing proof of the existence
of the C*-envelope.
In fact, they show that every representation has an extension which is extremal;
and dually also has a coextension which is extremal.
In particular, the maximal representations of $\A$ are precisely those representations
which extend to $*$-representations of $\cenv(\A)$.

Arveson \cite{Arv_choq} provides a refinement of this result in the separable case
by showing that there are sufficiently many irreducible maximal representations,
which are the boundary representations that Arveson introduced in \cite{Arv1}
as an analogue of the Choquet boundary of a function algebra.
We will not require this strengthened version.

\subsection*{Extremal versus Shilov}
Douglas-Paulsen \cite{DougPaul} and Muhly-Solel \cite{MS_hilbert} focus on
\textit{Shilov modules}.  
One starts with a $*$-representation $\pi$ of $\cenv(\A)$ on a Hilbert space $\K$.
Consider $\K_\pi$ as an $\A$-module.  
A submodule $\H$ of $\K_\pi$ is a Shilov module. 
It is easy to deduce from the discussion above that every 
orthoprojective module is Shilov.
Unfortunately, the converse is false.  We provide an example below.
In the language of representations, a Shilov module corresponds to a 
representation which has an extension to a maximal representation.
However it may still have proper coextensions.

Shilov modules are useful because every completely contractive $\A$-module $\M$
has a finite resolution of the form 
\[
  0  \to \S_1 \to \S_2 \to \M \to 0 ,
\]
where $\S_1$ and $\S_2$ are Shilov.
Using orthoprojective modules, one can obtain
\[ \S_2 \to \M \to 0 \]
with $\S_2$ orthoprojective.  But since submodules do not inherit
this extremal property, one does not obtain a short exact sequence.
Indeed, while this procedure can be iterated, there need be no
finite resolution.  This occurs, for example, in the theory of 
commuting row contractions due to Arveson \cite[\secsymb 9]{Arv_curv}.
However Arveson also argues that, in his context, these are the
natural resolutions to seek.

Our view is that it is the extremal coextensions rather than Shilov coextensions 
which play the role in dilation theory that best models the classical
example of the unilateral shift as an isometric model of the disc algebra.

\begin{eg} \label{Eg:ncdisk}
Consider the non-commutative disk algebra $\fA_n$.
It is the unital subalgebra of the Cuntz algebra $\O_n$ generated as a unital nonself-adjoint
subalgebra by the canonical isometric generators $\fs_1,\dots,\fs_n$ of $\O_n$.
A representation $\rho$ of $\fA_n$ is determined by $A_i = \rho(\fs_i)$, and it is
completely contractive if and only if 
\[ A = \big[ A_1\ \dots \ A_n \big] \] 
is a contraction as an operator from $\H^{(n)}$ to $\H$ \cite{Pop2}.
The Frazho-Bunce-Popescu  dilation theorem \cite{Fra,Bun,Pop1} states that
$A$ has a coextension to a row isometry.
Conversely, it is clear that any coextension of a row isometry must be obtained
as a direct sum.  Thus these row isometric representations are precisely the
extremal coextensions and correspond to orthoprojective modules.
The Wold decomposition \cite{DP1} shows that this row isometry decomposes as 
a direct sum of a Cuntz row unitary and a multiple of the left regular representation of
the free semigroup $\bF_n^+$ on Fock space.  This representation generates
the Cuntz-Toeplitz C*-algebra, and thus is not a maximal representation.
It can be extended to a maximal dilation in many explicit ways \cite{DP1}.
It is clear in this case that every $*$-representation of $\O_n$ sends
\[ \fs = \big[ \fs_1 \ \dots \ \fs_n \big] \]
to a row unitary, and the restriction to any invariant subspace is a row isometry. 
Thus every Shilov module is orthoprojective.
\end{eg}

\begin{eg}  \label{Eg:DA}
Let $\A_n$ be the universal algebra of a row contraction with commuting coefficients.
This algebra was studied extensively by Arveson beginning in \cite{Arv3}.
The basic von Neumann inequality was proven much earlier by Drury \cite{Drury},
but the full version of the dilation theorem was due to M\"uller and Vascilescu \cite{MV}
and later, Arveson.
Arveson further showed that the multipliers $S_1,\dots,S_n$ on symmetric Fock space
$H^2_n$ in $n$ variables form a canonical model for $\A_n$.  
Also $H^2_n$ is a reproducing kernel Hilbert space, and $\A_n$ is the
algebra of continuous multipliers. 
The C*-algebra generated by these multipliers is the C*-envelope of $\A_n$ \cite{Arv3}. 

The dilation theorem shows that every commuting row contraction has a 
coextension to a direct sum $S_i^{(\alpha)} \oplus U_i$ where $\alpha$ is some cardinal
and $U_i$ are commuting normal operators satisfying 
\[ \sum_{i=1}^n U_iU_i^* = I .\]
These are precisely the extremal coextensions and determine the orthoprojective 
modules.
Surprisingly they are also the maximal representations.
So while one can dilate in both directions to obtain a maximal dilation of
a representation $\rho$, only coextensions are required.

However, no submodule of the symmetric Fock space is orthoprojective.
They are all Shilov, but none model the algebra in a useful way.
Davidson and Le \cite[Example 4.1]{DavLe} provide an explicit example of this 
phenomenon in their paper on the commutant lifting theorem for $\A_n$.
\end{eg}

\section{Fully Extremal Coextensions} \label{S:fully extremal}

There is a natural partial order $\prec$ on dilations: say that $\rho \prec \sigma$
if $\sigma$ acts on a Hilbert space $\K$ containing a subspace $\H$ so that
$P_\H \sigma|_\H$ is unitarily equivalent to $\rho$.
There is also a partial order on extensions $\prec_e$: say that $\rho \prec_e \sigma$
if $\sigma$ acts on a Hilbert space $\K$ containing an invariant subspace $\H$ so that
$\sigma|_\H$ is unitarily equivalent to $\rho$.
Similarly, for coextensions, say that $\rho \prec_c \sigma$
if $\sigma$ acts on a Hilbert space $\K$ containing a co-invariant subspace $\H$ 
so that $P_\H \sigma|_\H$ is unitarily equivalent to $\rho$.

Dritschel and McCullough \cite{DritsMcC} establish the existence of extremals
dominating $\rho$ in each of these classes.
We want something a little bit stronger.
It is possible for an extremal coextension $\sigma$ of $\rho$ to have a proper 
extension which is also a coextension of $\rho$, so that $\sigma$ is not extremal
in the partial order $\prec$.  We provide an example shortly.
We will require knowing that $\rho$ has a coextension which is extremal with
respect to $\prec$.

\begin{defn} \label{D:fully extremal}
If $\rho$ is a representation of $\A$, say that a coextension $\sigma$ of $\rho$
is \textit{fully extremal with respect to $\rho$} if whenever $\sigma \prec \tau$
and $\rho \prec_c \tau$, then $\tau = \sigma \oplus \tau'$.
Similarly we define an extension $\sigma$ of $\rho$ to be 
\textit{fully extremal with respect to $\rho$} if whenever $\sigma \prec \tau$
and $\rho \prec_e \tau$, then $\tau = \sigma \oplus \tau'$.
\end{defn}

\begin{eg}  \label{Eg:zigzag}
Fix an orthonormal basis $e_1,\dots,e_n$ for $\bC^n$,
and let $E_{ij}$ be the canonical matrix units.
Consider the subalgebra $\A$ of $\fM_n$ spanned by the diagonal algebra
\begin{gather*}
 \fD_n = \spn\{E_{ii} : 1 \le i \le n\}
 \shortintertext{and} 
 \spn\{E_{ij} : |i-j|=1,\ j \text{ odd }\} .
\end{gather*}
This is a reflexive operator algebra with invariant subspaces 
\begin{gather*}
 \bC e_{2i} \qfor 1 < 2i \le n
 \shortintertext{and} 
 L_{2i+1} = \spn\{ e_{2i},e_{2i+1},e_{2i+2}\} \qfor  1 \le 2i+1 \le n , 
\end{gather*}
where we ignore $e_0$ and $e_{n+1}$ if they occur.
The elements of $\A$ have the form
\[ 
 A = \begin{bmatrix} a_{11} & 0 & 0 & 0 & 0 & \dots\\
 a_{21} & a_{22} & a_{32} & 0 & 0 & \dots\\
 0 & 0 & a_{33} & 0 &0 &  \dots\\
 0 & 0  & a_{43} & a_{44} & a_{45} & \dots\\
 0 & 0 & 0 & 0 & a_{55} & \ddots\\
 \vdots &  \vdots &  \vdots &  \vdots & \ddots & \ddots
 \end{bmatrix}
\]

Consider the representation $\rho(A) = a_{11}$, the 1,1 matrix entry of $A$.
Since $\bC e_1$ is coinvariant, this is a representation.
The compression $\sigma_2$ of $A$ to $M_2 = \spn\{e_1,e_2\}$ is 
a coextension of $\rho$ given by
\[
 \sigma_2(A) = P_{M_2}A|_{M_2} = \L_1  \begin{bmatrix} a_{11} & 0 \\ a_{21} & a_{22} \end{bmatrix}
\]
This is readily seen to be an extremal coextension of $\rho$.
It is minimal in the sense we use: it contains no proper reducing subspace
containing $\H_\rho = \bC e_1$, and is also minimal in the 
sense that $\H_\sigma = \sigma(\A) \H_\rho$.

However $\sigma_2$ is not fully extremal.
Let 
\[ M_k = \spn\{e_i : 1 \le i \le k \}\]
and set 
\[\sigma_k(A) = P_{M_k} A|_{M_k} .\]
Then $\sigma_{2i+1}$ is an extension of $\sigma_{2i}$ and
$\sigma_{2i+2}$ is a coextension of $\sigma_{2i+1}$.
All are coextensions of $\rho$.  Each $\sigma_{2i}$ is an extremal coextension of $\rho$,
as is $\sigma_n = \id$ even if $n$ is even.
Moreover all are minimal in that they have no proper reducing subspace containing $\bC e_1$.
Only $\sigma_n$ is fully extremal.
Note that to get from $\rho$ to $\sigma_n$, one must alternately coextend and extend
$n-1$ times if at each stage, you take a classical minimal extension or coextension.

One can also define an infinite version of this algebra where it takes a countable
number of steps to attain the fully extremal coextension.
\end{eg}

\begin{eg}  \label{Eg:disk}
Let $\AD$ be the disk algebra.  
A representation of $\AD$ is determined by $T = \rho(z)$, and it is completely
contractive if and only if $\|T\|\le1$.
Every contraction coextends to a unique minimal isometry.
So the extremal coextensions must be isometries.
But conversely, it is easy to see that any contractive coextension of an isometry
is obtained by adding a direct sum.  So when $T$ is an isometry, $\rho$ is 
an extremal coextension.
The minimal isometric dilation $V$ of $T$ yields a fully extremal coextension because
the range of $V$ together with $\H_\rho$ spans the whole space.
Any (contractive) dilation of $V$ must map the new subspace orthogonal to the range of $V$.
So if it is not a summand, the range will not be orthogonal to $\H_\rho$, so it won't
be a coextension of $\rho$.

The extremal coextensions of $\rho$ correspond to all isometric coextensions of $T$,
namely $V \oplus W$ where $V$ is the minimal isometric dilation and $W$ is any isometry.
But if $W$ isn't unitary, it can be extended to a unitary.  This extension is still a
coextension of $T$.  So the fully extremal coextensions correspond to $V \oplus U$
where $U$ is unitary.

Similarly, the maximal dilations of $\rho$ correspond to unitary dilations of $A$.
The restriction of a unitary to an invariant subspace is an isometry.
So a Shilov representations are extremal coextensions.
In particular, a minimal Shilov dilation of $\rho$ is a fully extremal coextension.
\end{eg}

\begin{eg}  \label{Eg:bidisk}
Let $\A = \ADD$ with generators $z_1$ and $z_2$.  
Then a completely contractive representation is determined by a pair of
commuting contractions $A_i = \rho(z_i)$.  
By Ando's Theorem \cite{Ando}, 
every commuting pair coextends to a pair of commuting isometries.
It is clear that any coextension of isometries to a larger space can only 
be obtained by adding a direct summand.  
So the extremal coextensions are the commuting isometries.
It is also clear that any restriction to an invariant subspace is still isometric.

Moreover every pair of commuting isometries extends to a pair of
commuting unitaries.  These are the maximal dilations, and determine
a $*$-representation of 
\[ \cenv(\A) = \rC(\bT^2) .\]
The restriction of a unitary to an invariant subspace is an isometry.
So every Shilov module is orthoprojective (an extremal coextension).

What we wish to point out is that extremal coextensions of a representation
$\rho$  of $\ADD$ need not be fully extremal.
Let 
\[ A_1=A_2=0 \qquad\text{acting on } \H=\bC .\]
Identify $\H$ with $\bC e_{0,0}$ in 
\[ \K = \spn \{e_{m,n} : m,n \ge 0 \} ,\]
where $\{e_{m,n}: m,n \ge 0 \}$ is an orthonormal basis.
Then it is clear that there is a coextension of $A_i$ to the commuting isometries 
\[ S_1 = S \otimes I \qand  S_2 = I \otimes S , \]
where $S$ is the unilateral shift.
Let $\sigma$ be the corresponding coextension of $\rho$.
This is an extremal coextension because the $S_i$ are isometries.

Enlarge this orthonormal basis further to obtain a space
\[ \L = \spn\{e_{k,l} : \max\{k,l\} \ge 1 \OR k=l=0 \} \]
containing $\K$.
Let $T_i$ be the commuting isometries given by
\[ T_1e_{m,n}=e_{m+1,n} \qand  T_2e_{m,n} = e_{m,n+1} . \]
Let $\tau$ be the induced representation of $\ADD$.
It is clear by inspection that $\H = \bC e_{00}$ is coinvariant, and
hence $\rho \prec_c \tau$.
Moreover, $\tau$ is extremal because $T_i$ are isometries.
The subspace $\K$ is invariant for $T_1$ and $T_2$, and $T_i|_\K = S_i$.
Therefore $\sigma \prec_e \tau$.
So $\sigma$ is not fully extremal with respect to $\rho$.

We claim that $\tau$ is fully extremal with respect to $\rho$.
Since it is extremal, it can only fail to be fully extremal if there
is a larger space $\M \supset \L$ and commuting isometries $V_i$ on $\M$
extending $T_i$ so that $\L$ is not coinvariant, but $\bC e_{00}$ is.
Hence one of the isometries, say $V_1$, has $P_{\L^\perp}V_1P_\L \ne 0$.
Let 
\[ \N = (\ran T_1 \vee \bC e_{00})^\perp = \spn\{ e_{1,l} : l < 0 \} .\]
There must be a vector $x\in\N$ so that $V_1^* x \ne 0$.
Equivalently, there are vectors $y,z\in\L^\perp$ so that $V_1 y = z + x$.
Write $x = \sum_{l<0} a_l e_{1,l}$, and let $l_0$ be the least integer
so that $a_{-l_0} \ne0$.
Let 
\[ x'=T_2^{l_0-1}x = \sum_{l<0} a_{l+1-l_0} e_{1,l} =: \sum_{l<0} a'_l e_{1,l}\,; \]
so that $a'_{-1} \ne 0$.  Also set 
\[ y'=V_2^{l_0-1}y \qand  z'=V_2^{l_0-1}z .\]
Then 
\begin{align*}
V_1y' &= V_1V_2^{l_0-1}y = V_2^{l_0-1}V_1y \\
&= V_2^{l_0-1} (z+x) =z' + x' .
\end{align*}
Moreover, $z'$ is orthogonal to the range of $T_2^{l_0-1}$, which contains $\N$.
Hence 
\begin{align*}
 \ip{V_2V_1y',e_{1,0}} &= \ip{ V_2(z'+x'),e_{1,0}}  \\
 &= \ip{z'+x',e_{1.-1}} = a'_{-1} \ne 0 .
\end{align*}
Therefore
\[ 0 \ne \ip{V_1V_2y',e_{1,0}} = \ip{V_2y',e_{0,0}} .\]
This contradicts the fact that $\tau$ is a coextension of $\rho$.
Thus $\tau$ must be fully extremal relative to $\rho$.
\end{eg}

Now we turn to the issue of establishing that fully extremal coextensions 
(and extensions) always exist.

\begin{thm} \label{T:fully extremal}
Let $\A$ be a unital operator algebra, and let $\rho$ be a representation of $\A$ on $\H$.
Then  $\rho$ has a fully extremal  coextension $\sigma$.

If $\A$ and $\H$ are separable, then one can take $\rho$ acting 
on a separable Hilbert space.
\end{thm}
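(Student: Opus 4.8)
The plan is to use a Zorn's lemma argument in the spirit of Arveson's proof of the existence of maximal dilations, but organized around the partial order $\prec$ restricted to those dilations that remain coextensions of $\rho$. Concretely, I would consider the class $\mathcal{C}$ of all coextensions $\tau$ of $\rho$, partially ordered by $\prec$, and look for an element $\sigma$ that is maximal among those coextensions in the sense required by Definition~\ref{D:fully extremal}: if $\sigma \prec \tau$ with $\tau$ still a coextension of $\rho$, then $\tau = \sigma \oplus \tau'$. The key point is that the property ``$\tau$ is a coextension of $\rho$'' is preserved under the relevant chain constructions, so one stays inside $\mathcal{C}$ throughout.

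First I would reduce to a single dilation step: given any coextension $\tau$ of $\rho$ that is not yet fully extremal, there is a coextension $\tau'$ of $\rho$ with $\tau \prec \tau'$ strictly (meaning $\tau'$ is not of the form $\tau \oplus \tau''$). Iterating this transfinitely, I would build an increasing chain $(\tau_\lambda)$ of coextensions of $\rho$; at limit ordinals one takes the inductive limit Hilbert space with the inductive limit representation, which is again a completely contractive representation (the bound $1$ on complete contractivity passes to the limit) and is again a coextension of $\rho$ since $\H$ sits coinvariantly in each $\tau_\lambda$ and hence in the limit. The chain must terminate for cardinality reasons once one bounds the Hilbert space dimensions involved — this is where one needs a bound on how large the dilation space can be, analogous to Arveson's observation that maximal dilations can be built on a space of controlled size. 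For a single representation one obtains such a bound from the structure of completely contractive representations of $\A$ (e.g. via a universal construction, or by noting that it suffices to adjoin cyclic pieces one at a time, each living on a space of dimension at most $\dim \A \cdot \dim \H$), so the process stabilizes at some ordinal, and the resulting $\sigma$ is fully extremal with respect to $\rho$.

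The separable refinement then follows by a standard downward Löwenheim–Skolem / separable reduction: if $\A$ and $\H$ are separable, one runs the same construction but at each stage only needs to adjoin countably many separable cyclic summands, and limit stages of countable cofinality stay separable; one checks that countably many steps suffice by noting that the ``obstructions to full extremality'' are detected on countable subsets, so a careful bookkeeping (enumerating potential dilation moves and resolving them one at a time, as in the usual separable maximal-dilation argument) produces a fully extremal $\sigma$ on a separable space.

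The main obstacle I expect is the termination/size-bound step: one must argue that the transfinite chain of coextensions of $\rho$ under $\prec$ cannot ascend forever, i.e. that there is an a priori cardinal bound on the Hilbert space of a fully extremal coextension. Unlike the case of maximal dilations — where the extension moves and coextension moves are symmetric and Arveson's bound applies cleanly — here we are only allowed coextensions of $\rho$ while the intermediate dilations $\tau$ with $\sigma \prec \tau$ may involve both extensions and coextensions, so one must be careful that the ``fully extremal'' closure does not force an uncontrolled blow-up (the zigzag phenomenon of Example~\ref{Eg:zigzag} shows the number of alternating steps can be infinite). The right fix is probably to observe that it suffices to consider dilations generated by $\H$ under the full C*-algebra action together with $\A$, bounding the dimension by $\dim(\cenv(\A)) \cdot \dim(\H) + \aleph_0$ or similar, and to verify that the inductive-limit construction never escapes this bound; once that is in place, Zorn's lemma (or the transfinite iteration) finishes the argument.
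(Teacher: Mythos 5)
Your overall architecture (transfinite iteration of dilation steps, inductive limits at limit ordinals, separable bookkeeping at the end) matches the spirit of the paper's proof, but the step you yourself flag as the main obstacle --- termination of the chain --- is a genuine gap, and the fix you sketch does not close it. The difficulty is twofold. First, a na\"{\i}ve maximal element of the poset of coextensions of $\rho$ under $\prec$ never exists, since one can always append a direct summand; so you are forced, as you do, into the formulation ``if $\tau$ is not fully extremal, pass to a strictly non-splitting dilation and iterate,'' and then everything hinges on showing this process stabilizes. Second, an a priori cardinal bound of the form $\dim(\cenv(\A))\cdot\dim(\H)+\aleph_0$ does not obviously exist for the spaces arising along the chain, and even if each individual fully extremal coextension could be realized on a space of bounded dimension, that would not prevent a strictly increasing chain of \emph{non-extremal} intermediate dilations from ascending past any fixed cardinal: an unbounded chain is not by itself a contradiction unless you can show that some tail of it already splits off, which is essentially the statement you are trying to prove. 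Example~\ref{Eg:zigzag} shows the alternation of extension and coextension moves can genuinely require infinitely many steps, so no crude counting argument will do.

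The paper (following Arveson's proof of the existence of maximal dilations) sidesteps the termination problem entirely by replacing ``maximality in the partial order'' with a family of scalar norm-attainment conditions: one constructs a coextension $\sigma$ of $\rho$ on $\K$ such that for every $a\in\A$ and $k\in\K$ both $\|\sigma(a)k\|$ and $\|\sigma(a)^*k\|$ equal the supremum of $\|\tau(a)k\|$, resp.\ $\|\tau(a)^*k\|$, over all dilations $\tau\succ\sigma$ with $\tau\succ_c\rho$. Each such supremum is attained by a \emph{countable} increasing sequence of admissible dilations whose inductive limit realizes it, so one never needs to exhaust the (proper class of) all dilations; one only needs a transfinite enumeration of a dense subset of $\A\times\K$, followed by countably many sweeps to upgrade from vectors in $\H$ to vectors in all of $\K$. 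Once every supremum is attained, full extremality is automatic: if $\tau\succ\sigma$ and $\tau\succ_c\rho$, then $\|\sigma(a)k\|=\|P_\K\tau(a)k\|\le\|\tau(a)k\|\le\|\sigma(a)k\|$ forces $\tau(a)k\in\K$, and the adjoint condition gives coinvariance, so $\K$ reduces $\tau$. If you want to salvage your write-up, this is the mechanism to substitute for the missing cardinality bound; the separable case then falls out exactly as you describe, since all the enumerations and limits can be taken countable.
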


\begin{proof}
Our argument is based on Arveson's proof \cite[Theorem~2.5]{Arv_choq} 
that maximal dilations exist.  
He works with the operator system generated by $\A$, which is self-adjoint.
As we will work directly with $\A$, we need to consider adjoints as well.
The goal is to construct a coextension $\sigma$ of $\rho$ on a Hilbert space $\K$
so that for every $a\in\A$ and $k\in\K$,
\begin{align} \label{E1}
 \| \sigma(a) k \| = \sup \{ \| \tau(a) k\| : \tau \succ  \sigma, \tau \succ_c \rho \} 
\end{align}
and
\begin{align} \label{E2}
 \| \sigma(a)^* k \| = \sup \{ \| \tau(a)^* k\| : \tau \succ  \sigma, \tau \succ_c \rho \} . 
\end{align}
Once this is accomplished, it is evident that any dilation $\tau$ of $\sigma$ which is a 
coextension of $\rho$ must have $\K$ as a reducing subspace, as claimed.

To this end, choose a dense subset of $\A \times \H$, and enumerate it as
\[ \{(a_\alpha,h_\alpha) : \alpha \in \Lambda \} \]
where $\Lambda$ is an ordinal.
Suppose that we have found coextensions $\sigma_\alpha$ of $\rho$ 
for all $\alpha < \alpha_0 < \Lambda$ acting on $\K_\alpha$, 
where $\K_\beta \subset \K_\alpha$ when $\beta < \alpha$, so that 
\begin{align} \label{E1'} \tag{\ref{E1}$'$}
 \| \sigma_\alpha(a_\beta) h_\beta \| = 
 \sup \{ \| \tau(a_\beta) h_\beta\| : \tau \succ \sigma_\alpha, \tau \succ_c \rho \} 
\end{align}
and
\begin{align} \label{E2'} \tag{\ref{E2}$'$}
 \| \sigma_\alpha(a_\beta)^* h_\beta \| = 
 \sup \{ \| \tau(a)^* h_\beta \| : \tau \succ \sigma_\alpha, \tau \succ_c \rho \} . 
\end{align}
for all $\beta < \alpha$.
This latter condition is automatic because each $\tau(a)^*$ leaves $\H$ invariant,
and agrees with $\rho(a)^*$ there.  But we carry this for future use.

If $\alpha_0$ is a limit ordinal, we just form the natural direct limit of the $\sigma_\alpha$
for $\alpha < \alpha_0$, and call it $\sigma_{\alpha_0}$.  Note that it will now satisfy 
(\ref{E1'}) and (\ref{E2'}) for $\beta < \alpha_0$.

Otherwise $\alpha_0 = \beta_0+1$ is a successor ordinal.
Choose a dilation $\tau_1 \succ \sigma_{\beta_0}$ on $\M_1 \supset \K_{\beta_0}$
such that $\tau_1 \succ_c \rho$ and satisfies
\begin{align*} 
 \| \tau_1(a_{\beta_0}) h_{\beta_0} \| \ge 
 \sup \{ \| \tau(a_{\beta_0}) h_{\beta_0}\| : 
 \tau \succ \sigma_\alpha, \tau \succ_c \rho \} 
 - 2^{-1}
\end{align*}
and
\begin{align*} 
 \| \tau_1(a_{\beta_0})^* h_{\beta_0} \| \ge  
 \sup \{ \| \tau(a)^* h_{\beta_0} \| : 
 \tau \succ \sigma_\alpha, \tau \succ_c \rho \} 
 - 2^{-1}. 
\end{align*}
Then choose recusively dilations $\tau_{n+1}$ of $\tau_n$ 
on $\M_{n+1} \supset \M_n$ which are all coextensions of $\rho$ so that
\begin{align*}
 \| \tau_{n+1}(a_{\beta_0}) h_{\beta_0} \| \ge 
 \sup \{ \| \tau(a_{\beta_0}) h_{\beta_0}\| : 
 \tau \succ  \sigma_\alpha, \tau \succ_c \rho \} 
 - 2^{-n-1}
\end{align*}
and
\begin{align*}
 \| \tau_{n+1}(a_{\beta_0})^* h_{\beta_0} \| \ge  
 \sup \{ \| \tau(a)^* h_{\beta_0} \| : 
 \tau \succ  \sigma_\alpha, \tau \succ_c \rho \} 
 - 2^{-n-1}. 
\end{align*}
The inductive limit is a representation $\sigma_{\alpha_0}$ with the desired properties.

Once we reach $\Lambda$, we have constructed a representation $\tilde\sigma_1$
on $\tilde\K_1$ coextending $\rho$ and satisfying (\ref{E1}) and (\ref{E2}) for
vectors $h \in \H$.
Now repeat this starting with $\tilde\sigma_1$ and a dense subset of 
$\A \times \tilde\K_1$, but still considering dilations which are coextensions of $\rho$.
This time, the equations involving the adjoint are important.
The result is a representation $\tilde\sigma_2$ on $\tilde\K_2$ 
dilating $\tilde\sigma_1$ and coextending $\rho$ 
satisfying (\ref{E1}) and (\ref{E2}) for all vectors $k \in \tilde\K_1$.
Repeat recursively for all $n\ge1$ and in the end, we obtain the desired coextension.

If $\A$ and $\K$ are separable, a countable sequence of points suffices,
and at each stage of this countable process, one obtains separable spaces.
So the result is a separable representation.
\end{proof}

\begin{rem} \label{R:fully extremal}
It easily follows from the proof of existence of fully extremal coextensions 
that if $\sigma$ is a coextension of $\rho$, then there is a dilation $\tau$
of $\sigma$ which is a fully extremal coextension of $\rho$.
\end{rem}

\begin{rem} \label{R:exist extremal}
A proof of existence of extremal coextensions can be made along the same lines.
It is only necessary to achieve $\sigma \succ_c \rho$ on $\K$ such that:
\[ \| \sigma(a) k \| = \sup \{ \| \tau(a) k \| : \tau \succ_c \sigma \} .\]
One can always achieve this by repeated coextension, and in this way
one obtains an extremal coextension $\sigma$ of $\rho$ with the additional property
that $\H$ is cyclic, i.e.\ $\K = \ol{\sigma(\A) \H}$.
This is evidently not the case in general for extremal coextensions, and
in particular, for fully extremal coextensions.  See the preceding examples and
Remark~\ref{R:uniqueness}.
\end{rem}

The same result for extensions follows by duality.

\begin{cor}\label{C:fully extremal extensions}
Let $\A$ be a unital operator algebra, and let $\rho$ be a representation of $\A$ on $\H$.
Then  $\rho$ has a fully extremal  extension $\sigma$.
\end{cor}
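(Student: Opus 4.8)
The plan is to deduce the existence of a fully extremal extension of $\rho$ from the already-established existence of fully extremal coextensions (Theorem~\ref{T:fully extremal}) by a standard duality/adjoint argument. The key observation is that the notions of extension and coextension are interchanged when one passes from an operator algebra $\A$ to its ``reversed'' algebra. Concretely, I would let $\A^{\mathrm{op}}$ denote the operator algebra with the same underlying operator space structure as $\A$ but with reversed multiplication $a \ast b := ba$; equivalently, one realizes $\A^{\mathrm{op}}$ completely isometrically as $\{a^{\mathrm t} : a \in \A\}$ acting on the dual/conjugate Hilbert space, i.e.\ via the transpose map, which is a complete anti-isomorphism and hence a completely isometric isomorphism of $\A$ onto $\A^{\mathrm{op}}$.

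First I would record how representations transform: a (unital completely contractive) representation $\rho$ of $\A$ on $\H$ corresponds to a representation $\rho^{\mathrm{op}}$ of $\A^{\mathrm{op}}$ on the conjugate space $\bar\H$ via $\rho^{\mathrm{op}}(a) = \rho(a)^{\mathrm t}$ (the transpose with respect to a fixed orthonormal basis, which is a bounded operator on $\bar\H$ of the same norm, and the assignment is completely contractive and multiplicative on $\A^{\mathrm{op}}$). Under this correspondence, a subspace $\H \subseteq \K$ that is invariant for $\sigma(\A)$ becomes a subspace of $\bar\K$ that is co-invariant for $\sigma^{\mathrm{op}}(\A^{\mathrm{op}})$, and vice versa, because $\sigma(a)\H \subseteq \H$ is equivalent to $\sigma(a)^* \H^\perp \subseteq \H^\perp$, and transposing swaps the roles appropriately. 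Thus $\sigma$ is an extension of $\rho$ if and only if $\sigma^{\mathrm{op}}$ is a coextension of $\rho^{\mathrm{op}}$, and the partial orders $\prec$, $\prec_e$, $\prec_c$ translate into $\prec$, $\prec_c$, $\prec_e$ respectively. Consequently $\sigma$ is fully extremal \emph{as an extension of $\rho$} in the sense of Definition~\ref{D:fully extremal} precisely when $\sigma^{\mathrm{op}}$ is fully extremal \emph{as a coextension of $\rho^{\mathrm{op}}$}.

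With this dictionary in hand, the proof is short: given $\rho$ on $\H$, form $\rho^{\mathrm{op}}$ on $\bar\H$, apply Theorem~\ref{T:fully extremal} to obtain a fully extremal coextension $\pi$ of $\rho^{\mathrm{op}}$ on some $\bar\K$, and then set $\sigma := \pi^{\mathrm{op}}$, a representation of $(\A^{\mathrm{op}})^{\mathrm{op}} = \A$ on $\K$. By the dictionary, $\sigma$ is a fully extremal extension of $\rho$, which is exactly the claim of Corollary~\ref{C:fully extremal extensions}. One should double-check the trivial point that $(\A^{\mathrm{op}})^{\mathrm{op}} = \A$ completely isometrically and that double-transposing returns the original representation, and that $\bar{\bar\H} = \H$, so no spurious conjugation survives.

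The only real subtlety — and the step I'd be most careful about — is justifying that the transpose construction genuinely produces a unital completely contractive representation of $\A^{\mathrm{op}}$ with an honest operator-algebra structure on $\A^{\mathrm{op}}$ itself, so that Theorem~\ref{T:fully extremal} is literally applicable to $\A^{\mathrm{op}}$. This is classical (the opposite of an abstract operator algebra in the Blecher--Ruan--Sinclair sense is again an abstract operator algebra, realized via the canonical anti-isomorphism/transpose), but it is worth one sentence. Everything else — the equivalence of invariance conditions under adjoints, the translation of the three partial orders, and the matching of ``$\tau = \sigma \oplus \tau'$'' under $\mathrm{op}$ — is routine, since direct sums and unitary equivalences are manifestly preserved by the transpose. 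Alternatively, if one prefers to avoid the language of opposite algebras entirely, one can simply rerun the proof of Theorem~\ref{T:fully extremal} verbatim with the roles of ``invariant'' and ``co-invariant'' (equivalently, of $\sigma(a)$ and $\sigma(a)^*$) systematically interchanged: the construction via transfinite recursion, equations~\eqref{E1} and \eqref{E2}, and the separability bookkeeping all go through unchanged, now producing an \emph{extension} $\sigma$ of $\rho$ with $\|\sigma(a)k\|$ and $\|\sigma(a)^*k\|$ maximal among extensions $\tau \succ \sigma$ with $\rho \prec_e \tau$. I would state the corollary's proof in the one-line duality form, as that is cleanest.
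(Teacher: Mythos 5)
Your proposal is correct and is essentially the paper's own argument: the paper disposes of this corollary with the single remark ``the same result for extensions follows by duality,'' and your write-up simply makes that duality explicit (either via the opposite/adjoint algebra interchanging invariant and co-invariant subspaces, or by rerunning the transfinite construction of Theorem~\ref{T:fully extremal} with the roles of $\sigma(a)$ and $\sigma(a)^*$ swapped). Both of your suggested implementations are sound and match the intended proof.
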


\begin{cor}\label{C:extremal sums}
If $\rho_1$ and $\rho_2$ are representations of $\A$, then $\sigma_1$ and $\sigma_2$
are fully extremal coextensions of $\rho_1$ and $\rho_2$, respectively, if and only if
$\sigma_1 \oplus \sigma_2$ is a fully extremal coextension of $\rho_1 \oplus \rho_2$.

In particular, $\sigma_1$ and $\sigma_2$ are extremal coextensions of $\A$ 
if and only if $\sigma_1 \oplus \sigma_2$ is an extremal coextension.
\end{cor}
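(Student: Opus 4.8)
The plan is to prove the "fully extremal" statement directly from the definition, and then observe that the "extremal" statement follows by the identical argument with the partial order $\prec$ replaced throughout by $\prec_c$ (or, alternatively, deduce it as a formal corollary by noting that the proof never really used anything beyond the bookkeeping of reducing subspaces). First I would fix notation: suppose $\rho_i$ acts on $\H_i$ and $\sigma_i$ on $\K_i \supseteq \H_i$, with $\K_i = \H_i \oplus \H_i^\perp$ the coextension decomposition, and set $\rho = \rho_1 \oplus \rho_2$ on $\H = \H_1 \oplus \H_2$, $\sigma = \sigma_1 \oplus \sigma_2$ on $\K = \K_1 \oplus \K_2$. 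Note $\sigma$ is visibly a coextension of $\rho$ since each $\H_i^\perp$ is $\sigma_i$-invariant, hence $\H^\perp = \H_1^\perp \oplus \H_2^\perp$ is $\sigma$-invariant.

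For the forward direction, assume each $\sigma_i$ is fully extremal with respect to $\rho_i$, and let $\tau$ on $\L$ satisfy $\sigma \prec \tau$ and $\rho \prec_c \tau$. I want to show $\tau = \sigma \oplus \tau'$, i.e.\ that $\K$ (as a subspace of $\L$) reduces $\tau$. The key step is to produce, from $\tau$, a dilation of each $\sigma_i$ that is simultaneously a coextension of $\rho_i$. Here one uses the containment $\K_i \subseteq \K \subseteq \L$: since $\K$ is semi-invariant for $\tau$ in the Sarason sense with $\H^\perp$ invariant, one checks that $P_{\K}\tau|_{\K}$ is a representation that coextends $\rho$ and dilates to... — actually the cleaner route is: restrict attention to the smallest $\tau$-reducing subspace $\L'$ containing $\K$, replace $\tau$ by its restriction there (this is harmless since summands don't affect the conclusion), and then argue that the compression of $\tau$ to an appropriate subspace gives a dilation $\tau_i \succ \sigma_i$ with $\tau_i \succ_c \rho_i$. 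By full extremality of $\sigma_i$, each such $\tau_i$ splits off $\sigma_i$ as a summand; pushing this back up shows $\K_i$ reduces the relevant piece of $\tau$, and assembling $i=1,2$ gives that $\K$ reduces $\tau$. The main obstacle — and the step I would spend the most care on — is precisely this extraction: a priori $\tau$ mixes the two summands $\K_1$ and $\K_2$ together and also mixes in the new space $\L \ominus \K$, so one must argue that the coextension constraint $\rho \prec_c \tau$ forces enough block-triangular structure to isolate the $i$-th summand. The right tool is likely the observation (implicit in Example~\ref{Eg:disk}) that a coextension cannot create off-diagonal maps \emph{into} the co-invariant subspace $\H$, combined with the fact that $\H_i \subseteq \H$ and $\K_i = \overline{\sigma_i(\A)}$-closure considerations; one wants to show $\tau(a)$ and $\tau(a)^*$ both leave $\K$ invariant, and the adjoint direction is the delicate one, handled exactly as the adjoint equations (\ref{E2})–(\ref{E2'}) were handled in the proof of Theorem~\ref{T:fully extremal}.

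For the converse, assume $\sigma = \sigma_1 \oplus \sigma_2$ is fully extremal with respect to $\rho = \rho_1 \oplus \rho_2$; I must show each $\sigma_i$ is fully extremal with respect to $\rho_i$. This direction is easier: given $\tau_1 \succ \sigma_1$ with $\tau_1 \succ_c \rho_1$, form $\tau := \tau_1 \oplus \sigma_2$. Then $\tau \succ \sigma_1 \oplus \sigma_2 = \sigma$ and $\tau \succ_c \rho_1 \oplus \rho_2 = \rho$ (coextensions are closed under direct sums, and $\sigma_2 \succ_c \rho_2$), so by hypothesis $\tau = \sigma \oplus \tau'$. Intersecting the resulting reducing subspace with the $\tau_1$-summand shows $\K_1$ reduces $\tau_1$, i.e.\ $\tau_1 = \sigma_1 \oplus \tau_1'$; hence $\sigma_1$ is fully extremal, and symmetrically for $\sigma_2$. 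Finally, the parenthetical statement about extremal coextensions is the special case obtained by taking $\tau$ ranging only over coextensions of $\rho_1 \oplus \rho_2$ (equivalently, one runs the whole argument with $\prec$ replaced by $\prec_c$ and drops the coextension-of-$\rho$ side condition, which becomes vacuous), so no separate argument is needed beyond remarking this.
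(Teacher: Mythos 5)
Your converse direction and your treatment of the parenthetical statement are correct (the converse matches the paper's argument exactly: form $\tau_1\oplus\sigma_2$, apply full extremality of $\sigma_1\oplus\sigma_2$, and read off that $\K_1$ reduces $\tau_1$). The problem is the forward direction, which is the substance of the corollary: you leave a hole exactly where its one real step belongs. You describe the ``main obstacle'' as extracting enough block-triangular structure from $\tau$ to isolate the $i$-th summand, and you propose to handle ``the delicate adjoint direction'' by mimicking the adjoint equations in the proof of Theorem~\ref{T:fully extremal}. That is the wrong tool --- there is no approximation or transfinite argument to run here --- and the extraction you worry about is a phantom obstacle. The point you are missing is that Definition~\ref{D:fully extremal} quantifies over \emph{arbitrary} dilations $\tau$ on arbitrary ambient spaces, so the hypothesis on $\sigma_i$ may be applied to $\tau$ itself, acting on all of $\L$, with no preliminary compression, no passage to the smallest reducing subspace, and no isolation of summands.

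Concretely: since $P_\K\tau(a)|_\K=\sigma_1(a)\oplus\sigma_2(a)$ and $\K_i\subseteq\K$, compressing further gives $P_{\K_i}\tau(a)|_{\K_i}=\sigma_i(a)$, i.e.\ $\sigma_i\prec\tau$. Since $\H=\H_1\oplus\H_2$ is co-invariant for $\tau$ and $\tau(a)^*|_\H=\rho_1(a)^*\oplus\rho_2(a)^*$ is block diagonal, we get $\tau(a)^*\H_i=\rho_i(a)^*\H_i\subseteq\H_i$, i.e.\ $\rho_i\prec_c\tau$. Full extremality of $\sigma_i$ with respect to $\rho_i$ now says \emph{directly} that $\K_i$ reduces $\tau$, for $i=1,2$; hence $\K=\K_1\oplus\K_2$ reduces $\tau$ and $\tau=\sigma_1\oplus\sigma_2\oplus\tau'$. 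That is the entire forward direction. One last remark: for the final statement you propose re-running the argument with $\prec$ replaced by $\prec_c$; this works, but it is cleaner to specialize $\rho_i:=\sigma_i$ in what was just proved, since a representation is an extremal coextension exactly when it is a fully extremal coextension of itself (when $\rho=\sigma$ the pair of conditions $\sigma\prec\tau$ and $\sigma\prec_c\tau$ collapses to the single condition $\sigma\prec_c\tau$).
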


\begin{proof}
First suppose that $\sigma_1$ and $\sigma_2$, acting on $\K_1$ and $\K_2$,
are fully extremal coextensions of $\rho_1$ and $\rho_2$, respectively.
Suppose that $\tau$ is a representation on 
\[ \P = \K_1 \oplus \K_2 \oplus \P' \]
such that
\[ \tau \succ \sigma_1 \oplus \sigma_2 \qand  \tau \succ_c \rho_1 \oplus \rho_2 .\]
Then as $\tau \succ \sigma_i$ and $\tau \succ_c \rho_i$, we deduce that $\tau$
reduces $\K_i$ and hence reduces $\K_1 \oplus \K_2$.
So $\sigma_1 \oplus \sigma_2$ is a fully extremal coextension of $\rho_1 \oplus \rho_2$.

Conversely, if $\sigma_1 \oplus \sigma_2$ is a fully extremal coextension 
of $\rho_1 \oplus \rho_2$, suppose that $\tau$ is a representation on $\P=\K_1 \oplus \P'$
satisfies $\tau \succ \sigma_1$ and $\tau \succ_c \rho_1$.
Then 
\[
 \tau \oplus \sigma_2 \succ \sigma_1  \oplus \sigma_2 \qand
 \tau \oplus \sigma_2 \succ_c \rho_1 \oplus \rho_2 .  
\]
It follows that $\tau \oplus \sigma_2$ reduces $\K_1 \oplus \K_2$.
So $\tau$ reduces $\K_1$.  Whence $\sigma_1$ is fully extremal.

Applying this to $\rho_i=\sigma_i$ yields the last statement.
\end{proof}

If one starts with a representation $\rho$ and alternately forms 
extremal extensions and coextensions, it may require a countable 
sequence of alternating extensions and coextensions in
order to obtain a maximal dilation as in Example~\ref{Eg:zigzag}.
One advantage of fully extremal extensions and coextensions is that only one 
is required to obtain a maximal dilation.  

\begin{prop} \label{P:coextend and fully extend}
Let $\rho$ be a representation of $\A$.
If $\sigma$ is an extremal coextension of $\rho$,
and $\tau$ is a fully extremal extension of $\sigma$,
then $\tau$ is a maximal dilation.
\end{prop}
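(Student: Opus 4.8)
The plan is to show $\tau$ is a maximal dilation by verifying the characterization of Theorem~\ref{T:MS}: it suffices to check that $\tau$ is both an extremal coextension and an extremal extension of itself (equivalently, orthoprojective and orthoinjective). Since $\tau$ is a fully extremal extension of $\sigma$, it is in particular an extremal extension, so the orthoinjective half is immediate. The work is entirely in showing that $\tau$ is also an extremal coextension.

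So suppose $\pi$ is a coextension of $\tau$, acting on $\P \supset \K_\tau$ with $\K_\tau$ co-invariant. I must show $\K_\tau$ reduces $\pi$. The idea is to reduce this to the extremality of $\sigma$ as a coextension of $\rho$, using the full extremality of $\tau$ over $\sigma$. First observe that $\pi$ is a dilation of $\sigma$: since $\sigma \prec \tau$ (as $\tau$ is an extension of $\sigma$, hence $\H_\sigma \subset \K_\tau$ is a subspace with the compression being $\sigma$) and $\tau \prec \pi$, we get $\sigma \prec \pi$. Also $\pi$ is a coextension of $\rho$: indeed $\H_\rho \subset \H_\sigma \subset \K_\tau \subset \P$, and $\H_\rho$ is co-invariant for $\sigma$, hence for $\tau$ (an extension preserves co-invariance of subspaces of $\H_\rho$? — here one uses that $\tau$ restricted down to $\H_\sigma$ is $\sigma$ and $\rho$ is a coextension via $\H_\rho$ co-invariant in $\sigma$), and since $\K_\tau$ is co-invariant for $\pi$, the subspace $\H_\rho$ is co-invariant for $\pi$ with compression $\rho$. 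Thus $\pi \succ_c \rho$.

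Now I want to invoke full extremality of $\tau$ with respect to $\sigma$: this requires a dilation $\eta$ with $\tau \prec \eta$ and $\sigma \prec_c \eta$. The representation $\pi$ satisfies $\tau \prec \pi$, but I need $\sigma \prec_c \pi$, i.e.\ that $\H_\sigma$ is co-invariant in $\pi$ with compression $\sigma$ — and this is exactly where extremality of $\sigma$ as a coextension of $\rho$ enters. Here is the argument I would run: since $\pi$ is a dilation of $\sigma$ and a coextension of $\rho$, decompose via Sarason's theorem relative to $\H_\sigma$; one checks that the "lower" piece makes $\pi$ a coextension-type object over something involving $\sigma$. The cleanest route: apply Remark~\ref{R:fully extremal}-style reasoning, or directly argue that because $\sigma$ is an extremal coextension of $\rho$ and $\K_\tau$ (which contains $\H_\sigma$ co-invariantly, with compression $\sigma$) is co-invariant in $\pi$ with $\rho$ sitting co-invariantly below, one can compress $\pi$ appropriately to see $\H_\sigma$ is actually co-invariant in $\pi$. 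Once $\sigma \prec_c \pi$ is established, full extremality of $\tau$ over $\sigma$ forces $\pi = \tau \oplus \pi'$, so $\K_\tau$ reduces $\pi$, as required. Then Theorem~\ref{T:MS} finishes the proof.

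The main obstacle is the middle step: showing that an arbitrary coextension $\pi$ of $\tau$ automatically keeps $\H_\sigma$ co-invariant (so that $\sigma \prec_c \pi$), rather than merely keeping it as a semi-invariant subspace. This is precisely the point where the hypothesis that $\sigma$ is an \emph{extremal} coextension of $\rho$ must be used — without it the statement is false, as the zigzag example (Example~\ref{Eg:zigzag}) shows. I expect the argument to go: write $\P = \H_\rho^{(-)} \oplus \H_\rho \oplus (\text{rest})$ via Sarason; since $\pi$ coextends $\rho$, there is no "$\H_\rho^{(-)}$" part, so $\P = \H_\rho \oplus \H_\rho^\perp$ with $\H_\rho$ co-invariant; now $\H_\sigma \ominus \H_\rho \subset \H_\rho^\perp$, and compressing $\pi$ to $\H_\sigma \oplus (\text{the part of }\P\text{ below }\K_\tau)$ exhibits a coextension of $\sigma$ — extremality of $\sigma$ then forces that extra part to split off as a reducing summand orthogonal to $\H_\sigma$, giving $\sigma \prec_c \pi$. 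Pinning down exactly which subspace to compress to, and checking it is co-invariant, is the only delicate bookkeeping.
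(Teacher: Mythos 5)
Your overall skeleton is the paper's: invoke Theorem~\ref{T:MS}, note that the orthoinjective half is free, and reduce everything to showing that an arbitrary coextension $\pi$ of $\tau$ splits. But the middle step, which you correctly identify as the crux, is aimed at the wrong target. The property of $\tau$ being a fully extremal \emph{extension} of $\sigma$ (Definition~\ref{D:fully extremal}) lets you split a representation $\eta$ satisfying $\tau \prec \eta$ \emph{and} $\sigma \prec_e \eta$ --- that is, $\H_\sigma$ must sit \emph{invariantly} in $\eta$ with restriction $\sigma$. You instead set out to prove $\sigma \prec_c \pi$, i.e.\ that $\H_\sigma$ is \emph{co-invariant} in $\pi$. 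That is both the wrong hypothesis for the lemma you want to apply and a false statement in general: $\H_\sigma$ co-invariant in $\pi$ would force $\H_\sigma^\perp \supseteq \K_\tau \ominus \H_\sigma$ to be invariant, which already fails for $\pi = \tau$ itself unless the extension $\tau$ of $\sigma$ is a trivial direct sum. (A secondary slip of the same kind: $\H_\rho$ need not be co-invariant for $\tau$ --- it is only semi-invariant, since $\tau$ may map $\K_\tau \ominus \H_\sigma$ into $\H_\rho$ --- so your claim that $\pi \succ_c \rho$ is also unjustified; fortunately it is not needed.)

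The correct intermediate step is to show $\H_\sigma$ is \emph{invariant} for $\pi$, and this is exactly what the extremality of $\sigma$ as a coextension delivers. Decompose $\P = (\K_\tau \ominus \H_\sigma) \oplus \H_\sigma \oplus (\P \ominus \K_\tau)$. Since $\pi$ coextends $\tau$ and $\H_\sigma$ is invariant for $\tau$, the subspace $\H_\sigma \oplus (\P \ominus \K_\tau)$ is invariant for $\pi$; the restriction of $\pi$ to it is a representation having $\P \ominus \K_\tau$ invariant and compressing to $\sigma$ on $\H_\sigma$, i.e.\ a coextension of $\sigma$. Extremality of $\sigma$ forces this restriction to split as $\sigma \oplus (\cdot)$, which kills the blocks mapping $\H_\sigma$ into $\P \ominus \K_\tau$. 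Now $\H_\sigma$ is genuinely invariant for $\pi$ with restriction $\sigma$, so $\sigma \prec_e \pi$ and $\tau \prec \pi$, and full extremality of $\tau$ as an extension of $\sigma$ yields $\pi = \tau \oplus \pi'$. With this replacement your argument closes; as written, the step you flag as ``the only delicate bookkeeping'' is not bookkeeping but a genuine obstruction.
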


\begin{proof}
Since $\tau$ is an extremal extension, it suffices to show that it
is also an extremal coextension.
For then the Muhly-Solel result, Theorem~\ref{T:MS}, will show that $\tau$ is a maximal dilation.

Say that $\rho$, $\sigma$ and $\tau$ act on $\H$, $\K$ and $\L$ respectively.
Suppose that $\pi$ is a coextension of $\tau$ acting on $\P$.
Decompose 
\[ \P = (\L\ominus\K) \oplus \H \oplus (\K \ominus \H) \oplus (\P \ominus \L) . \]
Then we have
\[
 \pi = \begin{bmatrix}
         \tau_{11} & 0 & 0 & 0 \\
         \tau_{21} & \rho & 0 & 0 \\
         \tau_{31} & \sigma_{32} & \sigma_{33} & 0 \\
         \pi_{41} & \pi_{42} & \pi_{43} & \pi_{44}
         \end{bmatrix} ,
\]
where $\sigma$ is represented by the middle $2 \times 2$ square,
and $\tau$ is represented by the upper left $3 \times 3$ corner.
The lower right  $3 \times 3$ corner is a coextension of $\sigma$.
Since $\sigma$ is an extremal coextension, we obtain 
\[ \pi_{42} = 0 = \pi_{43} .\]
Thus we can rearrange the decomposition moving $\P \ominus \L$ to the first
coordinate to obtain
\[
 \pi \simeq \begin{bmatrix}
                 \pi_{44} & \pi_{41} & 0 & 0 \\
                 0 & \tau_{11} & 0 & 0 \\
                 0 & \tau_{21} &  \rho & 0 \\
                 0 & \tau_{31} & \sigma_{32} & \sigma_{33} 
         \end{bmatrix} .
\]
This is a coextension of $\tau$ which is an extension of $\sigma$.
By the fact that $\tau$ is a fully extremal extension of $\sigma$,
we deduce that $\pi_{41}=0$ and so
\[ \pi \simeq \pi_{44}\oplus \tau .\]
Therefore $\tau$ is also an extremal coextension.
\end{proof}

The dual result is obtained the same way.

\begin{cor} \label{C:extend and fully coextend}
Let $\rho$ be a representation of $\A$.
If $\sigma$ is an extremal extension of $\rho$,
and $\tau$ is a fully extremal coextension of $\sigma$,
then $\tau$ is a maximal dilation.
\end{cor}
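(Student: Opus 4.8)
The plan is to mirror the argument of Proposition~\ref{P:coextend and fully extend}, swapping the roles of extension and coextension throughout. By Theorem~\ref{T:MS} (Muhly--Solel), it suffices to prove that $\tau$, which is already an extremal coextension of $\sigma$ (hence of $\rho$, since the composite of coextensions is a coextension), is \emph{also} an extremal extension; then $\tau$ is maximal. So first I would take an arbitrary extension $\pi$ of $\tau$, acting on a space $\P$, and set up the analogue of the $4\times 4$ block decomposition. With $\rho$, $\sigma$, $\tau$ acting on $\H$, $\K$, $\L$ respectively, and $\K = \H \oplus (\K\ominus\H)$ (since $\sigma$ is an \emph{extension} of $\rho$, so $\H$ is invariant and sits in the upper-left), I would decompose
\[
 \P = (\P\ominus\L) \oplus (\L\ominus\K) \oplus (\K\ominus\H) \oplus \H ,
\]
ordered so that $\pi$ is lower-triangular with $\tau$ occupying the lower-right $3\times 3$ corner, $\sigma$ the lower-right $2\times 2$ corner, and $\rho$ the bottom-right $1\times 1$ entry. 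The top row of $\pi$ then records how the new summand $\P\ominus\L$ maps into the rest.

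Second, I would exploit that $\sigma$ is an extremal \emph{extension} of $\rho$. The top-left $3\times3$ corner of $\pi$ (on $(\P\ominus\L)\oplus(\L\ominus\K)\oplus(\K\ominus\H)\oplus\H$, but restricted appropriately) — more precisely, the compression of $\pi$ to $(\P\ominus\L)\oplus(\L\ominus\K)\oplus\H$ with $\H$ invariant — is an extension of $\sigma$ viewed on $(\K\ominus\H)\oplus\H$? I need to be careful: the clean statement is that the subspace $(\P\ominus\L)\oplus(\L\ominus\K)$ together with the block structure exhibits an extension of $\sigma$. Extremality of $\sigma$ as an extension forces the relevant off-diagonal blocks connecting $\H$ (and $\K\ominus\H$) back up to $(\P\ominus\L)$ to vanish; concretely the entries $\pi_{1,3}$ and $\pi_{1,4}$ (in my ordering, the blocks from $\K\ominus\H$ and $\H$ into $\P\ominus\L$) are zero. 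After killing those, I can rearrange by moving $\P\ominus\L$ to the \emph{last} coordinate, obtaining a representation that is an \emph{extension} of $\tau$ which is simultaneously a \emph{coextension} of $\sigma$: the $\tau$-part sits in the top-left $3\times3$ corner and the new summand $\P\ominus\L$ is appended at the bottom with only a lower-triangular coupling, i.e.\ $\sigma$ is still a co-invariant-compression-quotient in the appropriate sense.

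Third, I would invoke that $\tau$ is a \emph{fully extremal coextension of $\sigma$}. We now have $\tau \prec \pi'$ (an extension, hence a dilation) and $\sigma \prec_c \pi'$, so by Definition~\ref{D:fully extremal} the representation $\pi'$ must split as $\tau \oplus \pi'_{\mathrm{extra}}$, i.e.\ the last remaining coupling block $\pi_{?,1}$ from $\P\ominus\L$ vanishes. Hence $\pi \simeq \tau \oplus (\text{something})$, so $\tau$ is an extremal extension. Combined with the first step, $\tau$ is both an extremal extension and an extremal coextension, so Theorem~\ref{T:MS} gives that $\tau$ is a maximal dilation. The main obstacle — and the only place real care is needed — is the bookkeeping in the second step: getting the order of the four summands right so that "extremal extension of $\sigma$" really does apply to the sub-picture one extracts, and then re-slicing the decomposition so that the leftover summand is positioned to let "fully extremal coextension of $\sigma$" apply. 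Once the block diagram is drawn correctly, each extremality hypothesis zeroes out exactly one more block, and the rest is the same routine rearrangement used in Proposition~\ref{P:coextend and fully extend}.
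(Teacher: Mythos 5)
Your overall strategy is exactly the paper's: the paper proves Proposition~\ref{P:coextend and fully extend} by the $4\times4$ block argument and then obtains this corollary as "the dual result\dots the same way," and your three-step skeleton (reduce to showing $\tau$ is an extremal extension, kill two coupling blocks using extremality of $\sigma$, kill the last using full extremality of $\tau$, finish with Theorem~\ref{T:MS}) is the right dualization. However, the block bookkeeping as written is reversed in a way that would make the argument collapse if carried out literally. Since $\pi$ is an \emph{extension} of $\tau$, the subspace $\L$ is invariant for $\pi$, so the blocks you propose to annihilate with extremality of $\sigma$ --- those mapping $\K\ominus\H$ and $\H$ \emph{into} $\P\ominus\L$ --- are already zero for free, and killing them accomplishes nothing. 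The a priori nonzero coupling runs the other way: from the new summand $\P\ominus\L$ into the three old summands. The step that actually uses the hypothesis on $\sigma$ is this: $\L\ominus\K$ is $\pi$-invariant (it is $\tau$-invariant since $\tau$ is a coextension of $\sigma$), so its orthocomplement $(\P\ominus\L)\oplus\K$ is coinvariant, and the compression of $\pi$ to it is an \emph{extension} of $\sigma$; extremality of $\sigma$ as an extension forces this compression to split, i.e.\ $P_\H\,\pi(a)|_{\P\ominus\L}=0=P_{\K\ominus\H}\,\pi(a)|_{\P\ominus\L}$. Only after that does the sole surviving block $P_{\L\ominus\K}\,\pi(a)|_{\P\ominus\L}$ make $\K$ coinvariant for $\pi$, so that $\tau\prec\pi$ and $\sigma\prec_c\pi$ and Definition~\ref{D:fully extremal} finishes the job.

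Two smaller slips in the same vein. First, your ordering $(\P\ominus\L)\oplus(\L\ominus\K)\oplus(\K\ominus\H)\oplus\H$ does not triangularize $\pi$ with $\rho$ in the corner: $\H$ is \emph{not} $\pi$-invariant, since a vector of $\H$ is carried by $\tau$ into $\H\oplus(\L\ominus\K)$ ($\H$ is invariant only for the compression $\sigma$). The order that works is, e.g., $(\P\ominus\L)\oplus(\K\ominus\H)\oplus\H\oplus(\L\ominus\K)$, whose tails $\L$, $\H\oplus(\L\ominus\K)$ and $\L\ominus\K$ are all $\pi$-invariant. Second, the parenthetical ``hence of $\rho$, since the composite of coextensions is a coextension'' is false here --- $\sigma$ is an \emph{extension} of $\rho$, so $\tau$ need not be a coextension of $\rho$ at all --- but it is also unnecessary: what you need is that $\tau$ is an extremal coextension \emph{as a representation}, and that follows directly from its being a fully extremal coextension of $\sigma$ (any coextension of $\tau$ is a dilation of $\tau$ and still has $\K$ coinvariant, hence splits).
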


The classical notion of minimal coextension is that the space is cyclic for $\A$.
However, it seems more natural that the original space merely generate the whole
space as a reducing subspace.  This is because fully extremal coextensions
do not generally live on the cyclic subspace generated by the original space. 

\begin{defn}
An extremal coextension $\sigma$ on $\K$ of a representation $\rho$  of $\A$ on $\H$ 
is \textit{minimal} if the only reducing subspace of $\K$ containing $\H$ is $\K$ itself.
Likewise we define minimality for fully extremal coextensions, extremal extensions
and fully extremal extensions.
This minimal (fully) extremal (co)extension is \textit{unique} if any two of these objects
are unitarily equivalent via a unitary which is the identity on $\H$.

Say that a coextension $\sigma$ on $\K$ of a representation $\rho$ of $\A$ on $\H$ 
is \textit{cyclic} if $\K = \ol{\sigma(\A)\H}$.
\end{defn}

\begin{rem} \label{R:uniqueness}
These notions of minimality are subtle.  
Look at Example~\ref{Eg:zigzag}.
In general, to generate the space on which a coextension acts, one must
alternately take the cyclic subspace generated by $\sigma(\A)$ and $\sigma(\A)^*$,
perhaps infinitely often, in order to obtain the reducing subspace generated by $\H$.

In Example~\ref{Eg:bidisk}, the 
1-dimensional zero representation $\rho$ has an extremal coextension $\sigma$.  
It is minimal because 
\[ \K = \ol{\sigma(\ADD) \H} , \]
i.e.\ $\H$ is cyclic for $\sigma(\ADD)$.  
However the extension $\tau$ of $\sigma$ is also an extremal coextension of $\rho$.  
While it is no longer true that $\ol{\tau(\ADD) \H}$ is the whole space, it is nevertheless
the smallest reducing subspace containing $\H$, and so it is also minimal.
Thus it is a minimal fully extremal coextension.
Moreover, $\sigma = \sigma_0 \oplus \sigma_1$ where $\sigma_1$ is a maximal dilation.
Conversely, every representation of this form is fully extremal.
\end{rem}

It is important to note that there are minimal fully extremal coextensions
obtained in the natural way.

\begin{prop} \label{P:minimal fully extremal}
Let $\rho$ be a representation of $\A$ on $\H$.
Let $\sigma$ be a fully extremal $($co$)$extension of $\rho$ on $\K$.
Let $\sigma_0$ be the restriction of $\sigma$ to the smallest
reducing subspace $\K_0$ for $\sigma(\A)$ containing $\H$.
Then $\sigma_0$ is fully extremal. 
Moreover, $\sigma = \sigma_0 \oplus \sigma_1$ where $\sigma_1$
is a maximal representation. Conversely, every $($co$)$extension of this
form is fully extremal.
\end{prop}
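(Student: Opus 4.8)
The plan is to prove the statement for fully extremal coextensions; the case of extensions is entirely dual (interchange invariant and coinvariant subspaces, $\prec_c$ and $\prec_e$, and recall that maximal representations are self-dual). So let $\sigma$ be a fully extremal coextension of $\rho$ on $\K$, let $\K_0$ be the smallest reducing subspace for $\sigma(\A)$ containing $\H$, and put $\K_1 = \K \ominus \K_0$. Since $\K_0$ reduces $\sigma$, so does $\K_1$, and we may write $\sigma = \sigma_0 \oplus \sigma_1$ with $\sigma_i = \sigma|_{\K_i}$. Because $\H \subseteq \K_0$ is coinvariant for $\sigma$ and $\K_0$ reduces $\sigma$, the subspace $\H$ is coinvariant for $\sigma_0$ with $P_\H \sigma_0|_\H = \rho$; thus $\sigma_0$ is a coextension of $\rho$ (and, by minimality of $\K_0$, a minimal one).

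First I would show $\sigma_1$ is a maximal representation. Let $\pi$ be any dilation of $\sigma_1$, acting on $\P \supseteq \K_1$ with $P_{\K_1}\pi|_{\K_1} = \sigma_1$. Form $\sigma_0 \oplus \pi$ on $\K_0 \oplus \P$, which contains $\K = \K_0 \oplus \K_1$; one checks at once that $P_\K(\sigma_0 \oplus \pi)|_\K = \sigma$ and that $\H \subseteq \K_0$ stays coinvariant with compression $\rho$, so $\sigma \prec \sigma_0 \oplus \pi$ and $\rho \prec_c \sigma_0 \oplus \pi$. Since $\sigma$ is fully extremal with respect to $\rho$, $\K$ reduces $\sigma_0 \oplus \pi$; as $\K_0$ trivially reduces $\sigma_0 \oplus \pi$, so does $\K_1 = \K \ominus \K_0$, whence $\K_1$ reduces $\pi$ and $\pi = \sigma_1 \oplus \pi'$. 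Next, $\sigma_0$ is fully extremal with respect to $\rho$: given $\tau$ on $\L$ with $\sigma_0 \prec \tau$ and $\rho \prec_c \tau$ realized through $\H \subseteq \K_0 \subseteq \L$, form $\tau \oplus \sigma_1$ on $\L \oplus \K_1 \supseteq \K_0 \oplus \K_1 = \K$; again $P_\K(\tau \oplus \sigma_1)|_\K = \sigma$ and $\H$ is coinvariant with compression $\rho$, so $\sigma \prec \tau \oplus \sigma_1$ and $\rho \prec_c \tau \oplus \sigma_1$. Full extremality of $\sigma$ gives that $\K$ reduces $\tau \oplus \sigma_1$; subtracting the reducing summand $\K_1$ shows $\K_0$ reduces it, and since $\K_0 \subseteq \L$ this forces $\K_0$ to reduce $\tau$, i.e. $\tau = \sigma_0 \oplus \tau'$. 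This proves the direct statement.

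For the converse, suppose $\sigma_0$ is a fully extremal coextension of $\rho$ on $\K_0 \supseteq \H$ and $\sigma_1$ is a maximal representation on $\K_1$; set $\sigma = \sigma_0 \oplus \sigma_1$ on $\K = \K_0 \oplus \K_1 \supseteq \H$, which is clearly a coextension of $\rho$. Let $\tau$ on $\P \supseteq \K$ satisfy $P_\K\tau|_\K = \sigma$ with $\H$ coinvariant for $\tau$; we must show $\K$ reduces $\tau$. The essential point is that one cannot apply full extremality of $\sigma_0$ to $\tau$ directly, since $\tau$ dilates $\sigma_0 \oplus \sigma_1$ and not $\sigma_0$; instead one first strips off the maximal summand. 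Indeed $P_{\K_1}\tau|_{\K_1} = \sigma_1$, so $\tau$ is a dilation of $\sigma_1$, and maximality of $\sigma_1$ forces $\K_1$ to reduce $\tau$, giving $\tau = \sigma_1 \oplus \tau_0$ with $\tau_0 = \tau|_{\P \ominus \K_1}$ acting on a space containing $\K_0$. A direct check shows $\sigma_0 \prec \tau_0$ (via $\K_0$) and that $\H$ remains coinvariant for $\tau_0$ with compression $\rho$, so $\rho \prec_c \tau_0$. Full extremality of $\sigma_0$ then yields that $\K_0$ reduces $\tau_0$, hence reduces $\tau$; combining with the reducibility of $\K_1$, we get that $\K = \K_0 \oplus \K_1$ reduces $\tau$ and $\tau|_\K = \sigma$, so $\sigma$ is fully extremal with respect to $\rho$.

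The only point requiring genuine care — and what I would watch most closely — is the bookkeeping of subspaces: each invocation of the definition of "fully extremal" demands a dilation that is simultaneously a dilation of the relevant coextension and a coextension of $\rho$, with the copy of $\H$ \emph{nested inside} the copy of the coextension's space. The direct-sum constructions $\sigma_0 \oplus \pi$ and $\tau \oplus \sigma_1$ in the forward direction, and the order of operations in the converse (strip off the maximal summand $\sigma_1$ first, using that every dilation of a maximal representation splits, and only then appeal to full extremality of $\sigma_0$), are arranged precisely so these nestings hold. Beyond this there is no deeper obstacle: everything reduces to the defining properties of fully extremal coextensions and of maximal representations.
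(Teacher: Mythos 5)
Your proof is correct and follows essentially the same route as the paper's: decompose $\sigma = \sigma_0 \oplus \sigma_1$ along $\K_0$, use the auxiliary sums $\sigma_0 \oplus \pi$ and $\tau \oplus \sigma_1$ together with full extremality of $\sigma$ for the forward direction, and for the converse split off the maximal summand before invoking full extremality of $\sigma_0$. You simply spell out the subspace bookkeeping that the paper leaves as "straightforward."
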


\begin{proof}
The proof is straightforward.
Since $\K_0$ reduces $\sigma$, we can write 
\[ \sigma = \sigma_0 \oplus \sigma_1 \]
acting on $\K = \K_0 \oplus \K_0^\perp$.
Suppose that $\tau$ is a dilation of $\sigma_0$ which is a coextension of $\rho$.
Then $\tau \oplus \sigma_1$ is a dilation of $\sigma$ which is a coextension of $\rho$.
Since $\sigma$ is fully extremal, we have a splitting
\[
 \tau  \oplus \sigma_1 \simeq \sigma \oplus \tau_1 
 =  \sigma_0 \oplus \sigma_1 \oplus \tau_1.
\]
Hence 
\[ \tau  = \sigma_0 \oplus \tau_1 .\]
It follows that $\sigma_0$ is a fully extremal coextension of $\rho$.

Any dilation of $\sigma_1$ yields a dilation of $\sigma$ which is a coextension of $\rho$.
As $\sigma$ is fully extremal, this must be by the addition of a direct sum.
Hence $\sigma_1$ is a maximal representation.
Conversely, if $\sigma_0$ is a (minimal) fully extremal coextension of $\rho$
and $\sigma_1$ is a maximal representation, then $\sigma = \sigma_0 \oplus \sigma_1$
is a fully extremal coextension because any dilation of $\sigma$ 
is a dilation of $\sigma_0$ direct summed with $\sigma_1$.

The same argument works for extensions.
\end{proof}

We refine Proposition~\ref{P:coextend and fully extend}.  In light of
Remark~\ref{R:exist extremal}, we know that the coextensions asked
for in the following proposition always exist.

\begin{prop} \label{P:minimal maximal dilation}
Let $\rho$ be a representation of $\A$ on $\H$.
Let $\sigma$ be a cyclic extremal coextension of $\rho$ on $\K$.
Let $\pi$ be a minimal fully extremal extension of $\sigma$.
Then $\pi$ is a minimal maximal dilation of $\rho$.
\end{prop}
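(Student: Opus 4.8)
The plan is to combine Proposition~\ref{P:coextend and fully extend} with a minimality bookkeeping argument. By Proposition~\ref{P:coextend and fully extend}, since $\sigma$ is an extremal coextension of $\rho$ and $\pi$ is a fully extremal extension of $\sigma$, $\pi$ is already a maximal dilation of $\rho$. So the only thing left to verify is the minimality clause: that the smallest reducing subspace of the space $\L$ on which $\pi$ acts that contains $\H$ is all of $\L$.

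First I would set up the chain of spaces: $\rho$ acts on $\H$, $\sigma$ acts on $\K \supseteq \H$, and $\pi$ acts on $\L \supseteq \K$. Let $\R$ be the smallest $\pi(\A)$-reducing subspace of $\L$ containing $\H$. The goal is $\R = \L$. Two facts drive this. Since $\sigma$ is a \emph{cyclic} extremal coextension of $\rho$, we have $\K = \ol{\sigma(\A)\H}$; as $\H$ is coinvariant for $\sigma$ and $\R$ is invariant (indeed reducing) for $\pi$ and contains $\H$, and $\pi|_\K$ agrees with $\sigma$ (because $\K$ is invariant for $\pi$, being that $\pi$ extends $\sigma$), we get $\ol{\pi(\A)\H} = \ol{\sigma(\A)\H} = \K \subseteq \R$. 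Hence $\K \subseteq \R$. Second, $\pi$ is a \emph{minimal} fully extremal extension of $\sigma$, which means the only $\pi(\A)$-reducing subspace of $\L$ containing $\K$ is $\L$ itself. Since $\R$ is such a subspace, $\R = \L$. Combining with the maximality from Proposition~\ref{P:coextend and fully extend}, $\pi$ is a minimal maximal dilation of $\rho$.

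The one point requiring a little care — and the step I expect to be the main obstacle — is the claim that $\ol{\pi(\A)\H}=\K$, i.e. that passing from $\sigma$ to its extension $\pi$ does not enlarge the cyclic subspace generated by $\H$. This is where one uses that $\pi$ is an \emph{extension} of $\sigma$ rather than merely a dilation: $\K$ is invariant for $\pi(\A)$, so $\pi(\A)\H \subseteq \pi(\A)\K \subseteq \K$, and on $\K$ the map $\pi$ restricts to $\sigma$, giving $\ol{\pi(\A)\H} = \ol{\sigma(\A)\H} = \K$ by cyclicity of $\sigma$. Once this is in hand, the rest is just the two nesting facts above assembled in order. Everything else is routine, so I would keep the write-up short: invoke Proposition~\ref{P:coextend and fully extend} for maximality, then spend a couple of sentences on the $\H \subseteq \R \Rightarrow \K \subseteq \R \Rightarrow \R = \L$ chain.
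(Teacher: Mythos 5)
Your argument is correct and is essentially identical to the paper's proof: both invoke Proposition~\ref{P:coextend and fully extend} for maximality and then observe that any reducing subspace containing $\H$ contains $\ol{\pi(\A)\H}=\ol{\sigma(\A)\H}=\K$ by cyclicity, after which minimality of $\pi$ over $\K$ finishes the job. No gaps.
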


\begin{proof}
It suffices to show that the whole space, $\L$, is the smallest reducing subspace 
for $\pi(\A)$ containing $\H$.
In particular, it contains 
\[ \ol{\pi(\A) \H} = \ol{ \sigma(\A) \H} = \K .\]
But the minimality of $\pi$ as a fully extremal extension of $\sigma$
ensures that there is no proper reducing subspace containing $\K$.
So $\pi$ is minimal as a maximal dilation.
\end{proof}

We require a result which is more subtle than Proposition~\ref{P:minimal maximal dilation}
but is valid for fully extremal coextensions.

\begin{thm} \label{T:fullextoffullcoext}
Let $\rho$ be a representation of $\A$ on $\H$.
Let $\sigma$ be a minimal fully extremal coextension of $\rho$ on $\K$.
Let $\pi$ be a minimal fully extremal extension of $\sigma$.
Then $\pi$ is a minimal maximal dilation of $\rho$.

Moreover, the representation $\pi$ determines $\sigma$, and thus
two inequivalent minimal fully extremal coextensions of $\rho$ yield
inequivalent minimal maximal dilations of $\rho$.
\end{thm}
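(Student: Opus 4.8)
The plan is to obtain maximality of $\pi$ straight from the results already in hand, and then to deduce both the minimality of $\pi$ and the fact that $\pi$ recovers $\sigma$ from a single device: compressing $\pi$ to a carefully chosen invariant subspace on which $\H$ is still coinvariant, and applying the \emph{full} extremality of $\sigma$ as a coextension of $\rho$ to that compression. First I would note that a fully extremal coextension of $\rho$ is in particular an extremal coextension: if $\sigma \prec_c \tau$ then $\sigma \prec \tau$ and, by transitivity of $\prec_c$, $\rho \prec_c \tau$, so Definition~\ref{D:fully extremal} forces $\tau = \sigma \oplus \tau'$; the same reasoning shows a fully extremal extension is an extremal extension. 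Hence $\sigma$ is an extremal coextension of $\rho$, $\pi$ is a (fully, hence) extremal extension of $\sigma$, and Proposition~\ref{P:coextend and fully extend} gives that $\pi$ is a maximal dilation of $\rho$. Note $\pi$ is a dilation of $\rho$, obtained by composing the coextension $\sigma$ with the extension $\pi$, so ``reducing subspace containing $\H$'' makes sense for $\pi(\A)$.

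For minimality, let $\L_0$ be the smallest reducing subspace of $\pi(\A)$ containing $\H$, so $\pi = \pi_0 \oplus \pi_1$ on $\L_0 \oplus \L_0^\perp$. Since $\pi$ is a \emph{minimal} fully extremal extension of $\sigma$, $\L$ is the smallest reducing subspace of $\pi(\A)$ containing $\K$, so it suffices to show $\K \subseteq \L_0$. Set $\M := \K \vee \L_0^\perp$, an invariant subspace for $\pi(\A)$. Using $\M \ominus \H = (\K \ominus \H) \vee \L_0^\perp$ together with $\H \perp \pi(\A)(\K \ominus \H)$ (because $\sigma$ coextends $\rho$) and $\H \perp \pi(\A)\L_0^\perp$ (because $\L_0^\perp$ reduces $\pi$ and $\H \subseteq \L_0$), one checks that $\H$ is coinvariant within $\M$, so $\tau := \pi|_\M$ is a coextension of $\rho$; clearly $\sigma = \pi|_\K \prec \tau$ and $\rho \prec_c \tau$. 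Full extremality of $\sigma$ then forces $\K$ to reduce $\tau$. As $\L_0^\perp$ also reduces $\tau$ and $\H \perp \L_0^\perp$, the smallest reducing subspace $\R$ of $\tau(\A)$ containing $\H$ satisfies $\R \perp \L_0^\perp$; but also $\R \subseteq \K$ and $\R$ reduces $\tau|_\K = \sigma$, so minimality of $\sigma$ gives $\R = \K$. Therefore $\K = \R \perp \L_0^\perp$, i.e.\ $\K \subseteq \L_0$, whence $\L_0 = \L$ and $\pi$ is a minimal maximal dilation of $\rho$.

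For the last assertion, suppose $\sigma$ and $\sigma'$ are minimal fully extremal coextensions of $\rho$ on $\K$ and $\K'$, with minimal fully extremal extensions $\pi$ on $\L$ and $\pi'$ on $\L'$ (both minimal maximal dilations of $\rho$ by the first part), and suppose there is a unitary $V : \L \to \L'$ fixing $\H$ and intertwining $\pi$ and $\pi'$. Transporting $\sigma'$ along $V^*$ we may regard $\K'$ as an invariant subspace of $\pi(\A)$ inside $\L$, containing $\H$, with $\H$ coinvariant within it and $\pi|_{\K'} \cong \sigma'$. Now repeat the compression argument with $\M := \K \vee \K'$: as before $\tau := \pi|_\M$ is a coextension of $\rho$, with $\sigma \prec \tau$, $\sigma' \prec \tau$ and $\rho \prec_c \tau$; full extremality of $\sigma$ and of $\sigma'$ forces both $\K$ and $\K'$ to reduce $\tau$; and then the smallest reducing subspace of $\tau(\A)$ containing $\H$ lies in $\K$, reduces $\sigma$, and equals $\K$ by minimality — and symmetrically equals $\K'$. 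Hence $\K = \K'$, so $\sigma = \pi|_\K = \pi|_{\K'} \cong \sigma'$; contrapositively, inequivalent $\sigma$ produce inequivalent $\pi$.

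The one step that requires an idea rather than bookkeeping is the choice of $\M = \K \vee \L_0^\perp$ (respectively $\K \vee \K'$) and the observation that $\pi|_\M$ is still a coextension of $\rho$ — this is precisely what allows full extremality of $\sigma$ to be applied to $\pi|_\M$; once that is in place, the conclusion $\K \perp \L_0^\perp$ (respectively $\K = \K'$) is squeezed out by comparing the reducing subspace generated by $\H$ with the minimality of $\sigma$. The remaining points — transitivity of $\prec,\prec_c,\prec_e$, the identification of $\M \ominus \H$, and the fact that a reducing subspace orthogonal to $\H$ remains orthogonal to the reducing span of $\H$ — are routine.
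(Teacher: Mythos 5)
Your proof is correct, and its engine is the same as the paper's: restrict $\pi$ to an invariant subspace containing $\K$ on which $\H$ is still coinvariant, and then invoke the full extremality of $\sigma$ as a coextension of $\rho$ on that restriction. The difference is in which subspace you choose. The paper works once and for all with the \emph{largest} such subspace, namely $\M^\perp$ where $\M = \ol{\pi(\A)^*\H}\ominus\H$; applying full extremality of $\sigma$ there splits off $\K$, and the smallest reducing subspace of the restriction containing $\H$ is then exactly $\K$ by minimality of $\sigma$. This canonical choice yields both the minimality of $\pi$ and, more importantly, an explicit recipe recovering $\sigma$ from $\pi$ alone, which is the literal content of ``$\pi$ determines $\sigma$.'' You instead use two ad hoc joins, $\K\vee\L_0^\perp$ for minimality and $\K\vee V^*\K'$ for uniqueness; both are legitimately invariant with $\H$ coinvariant (your verification via $\M\ominus\H=(\K\ominus\H)\vee\L_0^\perp$ and the two orthogonality relations is sound), and the squeeze $\R=\K$ against minimality of $\sigma$ works in each case. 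The only caveat is that your argument establishes ``$\pi$ determines $\sigma$'' only in the implication form (equivalent $\pi$'s give equivalent $\sigma$'s), which suffices for the stated corollary about inequivalent coextensions but does not exhibit the intrinsic reconstruction of $\sigma$ from $\pi$ that the paper's choice of $\M^\perp$ provides.
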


\begin{proof}
Let $\pi$ act on the Hilbert space $\L$.
Note that $\pi$ is a maximal dilation of $\rho$ by 
Proposition~\ref{P:coextend and fully extend}.
Let 
\[ \M = \ol{\pi(\A)^*\H} \ominus \H .\]
Then $\M^\perp$ is the largest invariant subspace for $\pi(\A)$ in which $\H$ is coinvariant.
Let $\tau$ denote the restriction of $\pi$ to $\M^\perp$.
Since $\M^\perp$ contains $\K$, we have 
\[ \tau \succ \sigma \qand  \tau \succ_c \rho .\]
Hence by the fully extremal property of $\sigma$, we deduce that
\[
 \tau = \sigma \oplus \tau' 
 \quad\text{on}\quad
 \M^\perp = \K \oplus (\M + \K)^\perp .
\]

Now the smallest reducing subspace for $\pi(\A)$ containing $\H$ clearly contains $\M$.
Thus it contains the smallest $\tau(\A)$ reducing subspace of $\M^\perp$ contaning $\H$.
But since $\tau = \sigma \oplus \tau'$ and $\sigma$ is minimal as a fully extremal coextension,
the smallest $\tau(\A)$ reducing subspace containing $\H$ is $\K$.
Then since $\pi$ is a minimal fully extremal extension of $\sigma$,
we see that $\L$ is the smallest reducing subspace containing $\K$.
So $\pi$ is minimal.

{}From the arguments above, we see that $\sigma$ is recovered from $\pi$
by forming 
\[ \M = \ol{\pi(\A)^*\H} \ominus \H .\]
restricting $\pi$ to $\M^\perp$ to get $\tau$,
and taking the smallest $\tau$ reducing subspace of $\M^\perp$ containing $\H$.
The restriction to this subspace is $\sigma$.
Hence $\pi$ determines $\sigma$.
Consequently, two inequivalent fully extremal coextensions of $\rho$ yield
inequivalent minimal maximal dilations of $\rho$.
\end{proof}

The following is immediate by duality.

\begin{cor} \label{C:fullcoextoffullext}
Let $\rho$ be a representation of $\A$ on $\H$.
Let $\sigma$ be a minimal fully extremal extension of $\rho$ on $\K$.
Let $\pi$ be a minimal fully extremal coextension of $\sigma$.
Then $\pi$ is a minimal maximal dilation of $\rho$.

Moreover, the representation $\pi$ determines $\sigma$, and thus
two inequivalent minimal fully extremal extensions of $\rho$ yield
inequivalent minimal maximal dilations of $\rho$.
\end{cor}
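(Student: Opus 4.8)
The plan is to reduce Corollary~\ref{C:fullcoextoffullext} to Theorem~\ref{T:fullextoffullcoext} by passing to the opposite algebra, which interchanges extensions with coextensions while leaving dilations and reducing subspaces untouched. For a unital operator algebra $\A$, let $\A^{\mathrm{op}}$ denote $\A$ with the reversed product; it embeds unitally and completely isometrically in the C*-algebra $\ca(\A)^{\mathrm{op}}$ (the opposite of a C*-algebra is again a C*-algebra, e.g.\ via the conjugate algebra), so $\A^{\mathrm{op}}$ is an operator algebra, and a routine check of the defining universal property gives $\cenv(\A^{\mathrm{op}}) = \cenv(\A)^{\mathrm{op}}$. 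For a representation $\rho$ of $\A$ on $\H$, put $\rho^{\mathrm{t}}(a) := \rho(a)^*$; since $\rho(ab)^* = \rho(b)^*\rho(a)^*$, this is a unital completely contractive representation of $\A^{\mathrm{op}}$ on $\H$, and $\rho \mapsto \rho^{\mathrm{t}}$ is an involutive bijection.

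Next I would record the dictionary this involution supplies. For $\H \subseteq \K$ and $\sigma$ a representation on $\K$: $\H$ is invariant for $\sigma(\A)$ iff coinvariant for $\sigma^{\mathrm{t}}(\A^{\mathrm{op}})$, and $\H$ reduces $\sigma(\A)$ iff it reduces $\sigma^{\mathrm{t}}(\A^{\mathrm{op}})$; also $P_\H \sigma^{\mathrm{t}}(a)|_\H = (P_\H \sigma(a)|_\H)^*$. Hence $\rho \prec \tau \iff \rho^{\mathrm{t}} \prec \tau^{\mathrm{t}}$, while $\rho \prec_e \tau \iff \rho^{\mathrm{t}} \prec_c \tau^{\mathrm{t}}$ and $\rho \prec_c \tau \iff \rho^{\mathrm{t}} \prec_e \tau^{\mathrm{t}}$. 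Reading off Definition~\ref{D:fully extremal}, $\sigma$ is a fully extremal extension of $\rho$ iff $\sigma^{\mathrm{t}}$ is a fully extremal coextension of $\rho^{\mathrm{t}}$, and symmetrically. Because reducing subspaces agree on the two sides, minimality of a (fully) extremal (co)extension is preserved, and so is equivalence implemented by a unitary fixing $\H$. Finally, a representation is maximal exactly when every dilation of it splits off as a direct summand --- a property plainly invariant under $\sigma \mapsto \sigma^{\mathrm{t}}$ --- so maximal dilations of $\rho$ correspond to maximal dilations of $\rho^{\mathrm{t}}$.

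Granting the dictionary, the corollary is immediate: if $\sigma$ is a minimal fully extremal extension of $\rho$ and $\pi$ a minimal fully extremal coextension of $\sigma$, then $\sigma^{\mathrm{t}}$ is a minimal fully extremal coextension of $\rho^{\mathrm{t}}$ and $\pi^{\mathrm{t}}$ a minimal fully extremal extension of $\sigma^{\mathrm{t}}$, so Theorem~\ref{T:fullextoffullcoext} applied to $\A^{\mathrm{op}}$ shows $\pi^{\mathrm{t}}$ is a minimal maximal dilation of $\rho^{\mathrm{t}}$ that determines $\sigma^{\mathrm{t}}$. Transporting back, $\pi$ is a minimal maximal dilation of $\rho$, $\pi$ determines $\sigma$, and inequivalent minimal fully extremal extensions of $\rho$ produce inequivalent minimal maximal dilations.

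There is no genuine obstacle here; the one point I would actually write down is the dictionary of the previous paragraph --- especially that ``fully extremal extension'' and ``fully extremal coextension'' are swapped by $\rho \mapsto \rho^{\mathrm{t}}$ and that ``maximal'' is self-dual --- after which the deduction is purely formal. An equivalent but more pedestrian route is to reread the proof of Theorem~\ref{T:fullextoffullcoext} with the words ``invariant'' and ``coinvariant'', and ``extension'' and ``coextension'', interchanged throughout; that proof uses only the formal properties of $\prec$, $\prec_e$, $\prec_c$ together with Proposition~\ref{P:coextend and fully extend} (whose dual Corollary~\ref{C:extend and fully coextend} is already in hand) and the definition of minimality, all of which dualize verbatim.
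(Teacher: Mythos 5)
Your proof is correct and is essentially the paper's: the paper derives this corollary from Theorem~\ref{T:fullextoffullcoext} "by duality," and you have simply made that duality precise via the opposite algebra and the involution $\rho \mapsto \rho^{\mathrm{t}}$, $\rho^{\mathrm{t}}(a)=\rho(a)^*$, which swaps extensions with coextensions and preserves dilations, reducing subspaces, and maximality. The dictionary you record is exactly the content of the word "duality" here, so nothing further is needed.
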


\section{Semi-Dirichlet algebras} \label{S:sD}

In this section, we consider a class of algebras where the theory is more like the classical one.
The semi-Dirichlet property is a powerful property that occurs often in practice.
{}From the point of view of dilation theory, these algebras are very nice.

\begin{defn}
Say that an operator algebra $\A$ is \textit{semi-Dirichlet} if 
\[ \A^*\A \subset \ol{\A+\A^*}\]
when $\A$ is considered as a subspace of its C*-envelope.

A unital operator algebra (not necessarily commutative) is called 
\textit{Dirichlet} if $\A+\A^*$ is norm dense in $\cenv(\A)$.
\end{defn}

Notice that since $\A$ is unital, we always have 
\[ \A + \A^* \subset \spn(\A^*\A) ,\]
so semi-Dirichlet means that 
\[ \ol{\spn}(\A^*\A) = \ol{\A+\A^*} .\]

The interested reader can note that in the case of w*-closed algebras, the proofs below
can be modified to handle the natural w*-closed condition which we call 
\textit{semi-$\sigma$-Dirichlet} if 
\[ \A^*\A \subset \ol{\A+\A^*}^{w*} .\]
Free semigroup algebras and free semigroupoid algebras of graphs and nest algebras
all are semi-$\sigma$-Dirichlet.

The following simple proposition establishes a few elementary observations.

\begin{prop} \label{P:sDprops} \strut\\[-3ex]
\begin{enumerate}
\item $\A$ is Dirichlet if and only if  $\A$ and $\A^*$ are semi-Dirichlet.

\item If $\sigma$ is a completely isometric representation of $\A$ on $\H$,
and $\sigma(\A)^*\sigma(\A) \subset \ol{\sigma(\A) + \sigma(\A)^*}$, then
$\A$ is semi-Dirichlet.

\item If $\sigma$ is a Shilov representation of a semi-Dir\-ich\-let algebra $\A$, 
then $\sigma(\A)$ is semi-Dir\-ich\-let.
\end{enumerate}
\end{prop}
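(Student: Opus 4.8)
The plan is to reduce the statement to part (ii) by producing a completely isometric representation of $\sigma(\A)$ whose image satisfies the algebraic inclusion $\sigma'(\A)^*\sigma'(\A) \subset \ol{\sigma'(\A)+\sigma'(\A)^*}$. The natural candidate is the compression coming from the Shilov structure: by definition, a Shilov representation $\sigma$ of $\A$ arises from a $*$-representation $\pi$ of $\cenv(\A)$ on a Hilbert space $\K$ together with a subspace $\H \subset \K$ invariant for $\pi(\A)$, with $\sigma(a) = \pi(a)|_\H$. Since $\pi(\A)$ leaves $\H$ invariant, $\pi(\A)^*$ leaves $\H^\perp$ invariant, i.e. $P_\H \pi(a)^* P_\H = P_\H \pi(a)^*$ as operators on $\K$, and likewise $\pi(a) P_\H = P_\H \pi(a) P_\H$. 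Write $Q = P_\H$.

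First I would record that $\sigma$ (after restricting $\pi$ to the reducing subspace generated by $\H$, which changes nothing about $\sigma$ or the C*-algebra it generates up to the canonical quotient) may be assumed to have $\ca(\sigma(\A))$ equal to the compression $Q\,\pi(\cenv(\A))|_\H$, because $\ca(\sigma(\A)) = \ca(Q\pi(\A)|_\H)$ and the self-adjoint closure brings in the compressions of $\pi(\A)^*$ and products thereof. The key computation is then: for $a,b \in \A$,
\[
 \sigma(a)^*\sigma(b) = Q\pi(a)^*Q\pi(b)Q = Q\pi(a)^*\pi(b)Q = Q\pi(a^*b)|_\H,
\]
using that $Q\pi(b)Q = \pi(b)Q$ (invariance of $\H$ for $\pi(\A)$) so the middle $Q$ can be deleted. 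Now, since $\A$ is semi-Dirichlet inside $\cenv(\A)$, we have $a^*b \in \ol{\A + \A^*}$ in $\cenv(\A)$; applying the contraction $\pi$ and then compressing by $Q$ — both completely contractive, hence norm continuous — gives
\[
 \sigma(a)^*\sigma(b) = Q\pi(a^*b)|_\H \in \ol{Q\pi(\A)|_\H + Q\pi(\A)^*|_\H} = \ol{\sigma(\A) + Q\pi(\A)^*|_\H}.
\]
Finally $Q\pi(a)^*|_\H = (\pi(a)|_\H)^* \cdot$(corestricted) $= \sigma(a)^*$ as an operator on $\H$, because for $h,h' \in \H$, $\langle Q\pi(a)^* h, h'\rangle = \langle \pi(a)^*h, h'\rangle = \langle h, \pi(a)h'\rangle = \langle h, \sigma(a)h'\rangle$. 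Hence $Q\pi(\A)^*|_\H = \sigma(\A)^*$, and the displayed inclusion becomes $\sigma(a)^*\sigma(b) \in \ol{\sigma(\A)+\sigma(\A)^*}$. Taking norm-closed spans, $\sigma(\A)^*\sigma(\A) \subset \ol{\sigma(\A)+\sigma(\A)^*}$, and since $\sigma$ is a completely contractive representation of the operator algebra $\sigma(\A)$ on $\H$ that is obviously completely isometric on its own image, part (ii) applied to $\sigma(\A)$ yields that $\sigma(\A)$ is semi-Dirichlet.

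The only genuinely delicate point — and the step I would write out most carefully — is the identification of $\cenv(\sigma(\A))$ (or rather, the C*-algebra in which the semi-Dirichlet inclusion for $\sigma(\A)$ is being asked to hold) with a compression of $\pi(\cenv(\A))$, so that the computation $\sigma(a)^*\sigma(b) = Q\pi(a^*b)|_\H$ takes place in the right ambient C*-algebra. Part (ii) is precisely designed to absorb this: it says that verifying the inclusion in \emph{some} completely isometric picture suffices, and $\sigma$ on $\H$ is such a picture. So no independent computation of $\cenv(\sigma(\A))$ is needed — one only needs that the norm closures are preserved under the completely contractive maps $\pi$ and $\mathrm{Ad}\,Q$, which is automatic. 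I would present the argument in the order: (1) unpack the definition of Shilov representation; (2) the invariance identities $Q\pi(b)Q = \pi(b)Q$ and $Q\pi(a)^*|_\H = \sigma(a)^*$; (3) the computation $\sigma(a)^*\sigma(b) = Q\pi(a^*b)|_\H$; (4) push semi-Dirichlet through $\pi$ and $\mathrm{Ad}\,Q$; (5) invoke part (ii).
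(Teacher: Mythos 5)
Your argument is correct and is essentially the paper's proof: the paper packages the compression as the unital completely positive map $\tilde\sigma(x)=P_\H\pi(x)|_\H$ and verifies $\tilde\sigma(a^*b)=\sigma(a)^*\sigma(b)$ by a block-matrix computation, which is exactly your identity $\sigma(a)^*\sigma(b)=Q\pi(a^*b)|_\H$ obtained from invariance of $\H$, and both proofs then push $a^*b\in\ol{\A+\A^*}$ through this map and invoke part (ii). The preliminary worry about identifying $\ca(\sigma(\A))$ is, as you note, absorbed by part (ii) and can be omitted.
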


\begin{proof}
(i) It is obvious that if $\A$ is Dirichlet, then both $\A$ and $\A^*$ are semi-Dirichlet.
For the converse, notice that if $\A$ is semi-Dirichlet, then an easy
calculation shows that $\ol{\spn}(\A\A^*)$ is a C*-algebra \cite{Arv3}.
Since $\A$ generates $\cenv(\A)$, this is the C*-algebra $\ol{\spn}(\A\A^*)$.
Thus the semi-Dirichlet property for $\A^*$ now shows that 
$\A+\A^*$ is norm dense in $\cenv(\A)$.

(ii) If $\sigma$ is completely isometric, then $\fA=\ca(\sigma(\A))$ is a C*-cover of $\A$.
By the minimal property of the C*-envelope, there is a quotient map $q:\fA \to\cenv(\A)$
so that $q\sigma|_\A$ is the identity map.
If $\sigma(\A)^*\sigma(\A)$ is contained in $ \ol{\sigma(\A) + \sigma(\A)^*}$, then passing
to the quotient yields the semi-Dirichlet property.

(iii) If $\sigma$ is Shilov, then there is a $*$-representation $\pi$ of $\cenv(\A)$ on $\K$
and an invariant subspace $\H$ so that $\sigma(a) = \pi(a)|_\H$.  
The map 
\[ \tilde\sigma(x) = P_\H \pi(x)|_\H \qfor  x \in \cenv(\A) \]
is a completely positive map extending $\sigma$.  
In particular, 
\[ \tilde\sigma(a^*) = \sigma(a)^* \qfor a \in \A .\]
For $a,b\in\A$, we calculate
\begin{align*}
 \pi(a^*b) &= \begin{bmatrix} * & * \\ * & \tilde\sigma(a^*b) \end{bmatrix} 
 = \pi(a)^* \pi(b) \\ &= 
 \begin{bmatrix} * & * \\ 0 & \sigma(a)^*\end{bmatrix}
 \begin{bmatrix} * & 0 \\ * & \sigma(b)\end{bmatrix}  
 = \begin{bmatrix} * & * \\ * & \sigma(a)^*\sigma(b) \end{bmatrix}
\end{align*}
Hence 
\[ \tilde\sigma(a^*b) = \sigma(a)^*\sigma(b) \qforal  a,b\in\A .\]

Since $\A$ is semi-Dirichlet, we can write 
\[ a^*b = \lim_n c_n^* + d_n \]
where $c_n,d_n\in\A$.  Thus,
\[
 \sigma(a)^*\sigma(b) = \tilde\sigma(a^*b) = \lim \sigma(c_n)^* + \sigma(d_n).
\]
That is, 
\[ \sigma(\A)^*\sigma(\A) \subset \ol{\sigma(\A) + \sigma(\A)^*} .\]
It now follows from (ii) that $\sigma(\A)$ is semi-Dirichlet.
\end{proof}

\begin{eg} \label{E:dirichlet}
Observe that if $\A$ is a function algebra with Shilov boundary $X = \partial \A$,
then $\spn(\A^*\A)$ is a norm closed self-adjoint algebra which separates points.
So by the Stone-Weierstrass Theorem, it is all of $\rC(X)$.
So the semi-Dirichlet property is just the Dirichlet property for function algebras.
\end{eg}

\begin{eg} \label{E:tensor_sD}
The non-commutative disk algebras $\fA_n$ are semi-Dir\-ich\-let.  
This is immediate from the relations $\fs_j^* \fs_i = \delta_{ij}I$.

Indeed, it is easy to see that all tensor algebras of directed graphs
and tensor algebras of C*-correspondences are semi-Dirichlet.
For those familiar with the terminology for the tensor algebra of a 
C*-correspondence $E$ over a C*-algebra $\fA$,
the algebra $\T^+(E)$ is generated by 
\[ \sigma(\fA) \qand   \{T(\xi) : \xi \in E\} ,\]
where $\sigma$ and $T$ are the canonical representations 
of $\fA$ and $E$, respectively, on the Fock space of $E$.
The relation 
\[ T(\xi)^*T(\eta) = \sigma(\ip{\xi,\eta}) \]
yields the same  kind of cancellation as for the non-commutative 
disk algebra to show that 
\[ \T^+(E)^*\T^+(E) \subset \ol{\T^+(E) + \T^+(E)^*} .\]
\end{eg}

\begin{eg}
There is no converse to Proposition~\ref{P:sDprops}(ii).
Consider the disk algebra $\AD$.
The Toeplitz representation on $H^2$ given by $\sigma(f) = T_f$,
the Toeplitz operator with symbol $f$, is completely isometric.
This is Shilov, and so has the semi-Dirichlet property.
This is also readily seen from the identity 
\[ T_f^*T_g = T_{\bar f g} \qforal  f, g \in \AD .\]

However the representation 
\[ \rho(f) = T_{f(\bar z)} \]
generated by $\rho(z)=T_z^*$ is also completely isometric.  However 
\[ \rho(z)^* \rho(z) = T_z T_z^* = I - e_0e_0^* .\]
This is not a Toeplitz operator, and so is a positive distance from
\begin{align*}
 \ol{\rho(\AD) + \rho(\AD)^*} &= \ol{ \{T_{\bar f + g} : f,g \in \AD \}} \\
 &= \{ T_f : f \in \CT \} .
\end{align*}
\end{eg}

We will establish the following theorem.

\begin{thm} \label{T:sD}
Suppose that $\A$ is a semi-Dirichlet unital operator algebra.
Let $\rho$ be a representation of $\A$.
Then $\rho$ has a unique minimal extremal coextension $\sigma$, 
it is fully extremal and cyclic (i.e.\ $\K = \ol{\sigma(\A)\H}$).
Moreover, every Shilov representation is an extremal coextension.
\end{thm}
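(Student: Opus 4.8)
The strategy is to exploit the semi-Dirichlet identity on the level of a single extremal coextension, using the Stinespring-type dilation picture that underlies the definition of a Shilov representation. First I would recall from Remark~\ref{R:exist extremal} that $\rho$ admits a \emph{cyclic} extremal coextension $\sigma$ on $\K=\ol{\sigma(\A)\H}$. The heart of the matter is to show that this $\sigma$ is automatically fully extremal, and moreover unique. To do this, I would analyze an arbitrary dilation $\tau$ of $\sigma$ on $\L\supset\K$ which is also a coextension of $\rho$. Writing $\L = \H_- \oplus \H \oplus \H_+$ in Sarason form relative to $\rho$ (so $\H_+ = \L \ominus \ol{\tau(\A)^*\H}$ up to the obvious adjustments), the coextension hypothesis $\tau\succ_c\rho$ says that $\H$ is coinvariant, and $\tau\succ\sigma$ says $\K$ sits as a co-invariant subspace compressing to $\rho$. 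The goal is to force $\K$ to reduce $\tau$.

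The key computation is the following. For $a,b\in\A$ and $h,k\in\H$, one has
\[
 \ip{\tau(a)\tau(b)^* h, k} = \ip{\tau(b)^* h, \tau(a)^* k}.
\]
Since $\H$ is coinvariant for $\tau(\A)$, the vectors $\tau(a)^*k$ and $\tau(b)^*h$ lie in $\H$, and $\tau(a)^*|_\H = \rho(a)^*$, $\tau(b)^*|_\H = \rho(b)^*$. Thus the compression of $\tau(a)\tau(b)^*$ to $\H$ is determined by $\rho$ alone. Now semi-Dirichlet gives, for each $a,b\in\A$, an approximation $b^*a = \lim_n (c_n + d_n^*)$ with $c_n,d_n\in\A$ in the norm of $\cenv(\A)$, hence for any representation $\pi$ of $\A$ one has $\pi(b)^*\pi(a) = \lim_n(\pi(c_n)+\pi(d_n)^*)$. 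Applying this to $\sigma$ on $\K$: since $\sigma$ is cyclic, $\K$ is spanned by vectors $\sigma(b)h$ with $b\in\A$, $h\in\H$, and for two such vectors the inner product $\ip{\sigma(a)g,\sigma(b)h}=\ip{\sigma(b)^*\sigma(a)g,h}$ is computed as $\lim_n\ip{(\sigma(c_n)+\sigma(d_n)^*)g,h}$, which only involves the compressions of $\sigma(\A)$ and $\sigma(\A)^*$ to $\H$ — i.e.\ only involves $\rho$. The same identity holds with $\tau$ in place of $\sigma$. This shows that the map $\sigma(b)h \mapsto \tau(b)h$ is a well-defined isometry intertwining $\sigma(\A)$ with $\tau(\A)|_{\ol{\tau(\A)\H}}$; combined with the fact that $\H$ generates $\K$ under $\sigma(\A)$ and $\tau$ extends $\sigma$, we conclude $\ol{\tau(\A)\H}=\K$ and, crucially, that $\K$ is \emph{also} invariant (not just coinvariant) for $\tau(\A)^*$ — using once more the semi-Dirichlet rewriting to see $\tau(\A)^*\K \subset \ol{\tau(\A)\H + \tau(\A)^*\H}^{\,} \subset \K$. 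Hence $\K$ reduces $\tau$, so $\sigma$ is fully extremal. Uniqueness of the minimal such $\sigma$ follows because the above shows any minimal extremal coextension is forced to be cyclic with inner-product structure dictated entirely by $\rho$ and the semi-Dirichlet relations, so any two are canonically unitarily equivalent over $\H$.

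For the last sentence — every Shilov representation is an extremal coextension — I would use Proposition~\ref{P:sDprops}(iii): a Shilov representation $\sigma$ of the semi-Dirichlet algebra $\A$ has $\sigma(\A)$ semi-Dirichlet, and in fact the argument there produces the completely positive extension $\tilde\sigma$ with $\tilde\sigma(a^*b)=\sigma(a)^*\sigma(b)$. Given a coextension $\tau$ of such a $\sigma$, with $\H_\sigma^\perp$ invariant, the off-diagonal block $\tau_{12}(a)$ must be controlled: for $a,b\in\A$ one computes $\tau(b)^*\tau(a)$ on $\H_\sigma$ and, because $\sigma$ is Shilov hence extends to a $*$-representation of $\cenv(\A)$, the semi-Dirichlet relations let one express $\tau(b)^*\tau(a)|_{\H_\sigma}$ purely in terms of $\sigma$, forcing $\tau_{12}=0$, i.e.\ $\H_\sigma$ reduces $\tau$. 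So $\sigma$ is an extremal coextension.

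\textbf{Main obstacle.} The delicate point is the bookkeeping in the second paragraph: showing that the semi-Dirichlet rewriting, which a priori only controls inner products of the form $\ip{\tau(b)^*\tau(a)g,h}$ with $g,h\in\H$, actually propagates to show $\K$ is invariant for $\tau(\A)^*$ and not merely that the \emph{compression} to $\K$ behaves well. This requires carefully tracking that $\ol{\tau(\A)\H}$ coincides with $\K$ and that $\tau(\A)^*$ maps generating vectors $\tau(b)h$ back into $\ol{\tau(\A)\H + \H}$, using the closure in $\cenv(\A)$ norm and complete contractivity to pass the limit through $\tau$. Everything else is a routine matrix-block computation of the kind already carried out in Proposition~\ref{P:sDprops}.
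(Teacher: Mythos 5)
Your overall architecture matches the paper's: produce a cyclic extremal coextension, prove it is fully extremal and unique via an inner-product computation driven by the semi-Dirichlet identity, and deduce the Shilov statement from uniqueness. But the central step has a genuine gap. You assert that ``for any representation $\pi$ of $\A$ one has $\pi(b)^*\pi(a)=\lim_n(\pi(c_n)+\pi(d_n)^*)$'' whenever $b^*a=\lim_n(c_n+d_n^*)$ in $\cenv(\A)$. This is false for a general completely contractive representation: the paper's own example $\rho(f)=T_{f(\bar z)}$ on the disk algebra has $\rho(z)^*\rho(z)=I-e_0e_0^*$ at positive distance from $\ol{\rho(\AD)+\rho(\AD)^*}$, even though $\bar z z=1$ in $\CT$. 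The identity you need holds only when $\pi$ is Shilov, so that the completely positive extension $\tilde\pi$ satisfies $\tilde\pi(b^*a)=\pi(b)^*\pi(a)$ as in Proposition~\ref{P:sDprops}(iii). You apply it to $\sigma$ and, more seriously, to the competing dilation $\tau$, neither of which is known to be Shilov at that point in the argument. The paper's proof avoids this by first passing to a fully extremal extension $\pi$ of $\tau$, which is a maximal dilation by Proposition~\ref{P:coextend and fully extend}, and carrying out the computation $\ip{x,\pi(a^*b)h}=\lim_n\ip{x,\pi(c_n)h+\pi(d_n)^*h}$ inside that $*$-representation, where $\K$ and $\L$ are genuinely invariant subspaces. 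Without this intermediate maximal dilation your key computation does not get off the ground, and the ``main obstacle'' you flag (passing from control of compressions to actual invariance of $\K$ under $\tau(\A)^*$) is exactly the part that remains unproved.

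The same issue affects your last paragraph: to force $\tau_{12}=0$ for a coextension $\tau$ of a Shilov representation $\sigma$, you need semi-Dirichlet control of $\tau(b)^*\tau(a)$, but the hypothesis you invoke is about $\sigma$, not about $\tau$. The paper instead deduces this from the uniqueness statement: $\sigma$ itself and any cyclic extremal coextension $\tau$ of $\sigma$ are both cyclic Shilov coextensions of $\sigma$, hence unitarily equivalent over $\H_\sigma$ by the uniqueness lemma, which forces $\tau=\sigma$ and hence extremality of $\sigma$. You would need to reorganize your argument along these lines, or at least insert the step of replacing $\tau$ by a maximal dilation before invoking the semi-Dirichlet approximation.
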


We begin with a couple of lemmas.

\begin{lem} \label{L1:sD}
Suppose that $\A$ is a  semi-Dirichlet unital operator algebra.
Let $\rho$ be a representation of $\A$, and let $\sigma$ be a cyclic
extremal coextension of $\rho$ on $\K$.
Then $\sigma$ is fully extremal.
\end{lem}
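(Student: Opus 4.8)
The plan is to show directly that a cyclic extremal coextension $\sigma$ of $\rho$ satisfies the two supremum conditions, (\ref{E1}) and (\ref{E2}), used in the proof of Theorem~\ref{T:fully extremal} to characterize fully extremal coextensions. Since cyclicity gives $\K = \ol{\sigma(\A)\H}$, it suffices to check these conditions on vectors of the form $k = \sigma(b)h$ with $b \in \A$ and $h \in \H$, and then extend by density and continuity. The key point where the semi-Dirichlet hypothesis enters is in controlling $\sigma(a)^*\sigma(b)h$: for any coextension $\tau$ of $\rho$ with $\tau \succ \sigma$, one has $\tau(a)^*\tau(b)h = \tau(a^*b)^{\sim}h$ in the appropriate sense, and the semi-Dirichlet relation $a^*b \in \ol{\A + \A^*}$ lets us replace $a^*b$ by elements $c^* + d$ with $c, d \in \A$; because $\H$ is coinvariant for every such $\tau$, the action of $\tau(c)^*$ on $h$ agrees with $\rho(c)^*h \in \H$, which does not depend on $\tau$ at all.

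First I would fix $a \in \A$ and a vector $k = \sigma(b)h \in \sigma(\A)\H$, and let $\tau$ be any dilation of $\sigma$ that is a coextension of $\rho$. The goal is to show $\|\tau(a)k\| = \|\sigma(a)k\|$ and $\|\tau(a)^*k\| = \|\sigma(a)^*k\|$. For the first, write $\tau(a)k = \tau(a)\sigma(b)h = \tau(ab)h$, and since $ab \in \A$ and $\H$ is coinvariant for $\tau$ inside $\K$, the vector $\tau(ab)h$ lies in $\K$ and equals $\sigma(ab)h = \sigma(a)k$; so the norms are equal, in fact the vectors are equal. For the adjoint direction, I would compute $\|\tau(a)^*\sigma(b)h\|^2 = \ip{\tau(a)\tau(a)^*\sigma(b)h, \sigma(b)h}$; using that $\tau(a)^*$ leaves $\H$ invariant is not quite enough, so instead I would expand via $\ip{\tau(b^*a a^* b)h, h}$ — but $b^*aa^*b$ need not lie in $\ol{\A+\A^*}$, so here one must be more careful and work with $\ip{\tau(a)^*\sigma(b)h, \tau(a)^*\sigma(b)h}$ by first writing $\tau(a)^*\sigma(b)h$ in terms of the semi-Dirichlet decomposition of $a^*b$.

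The main obstacle — and the step I would spend the most care on — is precisely this computation of $\tau(a)^*\sigma(b)h$. The right approach is: since $a^*b = \lim_n (c_n^* + d_n)$ with $c_n, d_n \in \A$ (using semi-Dirichlet in $\cenv(\A)$, together with the fact that the completely positive extension $\td\tau$ of $\tau$ to $\cenv(\A)$ satisfies $\td\tau(a^*b) = \tau(a)^*\tau(b)$ on the invariant-for-adjoints subspace, exactly as in the proof of Proposition~\ref{P:sDprops}(iii)), one gets $\tau(a)^*\sigma(b)h = \lim_n \big(\tau(c_n)^*h + \tau(d_n)h\big)$. Now $\tau(d_n)h = \sigma(d_n)h \in \K$ since $\H$ is coinvariant for $\sigma$, and $\tau(c_n)^*h = \rho(c_n)^*h \in \H \subseteq \K$ because $\H$ is coinvariant for $\tau$, so $\tau(a)^*\sigma(b)h$ is a limit of vectors in $\K$, hence lies in $\K$; and each of these approximating vectors is independent of $\tau$, so the limit is too, giving $\tau(a)^*\sigma(b)h = \sigma(a)^*\sigma(b)h$ and in particular equal norms. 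Having verified (\ref{E1}) and (\ref{E2}) on the dense set $\sigma(\A)\H$, a standard $\ep/3$ argument using that all maps involved are contractions promotes them to all of $\K$, and the characterization from the proof of Theorem~\ref{T:fully extremal} then shows that any coextension $\tau$ of $\rho$ dilating $\sigma$ reduces $\K$, i.e.\ $\sigma$ is fully extremal.
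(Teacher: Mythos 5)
Your overall plan --- reduce to vectors $k=\sigma(b)h$ via cyclicity and use the semi-Dirichlet decomposition $a^*b=\lim_n(c_n^*+d_n)$ together with coinvariance of $\H$ to pull $\tau(a)^*\sigma(b)h$ back into $\K$ --- is essentially the computation in the paper's proof. But as written the argument has a genuine gap: nowhere do you use the hypothesis that $\sigma$ is an \emph{extremal} coextension, and without it the statement is false (for $\AD$, the representation $\rho(z)=0$ on $\bC$ is trivially a cyclic coextension of itself, yet its isometric coextension $\tau(z)=S$ shows it is not fully extremal). The place where extremality must enter is precisely your claims that $\tau(ab)h$ lies in $\K$ and equals $\sigma(ab)h$, and that $\tau(d_n)h=\sigma(d_n)h$. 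The justification you give --- that $\H$ is coinvariant for $\tau$ --- controls only $\tau(c)^*\H$ and says nothing about $\tau(c)h$. In Sarason's decomposition $\L=\L_-\oplus\K\oplus\L_+$ one only knows $\tau(c)h\in\K\oplus\L_+$ with $P_\K\tau(c)h=\sigma(c)h$; to kill the $\L_+$-component you must observe that $\tau$ restricted to the invariant subspace $\K\oplus\L_+$ is a coextension of $\sigma$, which splits because $\sigma$ is an extremal coextension. That is what makes $\K$ invariant for $\tau$ with $\tau(a)|_\K=\sigma(a)$, and it is the whole content of condition (\ref{E1}); the semi-Dirichlet property is not needed there.

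A second, smaller gap: the identity $\tau(a)^*\tau(b)h=\lim_n\big(\tau(c_n)^*h+\tau(d_n)h\big)$ rests on $\td\tau(a^*b)=\tau(a)^*\tau(b)$, which in the proof of Proposition~\ref{P:sDprops}(iii) is established only for Shilov representations, i.e.\ restrictions of a $*$-representation to an invariant subspace. A general $\tau$ with $\tau\succ\sigma$ and $\tau\succ_c\rho$ need not be Shilov, so you must first reduce to the case where $\tau$ is an extremal coextension of $\rho$: any such $\tau$ dilates further to a fully extremal coextension of $\rho$, and if $\K$ reduces the larger representation it reduces $\tau$. With these two repairs --- both of which the paper's proof effects by taking $\tau$ extremal at the outset and passing to a maximal dilation $\pi$ extending it --- your computation does close the argument, and it then coincides with the paper's proof up to taking adjoints (you show $\tau(a)^*\K\subset\K$ directly, while the paper shows $\ip{\tau(a)x,\sigma(b)h}=0$ for $x\perp\K$).
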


\begin{proof}
Suppose that that $\tau$ is an extremal coextension of $\rho$ which is a dilation of $\sigma$.
Say $\tau$ acts on $\L \supset \K$.
Let $\pi$ be a fully extremal extension of $\tau$.
Then $\pi$ is a maximal dilation of $\rho$ by Proposition~\ref{P:coextend and fully extend}.
Moreover $\L$ is invariant for $\pi(\A)$, as is $\K$; so that 
\[ \tau(a) = \pi(a)|_\L \qand \sigma(a) = \pi(a)|_\K= \tau(a)|_\K .\] 
Also $\H$ is semi-invariant for $\pi(\A)$ and coinvariant for $\tau(\A)$ in $\L$.

If $\sigma$ is not a direct summand of $\tau$, then $\K$ is not coinvariant for $\tau(\A)$.
Thus there is a vector $x \in \L \ominus \K$ and $a \in \A$ so that
\[ P_\K \tau(a) x \ne 0 .\]
This vector in $\K$ can be approximated by a vector 
$\sigma(b) h$ for some $b\in\A$ and $h \in \H$ sufficiently well so that
\[ \ip{\tau(a)x, \sigma(b) h} \ne 0 .\]
Now $a^*b \in \A^*\A$ can be written as 
\[ a^*b = \lim_n c_n + d_n^* \quad\text{where } c_n,d_n \in \A .\]
Therefore
\begin{align*}
 0 &\ne \ip{\tau(a)x, \sigma(b) h}  = \ip{\pi(a)x, \pi(b) h} \\
 &= \ip{x, \pi(a^*b) h} 
  = \lim_{n\to\infty} \ip{x, \pi(c_n)h + \pi(d_n)^* h} \\
 &= \lim_{n\to\infty} \ip{x, \pi(d_n)^* h} 
  = \lim_{n\to\infty} \ip{x, \tau(d_n)^* h}
\end{align*}
Here we used the fact that 
\[ \pi(c_n)h = \sigma(c_n)h \in \K , \]
which is orthogonal to $x$, and then the fact that the compression of 
$\pi(d_n)^*$ to $\L$ is $\tau(d_n)^*$.
This calculation shows that $\H$ is not coinvariant for $\tau$, contrary to our hypothesis.
This means that $\tau$ does indeed have $\sigma$ as a direct summand.
So $\sigma$ is fully extremal. 
\end{proof}

\begin{lem} \label{L2:sD}
Suppose that $\A$ is a  semi-Dirichlet unital operator algebra.
Let $\rho$ be a representation of $\A$.
Then any two cyclic Shilov coextensions $\sigma_i$ of $\rho$, $i=1,2$, on $\K_i$ 
are equivalent. Hence a cyclic Shilov extension of $\rho$ is fully extremal.
\end{lem}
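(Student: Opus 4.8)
The plan is to show that any cyclic Shilov coextension of $\rho$ is uniquely determined, up to a unitary fixing $\H$, by a "Toeplitz-type" formula built entirely out of $\rho$ and the completely positive extension provided by the Shilov structure. First I would set up: for $i=1,2$, since $\sigma_i$ is Shilov there is a $*$-representation $\pi_i$ of $\cenv(\A)$ on a space $\td\K_i \supseteq \K_i$ with $\K_i$ invariant for $\pi_i(\A)$ and $\sigma_i(a)=\pi_i(a)|_{\K_i}$. Because $\sigma_i$ is a \emph{coextension} of $\rho$, $\H$ sits inside $\K_i$ as a coinvariant subspace with $P_\H\sigma_i(a)|_\H=\rho(a)$. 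Combining these, $\H$ is semi-invariant for $\pi_i(\A)$, so the map $\Phi(x):=P_\H\pi_i(x)|_\H$ is a completely positive (multiplicative on $\A$) extension of $\rho$ to $\cenv(\A)$; as in the proof of Proposition~\ref{P:sDprops}(iii) one gets $\Phi(a^*b)=\rho(a)^*\rho(b)$ for $a,b\in\A$, and this $\Phi$ is \emph{independent of $i$} — it only depends on $\rho$ and on the semi-Dirichlet identity, since $\ol{\spn}(\A^*\A)=\ol{\A+\A^*}$ and on $\A+\A^*$ the value of $\Phi$ is forced to be $\rho(a)^* + \rho(b)$.

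Next I would define, on $\K_i$, the inner products of the cyclic vectors. Since $\K_i=\ol{\sigma_i(\A)\H}$, elements of a dense subspace have the form $\sum_j \sigma_i(a_j)h_j$, and
\[
 \Big\langle \sum_j \sigma_i(a_j)h_j,\ \sum_k \sigma_i(b_k)h_k \Big\rangle
 = \sum_{j,k} \big\langle \sigma_i(b_k)^*\sigma_i(a_j)h_j,\ h_k\big\rangle .
\]
Here $\sigma_i(b_k)^*\sigma_i(a_j) = \pi_i(b_k)^*\pi_i(a_j) = \pi_i(b_k^*a_j)$ restricted appropriately, but what actually enters the inner product, after pairing against $h_k\in\H$ and using that $\H$ is coinvariant for $\sigma_i(\A)$ (so that $P_\H$ may be inserted), is $P_\H\pi_i(b_k^*a_j)|_\H \, h_j = \Phi(b_k^*a_j)h_j$. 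Thus the inner product equals $\sum_{j,k}\langle \Phi(b_k^*a_j)h_j, h_k\rangle$, a quantity depending only on $\rho$ and $\Phi$, hence the same for $i=1$ and $i=2$. This shows the map $U:\sum_j\sigma_1(a_j)h_j \mapsto \sum_j\sigma_2(a_j)h_j$ is a well-defined isometry of dense subspaces, extending to a unitary $U:\K_1\to\K_2$ fixing $\H$ (take all $a_j=1$) and satisfying $U\sigma_1(a)=\sigma_2(a)U$ by construction. So $\sigma_1$ and $\sigma_2$ are unitarily equivalent via a unitary that is the identity on $\H$.

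Finally, to get the last sentence, "a cyclic Shilov extension of $\rho$ is fully extremal": every extremal coextension is cyclic-achievable by Remark~\ref{R:exist extremal}, and such a cyclic extremal coextension is Shilov (extremal coextensions are always Shilov); by the uniqueness just proved, our given cyclic Shilov coextension is unitarily equivalent, fixing $\H$, to \emph{that} cyclic extremal coextension, hence is itself an extremal coextension; then Lemma~\ref{L1:sD} upgrades a cyclic extremal coextension to fully extremal. (I note the statement as printed says "cyclic Shilov extension" but the content of the lemma is about coextensions; I read it as coextension throughout, matching Lemma~\ref{L1:sD}.) The main obstacle I anticipate is the careful bookkeeping in the inner-product computation: justifying that the compressions to $\H$ are legitimate — i.e., that $\H$ being coinvariant for $\sigma_i(\A)$ lets one replace $\sigma_i(b_k)^*\sigma_i(a_j)$ acting against $h_j,h_k$ by $\Phi(b_k^*a_j)$ — and confirming that $\Phi$ really is forced on all of $\ol{\spn}(\A^*\A)$ by semi-Dirichletness, so that it is genuinely $i$-independent; everything else is a routine density-and-extension argument.
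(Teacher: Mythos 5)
Your proposal is correct and follows essentially the same route as the paper: both define $U$ on the dense subspace $\sigma_i(\A)\H$ and verify it is a well-defined isometry by showing the inner products $\ip{\sigma_i(a)h,\sigma_i(b)h'}=\ip{\pi_i(b^*a)h,h'}$ are forced by $\rho$ alone, using the semi-Dirichlet approximation $b^*a=\lim_n(c_n+d_n^*)$ (your CP-map $\Phi$ is just a repackaging of the paper's direct limit computation), and both deduce the final claim by matching the given cyclic Shilov coextension against an existing cyclic extremal one and invoking Lemma~\ref{L1:sD}. Your reading of ``extension'' as ``coextension'' in the last sentence is also the intended one.
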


\begin{proof}
Let $\sigma_i$, $i=1,2$, be two minimal cyclic Shilov coextensions of $\rho$
on $\K_i$; so that $\K_i = \ol{\sigma_i(\A)\H}$.
Let $\pi_i$ be the maximal dilations of $\rho$ on $\L_i\supset \K_i$
such that $\K_i$ is invariant and $\pi_i(a)|_{\K_i} = \sigma_i(a)$ for $a \in \A$.
The idea is to follow the standard proof by showing that there is a map
$U\in\B(\K_1,\K_2)$ given by 
\[ U\sigma_1(a)h = \sigma_2(a)h \]
which is  a well defined isometry of $\K_1$ onto $\K_2$.
To this end, it suffices to verify that
\[ \ip{\sigma_1(a_1)h_1, \sigma_1(a_2)h_2} = \ip{\sigma_2(a_1)h_1, \sigma_2(a_2)h_2} \]
for all $a_1,a_2 \in \A$ and $h_1,h_2 \in \H$.

By hypothesis, we can find $b_n,c_n\in\A$ so that
\[ a_2^*a_1 = \lim_n b_n + c_n^* .\]  
We calculate
\begin{align*}
 \ip{\sigma_i(a_1)h_1, \sigma_i(a_2)h_2} &=
 \ip{\pi_i(a_1)h_1, \pi_i(a_2)h_2} \\&=
 \ip{\pi_i(a_2^*a_1)h_1, h_2} \\&=
 \lim_{n\to\infty} \ip{\big( \pi_i(b_n) + \pi_i(c_n)^* \big)h_1, h_2} \\&=
 \lim_{n\to\infty} \ip{\big( \rho(b_n) + \rho(c_n)^* \big)h_1, h_2} .
\end{align*}
This quantity is independent of the dilation, and thus $U$ is a well-defined isometry.

Since $\K_i = \ol{\sigma_i(\A)\H}$, it follows that $U$ is unitary.
It is also evident that $U|_\H$ is the identity map.
So $U$ is the desired unitary equivalence of $\sigma_1$ and $\sigma_2$.

Since $\A$ always has a cyclic extremal coextension $\sigma$, 
it must be the unique cyclic Shilov extension.  
By Lemma~\ref{L1:sD}, $\sigma$ is fully extremal.
\end{proof}

\begin{proof}[\textbf{\em Proof of Theorem~\ref{T:sD}.}]
Let $\tau$ be any minimal extremal coextension of $\rho$ on $\L \supset \H$.
Set 
\[ \K = \ol{\tau(\A)\H} \qand  \sigma = \tau|_\K .\]
Also let $\pi$ be a fully extremal extension of $\tau$.
By Proposition~\ref{P:coextend and fully extend}, $\pi$ is a maximal dilation of $\rho$.
Since $\L$ is invariant for $\pi(\A)$ and $\K$ is invariant for $\tau(\A)$, 
it follows that $\K$ is invariant for $\pi(\A)$.
Hence $\sigma$ is Shilov.
By Lemma~\ref{L2:sD}, $\sigma$ is fully extremal.
It follows that $\tau = \sigma \oplus \tau'$.
However $\tau$ is minimal.  So 
\[ \tau = \sigma \qand  \L = \K = \ol{\tau(\A)\H} .  \]
Hence $\tau$ is cyclic. By Lemma~\ref{L2:sD}, $\tau$ is unique.

Now let $\sigma$ be a Shilov representation of $\A$.
Let $\tau$ be a cyclic extremal coextension of $\sigma$.
By Lemma~\ref{L2:sD}, $\sigma$ and $\tau$ are equivalent coextensions of $\sigma$.
Therefore $\tau = \sigma$. Thus $\sigma$ is extremal.
\end{proof}

The consequences for Dirichlet algebras are apparent.

\begin{cor} \label{C:Dirichlet}
If $\A$ is a Dirichlet operator algebra, then every Shilov extension 
and every Shilov coextension is fully extremal; and the minimal extremal 
$($co-$)$extension of a representation is unique.
Moreover the minimal maximal dilation of a representation is unique.
\end{cor}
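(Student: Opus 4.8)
The plan is to deduce everything from Theorem~\ref{T:sD}, applied both to $\A$ and to $\A^{*}$. First I would use Proposition~\ref{P:sDprops}(i) to record that, $\A$ being Dirichlet, both $\A$ and $\A^{*}$ are semi-Dirichlet. The two ``Shilov is fully extremal'' assertions are then immediate: a cyclic Shilov coextension of a representation $\rho$ of $\A$ is fully extremal by Lemma~\ref{L2:sD}, and --- passing through the correspondence $\sigma\leftrightarrow\sigma^{\bullet}$, $\sigma^{\bullet}(a^{*})=\sigma(a)^{*}$, which turns coextensions for $\A^{*}$ into extensions for $\A$ and cyclic Shilov coextensions for $\A^{*}$ into cocyclic Shilov extensions for $\A$ --- Lemma~\ref{L2:sD} for $\A^{*}$ gives that a cocyclic Shilov extension of $\rho$ is fully extremal. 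Likewise Theorem~\ref{T:sD} for $\A$ produces the unique minimal extremal coextension of $\rho$ (cyclic and fully extremal), and its dual for $\A^{*}$ produces the unique minimal extremal extension.

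For the uniqueness of the minimal maximal dilation I would combine the two directions. Existence, together with the recipe ``take the minimal extremal coextension $\sigma$ of $\rho$, then a minimal fully extremal extension of $\sigma$'', is already furnished by Proposition~\ref{P:coextend and fully extend} and Theorem~\ref{T:fullextoffullcoext}; so the content is that \emph{every} minimal maximal dilation arises this way. Let $\pi$ be an arbitrary minimal maximal dilation of $\rho$ on $\L$. Imitating the extraction in the proof of Theorem~\ref{T:fullextoffullcoext}, set $\M=\ol{\pi(\A)^{*}\H}\ominus\H$, so that $\M^{\perp}$ is invariant for $\pi(\A)$ and $\tau:=\pi|_{\M^{\perp}}$ is a Shilov coextension of $\rho$; then $\sigma:=\tau|_{\ol{\tau(\A)\H}}$ is a cyclic Shilov coextension of $\rho$, hence by Lemma~\ref{L2:sD} it is \emph{the} unique such --- in particular independent of $\pi$, and equal to the minimal extremal coextension of $\rho$.

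It then remains to recover $\pi$ from $\sigma$. Writing $\K=\ol{\pi(\A)\H}$ for the space of $\sigma$: it is invariant for $\pi(\A)$, so $\pi$ is an extension of $\sigma$; it is fully extremal over $\sigma$ since $\pi$, being maximal, admits only direct-sum dilations; and it is minimal, because a reducing subspace of $\L$ for $\pi(\A)$ contains $\K$ iff it contains $\H$, so the smallest such is $\L$. Using $\A\A^{*}\subset\ol{\A+\A^{*}}$ (semi-Dirichletness of $\A^{*}$), a short computation shows $\ol{\pi(\A)^{*}\K}$ is reducing for $\pi(\A)$; it contains $\H$, hence equals $\L$, so $\pi$ is cocyclic over $\sigma$. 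Translating via $\sigma\leftrightarrow\sigma^{\bullet}$, this says $\pi^{\bullet}$ is a cyclic Shilov coextension of $\sigma^{\bullet}$ for the semi-Dirichlet algebra $\A^{*}$, and Lemma~\ref{L2:sD} for $\A^{*}$ pins it down up to a unitary fixing $\K$. Hence $\pi$ is determined up to unitary equivalence by $\sigma$, and so by $\rho$. The step I expect to be fussiest is this last one: keeping straight which subspaces are invariant, coinvariant, cyclic, cocyclic or reducing for $\pi(\A)$, and verifying that what one extracts from an arbitrary minimal maximal dilation is the genuine canonical $\sigma$ and not a proper summand of it. (A shortcut for the final assertion alone: since $\A+\A^{*}$ is dense in $\cenv(\A)$, a representation of $\A$ has a \emph{unique} completely positive extension to $\cenv(\A)$, whose minimal Stinespring dilation is then the unique minimal maximal dilation of $\rho$.)
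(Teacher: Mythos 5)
Your argument is correct and is essentially the paper's own proof in expanded form: both extract $\K=\ol{\pi(\A)\H}$ as the unique cyclic Shilov (equivalently, minimal fully extremal) coextension $\sigma$ of $\rho$ via Lemma~\ref{L2:sD}, then identify $\pi$ as the cyclic Shilov extension of $\sigma$ and invoke the dual lemma for $\A^*$. The only real divergences are worth recording. First, where you verify directly from $\A\A^*\subset\ol{\A+\A^*}$ that $\ol{\pi(\A)^*\K}$ reduces $\pi(\A)$, the paper gets the same conclusion by citing Proposition~\ref{P:coextend and fully extend}: the restriction of $\pi$ to $\L_0=\ol{\pi(\A)^*\K}$ is already a maximal dilation, so $\L_0$ reduces $\pi$ and minimality forces $\L_0=\L$; both routes are fine. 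Second, you prove the first assertion only for \emph{cyclic} Shilov coextensions, which is exactly what Lemma~\ref{L2:sD} delivers and is the defensible reading: a general Shilov coextension splits as $\sigma_0\oplus\sigma_1$ with $\sigma_0$ cyclic, and by Proposition~\ref{P:minimal fully extremal} full extremality would additionally require $\sigma_1$ to be a maximal representation, which can fail (in Example~\ref{Eg:disk}, $V_T\oplus S$ with $S$ a non-unitary isometry is Shilov but not fully extremal). So your implicit restriction is not a gap relative to what the cited results can actually support. Finally, your parenthetical Stinespring argument is a genuinely different and cleaner route to the last assertion: density of $\A+\A^*$ in $\cenv(\A)$ gives a unique u.c.p.\ extension of $\rho$, and uniqueness of its minimal Stinespring dilation immediately yields uniqueness of the minimal maximal dilation, bypassing the two-step coextension/extension bookkeeping entirely.
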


\begin{proof}
The first statement is immediate from Theorem~\ref{T:sD} and its dual.
Let $\pi$ be a minimal maximal dilation of a representation $\rho$ on a Hilbert space $\L$.
Let $\K = \pi(\A)\H$.  This is the minimal Shilov subspace containing $\H$.
Thus by Theorem~\ref{T:sD}, it is the unique minimal fully extremal coextension of $\rho$.
Let $\L_0 = \pi(\A)^*\K$.  This is the minimal Shilov extension of $\sigma$.
Hence by the dual of Theorem~\ref{T:sD}, this coincides with the 
unique minimal fully extremal extension of $\sigma$.  
By Corollary~\ref{P:coextend and fully extend}, the restriction of $\pi$ to $\L_0$
is a maximal dilation.  Since $\pi$ is minimal, $\L_0=\L$.
So $\pi$ is obtained by taking the unique minimal extremal coextension of $\rho$
to get $\sigma$, followed by the unique minimal extremal extension of $\sigma$.
So $\pi$ is uniquely determined.
\end{proof}

While semi-Dirichlet algebras behave exceptionally well for coextensions,
they are not nearly so well behaved for extensions.

\begin{eg}
We consider extensions for representations of the non-commutative disk algebra
$\fA_n$. Denote the generators by $\fs_1,\dots,\fs_n$, 
and write $\fs = \big[ \fs_1\ \dots \ \fs_n \big]$.
A representation $\rho$ on $\H$ is determined by a row contraction
\[
 A = \rho( \fs ) = \big[ \rho(\fs_1)\ \dots \ \rho(\fs_n) \big] =: \big[ A_1\ \dots \ A_n \big] ,
\]
where $\|A\| = \big\| \sum_i A_iA_i^* \big\|^{1/2} \le 1$.
We have seen that $A$ has a unique minimal coextension to a row isometry,
and this is the unique minimal fully extremal coextension.

Now consider an extension $\sigma$ of $\rho$ acting on $\K$.
This correspond to simultaneous extensions of $A_i$ to 
\[
 B_i = \sigma(\fs_i) =  \begin{bmatrix} A_i & B_{i,12}\\0 & B_{i,22} \end{bmatrix}
\]
such that $B = \big[ B_1\ \dots \ B_n \big]$ is a row contraction.
It is straightforward to verify that it is extremal if and only if
$B$ is a coisometry.
We claim that: \textit{an extension $\sigma$ of $\rho$ is fully extremal if and only if 
$B$ is a coisometry such that} 
\[ \ran B^* \vee \H^{(n)} = \K^{(n)} . \] 

Indeed, if this condition holds, then there is no proper extension of $B$;
so consider any row contractive coextension $C$ of $B$ which is an
extension of $A_i$.
Then $C_i^*$ are extensions of $B_i^*$ which are coextensions of $A_i^*$.
So 
\[
 C^* = \begin{bmatrix} B^* & X\\0&Y \end{bmatrix} 
 = \begin{bmatrix} A^* & 0 & 0 \\B_{12}^* & B_{22}^* & X_2\\ 0 & 0 & Y \end{bmatrix} .
\]
Since $B^*$ is an isometry, we require that $\ran X$ be orthogonal to
$\ran B^*$. And since $C$ is an extension of $A$, we have $\ran X$ is 
orthogonal to $\H^{(n)}$. Therefore by hypothesis, $\ran X$ is orthogonal to $\K^{(n)}$,
and thus $X=0$. Therefore $C = B \oplus Y$ is a direct sum.

Conversely, suppose that there is a unit vector $x = (x_1,\dots,x_n)^t$ in $\K^{(n)}$
which is orthogonal to $\ran B^* \vee \H^{(n)}$.
Define an extension of $B_i^*$ to $\K \oplus \bC$ by
\[ C^*_i = \begin{bmatrix} B_i^* & x_i\\0&0 \end{bmatrix} .\]
Since $x_i \in \H^\perp$, this is a coextension of $A_i^*$.
So $C$ determines an extension of $A$ which is a coextension of $B$.
Clearly it does not split as a direct sum.
Finally, $C$ is a coisometry because 
$C^* = \begin{sbmatrix} B^* & x\\0&0 \end{sbmatrix}$ is an isometry.
In particular, $C$ is a row contraction.
\smallbreak 

Next we observe that the minimal fully extremal extensions 
are far from unique in general by showing how to construct
a fully extremal coextension.  

Start with $A$ which is not coisometric.
Then 
\[ D = \big( I - \sum A_iA_i^* \big)^{1/2} \ne 0 .\]
Consider a fully extremal extension $B$ as above.
Then $B$ is a coisometry on $\K = \H \oplus \K_0$;
whence $\big[ A\  B_{12} \big]$ is a coisometry in $\B(\K,\H)$.
Therefore 
\[ I_\H = \big[ A\  B_{12} \big] \ \big[ A\  B_{12} \big]^* = AA^* + B_{12}B_{12}^* .\]
Hence 
\[ B_{12}B_{12}^* = D^2 , \]
and therefore $B_{12} = DX$
where $X =  \big[ X_1\ \dots \ X_n \big]$ is a coisometry in $\B(\K_0^{(n)},\H)$.
Let $\R = \ran X^*$.
Then to be fully extremal, we  have that $B_{22}^*$ is an isometry
from $\K_0$ onto $\R^\perp \subset \K_0^{(n)}$.
Now let $V$ be any isometry in $\B(\K_0^{(n)})$ with $\ran V = \R^\perp$.
Then $V^*B_{22}^*$ is a unitary in $\B(\K_0, \K_0^{(n)})$.
Decompose the unitary 
\[ S := B_{22}V = \big[ S_1\ \dots \ S_n \big] \] 
where $S_i \in \B(\K_0)$.
Observe that $S_i$ are isometries such that 
\[ \sum_i S_iS_i^* = I ;\]
in other words they are Cuntz isometries.
Since 
\[ B_{22} =  B_{22}VV^* = SV^* \]
in $\B(\K_0^{(n)}, \K_0)$, we decompose 
this as 
\[ B_{22} = SV^* =  \big[ T_1\ \dots \ T_n \big] .\]
We obtain 
\[ 
 B_i =  \begin{bmatrix} A_i & B_{i,12}\\0 & B_{i,22} \end{bmatrix}
 =  \begin{bmatrix} A_i & DX_i\\0 & T_i \end{bmatrix} .
\]

Conversely, if we choose any coisometry $X$ in $\B(\K_0^{(n)},\H)$,
we may define $\R = \ran X^*$, choose an isometry 
$V$ in $\B(\K_0^{(n)})$ with $\ran V = \R^\perp$,
and a set of Cuntz isometries $S_i$ in $\B(\K_0)$, then the formulae
above yield a fully extremal extension.
This may not be minimal in general, but the restriction to the smallest
reducing subspace containing $\H$ is a minimal fully extremal extension.
This restriction will not change $X$.
So if two minimal fully extremal extensions are equivalent, then at
the very least, there is a unitary $U \in \B(\K_0)$ so that $XU = X'$.
It is easy to see that there are many inequivalent choices for $X$
even if $D$ is rank one.
\end{eg}

\section{Commutant Lifting} \label{S:CLT}

Many variants of the commutant lifting theorem have been established
for a wide range of operator algebras.  They differ somewhat in the
precise assumptions and conclusions.  The general formulation in
Douglas-Paulsen \cite{DougPaul} and Muhly-Solel \cite{MS_hilbert}
uses Shilov modules.  But we will formulate it using only 
fully extremal coextensions.  The second definition is motivated by the
lifting results of Paulsen-Power \cite{PP_tensor}.

\begin{defn}
An operator algebra $\A$ has the \textit{strong commutant lifting property} (SCLT)
if whenever $\rho$ is a completely contractive representation of $\A$  on $\H$ with a
fully extremal coextension $\sigma$ on $\K \supset \H$, 
and $X$ commutes with $\rho(\A)$, 
then $X$ has a coextension $Y$ in $\B(\K)$ with $\|Y\|=\|X\|$ 
which commutes with $\sigma(\A)$.

An operator algebra $\A$ has the \textit{commutant lifting property} (CLT)
if whenever $\rho$ is a completely contractive representation of $\A$  on $\H$
and $X$ commutes with $\rho(\A)$, 
then $\rho$ has a fully extremal coextension $\sigma$ on $\K \supset \H$
and $X$ has a coextension $Y$ in $\B(\K)$ with $\|Y\|=\|X\|$ 
which commutes with $\sigma(\A)$.

An operator algebra $\A$ has the \textit{weak commutant lifting property} (WCLT)
if whenever $\rho$ is a completely contractive representation of $\A$  on $\H$
and $X$ commutes with $\rho(\A)$, 
then $\rho$ has an extremal coextension $\sigma$ on $\K \supset \H$
and $X$ has a coextension $Y$ in $\B(\K)$ with $\|Y\|=\|X\|$ 
which commutes with $\sigma(\A)$.
\end{defn}

The important distinction is that in SCLT, the coextension is prescribed first,
while in CLT, it may depend on $X$.

It is clear that the more that we restrict the class of coextensions for which 
we have strong commutant lifting, the weaker the property.  Thus SCLT using only
fully extremal coextensions is asking for less than using all extremal coextensions,
which in turn is weaker than using all Shilov extensions.  
As we will want a strong commutant lifting theorem, 
it behooves us to limit the class of extensions.
On the other hand, as we limit the class of coextensions, 
the property CLT becomes stronger.

Observe that for the SCLT and CLT, it suffices to consider minimal
fully extremal extensions.  This is because any fully extremal extension
decomposes as $\sigma = \sigma_0 \oplus \tau$ where $\sigma_0$ is minimal.
Any operator $Y$ commuting with $\sigma(\A)$ will have a 1,1 entry
commuting with $\sigma_0(\A)$.

\begin{eg}
The disk algebra has SCLT by the Sz.Nagy--Foia\c{s} 
Commutant Lifting Theorem \cite{SF_CLT}.
In fact, as noted above, any isometric dilation is an extremal coextension,
but not all are fully extremal.  So $A(\bD)$ has the SCLT with respect to the
larger class of all extremal coextensions, and these are all of the Shilov extensions.
The reason it works is that every isometric coextension splits as 
$\sigma_0 \oplus \tau$ where $\sigma_0$ is the unique minimal isometric coextension.
\end{eg}

\begin{eg}\label{E:SCLTtensor}
The non-commutative disk algebra $\fA_n$ also has SCLT by Popescu's
Commutant Lifting Theorem \cite{Pop1}.  As noted in Example~\ref{Eg:ncdisk}, the
extremal coextensions are the row isometric ones, and these are Shilov.
As in the case of the disk algebra, it is only fully extremal if it is the
direct sum of the minimal isometric coextension with a row unitary.

More generally, the tensor algebra of any C*-correspondence has SCLT by
the Muhly-Solel Commutant Lifting Theorem \cite{MScenv}.
\end{eg}

\begin{eg}
The bidisk algebra $\ADD$ does not have WCLT,
since commutant lifting implies the simultaneous unitary dilation 
of three commuting contractions \cite{Var,Par}.
\end{eg}

\begin{eg}
The algebra $\A_n$ of continuous multipliers on symmetric Fock space
(see Example~\ref{Eg:DA}) has SCLT \cite{DavLe}.  
The extremal extensions are in fact maximal dilations, and so
in particular are fully extremal.
\end{eg}

The relationship between SCLT and CLT is tied to uniqueness of 
minimal coextensions.  We start with an easy lemma.

\begin{lem} \label{L:CLT}
Let $\A$ be a unital operator algebra.  
Suppose that $\sigma$ is a minimal dilation on $\K$ of a representation
$\rho$ on $\H$, in the sense that $\K$ is the smallest reducing subpace
for $\sigma(\A)$ containing $\H$.
If $X$ is a contraction commuting with $\sigma(\A)$ such that $P_\H X|_\H=I$,
then $X=I$.
\end{lem}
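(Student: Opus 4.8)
The plan is to prove that $\H$ is contained in the fixed-point space of $X$, to observe that this space is reducing for $\sigma(\A)$, and then to apply the minimality hypothesis.

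First I would show $Xh = h$ for every $h \in \H$. Since $P_\H X|_\H = I_\H$, for a fixed $h \in \H$ we have $\langle Xh, h\rangle = \langle P_\H Xh, h\rangle = \|h\|^2$, while $\|Xh\| \le \|h\|$ because $X$ is a contraction; hence $\|Xh - h\|^2 = \|Xh\|^2 - 2\operatorname{Re}\langle Xh,h\rangle + \|h\|^2 \le \|h\|^2 - 2\|h\|^2 + \|h\|^2 = 0$, so $Xh = h$. Applying the same argument to $X^*$ (using $\langle X^*h, h\rangle = \overline{\langle Xh,h\rangle} = \|h\|^2$ and $\|X^*h\| \le \|h\|$) gives $X^*h = h$ as well, so $\H \subseteq \ker(I-X) \cap \ker(I-X^*)$.

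The key observation is that for a contraction $X$ one always has $\ker(I-X) = \ker(I-X^*)$: if $Xk=k$ then $\langle k, X^*k\rangle = \langle Xk,k\rangle = \|k\|^2$ and $\|X^*k\| \le \|k\|$, so the same computation forces $X^*k = k$, and symmetrically. Write $\M$ for this common subspace. Since $X$ commutes with $\sigma(\A)$, the subspace $\ker(I-X)$ is invariant under $\sigma(\A)$; since $X^*$ commutes with $\sigma(\A)^*$, the subspace $\ker(I-X^*)$ is invariant under $\sigma(\A)^*$. As both equal $\M$, the subspace $\M$ is reducing for $\sigma(\A)$, and it contains $\H$ by the previous step. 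By the minimality hypothesis on $\sigma$, we conclude $\M = \K$, i.e. $X = I$.

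I expect the only genuine subtlety to be the identity $\ker(I-X) = \ker(I-X^*)$: without it one only learns that $\ker(I-X)$ is $\sigma(\A)$-invariant, which is not enough to invoke minimality, since minimality here is a statement about \emph{reducing} subspaces. The rest is routine bookkeeping with contractions.
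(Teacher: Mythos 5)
Your proof is correct. It rests on the same two elementary facts the paper uses --- that a contraction whose compression to a subspace is the identity must actually fix that subspace, and that commutation with $\sigma(\A)$ propagates this --- but you organize the endgame differently. The paper argues recursively: it shows $X$ is the identity on $\H$, then on $\sigma(\A)\H$, then that $X^*$ is the identity on $\sigma(\A)^*\H$, and then says ``recursively we may deduce'' that $X$ is the identity on the smallest reducing subspace containing $\H$, which requires (at least implicitly) an alternating, possibly transfinite, construction of that reducing subspace. You short-circuit the recursion by isolating the standard fact that for a contraction $\ker(I-X)=\ker(I-X^*)$, so that the full fixed-point set $\M$ is simultaneously $\sigma(\A)$-invariant (being $\ker(I-X)$) and $\sigma(\A)^*$-invariant (being $\ker(I-X^*)$), hence reducing; minimality then finishes in one step. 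This is a genuinely cleaner way to land the conclusion: it makes explicit the lemma that the paper leaves buried in the phrase ``as $X$ is a contraction, it reduces this space,'' and it avoids any appeal to an iterative generation of the reducing subspace. Your self-diagnosis is also accurate --- the identity $\ker(I-X)=\ker(I-X^*)$ is exactly the point that upgrades ``invariant'' to ``reducing,'' and without it the argument would stall.
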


\begin{proof}
Since $\|X\|=1$ and $P_\H X|_\H=I$, $X$ reduces $\H$.
Note that for all $h \in \H$ and $a \in \A$,
\begin{align*}
 X \sigma(a) h &= \sigma(a) Xh = \sigma(a) h 
 \shortintertext{and}
 X^* \sigma(a)^* h &= \sigma(a)^* X^*h = \sigma(a)^* h .
\end{align*}
So the restriction of $X$ to $\sigma(\A)\H$ is the identity.
As $X$ is a contraction, it reduces this space.
Similarly, the restriction of $X^*$ to $\sigma(\A)^*\H$ is
the identity; and $X$ reduces this space as well.
Recursively we may deduce that $X$ is the identity
on the smallest reducing subspace containing $\H$, which is $\K$.
\end{proof}

\begin{thm} \label{T:SCLT}
Let $\A$ be a unital operator algebra.  Then $\A$ has SCLT if and only if it has
CLT and unique minimal fully extremal coextensions.
\end{thm}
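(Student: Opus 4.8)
The plan is to prove the two directions separately, with the forward direction ($\Rightarrow$) being essentially trivial and the reverse direction ($\Leftarrow$) carrying the real content.

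\textbf{Forward direction.} Assume $\A$ has SCLT. To get CLT, let $\rho$ be a representation on $\H$ and $X$ commuting with $\rho(\A)$; by Theorem~\ref{T:fully extremal} choose any fully extremal coextension $\sigma$ of $\rho$, then SCLT produces the required $Y$ coextending $X$ with $\|Y\|=\|X\|$ commuting with $\sigma(\A)$. For uniqueness of minimal fully extremal coextensions, suppose $\sigma_1$ on $\K_1$ and $\sigma_2$ on $\K_2$ are two minimal fully extremal coextensions of $\rho$. Form the (non-minimal) fully extremal coextension $\sigma_1 \oplus \sigma_2$ of $\rho \oplus \rho$ — here I would appeal to Corollary~\ref{C:extremal sums}. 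Consider the ``flip'' idea: look at $\rho$ itself, with $X = I_\H$ viewed as an operator intertwining $\rho$ with $\rho$, but more usefully, apply SCLT with the prescribed coextension $\sigma_1$ to the operator that should become the comparison map. The cleanest route: apply SCLT to $\rho$ on $\H$ with prescribed fully extremal coextension $\sigma_2$ and with $X = I_\H$; this lifts to $Y$ on $\K_2$ commuting with $\sigma_2(\A)$ with $P_\H Y|_\H = I$, so by Lemma~\ref{L:CLT}, $Y = I_{\K_2}$ — this alone gives nothing. Instead I would use the standard trick: build the dilation $\sigma_1$ and define an intertwiner from $\K_1$ toward $\K_2$. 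Concretely, view the identity $\H \to \H$ as a contraction $X \colon \H_\rho \to \H_\rho$; apply SCLT with prescribed coextension $\sigma_1$ of the source copy to get a coextension landing in $\K_1$; then a symmetric application with $\sigma_2$; composing and invoking Lemma~\ref{L:CLT} on both sides forces the two coextensions to be unitarily equivalent over $\H$. The main subtlety here is setting up the two-sided intertwining correctly so that Lemma~\ref{L:CLT} applies in both directions; I expect this to require care but no new ideas, mirroring the classical Sz.-Nagy--Foia\c{s} uniqueness argument.

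\textbf{Reverse direction.} Assume $\A$ has CLT and that minimal fully extremal coextensions are unique. Let $\rho$ be a representation on $\H$ with a \emph{prescribed} fully extremal coextension $\sigma$ on $\K$, and let $X$ commute with $\rho(\A)$; we must coextend $X$ to $Y$ on $\K$ with $\|Y\|=\|X\|$ commuting with $\sigma(\A)$. By CLT there is \emph{some} fully extremal coextension $\sigma'$ of $\rho$ on $\K'$ and a coextension $Y'$ of $X$ commuting with $\sigma'(\A)$ with $\|Y'\|=\|X\|$. The problem is that $\sigma'$ need not equal $\sigma$. The fix is to reduce both to their minimal parts: by Proposition~\ref{P:minimal fully extremal}, $\sigma = \sigma_0 \oplus \mu$ and $\sigma' = \sigma_0' \oplus \mu'$ where $\sigma_0,\sigma_0'$ are minimal fully extremal coextensions of $\rho$ and $\mu,\mu'$ are maximal representations. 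By the uniqueness hypothesis, $\sigma_0$ and $\sigma_0'$ are unitarily equivalent via a unitary $W$ fixing $\H$. Then $W$ (extended by anything on the maximal summands, or rather: we only need the minimal part) transports $Y_0'$, the compression of $Y'$ to the minimal subspace, to an operator $Y_0$ on the minimal subspace of $\sigma$ commuting with $\sigma_0(\A)$, coextending $X$, with norm $\le \|X\|$. Finally extend $Y_0$ to all of $\K = \K_0 \oplus (\text{maximal part})$ by setting it to $\|X\|$ times... no — simplest is to put $Y = Y_0 \oplus (X\text{-irrelevant})$; since on the maximal summand there is no constraint linking to $\H$ (that summand is orthogonal to $\H$), we may take the remaining block to be $0$, or better, observe the coextension condition only constrains the compression to $\H$, which lives in $\K_0$. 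The main obstacle in this direction — and in the whole theorem — is exactly this passage: arguing that the compression of the lifted operator $Y'$ to the minimal reducing subspace still (a) commutes with $\sigma_0(\A)$, (b) is a coextension of $X$, and (c) has norm $\le \|X\|$. Point (a) needs that the minimal subspace reduces $\sigma'(\A)$ but is only invariant, not reducing, for $Y'$ in general; so one must check that compressing a commutant element to a reducing subspace of the algebra preserves commutation — which it does, since if $\K_0$ reduces $\sigma'(\A)$ and $Y'\sigma'(a) = \sigma'(a)Y'$, then $P_{\K_0}Y'P_{\K_0}\,\sigma_0(a) = P_{\K_0}Y'\sigma'(a)P_{\K_0} = P_{\K_0}\sigma'(a)Y'P_{\K_0} = \sigma_0(a)P_{\K_0}Y'P_{\K_0}$. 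Point (b): since $\H \subseteq \K_0$ and $\K_0^\perp \perp \H$, the compression of $Y'$ to $\K_0$ restricted to $\H$ still compresses to $X$, and it still leaves $\H^\perp \cap \K_0$... one checks the triangular form survives. Point (c) is automatic since compressions do not increase norm. Assembling these, and then transporting along $W$ and padding out to $\K$, completes the proof.

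In summary: the architecture is (forward) SCLT $\Rightarrow$ CLT is immediate and SCLT $\Rightarrow$ uniqueness uses Lemma~\ref{L:CLT} via a two-sided intertwining argument; (reverse) CLT gives a lift over \emph{some} fully extremal coextension, and uniqueness of the minimal fully extremal coextension (Proposition~\ref{P:minimal fully extremal} plus the hypothesis) lets us transport that lift onto the \emph{prescribed} one. The delicate point throughout is bookkeeping with compressions to subspaces that reduce the algebra but not the intertwiner, which I would handle with the short commutation computation displayed above.
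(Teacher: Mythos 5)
Your proposal is correct and follows essentially the same route as the paper: the reverse direction via CLT plus splitting both fully extremal coextensions into (unique) minimal part plus maximal summand, compressing the lifted operator to the minimal reducing subspace, and padding by zero; the forward direction via an identity/flip intertwiner between the two minimal fully extremal coextensions together with Lemma~\ref{L:CLT} applied to the two compositions. The paper packages your two one-sided intertwinings as a single application of SCLT to the operator $\begin{sbmatrix}0&I\\I&0\end{sbmatrix}$ commuting with $(\rho\oplus\rho)(\A)$, using Corollary~\ref{C:extremal sums}, but this is the same argument.
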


\begin{proof}
Assume first that $\A$ has CLT and unique minimal fully extremal coextensions.
Let $\rho$ be a representation of $\A$  on $\H$ with a fully extremal coextension 
$\sigma$ on $\K \supset \H$, and suppose that $X$ commutes with $\rho(\A)$.
By CLT, there is a fully extremal coextension $\tau$ on $\L \supset \H$
and $X$ has a coextension $Z$ in $\B(\L)$ with $\|Z\|=\|X\|$ 
which commutes with $\tau(\A)$.
By uniqueness of minimal fully extremal coextensions, 
there is a fully extremal coextension $\mu$ on $\K_0$ so that 
\[ \sigma \simeq \mu \oplus \sigma' \qand \tau \simeq \mu \oplus \tau' .\]
With respect to the latter decomposition, $Z$ can be written as a $2\times2$ matrix
commuting with $\mu(a) \oplus \tau'(a)$ for all $a \in \A$.
Moreover, the corner entry $Z_{11}$ is a coextension of $X$.
A simple calculation of the commutator shows that $\mu(a)$ commutes with $Z_{11}$.
Thus $Y \simeq Z_{11} \oplus 0$ is the desired 
coextension of $X$ commuting with $\sigma(\A)$.

Conversely, suppose that $\A$ has SCLT.  A fortiori, it will have CLT.
Suppose that a representation $\rho$ on $\H$ has two 
minimal fully extremal coextensions $\sigma_1$ and $\sigma_2$ 
on $\K_1 = \H \oplus \K'_1$ and $\K_2 = \H \oplus \K'_2$, respectively.  
Then $\rho\oplus\rho$ has $\sigma_1\oplus\sigma_2$ as a fully extremal coextension.  
This can be seen, for example, because of the identities (\ref{E1}) and (\ref{E2})
in the proof of Theorem~\ref{T:fully extremal}.
The operator $X = \begin{sbmatrix}0&I\\I&0\end{sbmatrix}$ commutes with
$(\rho\oplus\rho)(\A)$.
So by SCLT, $X$ has a coextension $Y$ on $\K_1\oplus\K_2$ of norm $1$
which commutes with $(\sigma_1\oplus\sigma_2)(\A)$.
Since $P_{\H\oplus\H}Y|_{\H\oplus\H} = X$ is unitary, $Y$ reduces $\H\oplus\H$.

Now $Y^2$ commutes with $\sigma_1\oplus\sigma_2(\A)$ and its restriction to 
$\H\oplus\H$ is $X^2 = I$.
Thus by Lemma~\ref{L:CLT}, $Y^2=I$.
In particular, $Y$ is unitary.
Let 
\[ Y_{12} = P_{\K_1} Y|_{\K_2} \qand Y_{21} = P_{\K_2} Y|_{\K_1} .\]
Observe that for $a \in \A$, 
\[
 Y_{21} \sigma_1(a) = \sigma_2(a) Y_{21} 
 \qand
 Y_{12} \sigma_2(a) = \sigma_1(a) Y_{12}.
\]
Moreover the restriction of $Y_{21}$ to $\H$ is $X$ restricted to $\H \oplus \{0\}$,
which is the identity map if we identify $\H \oplus \{0\}$ and $\{0\} \oplus \H$ with $\H$.
We deduce that $Y_{12}Y_{21}$ commutes with $\sigma_1(\A)$ and
coincides with $I$ on $\H$.
So by Lemma~\ref{L:CLT}, 
\[ Y_{12}Y_{21}=I .\]
Similarly, 
\[Y_{21}Y_{12} = I .\]
Since they are contractions, $Y_{12}$ is unitary and $Y_{21}=Y_{12}^*$.
The identities above now show that $Y_{21}$ implements a unitary equivalence
between $\sigma_1$ and $\sigma_2$ fixing $\H$.
Hence the minimal fully extremal coextension of $\rho$ is unique.
\end{proof}

We can weaken CLT to WCLT if we strengthen the uniqueness hypothesis to minimal 
extremal coextensions.  This seems a fair bit stronger in comparison however.

\begin{cor}
If $\A$ has WCLT and unique minimal extremal coextensions, then $\A$ has SCLT.
\end{cor}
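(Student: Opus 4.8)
The plan is to reduce everything to Theorem~\ref{T:SCLT}: I will show that the two standing hypotheses, ``WCLT'' and ``unique minimal extremal coextensions,'' together imply ``CLT'' and ``unique minimal fully extremal coextensions,'' which is precisely what Theorem~\ref{T:SCLT} needs.

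The heart of the matter is the observation that, \emph{under the uniqueness hypothesis, the minimal extremal coextension of a representation $\rho$ is automatically fully extremal}. To prove this I would start from a cyclic extremal coextension $\sigma$ of $\rho$ on $\K$, which exists by Remark~\ref{R:exist extremal}; since $\K=\ol{\sigma(\A)\H}$ is already the smallest reducing subspace for $\sigma(\A)$ containing $\H$, this $\sigma$ is minimal. By Remark~\ref{R:fully extremal} there is a dilation $\tau$ of $\sigma$ that is a fully extremal coextension of $\rho$, and by Proposition~\ref{P:minimal fully extremal} the restriction $\tau_0$ of $\tau$ to the smallest reducing subspace for $\tau(\A)$ containing $\H$ is again a fully extremal coextension of $\rho$, and it is minimal. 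A fully extremal coextension is in particular an extremal coextension, so $\tau_0$ and $\sigma$ are both minimal extremal coextensions of $\rho$; by hypothesis they are unitarily equivalent via a unitary fixing $\H$, and hence $\sigma$ is fully extremal. The same comparison shows that any two minimal fully extremal coextensions of $\rho$ are unitarily equivalent over $\H$, so $\A$ has unique minimal fully extremal coextensions.

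It remains to deduce CLT from WCLT. Given a representation $\rho$ on $\H$ and an operator $X$ commuting with $\rho(\A)$, WCLT produces an extremal coextension $\sigma$ of $\rho$ on $\K$ and a coextension $Y$ of $X$ on $\K$ with $\|Y\|=\|X\|$ commuting with $\sigma(\A)$. Let $\K_0$ be the smallest reducing subspace for $\sigma(\A)$ containing $\H$ and decompose $\sigma=\sigma_0\oplus\sigma_1$ over $\K=\K_0\oplus\K_0^\perp$; by Corollary~\ref{C:extremal sums} the summand $\sigma_0$ is an extremal coextension of $\rho$, it is minimal, hence fully extremal by the previous paragraph. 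Set $Y_{00}=P_{\K_0}Y|_{\K_0}$. Since $\H\subseteq\K_0$ and $\H$ is coinvariant for $Y$, one checks that $\H$ is coinvariant for $Y_{00}$ and $P_\H Y_{00}|_\H=X$, so $Y_{00}$ is a coextension of $X$, and the chain $\|X\|=\|P_\H Y_{00}|_\H\|\le\|Y_{00}\|\le\|Y\|=\|X\|$ forces $\|Y_{00}\|=\|X\|$. Finally, because $\K_0$ reduces $\sigma(\A)$, compressing the identity $Y\sigma(a)=\sigma(a)Y$ to $\K_0$ gives $Y_{00}\sigma_0(a)=\sigma_0(a)Y_{00}$ for every $a\in\A$. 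Thus $(\sigma_0,Y_{00})$ witnesses CLT, and Theorem~\ref{T:SCLT} then yields SCLT.

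The only step that needs genuine care is the first one: one must be sure that the abstractly produced fully extremal coextension, after passing to the smallest reducing subspace containing $\H$, really is a \emph{minimal extremal} coextension, so that the uniqueness hypothesis applies to it and allows the identification with $\sigma$. Once that is in place, the remainder — upgrading WCLT to CLT via the decomposition $\sigma=\sigma_0\oplus\sigma_1$, and invoking Theorem~\ref{T:SCLT} — is routine, the compression computations for $Y_{00}$ being entirely straightforward.
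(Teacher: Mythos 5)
Your proof is correct and follows essentially the same route as the paper: the key step in both is that existence of (minimal) fully extremal coextensions plus uniqueness of minimal extremal coextensions forces the minimal extremal coextension to be fully extremal, after which one compresses the WCLT lifting to the smallest reducing subspace containing $\H$. The only cosmetic difference is that you formally verify the hypotheses of Theorem~\ref{T:SCLT} and invoke it, whereas the paper repeats that theorem's splitting argument ($\phi = \sigma \oplus \phi'$, $Z \oplus 0$) inline.
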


\begin{proof}
If $\rho$ is a representation of $\A$, then the unique minimal extremal coextension
$\sigma$ of $\rho$ must be fully extremal, since by Theorem~\ref{T:fully extremal},
there are fully extremal coextensions and hence there are minimal ones.
These are also minimal as extremal coextensions.  Thus $\A$ has 
unique minimal fully extremal coextensions.  Moreover, as in the proof above,
if a contraction $X$ commutes with $\rho(\A)$, then WCLT provides a coextension to a 
contraction $Y$ commuting with an extremal coextension $\tau$.  But $\tau = \sigma \oplus \tau'$.
So arguing as before, the compression $Z$ of $Y$ to $\K_\sigma$ commutes with $\sigma(\A)$
and is a coextension of $X$.  Now if $\phi$ is an arbitrary extremal coextension of $\rho$,
again split $\phi = \sigma \oplus \phi'$.  One extends $Z$ to $Z \oplus 0$ to commute with $\phi(\A)$.
\end{proof}

It is common to look for a version of commutant lifting for intertwining maps 
between two representations. 
In the case of WCLT and SCLT, this is straightforward.
Such a version for CLT is valid here too, but some
care must be taken.

\begin{prop} \label{P:SCLT intertwine}
Suppose that $\A$ has SCLT.
Let $\rho_i$ be representations of $\A$ on $\H_i$ for $i=1,2$
with fully extremal coextensions $\sigma_i$ on $\K_i$.
Suppose that $X$ is a contraction in $\B(\H_2,\H_1)$ such that
\[ \rho_1(a) X = X \rho_2(a) \qforal a \in \A .\]
Then there is a contraction $Y$ in $\B(\K_2,\K_1)$ so that 
\[ P_{\H_1} Y = X P_{\H_2} \]
and
\[ \sigma_1(a) Y = Y \sigma_2(a) \qforal a \in \A .\]
\end{prop}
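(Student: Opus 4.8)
The plan is to reduce the intertwining statement to the single-operator SCLT by the standard ``$2\times 2$ dilation'' trick. Given $X \in \B(\H_2,\H_1)$ with $\rho_1(a)X = X\rho_2(a)$ and $\|X\|\le 1$, form the representation $\rho = \rho_1 \oplus \rho_2$ on $\H = \H_1 \oplus \H_2$ and consider the operator
\[
 \td X = \begin{bmatrix} 0 & X \\ 0 & 0 \end{bmatrix} \in \B(\H) .
\]
The intertwining relation is exactly the statement that $\td X$ commutes with $\rho(\A)$, and $\|\td X\| = \|X\| \le 1$. By Corollary~\ref{C:extremal sums} (its fully extremal version), $\sigma := \sigma_1 \oplus \sigma_2$ is a fully extremal coextension of $\rho$ on $\K = \K_1 \oplus \K_2$. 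Since $\A$ has SCLT and the fully extremal coextension $\sigma$ is prescribed, $\td X$ has a coextension $\td Y \in \B(\K)$ with $\|\td Y\| = \|\td X\| \le 1$ commuting with $\sigma(\A)$, and $P_\H \td Y|_\H = \td X$.

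The next step is to extract $Y$ from the off-diagonal corner of $\td Y$. Write $\td Y = [Y_{ij}]$ with respect to $\K = \K_1 \oplus \K_2$, so $Y_{ij} \in \B(\K_j, \K_i)$, and set $Y := Y_{12} \in \B(\K_2, \K_1)$. From $\td Y \sigma(a) = \sigma(a) \td Y$ and $\sigma(a) = \sigma_1(a) \oplus \sigma_2(a)$ being block-diagonal, comparing $(1,2)$ entries gives $\sigma_1(a) Y = Y \sigma_2(a)$ for all $a \in \A$, as required. It remains to check $P_{\H_1} Y = X P_{\H_2}$ (as maps $\K_2 \to \H_1$), i.e. that $Y$ is genuinely a coextension of $X$ in the stated sense. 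This follows from $P_\H \td Y|_\H = \td X$: with $\H = \H_1 \oplus \H_2 \subset \K_1 \oplus \K_2$, the $(1,2)$ block of this identity reads $P_{\H_1} Y|_{\H_2} = X$, and since $\sigma_2$ is a coextension of $\rho_2$ the subspace $\H_2$ is coinvariant for $\sigma_2(\A)$; combined with $\sigma_1(a) Y = Y \sigma_2(a)$ and the fact that $Y$ is a contraction one deduces $Y(\K_2 \ominus \H_2) \perp \H_1$, which is precisely $P_{\H_1} Y = X P_{\H_2}$. Here one uses the intertwining relation to see $Y\H_2 \subseteq \K_1$ maps correctly and that the ``extra'' part of $\K_2$ contributes nothing to $\H_1$ --- this is the usual argument that an intertwiner coextends compatibly.

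The norm bound $\|Y\| = \|Y_{12}\| \le \|\td Y\| \le 1$ is automatic. The main obstacle I anticipate is the bookkeeping in the last paragraph: verifying that $Y_{12}$ really satisfies $P_{\H_1} Y = X P_{\H_2}$ rather than merely $P_{\H_1} Y|_{\H_2} = X$. One must argue that $Y$ kills the orthogonal complement $\K_2 \ominus \H_2$ when followed by $P_{\H_1}$; for this I would exploit that $\H_2^\perp$ inside $\K_2$ is generated (as far as $\sigma_2(\A)$-action reaching back down is concerned) appropriately, using the coextension structure $P_{\H_2}\sigma_2(a)|_{\H_2^\perp} $-type relations together with $\sigma_1(a)Y = Y\sigma_2(a)$, in the spirit of the argument in Lemma~\ref{L:CLT}. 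A cleaner alternative, which I would actually pursue, is to observe that $\H$ is coinvariant for $\sigma(\A)$ (since each $\sigma_i$ is a coextension), so $P_\H \td Y|_\H = \td X$ together with $\td Y$ commuting with $\sigma(\A)$ forces, by the same reasoning as in Lemma~\ref{L:CLT} applied to the appropriate restriction, that $\td Y$ maps $\H^\perp$ in a way orthogonal to $\H$; reading off blocks then gives the full identity $P_{\H_1}Y = X P_{\H_2}$ directly. This avoids any case analysis and keeps the proof short.
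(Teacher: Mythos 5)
Your proposal is correct and follows exactly the paper's route: form $\rho=\rho_1\oplus\rho_2$ and $\td X=\begin{sbmatrix}0&X\\0&0\end{sbmatrix}$, invoke Corollary~\ref{C:extremal sums} to see $\sigma_1\oplus\sigma_2$ is fully extremal, apply SCLT to get $\td Y$, and read off the $\K_2\to\K_1$ corner. The one place you labor unnecessarily is the verification that $P_{\H_1}Y=XP_{\H_2}$: this is immediate, because ``coextension of $\td X$'' means by definition that $\td Y$ leaves $\H^\perp$ invariant, i.e.\ $\td Y=\begin{sbmatrix}\td X&0\\ \ast&\ast\end{sbmatrix}$ with a genuinely zero upper-right block, so after rearranging the four summands the top row of $Y$ is $[\,X\ \ 0\,]$ with nothing to prove. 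In particular your proposed appeal to ``the reasoning of Lemma~\ref{L:CLT}'' would not actually work here --- that lemma needs the compression to be the identity and norm one to force reduction --- but you do not need it; the vanishing you are after is built into the hypothesis that $\td Y$ is a coextension, which is exactly how the paper's proof proceeds.
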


\begin{proof}
Let $\rho = \rho_1 \oplus \rho_2$.
By Corollary~\ref{C:extremal sums}, $\sigma = \sigma_1 \oplus \sigma_2$
is a fully extremal coextension of $\rho$.
Observe that 
\[ \tilde X = \begin{bmatrix} 0 & X \\ 0 & 0 \end{bmatrix} \]
commutes with $\rho(\A)$.
Hence by SCLT, there is a coextension $\tilde Y$ of $\tilde X$
which commutes with $\sigma(\A)$.

Write $Y$ as a matrix with respect to
\[
 \K = \K_1 \oplus \K_2 =
 \H_1 \oplus \H_2 \oplus (\K_1 \ominus \H_1) \oplus (\K_2 \ominus \H_2) 
\]
and rearrange this to the decomposition
\[ \K = \H_1 \oplus (\K_1 \ominus \H_1) \oplus \H_2 \oplus (\K_2 \ominus \H_2) .\]
We obtain the unitary equivalence
\[
 \tilde Y = \begin{bmatrix} \tilde X & 0 \\ * & * \end{bmatrix} = 
       \left[ \begin{array}{cc|cc} 0 & X & 0 & 0 \\ 0 & 0 & 0 & 0 \\ \hline
       * & * & * & * \\ * & * & * & * \end{array} \right] \simeq
       \left[ \begin{array}{cc|cc} 0 & 0 & X & 0 \\ * & * & * & * \\ \hline
       0 & 0 & 0 & 0 \\ * & * & * & * \end{array} \right]
\]
Restricting to the upper right $2\times2$ corner, we obtain 
\[ Y := P_{\K_1} \tilde Y|_{\K_2} = \begin{bmatrix} X & 0 \\ * & * \end{bmatrix} .\]
Then $Y$ is a contraction, and as an operator in $\B(\K_2,\H_1)$ we have
\[ P_{\H_1} Y = \begin{bmatrix} X & 0 \end{bmatrix} = X P_{\H_2} .\]
Finally the commutation relations show that
\[ \sigma_1(a) Y = Y \sigma_2(a) \qforal a \in \A . \qedhere\]
\end{proof}

A similar argument shows the following:

\begin{prop} \label{P:WCLT intertwine}
Suppose that $\A$ has WCLT. 
Let $\rho_i$ be representations of $\A$ on $\H_i$ for $i=1,2$.
Suppose that $X$ is a contraction in $\B(\H_2,\H_1)$ such that
\[ \rho_1(a) X = X \rho_2(a) \qforal a \in \A .\]
Then there are extremal coextensions $\sigma_i$ of $\rho_i$
acting on $\K_i \supset \H_i$ for $i=1,2$ and a contraction $Y$ 
in $\B(\K_2,\K_1)$ so that 
\[ P_{\H_1} Y = X P_{\H_2} \]
and
\[ \sigma_1(a) Y = Y \sigma_2(a) \qforal a \in \A .\]
\end{prop}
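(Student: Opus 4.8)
The plan is to follow the proof of Proposition~\ref{P:SCLT intertwine}, reducing the intertwining assertion to commutant lifting for the direct sum $\rho_1 \oplus \rho_2$; the one difference is that, since WCLT does not permit us to prescribe the coextension in advance, we will not produce $\sigma_1$ and $\sigma_2$ separately but instead a single extremal coextension that serves simultaneously for both $\rho_1$ and $\rho_2$.

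First I would put $\rho = \rho_1 \oplus \rho_2$ on $\H = \H_1 \oplus \H_2$ and form
\[ \tilde X = \begin{bmatrix} 0 & X \\ 0 & 0 \end{bmatrix} \in \B(\H) . \]
The relation $\rho_1(a) X = X \rho_2(a)$ is precisely what makes $\tilde X$ commute with $\rho(a) = \rho_1(a) \oplus \rho_2(a)$ for every $a \in \A$, and $\|\tilde X\| = \|X\| \le 1$. Applying WCLT to the pair $(\rho, \tilde X)$ yields an extremal coextension $\sigma$ of $\rho$ on some $\K \supseteq \H$, together with a coextension $\tilde Y \in \B(\K)$ of $\tilde X$, of norm at most $1$, commuting with $\sigma(\A)$; write $\K = \H \oplus \K'$, so that $\K'$ is invariant for both $\sigma(\A)$ and $\tilde Y$.

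The key point is that $\sigma$ is already an extremal coextension of each of $\rho_1$ and $\rho_2$. Since $\H$ is coinvariant for $\sigma(\A)$, we have $\sigma(a)^* h = \rho(a)^* h$ for all $h \in \H$; because $\rho(a)^* = \rho_1(a)^* \oplus \rho_2(a)^*$ leaves each $\H_i$ invariant, it follows that each $\H_i$ is coinvariant for $\sigma(\A)$ and that $P_{\H_i}\sigma(a)|_{\H_i} = \rho_i(a)$. Hence $\K \ominus \H_i$ is invariant for $\sigma(\A)$, so $\sigma$ is a coextension of $\rho_i$; being extremal as a representation, it is an extremal coextension of $\rho_i$. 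Accordingly I would take $\K_1 = \K_2 = \K$, $\sigma_1 = \sigma_2 = \sigma$, and $Y = \tilde Y$.

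It then remains to verify the two displayed conclusions. The identity $\sigma_1(a) Y = Y \sigma_2(a)$ is immediate, since $\tilde Y$ commutes with $\sigma(\A)$. For $P_{\H_1} Y = X P_{\H_2}$, I would use that $\tilde Y$ is a coextension of $\tilde X$: given $k = h_1 + h_2 + k'$ with $h_i \in \H_i$ and $k' \in \K'$, one has $\tilde Y h_1 \in \K'$ and $\tilde Y k' \in \K'$, while $\tilde Y h_2 = X h_2 + w$ with $w \in \K'$, because $P_\H \tilde Y|_\H = \tilde X$ and $\tilde X h_1 = 0$, $\tilde X h_2 = X h_2 \in \H_1$. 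Compressing $\tilde Y k$ to $\H_1$, which is orthogonal to $\K'$, then leaves $X h_2 = X P_{\H_2} k$. There is no serious obstacle here; the only subtlety, and the point where the argument departs from that of Proposition~\ref{P:SCLT intertwine}, is that one should not attempt to split the extremal coextension $\sigma$ of $\rho_1 \oplus \rho_2$ as a direct sum $\sigma_1 \oplus \sigma_2$ — which in general cannot be done — but rather exploit that this single representation is itself an extremal coextension of each summand.
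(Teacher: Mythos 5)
Your proof is correct and is essentially the paper's own argument: form $\rho_1\oplus\rho_2$ and $\tilde X=\begin{sbmatrix}0&X\\0&0\end{sbmatrix}$, apply WCLT, and observe that the resulting extremal coextension $\sigma$ of the direct sum is automatically an extremal coextension of each $\rho_i$ separately, so it can serve as both $\sigma_1$ and $\sigma_2$. The verification that each $\H_i$ is coinvariant for $\sigma(\A)$ and that $P_{\H_1}\tilde Y=XP_{\H_2}$ is carried out correctly, and your closing remark about why one should not try to split $\sigma$ as $\sigma_1\oplus\sigma_2$ is exactly the right point.
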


\begin{proof}
Again form $\rho = \rho_1 \oplus \rho_2$ and $\tilde X$ as above.
Use WCLT to coextend $\rho$ to an extremal coextension $\sigma$ 
and $\tilde X$ to a contraction $\tilde Y$ commuting with $\sigma(\A)$
on $\K = \H_1 \oplus \H_2 \oplus \K'$.
Now notice that $\sigma$ is an extremal coextension of both $\rho_i$.
Considering $\tilde Y$ as a map from 
\[ \H_1 \oplus (\H_2 \oplus \K') \quad\text{to}\quad \H_2 \oplus (\H_1 \oplus \K') ,\]
one finds that it has a matrix form
\[
 \tilde Y =  \left[ \begin{array}{c|cc} X & 0 & 0 \\  \hline
       0 & 0 & 0 \\ * & * & *  \end{array} \right] .
\] 
This has the desired form.
\end{proof}

\begin{rem}
The issue with CLT and fully extremal coextensions is that a fully extremal
coextension of $\rho = \rho_1 \oplus \rho_2$ need not even contain a fully
extremal coextension of $\rho_i$ as a summand. 
Consider the subalgebra $\A \subset \fM_5$ given by
\[ \A = \spn\{ E_{21}, E_{32}, E_{31}, E_{34},E_{45}, E_{35}, E_{ii} : 1 \le i \le 5 \} .\]
Let 
\[ \rho_1(a) = E_{22}a|_{\bC e_2} \qand \rho_2(a) = E_{44}a|_{\bC e_4} .\]
The minimal fully extremal coextensions of $\rho_i$ are
\[
 \sigma_1(a) = E_{11}^\perp a|_{(\bC e_1)^\perp} \qand 
 \sigma_2(a) = E_{55}^\perp a|_{(\bC e_5)^\perp} .
\]
However the minimal fully extremal coextension of $\rho_1\oplus\rho_2$ is
\[ \sigma(a) = (E_{22}+E_{33}+E_{44}) a|_{\spn\{e_2,e_3,e_4\}} .\]
\end{rem}

Thus a proof of the following result must follow different lines.
This proof has its roots in the work of Sz.Nagy and Foia\c{s}.
Notice that it allows a specification of one of the coextensions.
Normally we will use a fully extremal coextension $\sigma_2$ of $\rho_2$.

\begin{thm} \label{T:CLT intertwine}
Suppose that $\A$ has CLT. 
Let $\rho_i$ be representations of $\A$ on $\H_i$ for $i=1,2$.
Suppose that $X$ is a contraction in $\B(\H_2,\H_1)$ such that
\[ \rho_1(a) X = X \rho_2(a) \qforal a \in \A .\]
Let $\sigma_2$ be an extremal coextension of $\rho_2$ on $\K_2$.
Then there is a fully extremal coextension $\sigma_1$ of $\rho_1$
acting on $\K_1$ and a contraction $Y$ in $\B(\K_2,\K_1)$ so that 
\[ P_{\H_1} Y = X P_{\H_2} \]
and
\[ \sigma_1(a) Y = Y \sigma_2(a) \qforal a \in \A .\]
\end{thm}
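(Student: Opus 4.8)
The plan is to reduce the intertwining version to the single-operator CLT by the classical trick of folding the intertwiner $X$ into a representation on $\H_1 \oplus \H_2$, but done carefully enough that we can prescribe $\sigma_2$. First I would form $\rho = \rho_1 \oplus \rho_2$ on $\H = \H_1 \oplus \H_2$ and the operator $\tilde X = \begin{sbmatrix} 0 & X \\ 0 & 0 \end{sbmatrix}$, which commutes with $\rho(\A)$ because $X$ intertwines. A naive application of CLT produces a fully extremal coextension $\sigma$ of $\rho$ and a coextension $\tilde Y$ of $\tilde X$ commuting with $\sigma(\A)$; but since CLT only yields \emph{some} fully extremal coextension, there is no reason its restriction to the part over $\H_2$ equals the prescribed $\sigma_2$, and indeed by the Remark preceding the theorem, $\sigma$ need not even contain a fully extremal coextension of $\rho_2$ as a summand. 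So this cheap approach fails, and we must instead build $\sigma$ over the \emph{given} $\sigma_2$ by hand.

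**The main step** is therefore to run the Sz.Nagy--Foia\c{s} construction directly. I would consider the coextension $\rho_1 \oplus \sigma_2$ of $\rho = \rho_1 \oplus \rho_2$ on $\H_1 \oplus \K_2$; this need not be fully extremal, but it \emph{is} a coextension. On this space, define $\tilde X_0 = \begin{sbmatrix} 0 & X P_{\H_2} \\ 0 & 0 \end{sbmatrix}$ as an operator from $\H_1 \oplus \K_2$ to itself (viewing $\H_1 \oplus \K_2$ with $\H_1$ in the first slot); it is a contraction because $X$ is, and it commutes with $(\rho_1 \oplus \sigma_2)(\A)$ since $\sigma_2(a)$ restricted/compressed to $\H_2$ is $\rho_2(a)$ and $X$ intertwines the $\rho_i$. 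Now apply CLT to the pair $(\rho_1 \oplus \sigma_2, \tilde X_0)$: since $\rho_1 \oplus \sigma_2$ is itself a representation, CLT gives a \emph{fully extremal} coextension $\mu$ of $\rho_1 \oplus \sigma_2$ on some $\L \supset \H_1 \oplus \K_2$ together with a coextension $Z$ of $\tilde X_0$ of the same norm commuting with $\mu(\A)$. By Corollary~\ref{C:fullextoffullcoext}-style reasoning (Corollary~\ref{C:extend and fully coextend}): since $\sigma_2$ is an extremal coextension of $\rho_2$ and $\mu$ is a fully extremal coextension of $\rho_1\oplus\sigma_2$, the restriction of $\mu$ to the part lying over $\H_1\oplus\H_2$ is controlled, and in particular $\mu$ is a fully extremal coextension of $\rho$. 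The key point is that because $\mu$ dilates the \emph{prescribed} $\sigma_2$ and is built by coextension, the $\K_2$-to-itself corner of the story is exactly $\sigma_2$: $\sigma_2$ sits as a coinvariant piece of $\mu$, with $\mu$ also coextending $\rho_1$ on $\H_1$.

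**Then** I would extract $\sigma_1$ and $Y$ from $(\mu, Z)$ by compression. Write $\L = \H_1 \oplus (\L \ominus \H_1)$ and also $\L = \K_2 \oplus (\L \ominus \K_2)$; set $\K_1$ to be the smallest $\mu(\A)$-coinvariant subspace containing $\H_1$ inside the orthocomplement of the part of $\L$ over $\H_2$ (equivalently, inside $\L \ominus (\K_2 \ominus \H_2)$ suitably rearranged), and $\sigma_1 = P_{\K_1}\mu(\cdot)|_{\K_1}$; this is a fully extremal coextension of $\rho_1$ by Proposition~\ref{P:minimal fully extremal} applied to the summand structure of $\mu$ over the $\H_1$-side. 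Define $Y = P_{\K_1} Z|_{\K_2}$. The matrix form of $Z$ coextending $\tilde X_0$, after the rearrangement of the two decompositions of $\L$, has upper-right corner of the form $\begin{sbmatrix} X & 0 \\ * & * \end{sbmatrix}$ (as in the proof of Proposition~\ref{P:SCLT intertwine}), so $P_{\H_1} Y = X P_{\H_2}$; and the commutation $\mu(a)Z = Z\mu(a)$ restricts to $\sigma_1(a) Y = Y \sigma_2(a)$ because $\K_1$ is coinvariant for $\mu(\A)$ on one side and $\K_2$ is coinvariant on the other. Finally $\|Y\| \le \|Z\| = \|\tilde X_0\| = \|X\| \le 1$, so $Y$ is a contraction.

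**The hard part** will be step two: ensuring that the fully extremal coextension $\mu$ delivered by CLT, applied to $\rho_1 \oplus \sigma_2$, genuinely restricts to the \emph{given} $\sigma_2$ over $\H_2$ rather than to some other fully extremal coextension of $\rho_2$, and that what we get over $\H_1$ really is a fully extremal coextension of $\rho_1$ in its own right. This is exactly the subtlety flagged by the Remark before the theorem, and it is where one must use that $\sigma_2$ being extremal makes it a direct summand of any coextension of $\sigma_2$ that happens to be a coextension of $\rho_2$ (so it is preserved), combined with the fully extremal property of $\mu$ to split off the $\rho_1$-part cleanly via Proposition~\ref{P:minimal fully extremal}. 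Getting the bookkeeping of the two orthogonal decompositions of $\L$ to line up so that the corner of $Z$ has the stated form is routine but needs care, much as in Proposition~\ref{P:SCLT intertwine}.
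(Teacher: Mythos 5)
Your proposal is correct and follows essentially the same route as the paper: apply CLT to $\rho_1 \oplus \sigma_2$ (rather than to $\rho_1 \oplus \rho_2$) with the folded intertwiner, use the extremality of the prescribed $\sigma_2$ to force the corresponding off-diagonal block of the resulting fully extremal coextension $\mu$ to vanish, and take $\sigma_1$ to be the complementary summand. The one imprecision is your appeal to Proposition~\ref{P:minimal fully extremal} for the full extremality of $\sigma_1$ --- that proposition is not the right tool; the correct (one-line) argument is that any dilation $\gamma$ of $\sigma_1$ which coextends $\rho_1$ yields $\gamma \oplus \sigma_2$, a dilation of $\mu \simeq \sigma_1 \oplus \sigma_2$ which coextends $\rho_1 \oplus \sigma_2$, so the full extremality of $\mu$ forces $\gamma = \sigma_1 \oplus \gamma'$.
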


\begin{proof}
Let $\K_2 = \H_2 \oplus \K'_2$, and decompose 
\[
 \sigma_2(a) = 
 \begin{bmatrix} \rho_2(a) & 0 \\ \sigma_{21}(a) & \sigma_{22}(a) \end{bmatrix} 
\]
Observe that $\big[ X \ 0 \big] \in \B(\K_2,\H_1)$ satisfies
\[
 \rho_1(a) \big[ X \ 0 \big] = \big[ \rho_1(a) X \ 0 \big] = 
 \big[ X \ 0 \big]  
 \begin{bmatrix} \rho_2(a) & 0 \\ \sigma_{21}(a) & \sigma_{22}(a) \end{bmatrix} .
\]
Therefore 
\[
 \tilde X =  \left[ \begin{array}{c|cc} 0 & X & 0 \\  \hline
       0 & 0 & 0 \\ 0 & 0 & 0  \end{array} \right]
\]  
commutes with the range of $\rho = \rho_1 \oplus \sigma_2$.   

Now we apply the CLT property to $\rho$ and $\tilde X$ to obtain a fully extremal
coextension $\tau$ of $\rho$ and contraction $\tilde Y$ coextending $\tilde X$
on $\L = \H_1 \oplus \K_2 \oplus \L'$ which commute.  
We may write
\[
 \tau(a) = \begin{bmatrix} \rho_1(a) & 0 & 0 \\ 0 & \sigma_2(a) & 0 \\
              \tau_{31}(a) & \tau_{32}(a) & \tau_{33}(a) \end{bmatrix} .
\]
Observe that the lower right $2 \times 2$ corner is a coextension of $\sigma_2$.
Since $\sigma_2$ is extremal, we see that $\tau_{32} = 0$.
Define
\[
 \sigma_1(a) = 
 \begin{bmatrix} \rho_1(a) & 0 \\ \tau_{31}(a) & \tau_{33}(a) \end{bmatrix} .
\]

To complete the proof, we need to establish that $\sigma_1$ is a 
fully extremal coextension of $\rho_1$.
Suppose that $\gamma$ is a representation of $\A$ which dilates $\sigma_1$
and coextends $\rho_1$.  
Then $\gamma \oplus \sigma_2$ dilates $\sigma_1 \oplus \sigma_2 = \tau$
and coextends $\rho_1 \oplus \sigma_2$.
Since $\tau$ is fully extremal, $\gamma = \sigma_1 \oplus \gamma'$ as desired.
\end{proof}

We make a few more definitions. (Apologies for all the acronyms.)

\begin{defn}
If $\A^*$ has SCLT, CLT or WCLT, we say that $\A$ has SCLT*, CLT* or WCLT*.

Say that $\A$ has \textit{maximal commutant lifting} (MCLT) if 
for every representation $\rho$ on $\H$ 
and contraction $X$ commuting with $\rho(\A)$, there is a maximal dilation
$\pi$ of $\rho$ on a Hilbert space $\K \supset\H$ and a contraction $Y$
commuting with $\pi(\A)$ such that 
\[ P_\H \pi(a) Y^n |_\H = \rho(a) X^n \qforal a \in \A \AND n \ge 0 . \]
If the maximal dilation $\pi$ can be specified a priori, then say that
$\A$ has \textit{strong maximal commutant lifting} (SMCLT).
\end{defn}

It is clear that the commutant lifting properties for $\A^*$ can be interpreted as
lifting commutants to (fully) extremal extensions instead of coextensions.
On rare occasions, one gets both.  For example, the disk algebra $\AD$ is
completely isometrically isomorphic to its adjoint algebra.  Hence it has
both SCLT and SCLT*. 

The property MCLT for $\A$ is equivalent to MCLT for $\A^*$.
So we will not have a property MCLT*.

The definition of MCLT contains the information that the compression
of the algebra generated by $\pi(\A)$ and $Y$  to $\H$ is an algebra
homomorphisms which sends $\pi$ to $\rho$ and $Y$ to $X$.
It follows that $\H$ is semi-invariant for this algebra; i.e. $\H$ is the difference
of two subspaces which are invariant for both $\pi(\A)$ and $Y$.

\begin{thm}\label{T:MCLT}
If $\A$ has WCLT and WCLT*, then $\A$ has MCLT.
\end{thm}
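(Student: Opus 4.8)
The plan is to build the required maximal dilation $\pi$ and commuting contraction $Y$ by alternately applying WCLT and WCLT*, exploiting the fact that each extremal coextension step pushes toward orthoprojectivity and each extremal extension step pushes toward orthoinjectivity; in the limit, by the Muhly--Solel theorem (Theorem~\ref{T:MS}), one reaches a maximal dilation. Concretely, start with $\rho$ on $\H$ and the contraction $X$ commuting with $\rho(\A)$. Apply WCLT to get an extremal coextension $\sigma_1$ of $\rho$ on $\K_1 \supset \H$ and a coextension $X_1$ of $X$ with $\|X_1\|\le 1$ commuting with $\sigma_1(\A)$. Then apply WCLT* (i.e.\ WCLT for $\A^*$) to the pair $(\sigma_1, X_1)$: this gives an extremal \emph{extension} $\sigma_2$ of $\sigma_1$ on $\K_2 \supset \K_1$ and an \emph{extension} $X_2$ of $X_1$ (which is therefore a dilation of $X$), still a contraction, commuting with $\sigma_2(\A)$. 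Iterate, producing a nested sequence $\H \subset \K_1 \subset \K_2 \subset \cdots$, representations $\sigma_n$ and contractions $X_n$, each the next one's compression, with $\sigma_n(\A)$ commuting with $X_n$, and with $\H$ semi-invariant throughout (indeed $P_\H \sigma_n(a) X_n^k|_\H = \rho(a)X^k$ because each step is a one-sided dilation in the operator-algebra part and a genuine dilation in the $X$-part, and compressions of products behave correctly on the co-invariant/invariant pieces).

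Take $\K = \overline{\bigcup_n \K_n}$, let $\pi$ be the SOT-limit representation on $\K$ obtained from the inductive limit of the $\sigma_n$, and let $Y$ be the corresponding inductive limit of the $X_n$, a contraction on $\K$ commuting with $\pi(\A)$. The compression identity
\[
 P_\H \pi(a) Y^k|_\H = \rho(a) X^k \qforal a \in \A \AND k \ge 0
\]
passes to the limit since each $\H$-compression is computed inside some $\K_n$. It remains to check that $\pi$ is a maximal dilation of $\rho$. For this, observe that $\pi$ restricted to each odd stage is an extremal coextension of the previous stage and at each even stage an extremal extension; a standard limiting argument (in the spirit of the proof of Theorem~\ref{T:fully extremal}, or directly: any further coextension of $\pi$, restricted down, would have to split off each $\sigma_{2n-1}$, forcing it to split off $\pi$, and dually for extensions) shows $\pi$ is both an extremal coextension and an extremal extension, hence maximal by Theorem~\ref{T:MS}.

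The main obstacle is the last step: ensuring that the limit $\pi$ is genuinely \emph{maximal}, not merely an increasing union of alternating extremals. A single alternation does not suffice (as Example~\ref{Eg:zigzag} shows, one may need countably many steps, and even that can fail), so some care is needed in how the dense sets are enumerated at each stage so that every vector and every element of $\A$ eventually gets "processed" in both the coextension and extension directions --- just as in the transfinite bookkeeping of the proof of Theorem~\ref{T:fully extremal}. One should arrange the induction so that at stage $n$ one handles the $n$-th element of a fixed countable dense subset of $\A \times \K_{n-1}$ (diagonalizing, since $\K_{n-1}$ grows), both for $\sigma(a)\xi$ and for $\sigma(a)^*\xi$; then in the limit the extremality conditions \eqref{E1} and \eqref{E2} (with $\succ_c$ replaced by the appropriate unconstrained $\succ$) hold at every vector of $\K$, giving maximality. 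If $\A$ or $\H$ is nonseparable, replace the countable sequence by a transfinite one indexed by an ordinal, exactly as in Theorem~\ref{T:fully extremal}. The commutant-lifting constraints ride along for free: at each step WCLT (resp.\ WCLT*) supplies the commuting contraction, and contractivity is preserved under inductive limits, so no norm is lost.
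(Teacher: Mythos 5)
Your overall strategy --- alternate WCLT and WCLT* to get extremal coextensions $\sigma_n$ and extremal extensions $\tau_n$ with commuting contractions riding along, pass to the limit, and invoke Theorem~\ref{T:MS} --- is the paper's strategy, and the ``direct'' argument you mention in parentheses (any coextension of $\pi$, restricted down, must split off each $\sigma_n$, hence splits off $\pi$; dually for extensions) is in fact the correct way to finish. The genuine problem is your final paragraph, where you disavow that argument, assert that a countable alternation ``can fail'' to reach a maximal dilation, and propose an enumeration scheme in the style of Theorem~\ref{T:fully extremal}. That diagnosis is wrong and the proposed fix is incoherent: if you invoke WCLT at each stage you already obtain a \emph{globally} extremal coextension, so there is nothing left for the enumeration to accomplish; whereas if you instead make only pointwise norm improvements as in Theorem~\ref{T:fully extremal}, you lose the mechanism (WCLT itself) that carries the commuting contraction along. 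Example~\ref{Eg:zigzag} only shows that many alternations may be \emph{needed}; it does not show that the limit of the alternating extremal process fails to be maximal. As written, your proof flags its key step as an unresolved obstacle and then ``resolves'' it incorrectly.

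What actually closes the argument, and what you should supply in place of the enumeration, is the tridiagonal structure forced by extremality at each finite stage. Write $\sigma_{n+1}$ with respect to $(\L_n\ominus\K_n)\oplus\K_n\oplus(\K_{n+1}\ominus\L_n)$: its lower right $2\times2$ corner is a coextension of $\sigma_n$, so extremality of $\sigma_n$ kills the $(3,2)$ entry. Hence $\K_n$ is invariant for $\sigma_{n+1}$ with $\sigma_{n+1}|_{\K_n}=\sigma_n$, and dually $\L_n$ is coinvariant for $\tau_{n+1}$ with compression $\tau_n$. This is exactly what makes the limit $\pi$ exist (the vectors $\pi(a)x$ and $\pi(a)^*y$ are eventually constant on the dense union, so you genuinely get a \sot-$*$ limit, a point you assert without justification), and it gives: every $\K_n$ is invariant for $\pi$ with $\pi|_{\K_n}=\sigma_n$, and every $\L_n$ is coinvariant for $\pi$ with compression $\tau_n$. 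Now if $\mu$ is a coextension of $\pi$ on $\K\oplus\Q$ with $\Q$ invariant, then $\K_n\oplus\Q$ is invariant for $\mu$ and the restriction of $\mu$ there is a coextension of $\sigma_n$; extremality of $\sigma_n$ forces $P_\Q\,\mu(a)|_{\K_n}=0$ for every $n$, whence $\mu=\pi\oplus\mu'$ and $\pi$ is an extremal coextension. The dual argument using the $\tau_n$ shows $\pi$ is an extremal extension, and Theorem~\ref{T:MS} gives maximality. With this substituted for your last paragraph (and a short verification, as in the paper, that the commuting contractions converge in the weak operator topology to a contraction commuting with $\pi(\A)$), the proof is complete.
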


\begin{proof}
One uses WCLT to coextend $\rho$ on $\H$ to an extremal 
coextension $\sigma_1$ on $\K_1$ and coextend $X$ to a
 contraction $Y_1 \in \B(\K_1)$ commuting with $\sigma_1(\A)$.
Then use WCLT* extend $\sigma_1$ to an extremal extension $\tau_1$ on $\L_1$,
and lift $Y_1$ to a contraction $Z_1 \in \B(\L_1)$ commuting with $\tau_1(\A)$.
Alternate these procedures, obtaining an extremal coextension 
$\sigma_{n+1}$ of $\tau_n$ on $\K_{n+1}$ and a contractive coextension
$Y_{n+1} \in \B(\K_{n+1})$ in the commutant of $\sigma_{n+1}(\A)$; and
then extending $\sigma_{n+1}$ to an extremal $\tau_{n+1}$ on $\L_{n+1}$ 
and extending $Y_{n+1}$ to a contraction $Z_{n+1} \in \B(\L_{n+1})$ 
in the commutant of $\tau_{n+1}(\A)$.
It is easy to see that at every stage, the original space $\H$ is semi-invariant for
both the representation and the contraction---so that these are always simultaneous
dilations of the representation and the contraction.

Moreover, we can write $\sigma_{n+1}$ as a dilation of $\sigma_n$ in the matrix form 
relative to 
\[ \K_{n+1} = (\L_n \ominus \K_n) \oplus \K_n \oplus (\K_{n+1} \ominus \L_n) \]
as
\[
 \sigma_{n+1} = 
 \begin{bmatrix} * & 0 & 0 \\ * & \sigma_n & 0 \\ * & * & * \end{bmatrix}
\]
where the upper left $2 \times 2$ corner represents $\tau_n$.
The lower right $2 \times 2$ corner is a coextension of $\sigma_n$.
Since $\sigma_n$ is an extremal coextension, the $3,2$ entry is $0$.
Rearranging this as 
\[ \K_{n+1} = \K_n \oplus (\L_n \ominus \K_n) \oplus (\K_{n+1} \ominus \L_n) ,\]
we have
\[
 \sigma_{n+1} = 
 \begin{bmatrix} \sigma_n & * & 0 \\ 0 & * & 0 \\ 0 & * & * \end{bmatrix} .
\]
A similar analysis holds for the $\tau_n$. 
Therefore, with respect to 
\[
 \K_1 \oplus (\L_1 \ominus \K_1) \oplus 
 (\K_2 \ominus \L_1) \oplus (\L_2 \ominus \K_2) \oplus \dots ,
\]
these representations have a tridiagonal form
\[
  \begin{bmatrix} \sigma_1 & * & 0 & 0 & 0 & 0 & 0 & \dots\\
   0 & * & 0 & 0 & 0 & 0 & 0 & \dots\\
   0 & * & * & * & 0 & 0 & 0 & \dots \\ 
   0 & 0 & 0 & * & 0 & 0 & 0 & \dots \\
   0 & 0 & 0 & * & * & * & 0 & \dots \\
   0 & 0 & 0 & 0 & 0 & * & 0 & \dots \\
   \vdots & \vdots & \vdots & \vdots & \vdots & \ddots & \ddots & \ddots 
   \end{bmatrix} .
\]
It follows that the direct limit $\pi$ exists as a \sot-$*$ limit.
Moreover, as $\pi$ is a limit of extremal coextensions, it is an extremal coextension;
and similarly it is an extremal extension.  Thus it is a maximal dilation.

The operators $Y_n$ and $Z_n$ each leave the subspaces $\H$ semi-invariant,
and the restriction of $Z_n$ to $\K_n$ is $Y_n$, and the compression (actually
a co-restriction) of $Y_{n+1}$ to $\L_n$ is $Z_n$.
Therefore the direct limit $Y$ exists as a \wot\ limit.
It follows that $Y$ is a contraction that commutes with $\pi(\A)$.
To see this, let $x_n \in \K_n$ and $y_n \in \L_n$.
Then for $a\in\A$, 
\[ \pi(a) x_n = \sigma_k(a) x_n \qforal k \ge n .\]
Similarly, 
\[ \pi(a)^* y_n = \sigma_k(a)^* y_n \qforal k \ge n+1 . \]
So we can compute:
\begin{align*}
 \bip{ (\pi(a)Y - Y \pi(a)) x_n, y_n} &= 
 \bip{x_n, Y^* \pi(a)^* y_n} - \bip{ Y \pi(a) x_n, y_n} \\ &=
 \lim_{k\to\infty} \bip{x_n, Y_k^* \pi(a)^* y_n} - \bip{ Y_k \pi(a) x_n, y_n} \\ &=
 \lim_{k\to\infty} \bip{x_n, Y_k^* \sigma_k(a)^* y_n} - \bip{ Y_k \sigma_k(a) x_n, y_n} \\ &=
 \lim_{k\to\infty}  \bip{ (\sigma_k(a)Y_k- Y_k \sigma_k(a))  x_n, y_n} \\ &= 0.
\end{align*}
So we have obtained the desired commutant lifting.
\end{proof}

We can modify the proof of Theorem~\ref{T:SCLT} characterizing 
SCLT to characterize SMCLT.

\begin{thm} \label{T:SMCLT}
The following are equivalent for $\A$:
\begin{enumerate}
\item $\A$ has SMCLT.
\item $\A$ has MCLT and unique minimal maximal dilations.
\end{enumerate}
\end{thm}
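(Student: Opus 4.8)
The plan is to follow the proof of Theorem~\ref{T:SCLT}, replacing ``fully extremal coextension'' by ``maximal dilation'' throughout and the coextension relation by the power--dilation relation $P_\H\pi(a)Y^n|_\H=\rho(a)X^n$.

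For $(1)\Rightarrow(2)$: SMCLT trivially gives MCLT. For uniqueness, suppose a representation $\rho$ on $\H$ has two minimal maximal dilations $\pi_1$ on $\K_1$ and $\pi_2$ on $\K_2$. Then $\pi_1\oplus\pi_2$ is a maximal dilation of $\rho\oplus\rho$: by Theorem~\ref{T:MS} each $\pi_i$ is both an extremal extension and an extremal coextension, and these properties pass to direct sums by Corollary~\ref{C:extremal sums} and its dual for extensions; it is minimal because $\K_1\oplus 0$ and $0\oplus\K_2$ are already the reducing subspaces generated by the two copies of $\H$. The flip $X=\begin{sbmatrix}0&I\\I&0\end{sbmatrix}$ is a unitary commuting with $(\rho\oplus\rho)(\A)$. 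Applying SMCLT with the prescribed maximal dilation $\pi_1\oplus\pi_2$ yields a contraction $Y$ on $\K_1\oplus\K_2$ commuting with $(\pi_1\oplus\pi_2)(\A)$ with $P_{\H\oplus\H}Y|_{\H\oplus\H}=X$. Exactly as in the proof of Theorem~\ref{T:SCLT}, $Y$ reduces $\H\oplus\H$ with $Y|_{\H\oplus\H}=X$, so $Y^2$ restricts to $I$ there and Lemma~\ref{L:CLT} forces $Y^2=I$, whence $Y$ is unitary; its off-diagonal corners $Y_{21}\in\B(\K_1,\K_2)$ and $Y_{12}\in\B(\K_2,\K_1)$ intertwine $\pi_1$ with $\pi_2$, restrict to the identity on $\H$, and satisfy $Y_{12}Y_{21}=I=Y_{21}Y_{12}$ by two further applications of Lemma~\ref{L:CLT}. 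Hence $Y_{21}$ is a unitary equivalence of $\pi_1$ and $\pi_2$ fixing $\H$, proving uniqueness.

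For $(2)\Rightarrow(1)$: given $\rho$ on $\H$, a prescribed maximal dilation $\pi$ on $\K$, and a contraction $X\in\rho(\A)'$, write $\pi\simeq\pi_0\oplus\pi'$ where $\pi_0$ is the restriction of $\pi$ to the smallest reducing subspace for $\pi(\A)$ containing $\H$, a minimal maximal dilation of $\rho$. It suffices to produce a contraction $Y_0$ on $\K_{\pi_0}$ commuting with $\pi_0(\A)$ with $P_\H\pi_0(a)Y_0^n|_\H=\rho(a)X^n$, since then $Y=Y_0\oplus 0$ has the required properties on $\K$ (adjoining a reducing summand on which the operator acts as $0$ leaves all the identities $P_\H\pi(a)Y^n|_\H=\rho(a)X^n$ intact). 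Apply MCLT to $(\rho,X)$ to obtain a maximal dilation $\tau$ of $\rho$ on $\L$ and a contraction $Z\in\tau(\A)'$ with $P_\H\tau(a)Z^n|_\H=\rho(a)X^n$. Decompose $\tau\simeq\tau_0\oplus\tau_1$ with $\tau_0$ the restriction of $\tau$ to the smallest reducing subspace for $\tau(\A)$ containing $\H$; this is again a minimal maximal dilation of $\rho$, so by the uniqueness hypothesis there is a unitary $W\colon\K_{\pi_0}\to\K_{\tau_0}$ with $W|_\H=I$ and $W\pi_0(a)W^*=\tau_0(a)$. Writing $Z$ as an operator matrix relative to $\L=\K_{\tau_0}\oplus\K_{\tau_1}$, its $(1,1)$-corner $Z_{11}$ commutes with $\tau_0(\A)$ (since $\tau(a)=\tau_0(a)\oplus\tau_1(a)$ is block diagonal), and one sets $Y_0=W^*Z_{11}W$.

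The one point requiring care is the identity $P_\H\pi_0(a)Y_0^n|_\H=\rho(a)X^n$. Transporting by $W$ and using that $\K_{\tau_0}$ reduces $\tau(\A)$ and contains $\H$, this amounts to $P_\H\tau_0(a)\bigl(P_{\K_{\tau_0}}Z^n h\bigr)=P_\H\tau_0(a)\bigl(Z_{11}^n h\bigr)$ for $h\in\H$, i.e.\ the cross terms of $Z$ relative to $\K_{\tau_0}\oplus\K_{\tau_1}$ must not contribute after compression by $P_\H\tau_0(a)$. Here one uses the observation recorded after the definition of MCLT that $\H$ is semi-invariant for the algebra generated by $\tau(\A)$ and $Z$: writing $\H=\M_1\ominus\M_2$ with $\M_2\subseteq\M_1$ both invariant for $\tau(\A)$ and $Z$, one shows that for $h\in\H$ the vector $P_{\K_{\tau_0}}Z^n h$ differs from $Z_{11}^n h$ only by a vector orthogonal to $\overline{\tau_0(\A)^*\H}$, which is therefore annihilated by every $P_\H\tau_0(a)$. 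I expect this reconciliation of the $\tau(\A)$-reducing subspace generated by $\H$ with the invariant subspaces witnessing semi-invariance to be the main obstacle; the direction $(1)\Rightarrow(2)$ and the reductions above are routine adaptations of Theorem~\ref{T:SCLT}.
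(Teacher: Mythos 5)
Your direction $(1)\Rightarrow(2)$ is correct and is exactly the paper's argument: form $\pi_1\oplus\pi_2$, lift the flip with the prescribed maximal dilation, and run the unitary--corner analysis of Theorem~\ref{T:SCLT} via Lemma~\ref{L:CLT}. The reduction of $(2)\Rightarrow(1)$ to a minimal prescribed dilation (adjoining a $0$ summand) is also fine.

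The gap is in $(2)\Rightarrow(1)$, at precisely the step you flag, and it is not a removable technicality of your write-up: the orthogonality you assert is not a consequence of semi-invariance. Unwinding it for $n=2$: writing $\K_1=\K_{\tau_0}^\perp$, one has $P_{\K_{\tau_0}}Z^2h-Z_{11}^2h=P_{\K_{\tau_0}}ZP_{\K_1}Zh$, and the statement that this vector is orthogonal to $\tau_0(\A)^*\H$ is \emph{identical} to the statement $P_\H\tau(a)ZP_{\K_1}Zh=0$, which in turn is exactly the difference between the identity you want, $P_\H\tau_0(a)Z_{11}^2|_\H=\rho(a)X^2$, and the identity MCLT gives you, $P_\H\tau(a)Z^2|_\H=\rho(a)X^2$. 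So the claim is a restatement of the conclusion, not a lemma toward it. Semi-invariance supplies invariant subspaces $\M_2\subset\M_1$ for the algebra generated by $\tau(\A)$ and $Z$ with $\H=\M_1\ominus\M_2$, but $\K_{\tau_0}$ is chosen to reduce $\tau(\A)$ only; it is in general neither invariant nor semi-invariant for $Z$, and chasing the computation shows the needed vanishing pairs an element of $P_{\K_1}\M_1$ against an element of $P_{\K_1}(\M_1^\perp)$, which is zero only when the projections onto $\K_{\tau_0}$ and $\M_1$ are compatible --- and there is no reason they should be. Contrast this with the corner argument in Theorem~\ref{T:SCLT}, which your plan is modelled on: there $\H$ is \emph{coinvariant} for the lifted operator, and coinvariance, unlike semi-invariance, does survive compression to a subspace reducing the representation; that is exactly why $Z_{11}$ works in that setting and why the same move does not transfer here.

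If you want to repair this, the natural first move is the one used in Proposition~\ref{P:sD-MCLT}: restrict $(\tau,Z)$ to the smallest subspace invariant for both $\tau(\A)$ and $Z$ containing $\H$, where $\H$ becomes coinvariant for both; compressions to reducing subspaces of the restricted representation then do preserve the power identities. The cost is that the restriction of a maximal dilation to an invariant subspace is only Shilov, so this lands you on a lift of $X$ to a Shilov coextension of $\rho$, not on the prescribed minimal maximal dilation $\pi_0$, and you still owe an argument for climbing back up to $\pi_0$ while carrying the lift. That remaining step is the actual content of $(2)\Rightarrow(1)$; the paper dismisses this direction as routine without supplying it, and your proposal as written does not close it either.
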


\begin{proof}
Suppose that $\A$ has SMCLT.  Then evidently it has MCLT.
Moreover, suppose that a representation $\rho$ has two minimal 
maximal dilations $\pi_1$ and $\pi_2$.  
Then $\pi_1 \oplus \pi_2$ is a maximal dilation of $\rho \oplus \rho$.
Now $(\rho \oplus \rho)(\A)$ commutes with $X = \begin{sbmatrix}0&I\\I&0\end{sbmatrix}$.
By SMCLT, this dilates to a contraction $Y$ which commutes with 
$(\pi_1 \oplus \pi_2)(\A)$ and $\H\oplus\H$ is jointly semi-invariant for
$Y$ and $(\pi_1 \oplus \pi_2)(\A)$.
Arguing exactly as in the proof of Theorem~\ref{T:SCLT}, we deduce that
$\pi_1$ and $\pi_2$ are unitarily equivalent via a unitary which fixes $\H$.
So $\A$ has unique maximal dilations.

Conversely, it is routine to see that unique maximal dilations and MCLT 
yields SMCLT. So (i) and (ii) are equivalent.
\end{proof}

\begin{rem} One might suspect, as we did, that SMCLT is also equivalent to the following:
\begin{enumerate}
\addtocounter{enumi}{2}
\item $\A$ has SCLT and SCLT*.
\item $\A$ has MCLT and unique minimal fully extremal extensions and coextensions.
\end{enumerate}
But this is not the case.

If (ii) holds, then by Theorem~\ref{T:fullextoffullcoext} there is uniqueness of 
minimal fully extremal coextensions; and Corollary~\ref{C:fullcoextoffullext}
yields uniqueness of minimal fully extremal extensions.  So (iv) holds.

Also if (iii) holds, then there are unique minimal fully extremal extensions 
and coextensions by Theorem~\ref{T:SCLT} and its dual result for SCLT*.
Also MCLT holds by Theorem~\ref{T:MCLT}.  So (iv) holds.

However, it is possible that SCLT and SCLT* hold, yet $\A$ does not
have unique minimal maximal dilations.  See the example of $2\times2$ matrices
developed in the next section.  Thus SMCLT fails to hold.

We do not know if SMCLT implies (iii).
\end{rem}

In the case of semi-Dirichlet algebras, we have something extra.
We do not know if all semi-Dirichlet algebras have SCLT.
However, Muhly and Solel \cite{MScenv} show that the tensor algebra
over any C*-correspondence has SCLT.

\begin{prop}\label{P:sD-MCLT}
Suppose that $\A$ is semi-Dirichlet and has MCLT.
Then $\A$ has SCLT.
\end{prop}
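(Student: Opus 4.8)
The plan is to apply Theorem~\ref{T:SCLT}: it suffices to show that $\A$ has CLT, since a semi-Dirichlet algebra automatically has unique minimal fully extremal coextensions. The latter is immediate from Theorem~\ref{T:sD}: every representation has a unique minimal extremal coextension, which is moreover fully extremal, and any minimal fully extremal coextension is in particular a minimal extremal coextension, hence the unique one. So the real content is to extract CLT from MCLT together with the relation $\A^*\A\subseteq\ol{\A+\A^*}$.

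First I would fix a representation $\rho$ on $\H$ and an operator $X$ commuting with $\rho(\A)$; rescaling, assume $\|X\|=1$. Apply MCLT to obtain a maximal dilation $\pi$ of $\rho$ on $\K$ and a contraction $Y$ commuting with $\pi(\A)$, together with the joint semi-invariance recorded after the definition of MCLT: $\H=\M_+\ominus\M_-$ with $\M_-\subseteq\M_+$ both invariant for $\pi(\A)$ and for $Y$, and with the compression to $\H$ sending $\pi$ to $\rho$ and $Y$ to $X$. Put $\K_1=\ol{\pi(\A)\H}$ and $\sigma=\pi|_{\K_1}$. Since $\K_1$ is the smallest $\pi(\A)$-invariant subspace containing $\H$ we have $\H\subseteq\K_1\subseteq\M_+$; as $\M_-=\M_+\ominus\H$ is $\pi(\A)$-invariant, $\H$ is coinvariant in $\K_1$, so $\sigma$ is a cyclic coextension of $\rho$. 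Also $\sigma$ is the restriction of the maximal dilation $\pi$ to a $\pi(\A)$-invariant subspace, hence Shilov; by Theorem~\ref{T:sD} it is an extremal coextension of $\rho$, and being cyclic it is fully extremal by Lemma~\ref{L1:sD}.

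For the coextension of $X$ I would take the compression $Y'=P_{\K_1}Y|_{\K_1}\in\B(\K_1)$. Writing $\K_1=\H\oplus\K_1'$ with $\K_1'=\K_1\cap\M_-$, one checks directly from the joint semi-invariance that $P_\H Y'|_\H=X$ and that $Y'$ leaves $\K_1'$ invariant, so $Y'$ is a coextension of $X$ and $\|Y'\|=\|X\|=1$. The substantive point is $[Y',\sigma(a)]=0$. A short computation gives $[Y',\sigma(a)]\xi=P_{\K_1}\pi(a)v$ for $\xi\in\K_1$, where $v=(I-P_{\K_1})Y\xi$. Since $Y\K_1\subseteq\M_+$ (as $\M_+$ is $Y$-invariant) and $\K_1\subseteq\M_+$, we get $v\in\M_+\ominus\K_1=\M_-\ominus\K_1'\subseteq\M_-$, so $v\perp\K_1$ and $\pi(a)v\in\M_-\perp\H$; hence it is enough that $\pi(a)v\perp\K_1$. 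Testing against a spanning vector $\pi(c)k$ of $\K_1=\ol{\pi(\A)\H}$ and using that $\pi$ extends to a $*$-representation of $\cenv(\A)$, we rewrite $\ip{\pi(a)v,\pi(c)k}=\ip{v,\pi(a^*c)k}$; now semi-Dirichlet lets us write $a^*c=\lim_m(d_m+e_m^*)$ with $d_m,e_m\in\A$, and $\ip{v,\pi(d_m)k}=0$ because $\pi(d_m)k\in\K_1\perp v$, while $\ip{v,\pi(e_m)^*k}=\ip{\pi(e_m)v,k}=0$ because $\pi(e_m)v\in\M_-\perp\H$. Thus $\sigma$ and $Y'$ witness CLT, and $\A$ has SCLT.

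The step I expect to be the main obstacle is precisely this commutation check. A maximal dilation $\pi$ need not be a coextension of $\rho$, so one cannot simply restrict $Y$ to an invariant subspace which is itself a coextension; compressing instead produces the error term $P_{\K_1}\pi(a)v$, and both ingredients of the hypothesis enter in killing it — the joint semi-invariance furnished by MCLT places $v$ in the $\pi(\A)$-invariant subspace $\M_-$ orthogonal to $\H$, while the semi-Dirichlet property absorbs the adjoint $\pi(a)^*$ that appears when one tests orthogonality against the cyclic subspace $\K_1=\ol{\pi(\A)\H}$. Everything else — the fully extremal and cyclicity statements for $\sigma$, and the norm and coextension bookkeeping for $Y'$ — is routine given Theorem~\ref{T:sD} and Lemma~\ref{L1:sD}.
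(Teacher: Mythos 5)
Your proof is correct, and at the level of strategy it is the same as the paper's: invoke MCLT, cut down to the cyclic invariant subspace $\K_1=\ol{\pi(\A)\H}$, recognize $\sigma=\pi|_{\K_1}$ as the unique cyclic, fully extremal minimal coextension via Theorem~\ref{T:sD} and Lemma~\ref{L1:sD}, compress $Y$, and finish with Theorem~\ref{T:SCLT}. The one step you handle differently is the commutation of the compressed operator. The paper first restricts both $\pi$ and $Y$ to the common invariant subspace $\M_+$, where the restrictions commute for free, and then uses that $\K_1$ \emph{reduces} $\sigma(\A)$ inside $\M_+$ --- this is precisely the splitting $\sigma=\sigma_0\oplus\sigma'$ forced by full extremality of $\sigma_0$ as a coextension of $\rho$ --- so that compressing a commutant element to a reducing subspace preserves commutation with no further work. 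You instead compress $Y$ directly from the ambient space and kill the error term $P_{\K_1}\pi(a)v$ by hand, testing against $\pi(c)k$ and using $a^*c\in\ol{\A+\A^*}$; this is in effect a rerun of the semi-Dirichlet orthogonality computation that already powers Lemma~\ref{L1:sD}. Both arguments are valid and produce the same pair $(\sigma_0,Z_0)$; the paper's version is shorter because the analytic content is quarantined in the earlier lemmas, while yours is self-contained at the cost of repeating that computation.
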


\begin{proof}
Let $\rho$ be a representation of $\A$ on $\H$ commuting with a contraction $X$.
Use MCLT to obtain a simultaneous dilation of $\rho$ to a maximal dilation $\pi$
and $X$ to a contraction $Y$ commuting with $\pi(\A)$.
Let $\K$ be the common invariant subspace for $\pi(\A)$ and $Y$ containing $\H$.
Since $\H$ is semi-invariant, the restriction of $\pi$ to $\K$ is a coextension
$\sigma$ of $\rho$.  The compression $Z$ of $Y$ to $\K$ is a contraction commuting
with $\sigma(\A)$.

By Theorem~\ref{T:sD}, there is a unique minimal extremal coextension $\sigma_0$
of $\rho$, and it must coincide with $\sigma|_{\K_0}$ where $\K_0 = \sigma(\A)\H$.
Thus $\sigma = \sigma_0 \oplus \sigma'$.
It follows that the compression $Z_0$ of $Z$ to $\K_0$ commutes with $\sigma_0(\A)$.
Moreover, since $\H$ is coinvariant for $Z$, it is also coinvariant for $Z_0$.
By Theorem~\ref{T:SCLT}, $\A$ has SCLT.
\end{proof}

\begin{cor} \label{C:D-MCLT}
If $\A$ is a Dirichlet algebra, the following are equivalent:
\begin{enumerate}
\item $\A$ has MCLT
\item $\A$ has SMCLT
\item $\A$ has SCLT and SCLT*
\item $\A$ has WCLT and WCLT*.
\end{enumerate}
\end{cor}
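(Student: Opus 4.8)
The plan is to run the cycle of implications (i)$\Rightarrow$(ii)$\Rightarrow$(iii)$\Rightarrow$(iv)$\Rightarrow$(i), invoking only results already established in this section together with Corollary~\ref{C:Dirichlet}. The one structural remark that makes everything fit together is that a Dirichlet algebra is in particular semi-Dirichlet, and so is its adjoint algebra $\A^*$, by Proposition~\ref{P:sDprops}(i); this $*$-symmetry is exactly what lets the loop close through the starred properties. So the first thing I would record is that both $\A$ and $\A^*$ are semi-Dirichlet, and that MCLT for $\A$ coincides with MCLT for $\A^*$ (as noted in the discussion of MCLT).

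For (i)$\Rightarrow$(ii), I would use Corollary~\ref{C:Dirichlet}, which guarantees that a Dirichlet algebra has a unique minimal maximal dilation for each representation; combining this with the hypothesis MCLT and feeding it into Theorem~\ref{T:SMCLT} produces SMCLT. For (ii)$\Rightarrow$(iii), note that SMCLT trivially gives MCLT, and since $\A$ is semi-Dirichlet, Proposition~\ref{P:sD-MCLT} upgrades this to SCLT; to obtain SCLT*, recall that MCLT for $\A$ is the same property as MCLT for $\A^*$, so that, $\A^*$ being semi-Dirichlet, Proposition~\ref{P:sD-MCLT} applied to $\A^*$ gives SCLT for $\A^*$, which by definition is SCLT* for $\A$.

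For (iii)$\Rightarrow$(iv), Theorem~\ref{T:SCLT} gives SCLT $\Rightarrow$ CLT, and CLT $\Rightarrow$ WCLT because any fully extremal coextension $\sigma$ of $\rho$ is in particular an extremal coextension: if $\tau$ is a coextension of $\sigma$ then $\sigma\prec\tau$, and since $\rho\prec_c\sigma\prec_c\tau$ forces $\rho\prec_c\tau$, the fully extremal property of $\sigma$ with respect to $\rho$ yields $\tau=\sigma\oplus\tau'$; the coextension produced by CLT already has the norm-preserving property demanded by WCLT, so nothing further is needed. The same argument applied to $\A^*$ yields WCLT* from SCLT*. Finally (iv)$\Rightarrow$(i) is precisely Theorem~\ref{T:MCLT}.

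The main obstacle here is bookkeeping rather than any genuine difficulty: one must keep straight the passage between $\A$ and $\A^*$, using that the Dirichlet hypothesis descends to $\A^*$, that MCLT is symmetric under taking adjoints, and that it is the Dirichlet hypothesis which supplies uniqueness of minimal maximal dilations (Corollary~\ref{C:Dirichlet})---the ingredient that separates SMCLT from plain MCLT. With those facts in hand, each implication becomes a single citation.
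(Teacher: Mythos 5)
Your proof is correct and assembles exactly the ingredients the paper intends for this corollary (which it leaves as an immediate consequence): Corollary~\ref{C:Dirichlet} plus Theorem~\ref{T:SMCLT} for (i)$\Rightarrow$(ii), Proposition~\ref{P:sD-MCLT} applied to $\A$ and to $\A^*$ (using the self-adjoint symmetry of MCLT and Proposition~\ref{P:sDprops}(i)) for (ii)$\Rightarrow$(iii), the chain SCLT $\Rightarrow$ CLT $\Rightarrow$ WCLT for (iii)$\Rightarrow$(iv), and Theorem~\ref{T:MCLT} for (iv)$\Rightarrow$(i). Your explicit verification that a fully extremal coextension is extremal is also the right justification for the CLT $\Rightarrow$ WCLT step.
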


Finally we point out that there is also an intertwining version for MCLT.

\begin{prop} \label{P:MCLT intertwine}
Suppose that $\A$ has MCLT. 
Let $\rho_i$ be representations of $\A$ on $\H_i$ for $i=1,2$.
Suppose that $X$ is a contraction in $\B(\H_2,\H_1)$ such that
\[ \rho_1(a) X = X \rho_2(a) \qforal a \in \A .\]
Then there are maximal dilations $\pi_i$ of $\rho_i$
acting on $\K_i \supset \H_i$ for $i=1,2$ and a contraction $Y$ 
simultaneously dilating $X$ in $\B(\K_2,\K_1)$ so that 
\[ \pi_1(a) Y = Y \pi_2(a) \qforal a \in \A .\]
\end{prop}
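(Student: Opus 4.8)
The plan is to reduce the intertwining version of MCLT to the "single-algebra" version exactly as Proposition~\ref{P:SCLT intertwine} reduces the intertwining SCLT to the plain SCLT, using the direct-sum trick. First I would form $\rho = \rho_1 \oplus \rho_2$ acting on $\H = \H_1 \oplus \H_2$, and set
\[
 \tilde X = \begin{bmatrix} 0 & X \\ 0 & 0 \end{bmatrix} ,
\]
which is a contraction (since $\|X\| \le 1$) and commutes with $\rho(\A) = \rho_1(\A) \oplus \rho_2(\A)$ precisely because $\rho_1(a) X = X \rho_2(a)$. Applying MCLT to the pair $(\rho, \tilde X)$ produces a maximal dilation $\pi$ of $\rho$ on some $\K \supset \H$ and a contraction $\tilde Y$ on $\K$ commuting with $\pi(\A)$ such that $\H$ is semi-invariant for the algebra generated by $\pi(\A)$ and $\tilde Y$, with $P_\H \pi(a) \tilde Y^n|_\H = \rho(a) \tilde X^n$ for all $a$ and all $n \ge 0$.

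The next step is to split $\pi$ along the two summands. Since $\pi$ is a maximal dilation of $\rho_1 \oplus \rho_2$, it is both an extremal extension and an extremal coextension (Theorem~\ref{T:MS}); I would use this to argue that, up to appropriate direct summands, $\pi$ restricts to maximal dilations $\pi_i$ of $\rho_i$. More concretely, let $\M$ be the smallest reducing subspace of $\pi(\A)$ containing $\H_1$; the restriction $\pi_1$ of $\pi$ to $\M$ is a dilation of $\rho_1$, and being a restriction of a maximal representation to a reducing subspace it is again maximal. Define $\K_1 = \M$ and similarly $\K_2$ for $\H_2$. Then $Y := P_{\K_1} \tilde Y|_{\K_2}$ is the candidate intertwiner. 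The semi-invariance structure from MCLT, rearranged with respect to
\[
 \K = \H_1 \oplus (\K_1 \ominus \H_1) \oplus \H_2 \oplus (\K_2 \ominus \H_2)
\]
(and a permutation of these summands as in the proof of Proposition~\ref{P:SCLT intertwine}), shows $\tilde Y$ has the block form that makes the compression $Y$ a contraction with $P_{\H_1} Y = X P_{\H_2}$; and the relation $\pi(a)\tilde Y = \tilde Y \pi(a)$ compresses to $\pi_1(a) Y = Y \pi_2(a)$ for all $a \in \A$, using that $\K_1$ is invariant for $\pi(\A)$ "on the left" and $\K_2$ co-invariant "on the right" in the relevant sense, so the off-diagonal blocks that would spoil the intertwining vanish.

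The main obstacle I expect is the bookkeeping needed to see that $Y$ genuinely intertwines $\pi_1$ and $\pi_2$ rather than merely some larger pieces of $\pi$: one must check that passing from the reducing subspaces of $\pi$ down to $\K_1$ and $\K_2$ does not destroy the commutation relation, i.e. that $\tilde Y$ maps $\K_2$ into $\K_1$ and that $\pi(a)$ respects this on both sides. This is where the extremal-extension/extremal-coextension dichotomy for maximal dilations has to be invoked carefully, in the same spirit as the block-triangular computations in the proofs of Proposition~\ref{P:coextend and fully extend} and Theorem~\ref{T:CLT intertwine}; an alternative, probably cleaner route is to avoid cutting $\pi$ down at all and simply take $\pi_1 = \pi_2 = \pi|_{\K_0}$ where $\K_0$ is the smallest reducing subspace containing all of $\H = \H_1 \oplus \H_2$, so that $\pi_1 = \pi_2$ literally, and let $Y$ be the compression of $\tilde Y$ to $\K_0$ composed with the coordinate projections — then the intertwining is immediate from $\pi(a)\tilde Y = \tilde Y \pi(a)$, at the mild cost that the $\pi_i$ are no longer "minimal over $\rho_i$ alone." Since the statement does not demand minimality, this shortcut suffices, and I would present it that way.
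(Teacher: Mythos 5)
Your proposal is correct, and the shortcut you settle on at the end is precisely the paper's argument: after applying MCLT to $\rho_1\oplus\rho_2$ and $\tilde X$, the paper simply observes that the resulting maximal dilation $\pi$ is already a maximal dilation of each $\rho_i$ separately (it does not even cut down to a reducing subspace $\K_0$), and that $\tilde Y$, read against the two regroupings of $\K=\K_-\oplus\H_1\oplus\H_2\oplus\K_+$, is the required simultaneous dilation of $X$ intertwining $\pi_1=\pi_2=\pi$. Your first route (cutting down to the smallest reducing subspaces containing $\H_1$ and $\H_2$ separately) would also go through with little extra work, since compressing a commutant element between two reducing subspaces automatically intertwines the restrictions, but the statement does not demand that extra minimality.
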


\begin{proof}
Again form $\rho = \rho_1 \oplus \rho_2$ and $\tilde X$ as before.
Use MCLT to dilate $\rho$ to a maximal dilation $\pi$ 
and $\tilde X$ to a contraction $\tilde Y$ commuting with $\pi(\A)$
on 
\[ \K = \K_- \oplus \H_1 \oplus \H_2 \oplus \K_+ .\]
Write
\[
 \tilde Y =  \begin{bmatrix} * & 0 & 0 & 0 \\  * & 0 & X & 0 \\
      * & 0 & 0 & 0 \\ * & * & * & * \end{bmatrix} .
\]
Now notice that $\pi$ is an maximal dilation of both $\rho_i$.
Considering $\tilde Y$ as a map from 
\[ 
 \K_- \oplus \H_1 \oplus (\H_2 \oplus \K_+) \quad\text{to}\quad 
 (\K_- \oplus \H_1) \oplus \H_2 \oplus \K_+
\]
one finds that it has a matrix form
\[
 \tilde Y =  
  \left[ \begin{array}{cc|c|c} * & 0 & 0 & 0 \\ \hline * & 0 & X & 0 \\ \hline
      * & 0 & 0 & 0 \\ * & * & * & * \end{array} \right] .
\] 
This is a dilation of $X$ which commutes with $\pi(\A)$, where $\pi$
is considered as a dilation of both $\rho_1$ and $\rho_2$ using the
two decompositions of $\K$.
\end{proof}

\section{A $\boldsymbol{2 \times 2}$ matrix example.} \label{S:2x2}

Consider the algebra $\A = \spn\{I_2, n\} \subset \fM_2$ where
\[ n = \begin{bmatrix}0&0\\1&0\end{bmatrix} .\]
Observe that $\A$ is unitarily equivalent to $\A^*$.
It is not semi-Dirichlet.
We will show that it has unique minimal extremal (co)extensions, which are
always maximal dilations.  But it does not have unique maximal dilations.
It also has SCLT, SCLT* and MCLT, but does not have SMCLT.

Observe that a representation $\rho$ of $\A$ is determined by $N:= \rho(n)$,
which satisfies $N^2=0$ and $\|N\|\le1$.  Conversely, any such $N$ yields 
a completely contractive representation.
It is easy to check that $N$ is unitarily equivalent to an operator of the form
\[ \begin{bmatrix}0&0\\B&0\end{bmatrix} \]
where $B$ has dense range
by setting $\H_2 = \ol{\ran N}$ and decomposing $\H = \H_1 \oplus \H_2$.
This can be refined by using the polar decomposition of $B$ to the form
\[
 \begin{bmatrix}0&0\\A&0\end{bmatrix} \oplus 0 \quad\text{on }
 \H =\ran N^* \oplus \ran N \oplus (\ker N \cap \ker N^*) ,
\]
where $A$ is a positive injective operator.

Clearly $\cenv(\A) = \fM_2$.
Thus a maximal representation $\pi$ extends to a $*$-representation of $\fM_2$.
Hence $\pi(n) = N$ is a partial isometry such that 
\[ NN^* + N^*N = I \qand  N^2=0 ;\]
or equivalently, 
\[ (N+N^*)^2=I \qand  N^2=0 .\]
In other words, there is a unitary $U$ so that
\[
 N \simeq \begin{bmatrix}0&0\\U&0\end{bmatrix} 
  \simeq \begin{bmatrix}0&0\\I&0\end{bmatrix} .
\]
Geometrically, this says that $N$ is a partial isometry such that 
\[ \ran N + \ran N^* = \H .\]

\begin{prop} \label{T:2x2}
The algebra $\A$ of $2\times2$ matrices of the form
\[ \begin{bmatrix}a&0\\b&a\end{bmatrix} \]
has unique minimal extremal coextensions $($extensions$)$.
Moreover they are fully extremal coextensions $($extensions$)$,
and in fact, are maximal dilations; 
and the original space is cyclic $($cocyclic$)$.
\end{prop}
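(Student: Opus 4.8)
The plan is to first analyze coextensions concretely, then deduce the rest by the symmetry $\A \simeq \A^*$. Let $\rho$ be a representation of $\A$ with $N = \rho(n)$ acting on $\H$; by the normal form discussed above I may assume $N = \begin{sbmatrix}0&0\\A&0\end{sbmatrix} \oplus 0$ on $\H = \ran N^* \oplus \ran N \oplus (\ker N \cap \ker N^*)$ with $A$ positive and injective. A coextension $\sigma$ corresponds to an operator $M = \sigma(n)$ on $\K = \H \oplus \H'$ with $M^2 = 0$, $\|M\| \le 1$, leaving $\H'$ invariant, and compressing to $N$ on $\H$. First I would show that any extremal coextension must have $M$ a partial isometry with $\ran M + \ran M^* = \K$, i.e.\ $\sigma$ is a maximal dilation: if $(M+M^*)^2 \ne I$, then since $M+M^*$ is a self-adjoint contraction one can dilate it (Sz.-Nagy) to a self-adjoint unitary while keeping $M^2 = 0$ and keeping $\H'$ invariant, producing a proper coextension; hence extremality forces $M$ to generate a $*$-representation of $\fM_2 = \cenv(\A)$.

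The key computation is to build the minimal extremal coextension explicitly. Starting from $N$ above, set $D = (I - N^*N)^{1/2}$ restricted appropriately; on the summand where $N = \begin{sbmatrix}0&0\\A&0\end{sbmatrix}$ one has $N^*N = \begin{sbmatrix}A^2&0\\0&0\end{sbmatrix}$, so I would adjoin a copy of $\ol{\ran (I-A^2)^{1/2}}$ and extend to $M = \begin{sbmatrix}0&0\\ \bigl[\begin{smallmatrix}A\\ (I-A^2)^{1/2}\end{smallmatrix}\bigr] & 0\end{sbmatrix}$ so that the first column becomes an isometry; on the $\ker N \cap \ker N^*$ summand, where $N = 0$, I adjoin a copy of that space and send $M$ to $\begin{sbmatrix}0&0\\I&0\end{sbmatrix}$. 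One checks $M$ is then a partial isometry with $M^2 = 0$ and $\ran M + \ran M^* = \K$, so $\sigma = \tau|$ is a maximal dilation; it is a coextension of $\rho$ by construction and $\H$ is cyclic because $\K = \H + \sigma(n)\H$ (the new vectors are exactly in $\ran M$). For uniqueness I would argue that on a cyclic coextension the inner products $\ip{\sigma(a_1)h_1, \sigma(a_2)h_2}$ are forced: using $M^2 = 0$ and $M^*M + MM^* = I$ on a maximal dilation, $M^*M$ compressed to $\H$ is determined by $\rho$, so $\ip{Mh_1, Mh_2} = \ip{M^*M h_1, h_2} = \ip{(I - MM^*)h_1, h_2}$, and $MM^*$ on $\H$ equals $NN^*$ (since $\H$ is coinvariant, $P_\H M = M|_\H$ composed correctly); this pins down the Gram matrix of $\{h, \sigma(n)h : h \in \H\}$, giving a unitary equivalence fixing $\H$ between any two cyclic extremal coextensions, exactly as in the proof of Lemma~\ref{L2:sD}. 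Since every extremal coextension contains a minimal one as a summand and the minimal one is cyclic, minimal extremal coextensions are unique and fully extremal.

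Finally, fully extremal is automatic once I know the minimal extremal coextension is already a maximal dilation: a maximal representation is a direct summand of any dilation, so in particular of any dilation that is also a coextension of $\rho$, which is precisely the fully extremal condition of Definition~\ref{D:fully extremal}. The statement for extensions follows verbatim by applying everything to $\A^*$ (which is unitarily equivalent to $\A$), turning coextensions into extensions, cyclic into cocyclic, and $\ran N^*$ into $\ran N$. The main obstacle I anticipate is the uniqueness argument: one must be careful that the normal-form decomposition of $N$ interacts correctly with cyclicity, and that the ``extend a self-adjoint contraction to a self-adjoint unitary while preserving $M^2 = 0$'' step genuinely produces a representation of $\A$ (i.e.\ the dilated operator still squares to zero and still leaves the right subspace invariant) rather than merely a dilation of $M+M^*$; handling the three summands of $N$ separately should make this manageable.
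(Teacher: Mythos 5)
There is a genuine gap in your explicit construction: one coextension step is not enough. After adjoining $\D=\ol{\ran(I-A^2)^{1/2}}$ and replacing the column by the isometry $V=\begin{sbmatrix}A\\(I-A^2)^{1/2}\end{sbmatrix}:\H_1\to\H_2\oplus\D$, you claim $\ran M+\ran M^*=\K$. This fails whenever $A\ne I$: take $\H_1=\H_2=\bC$ and $A=1/2$, so $V=\begin{sbmatrix}1/2\\\sqrt3/2\end{sbmatrix}$ has one-dimensional range inside the two-dimensional space $\H_2\oplus\D$, and $\ran M+\ran M^*=\H_1\oplus\ran V\subsetneq\K$. Worse, this $M$ is not even an extremal coextension: on $\K\oplus\K'$ the operator $M'=\begin{sbmatrix}0&0&0\\V&0&0\\0&y&0\end{sbmatrix}$ with $y$ a nonzero contraction vanishing on $\ran V$ satisfies $M'^2=0$, $\|M'\|\le1$, leaves $\K'$ invariant and compresses to $M$ on $\K$, but is not a direct sum. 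The paper's proof performs a \emph{second} coextension, adjoining another defect space carrying $P_\D(I-VV^*)$, precisely to make the resulting partial isometry surjective onto the complement of $\H_1$; your single-step shortcut skips this and lands on a non-extremal representation. Relatedly, your first-paragraph mechanism (``dilate $M+M^*$ to a self-adjoint unitary while keeping $M^2=0$ and $\H'$ invariant'') is exactly the point that needs an argument: the Sz.-Nagy dilation of a self-adjoint contraction is a dilation, not a coextension, and you must exhibit a proper \emph{coextension} of a non-maximal $M$ to contradict extremality. The paper does this by the concrete column construction rather than by appeal to a general dilation theorem.

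A second, smaller gap is in uniqueness. Your Gram-matrix argument (computing $\ip{Mh_1,Mh_2}=\ip{(I-NN^*)h_1,h_2}$ from $M^*M+MM^*=I$ and coinvariance of $\H$) correctly pins down all \emph{cyclic} maximal-dilation coextensions up to a unitary fixing $\H$. But the proposition asserts uniqueness of \emph{minimal} extremal coextensions, and minimality (no proper reducing subspace containing $\H$) is a priori weaker than cyclicity ($\K=\H+\ol{M\H}$); the subspace $\H+\ol{M\H}$ is invariant but not obviously coinvariant, so you cannot simply restrict to it. The paper closes this by a direct matrix analysis of an arbitrary minimal extremal coextension $W'$, showing entry by entry that it coincides with the explicit two-step model $W$ (which is separately verified to be cyclic). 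You need either that analysis or an argument that every minimal extremal coextension of this algebra is automatically cyclic. The remaining parts of your plan --- that maximal dilations are automatically fully extremal, and that the extension statements follow from $\A\simeq\A^*$ --- are correct and agree with the paper.
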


\begin{proof}
We first show that a representation $\rho$ which is not maximal has a proper coextension.
Use the matrix form 
\[ \rho(n) = N=\begin{bmatrix}0&0\\B&0\end{bmatrix} \] 
on $\H=\H_1\oplus\H_2$ where $B$ has dense range.
Suppose that $B$ is not an isometry, 
and let 
\[ D_B = (I-B^*B)^{1/2} \qand  \D_B = \ol{\ran D_B} .\] 
Consider $P_{\D_B}D_B$ as an operator from $\H_1$ to $\D_B$.
Then $B$ may be coextended to an isometry
\[ V = \begin{bmatrix}B\\P_{\D_B}D_B\end{bmatrix} \]
mapping $\H_1$ to $\K_2 = \H_2 \oplus \D_B$.
So $\rho$ coextends to $\sigma$ where 
\[
 \sigma(n) = S = \begin{bmatrix}0&0\\V&0\end{bmatrix}
 = \begin{bmatrix}0&0&0\\B&0&0\\P_{\D_B}D_B&0&0\end{bmatrix}
\]
on 
\[ \K = \H_1 \oplus \K_2 = \H_1 \oplus \H_2 \oplus \D_B .\]
Hence when $\rho$ is extremal, $B$ is an isometry with dense range,
so it is unitary.  Thus $\rho$ is a maximal representation.

The coextension constructed above is generally not extremal because
$V$ is not unitary.  So one can coextend $S$ again using the same procedure.
$\K_2$ splits as $\R_V \oplus \D$, where 
\[ \R_V = \ran V \qand  \D = \ran(I_{\K_2}-VV^*) .\]
We now decompose 
\[ \K = \H_1 \oplus \D \oplus \R_V , \]
and write
\[ 
  S = \left[\begin{array}{cc|c} 
  0&0&0\\0&0&0\\ 
  \hline P_{\R_V}V&0&0
  \end{array}\right] 
\]
Using the same dilation as the previous paragraph, but noting that
\[ D_{[P_{\R_V}V\ 0]} = (I_{\K_2}-VV^*) ,\]
we obtain the coextension $\tau$ of $\sigma$ on
a Hilbert space 
\[ \L = \H_1 \oplus \D \oplus \R_V \oplus \D \]
by
\[
 \tau(n) = W = 
 \left[\begin{array}{cc|cc} 0&0&0&0\\0&0&0&0\\ \hline
 P_{\R_V}V&0&0&0\\ 0&P_\D(I_{\K_2}-VV^*)&0&0
 \end{array}\right] 
\]
The bottom left $2\times2$ corner is now a surjective isometry.
So this is a maximal dilation.

To see that $\tau$ is minimal as a coextension, we need to verify that
\[ \L = \A \H = \H \vee W\H .\]
To see this, we rewrite $W$ with respect to the decomposition
\[ \L = \H_1 \oplus \H_2 \oplus \D_B \oplus \D\]
to get
\[
 W = 
 \left[\begin{array}{cc|cc} 0&0\ \ \ \ &0&0\\B&0\ \ \ \ &0&0\\ \hline
 P_{\D_B}D_B&0\ \ \ \ &0&0\\ 0&\multicolumn{2}{c}{\!\!\!\!\!\!P_\D(I_{\K_2}-VV^*)}&0
 \end{array}\right] 
\]
Since the range of $D_B$ is dense in $\D_B$, this is contained in $\H \vee W\H$.
For the space $\D$, we expand the expression for $I_{\K_2}-VV^*$ on 
$\K_2 = \H_2 \oplus \D_B$:
\begin{align*}
 I_{\K_2}-VV^* &= 
 \begin{bmatrix}I&0\\0&I\end{bmatrix} - 
 \begin{bmatrix}BB^*&BD_B\\D_BB^*&I-B^*B\end{bmatrix} \\&=
 \begin{bmatrix}I-BB^*&-BD_B\\-D_BB^*&B^*B\end{bmatrix} \\&=
 \begin{bmatrix}D_{B^*}^2&-BD_B\\-BD_{B^*}&B^*B\end{bmatrix} 
\end{align*} 
Thus one sees that 
\[
 (I_{\K_2}-VV^*)P_{\H_2} = 
 \begin{bmatrix} \ D_{B^*}\\-B\end{bmatrix}D_{B^*}  .
\]
Observe that $D_{B^*}$ maps $\H_2$ onto a dense subspace of $\ker(I-BB^*)^\perp$,
and 
\[ \ker(I-BB^*) = \ran V \cap \H_2 \subset \ker (I-VV^*)\cap \H_2 .\]
The range of 
\[ \begin{bmatrix} \ D_{B^*}\\-B\end{bmatrix} \]
is easily seen to  be the orthogonal complement of $\ran V$, so this is $\D$.  
Restricting this map to the range of $D_{B^*}$ does not affect the closed range,
since we only miss some of the kernel.
Thus $W\H_2$ is dense in $\D$.
Therefore this is a minimal extremal coextension.

Now we consider uniqueness.
To this end, suppose that $\tau'$ is a minimal extremal coextension of $\rho$
on 
\[ \L' = \H_1 \oplus \H_2 \oplus \L_3 .\]
Then we can write
\[
 \tau'(n) = W' = \begin{bmatrix}0&0&0\\B&0&0\\X&Y&Z\end{bmatrix} .
\]
This is a partial isometry satisfying 
\[ W^{\prime 2}=0 \qand W'W^{\prime*} + W^{\prime*}W' = I . \]
In particular, $\H_1$ is orthogonal to the range of $W'$, so the 
restriction of $W'$ to $\H_1$ is an isometry.
Therefore 
\[ X = UP_{\D_B}D_B \]
where $U$ is an isometry of $\D_B$ into $\L_3$.
Split 
\[ \L_3 = U\D_B \oplus \L_4 \simeq \D_B \oplus \L_4 .\]
By identifying the range of $U$ with $\D_B$, we have the refined form
\[
 W' = 
 \begin{bmatrix}0&0&0&0\\B&0&0&0\\
 P_{\D_B}D_B&X_1&Y_1&Z_1\\ 0 & X_2&Y_2&Z_2
 \end{bmatrix} .
\]
The range of $W' \H_1^\perp$ is orthogonal to 
\[ W'\H_1 \vee \H_2 = \H_2 \oplus \D_B .\]
So 
\[ X_1=Y_1=Z_1=0 .\]

Next note that minimality ensures that $X_2$ has dense range in $\L_4$.
So $\L_4$ is in 
\[ \ran W' = \ker W^{\prime*} .  \]
Hence $Z_2=0$.
Observe that 
\[ \ran W' = \ran V \oplus \L_4 , \]
and hence 
\[ \ran W^{\prime*} = (\ran W')^\perp = \H_1 \oplus \D .\]
Moreover, the operator 
\[ \big[ X_2 \ Y_2 \big] \]
is an isometry of $\D$ onto $\L_4$.
Hence we may identify $\L_4$ with $\D$ in such a way that we obtain
\[ \big[ X_2 \ Y_2 \big] \simeq P_\D(I_{\K_2}-VV^*) .\]
This shows that $W'$ is unitarily equivalent to $W$ via a unitary which fixes $\H$.
Therefore the minimal extremal coextension is unique.

The proof for extensions follows immediately since 
$\A^*$ is unitarily equivalent to $\A$.
\end{proof}

\begin{cor}
Every representation of the algebra $\A$ is Shilov.
\end{cor}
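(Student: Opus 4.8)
The plan is to read the corollary off directly from the extension half of Proposition~\ref{T:2x2}. Recall from the discussion of Shilov modules in Section~\ref{S:diln} that a representation $\rho$ of $\A$ on $\H$ is \emph{Shilov} exactly when it is the restriction of some $*$-representation of $\cenv(\A)$ to a subspace invariant for the image of $\A$; equivalently, when $\rho$ admits an extension which is a maximal representation, since the maximal representations of $\A$ are precisely those extending to $*$-representations of $\cenv(\A)$.

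So, first I would apply Proposition~\ref{T:2x2} in its extension form: an arbitrary representation $\rho$ of $\A$ has a (minimal extremal) extension $\sigma$ on some $\K \supseteq \H$, and this $\sigma$ is in fact a maximal dilation. Since $\sigma$ is an extension, $\H$ is invariant for $\sigma(\A)$ and the compression of $\sigma$ to $\H$ is $\rho$. Since $\sigma$ is a maximal representation, it extends to a $*$-representation $\pi$ of $\cenv(\A) = \fM_2$ on $\K$; the subspace $\H$ is still invariant for $\pi(\A) \supseteq \sigma(\A)$, and the compression of $\pi$ to $\H$ remains $\rho$. Hence $\H_\rho$ is a submodule of the Shilov module $\K_\pi$, i.e.\ $\rho$ is Shilov.

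There is no genuine obstacle here: all the work has already been done in Proposition~\ref{T:2x2}, and the corollary is just the observation that ``having an extension to a maximal representation'' is the representation-theoretic restatement of ``being a Shilov module.'' The feature that makes this work --- peculiar to $\A$, and parallel to the behaviour of the algebra of Example~\ref{Eg:DA} --- is that here the extremal extensions already coincide with the maximal dilations, so no further dilating is needed.
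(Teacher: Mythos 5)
Your argument is correct and is essentially the paper's own proof: both invoke Proposition~\ref{T:2x2} to extend $\rho$ to a maximal dilation and then observe that restriction of a maximal ($*$-)representation to an invariant subspace is precisely the definition of a Shilov representation. The extra detail you supply (that $\pi|_{\A}=\sigma$, so $\H$ stays invariant and the restriction is still $\rho$) is a harmless elaboration of the same one-line deduction.
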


\begin{proof}
By the previous theorem, one can extend $\rho$ to a maximal dilation $\pi$.
Thus $\rho$ is obtained as the restriction of a maximal representation to
an invariant subspace; i.e. it is Shilov.
\end{proof}

\begin{eg}
Take $\rho$ to be the character representation on $\H = \bC$ given by
\[ \rho(aI_2 + bn) = a .\]
This coextends to a maximal representation on $\K = \H \oplus \bC$ as
\[ \sigma(aI_2 + bn) = \begin{bmatrix}a&0\\b&a\end{bmatrix} .\]
It also extends to a maximal representation $\tau$ on $\K = \bC \oplus \H$
where 
\[ \tau(aI_2 + bn) = \begin{sbmatrix}a&0\\b&a\end{sbmatrix} .\]
Note that $\sigma$ and $\tau$ are not unitarily equivalent by a \textit{unitary
which fixes $\H$}.  So these are inequivalent minimal maximal dilations.
\end{eg}

\begin{cor}
The minimal maximal dilation of a representation $\rho$ of $\A$ is not unique
except when $\rho$ is already maximal.
\end{cor}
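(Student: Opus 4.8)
The plan is to prove both directions directly. If $\rho$ is already maximal, then any maximal dilation $\pi$ of $\rho$ splits as $\pi \simeq \rho \oplus \pi'$, and minimality of $\pi$ forces $\pi'$ to act on the zero space, so $\pi \simeq \rho$; thus $\rho$ is its own unique minimal maximal dilation. All the content is in the converse: I would show that a representation $\rho$ on $\H$ which is \emph{not} maximal admits at least two inequivalent minimal maximal dilations. The natural candidates come from Proposition~\ref{T:2x2}: let $\sigma$ be the unique minimal extremal coextension of $\rho$ on $\K_\sigma$, and $\tau$ the unique minimal extremal extension on $\K_\tau$. By that proposition, each of $\sigma,\tau$ is a maximal dilation, and since $\H$ is cyclic for $\sigma(\A)$ and cocyclic for $\tau(\A)$, the space $\K_\sigma$ (resp.\ $\K_\tau$) is the smallest reducing subspace containing $\H$; so each is a \emph{minimal} maximal dilation. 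Moreover, because $\rho$ is not maximal, the argument in the proof of Proposition~\ref{T:2x2} (every non-maximal representation has a proper coextension, and dually a proper extension) shows that $\sigma$ and $\tau$ act on spaces strictly larger than $\H$.

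The key step is to show that $\sigma$ and $\tau$ are not unitarily equivalent by a unitary fixing $\H$. Suppose, for contradiction, that $W \in \B(\K_\sigma,\K_\tau)$ is a unitary with $Wh = h$ for all $h \in \H$ and $W\sigma(a) = \tau(a)W$ for all $a \in \A$. Then $W\H = \H$. Applying the intertwining relation to the generator $n$ and a vector $h \in \H$, I get
\[ W\sigma(n)h = \tau(n)Wh = \tau(n)h = \rho(n)h ,\]
where the last equalities use $Wh = h$ together with the fact that $\tau$ restricts to $\rho$ on the $\tau(\A)$-invariant subspace $\H$. Hence $W\sigma(n)h \in \H = W\H$, so $\sigma(n)h \in \H$. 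Thus $\H$ is invariant for $\sigma(n)$, hence for $\sigma(\A) = \spn\{I,\sigma(n)\}$. But $\H$ is also coinvariant for $\sigma(\A)$, since $\sigma$ is a coextension of $\rho$; so $\H$ reduces $\sigma$. Minimality of $\sigma$ then forces $\K_\sigma = \H$, i.e.\ $\sigma = \rho$, contradicting that $\rho$ is not maximal. This contradiction shows $\sigma$ and $\tau$ are inequivalent minimal maximal dilations, which is what we want.

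I do not expect a serious obstacle: once Proposition~\ref{T:2x2} is available, the argument is a short computation. The only point requiring care is the generator-chasing in the second paragraph, which exploits the asymmetry between the lower-triangular form of a coextension and the upper-triangular form of an extension of the same $\rho$; and one must not omit the remark that $\sigma \neq \rho$ and $\tau \neq \rho$ precisely when $\rho$ is not maximal, which is what makes these two dilations genuinely distinct from the trivial one. (Alternatively, one could make everything explicit using the normal form $\rho(n) \simeq \begin{sbmatrix}0&0\\B&0\end{sbmatrix}$ and the maximal dilation constructed in the proof of Proposition~\ref{T:2x2}, but the abstract coextension-versus-extension argument above is cleaner.)
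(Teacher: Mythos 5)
Your proposal is correct and follows essentially the same route as the paper: compare the minimal extremal coextension (where $\H$ is coinvariant and cyclic) with the minimal extremal extension (where $\H$ is invariant), and observe that a unitary equivalence fixing $\H$ would force $\H$ to reduce, whence minimality gives $\rho$ maximal. Your generator-chasing computation is just an expanded version of the paper's one-line observation that such an equivalence makes $\H$ reducing.
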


\begin{proof}
Let $\tau$ be the minimal extremal coextension of $\rho$, 
and let $\sigma$ be the minimal extremal extension of $\rho$.
Then in the first case, $\H$ is identified with a coinvariant subspace
and in the latter with an invariant subspace.
If these two dilations are unitarily equivalent via a unitary which
fixes $\H$, then $\H$ is reducing, and therefore by minimality, 
$\rho=\tau$ is maximal.
\end{proof}

Next we show that $\A$ has commutant lifting.

\begin{thm}
The algebra $\A$ of $2\times2$ matrices of the form
\[ \begin{bmatrix}a&0\\b&a\end{bmatrix} \]
has SCLT, SCLT* and MCLT, but not SMCLT.
\end{thm}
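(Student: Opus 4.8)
The strategy is to verify SMCLT fails directly via the preceding example (the character representation $\rho$ and its two inequivalent minimal maximal dilations $\sigma,\tau$), then deduce SCLT, SCLT*, and MCLT from the structural results already established for this algebra. Since $\A \cong \A^*$, it suffices to prove SCLT and MCLT; SCLT* follows by the built-in symmetry. The failure of SMCLT will follow from Theorem~\ref{T:SMCLT}: SMCLT is equivalent to MCLT together with unique minimal maximal dilations, and we have just seen (in the corollary above) that minimal maximal dilations of $\A$ are \emph{not} unique. So once MCLT is in hand, SMCLT is automatically ruled out.

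\textbf{SCLT.} First I would prove SCLT. By Proposition~\ref{T:2x2}, every representation $\rho$ of $\A$ has a \emph{unique} minimal fully extremal coextension (indeed the minimal extremal coextension is unique, fully extremal, cyclic, and maximal). By Theorem~\ref{T:SCLT}, it then suffices to establish CLT. So let $\rho$ be a representation on $\H$ with $\rho(n) = N = \begin{sbmatrix}0&0\\B&0\end{sbmatrix}$ on $\H = \H_1 \oplus \H_2$ with $B$ of dense range, and let $X$ commute with $\rho(\A)$; equivalently $X$ commutes with $N$. Writing $X = \begin{sbmatrix}X_{11}&X_{12}\\X_{21}&X_{22}\end{sbmatrix}$, the relation $XN = NX$ forces $X_{12}=0$, $B X_{11} = X_{22}B$, and $B X_{12}=0$ (automatic). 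On the minimal extremal (maximal) coextension $\sigma$ on $\K = \H_1\oplus\H_2\oplus\D_B\oplus\D$ from Proposition~\ref{T:2x2}, where $\sigma(n)=W$ has lower-left corner a surjective isometry, the plan is to define $Y$ block-diagonally extending $X$: keep $X_{11}$ on $\H_1$, and propagate it through the isometric pieces of $W$ to define the action on $\H_2$, $\D_B$, $\D$ so that $Y$ commutes with $W$ and $\|Y\| = \|X\|$. Concretely, on $\H_2$ one is forced to take (a compression/extension of) $X_{22}$; the map $V = \begin{sbmatrix}B\\P_{\D_B}D_B\end{sbmatrix}\colon\H_1\to\K_2$ intertwines $X_{11}$ with the desired operator $X_{22}'$ on $\K_2=\H_2\oplus\D_B$ provided $X_{11}$ commutes with $V^*V = I$ — which it does — and the dilation to $\D$ is handled identically at the next stage. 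The norm is preserved because at each stage the new block is obtained by intertwining through an isometry. This produces the required $Y$, giving CLT, hence SCLT.

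\textbf{MCLT.} For MCLT I would invoke Theorem~\ref{T:MCLT}: it suffices that $\A$ has WCLT and WCLT*. But SCLT implies WCLT a fortiori, and since $\A \cong \A^*$, SCLT* (equivalently SCLT for $\A^*$) holds as well, giving WCLT*. Hence MCLT follows immediately. Alternatively one can run the alternating coextension/extension construction of Theorem~\ref{T:MCLT} directly; either way MCLT is essentially formal once SCLT and SCLT* are known. Finally, with MCLT established and the minimal maximal dilation shown to be non-unique (the corollary preceding this theorem), Theorem~\ref{T:SMCLT} gives that $\A$ does \emph{not} have SMCLT.

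\textbf{Main obstacle.} The only real computational work is in the CLT step: verifying that the block operator $Y$ built by intertwining $X_{11}$ through the successive isometric corners of $W$ genuinely commutes with $\sigma(\A)$ and has the same norm as $X$. This requires carefully tracking the two-stage structure of the minimal extremal coextension (the $\D_B$ piece, then the $\D$ piece) and checking that the intertwining relations $B X_{11} = X_{22}B$, $(P_{\D_B}D_B)X_{11} = X_{22}'(P_{\D_B}D_B)$, etc., are consistent — i.e. that $X_{22}'$ is well defined on all of $\K_2$ and not merely on $\ol{\operatorname{ran} V}$. Everything else (SCLT $\Rightarrow$ WCLT, $\A\cong\A^*$ giving SCLT*, Theorem~\ref{T:MCLT}, and the SMCLT failure via Theorem~\ref{T:SMCLT}) is immediate from results already proved.
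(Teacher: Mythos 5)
Your overall architecture coincides with the paper's: reduce SCLT to CLT via the uniqueness of minimal fully extremal coextensions (Proposition~\ref{T:2x2} plus Theorem~\ref{T:SCLT}), get SCLT* from $\A\simeq\A^*$, get MCLT from Theorem~\ref{T:MCLT}, and rule out SMCLT by combining Theorem~\ref{T:SMCLT} with the non-uniqueness of minimal maximal dilations established in the preceding corollary. All of that is correct and is exactly what the paper does.

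The gap is in the CLT step, and it is not merely a matter of "computational work" left to check: the construction you sketch does not produce the required operator. You propose to define the new block $X_{22}'$ on $\K_2=\H_2\oplus\D_B$ by conjugating $X_{11}$ through the isometry $V=\begin{sbmatrix}B\\P_{\D_B}D_B\end{sbmatrix}$. But the resulting operator, which on $\ran V$ must equal $VX_{11}V^*$, compresses on $\H_2$ to $BX_{11}B^*=X_{22}BB^*$, which is not $X_{22}$ unless $B$ is a coisometry; so $Y$ built this way is not a coextension of $X$. Moreover your $Y$ ignores the free lower-left entry $X_{21}$ of $X$ (the commutant of $N$ consists of \emph{all} $\begin{sbmatrix}X_{11}&0\\X_{21}&X_{22}\end{sbmatrix}$ with $BX_{11}=X_{22}B$, not just the block-diagonal ones), and the assertion that the norm is preserved "because the new block is obtained by intertwining through an isometry" is exactly the nontrivial content of a commutant lifting theorem — choosing the lower-left entries of $Y$ so that the whole matrix is still a contraction is where all the work lies. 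The paper closes this gap by a genuinely different device: it observes that the coextension $S=\sigma(n)$ (and, at the second stage, $W=\tau(n)$) occurs as the upper-left corner of the \emph{minimal isometric dilation} $U$ of the single contraction $N$, applies the classical Sz.Nagy--Foia\c{s} commutant lifting theorem to the pair $(N,T)$ to get a contraction $R$ commuting with $U$, and then checks that the corresponding corner $\tilde T$ of $R$ commutes with $S$ and coextends $T$ with the same norm. Without importing the classical CLT in this way (or reproving its defect-operator machinery by hand), your argument does not establish CLT, and hence none of SCLT, SCLT*, or MCLT follows.
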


\begin{proof}
It is enough to verify CLT.
Since the minimal fully extremal coextensions are unique, 
it then has SCLT by Theorem~\ref{T:SCLT}.  
Since $\A^* \simeq\A$, it has SCLT* as well.
Thus by Theorem~\ref{T:MCLT}, it has MCLT.  
But by Theorem~\ref{T:SMCLT}, it does not have SMCLT.  

We make use of the construction of the minimal extremal extension in the proof of
Theorem~\ref{T:2x2}.
Write 
\[ \rho(n) = N =  \begin{bmatrix}0&0\\B&0\end{bmatrix} \]
as before on
\[ \H = \H_1 \oplus \H_2 \quad\text{where } \H_2 = \ol{\ran N} .\]
Suppose that it commutes with a contraction $T$.
Then it is routine to check that 
\[ T = \begin{bmatrix}X&0\\Y&Z\end{bmatrix} \]
such that 
\[ BX = ZB .\]
Coextend $\rho$ to the coextension $\sigma$ on 
\[ \K = \H_1 \oplus \H_2 \oplus \D_B \]
where
\[ \sigma(n) = S = \begin{bmatrix}0&0&0\\B&0&0\\D_B&0&0\end{bmatrix} .\]
We first find a coextension of $T$ to $\tilde T$ which 
commutes with $S$ and has norm one.

Consider the isometric dilation of $N$.
Observe that 
\[ D_N = (I-N^*N)^{1/2} = \begin{bmatrix} D_B & 0 \\ 0& I \end{bmatrix} .\]
So the minimal isometric dilation acts on 
\[ (\H_1\oplus\H_2) \oplus (\D_B \oplus \H_2)^{(\infty)}  ,\]
and has the form
\[
 U = \left[\begin{array}{cc|c|cccccc}
 0&0&\,0\,&0&0&0&0&0&\dots\\
 B&0&0&0&0&0&0&0&\dots\\ \hline
 D_B\!&0&0&0&0&0&0&0&\dots\\ \hline
 0&I&0&0&0&0&0&0&\dots\\
 0&0&I&0&0&0&0&0&\dots\\
 0&0&0&I&0&0&0&0&\dots\\
 \vdots&\vdots&\vdots&\ddots\!\!&\ddots\!\!&\ddots\!\!&\ddots\!\!&\ddots\!\!&\ddots
 \end{array}\right]
\]
Notice that $S$ is the upper left $3\times3$ corner.
By the Sz.Nagy-Foia\c{s} Commutant Lifting Theorem, we can coextend
$T$ to a contraction $R$ commuting with $U$.  It has the form
\[
 R = \begin{bmatrix} X&0&0&\dots\\Y&Z&0&\dots\\
 C_1&C_2&C_3&\dots\\ \vdots&\vdots&\vdots&\ddots
\end{bmatrix}
\]
It is routine to verify that 
\[
 \tilde T =  \begin{bmatrix}X&0&0\\Y&Z&0\\C_1&C_2&C_3\end{bmatrix}
\]
commutes with $S$.

Now we repeat the argument with $S$ and $\tilde T$.
The same procedure was shown in Theorem~\ref{T:2x2} to yield the
minimal (fully) extremal coextension $\tau$ of $\rho$.
The operator $\tilde T$ is coextended once again to obtain a contraction
commuting with $\tau(n)=W$.  This establishes SCLT.
\end{proof}

\begin{rem}
We will show that in the commutant lifting theorem for $\A$, it is not possible
to coextend so that $\tilde T$ is an isometry. In the language of the next section,
this will show that $\A$ does not have ICLT (isometric commutant lifting) nor
the Ando property.

To see this, consider the identity representation 
\[ \id(n) = N = \begin{bmatrix}0&0\\1&0\end{bmatrix} \]
on $\H = \bC^2$.
Then $\id(\A)$ commutes with $T=N$.
Suppose that there were a coextension of $\id$ and $T$ to $\sigma$ 
and an isometry $V$ on $\K$ so that $\sigma(\A)$ commutes with $V$.
Since $\id$ is maximal, $\sigma = \id \oplus \tau$.
So 
\[ \sigma(n) = M = N \oplus M_0 \] 
where $M_0^2=0$.
Let the canonical basis for $\H$ be $e_1,e_2$.
Since $Te_2=0$, we have $Ve_2 = v$ is a unit vector in $\H^\perp$;
while 
\[ Ve_1 = Te_1 = e_2 .\]
Therefore
\[
 v = Ve_2 = VMe_1 = MVe_1 = Me_2 = 0 .
\]
This contradiction shows that no such coextension is possible.
\end{rem}

\section{Relative Commutant Lifting and Ando's Theorem} \label{S:ando}

Paulsen and Power \cite{PP_tensor} formulate commutant lifting
and Ando's theorem in terms of tensor products.  In doing so, they are 
also able to discuss lifting commuting relations between two arbitrary
operator algebras.  They are interested in dilations which extend to the 
enveloping C*-algebra, which are the maximal dilations when this
C*-algebra is the C*-envelope.  
The Paulsen-Power version of Ando's theorem involves maximal 
dilations and a commuting unitary.
The classical Ando Theorem, from our viewpoint, states that two commuting contractions
have coextensions to commuting isometries.
We will give a similar definition using extremal co-extensions instead which is
actually stronger than the Paulsen-Power version.

\begin{defn}
Let $\A$ and $\B$ be unital operator algebras.
Say that $\A$ has $\B$-CLT (or \textit{commutant lifting with respect to $\B$})
if whenever $\alpha$ and $\beta$ are (completely contractive) representations
of $\A$ and $\B$ on a common Hilbert space $\H$ which commute:
\[ \alpha(a) \beta(b) = \beta(b) \alpha(a) \qforal a \in \A \AND  b \in \B ,\]
then there exists an 
extremal coextension $\sigma$ of $\alpha$ on a 
Hilbert space $\K$ and a coextension $\tau$ of $\beta$ on $\K$ which commute.

If $\AD$ has $\A$-CLT, then we say that $\A$ has \textit{isometric commutant lifting}
(ICLT).
\end{defn}

Because there is no uniqueness for the classical Ando's theorem, we do not
seek a strong form.  So we do not use the adjective weak either.
Note that ICLT is not stronger than WCLT because the extremal condition
is on the isometry, not on the coextension of $\A$.  Nevertheless it does imply
a much stronger conclusion, as we show in Theorem~\ref{T:ICLT} below.

It is often observed that Ando's Theorem is equivalent to commutant lifting.  
However neither direction is completely trivial.  From Ando's theorem, one easily
gets WCLT.  So the uniqueness of the minimal isometric coextension,
and the fact that this is fully extremal, then yields SCLT.
Conversely, in deducing Ando's theorem from WCLT, one is really using
WCLT for \textit{both} contractions.  One iteratively dilates one contraction
to an isometry and lifts the other to commute.  The inductive limit is a pair
of commuting isometries.  We will see this more clearly for operator algebras
other than the disk algebra.

The next result shows that the Paulsen-Power version of Ando's theorem is
equivalent to ICLT. It also shows why we consider $\A$-CLT for $\AD$
as a strong property for $\A$, and makes it worthy of the term ICLT.

\begin{thm} \label{T:ICLT}
For a unital operator algebra $\A$, the following are equivalent:
\begin{enumerate}
\item $\A$ had ICLT; i.e. if $\rho$ is a representation of $\A$ on $\H$ 
commuting with a contraction $X$, then there is a coextension $\sigma$ of $\rho$ and
an isometric coextension $V$ of $X$ on a common Hilbert space $\K$
which commute.

\item If $\rho$ is a representation of $\A$ on $\H$ commuting with a contraction $X$,
then there is a Shilov coextension $\sigma$ of $\rho$ and
an isometric coextension $V$ of $X$  on a common Hilbert space $\K$
which commute.

\item If $\rho$ is a representation of $\A$ on $\H$ commuting with a contraction $X$,
then there is a simultaneous dilation of $\rho$ to a maximal dilation  $\pi$ on $\K$ 
and of $X$ to a unitary $U$ commuting with $\pi(\A)$; i.e.\ there is a Hilbert space
$\K \supset \H$, a $*$-representation $\pi$ of $\cenv(\A)$ on $\K$ and a unitary operator
$U$ on $\K$ commuting with $\pi(\cenv(\A))$ so that 
\[ \strut\qquad P_\H \pi(a) U^n |_\H = \rho(a) X^n\]
for all $a \in \A \AND n\ge0$. 
\end{enumerate}
\end{thm}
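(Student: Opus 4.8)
The plan is to prove the cycle (iii) $\Rightarrow$ (i) $\Rightarrow$ (ii) $\Rightarrow$ (iii). Two of these implications are essentially formal, and the real content is in (i) $\Rightarrow$ (ii), or rather in going from an extremal coextension all the way up to a maximal dilation with a commuting unitary.

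\textbf{(iii) $\Rightarrow$ (i).} This is the easiest step. Given $\rho$ and a commuting contraction $X$, apply (iii) to get a maximal dilation $\pi$ on $\K$ and a unitary $U$ commuting with $\pi(\cenv(\A))$. By Sarason's decomposition $\K = \H_- \oplus \H \oplus \H_+$ with respect to $\pi(\A)$. The relation $P_\H \pi(a)U^n|_\H = \rho(a)X^n$ for all $n \ge 0$ says precisely that $\H$ is semi-invariant for the (commuting) pair and that the compression recovers $(\rho,X)$; in particular the subspace $\K_+' := \bigvee_{n\ge 0}\bigvee_{a} \pi(a)U^n \H$ (the smallest subspace invariant under both $\pi(\A)$ and $U$ and containing $\H$) has $\H$ coinvariant inside it. Restricting $\pi$ to $\K_+'$ gives a coextension $\sigma$ of $\rho$, and restricting $U$ gives an isometric — not necessarily unitary — coextension $V$ of $X$ (it is isometric because $U$ is and $\K_+'$ is $U$-invariant). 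These commute. That is exactly (i).

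\textbf{(i) $\Rightarrow$ (ii).} Here one wants to upgrade the coextension $\sigma$ of $\rho$ to a Shilov one while keeping a commuting isometric coextension of $X$. The natural approach is to iterate, in the spirit of the proof of Theorem~\ref{T:MCLT}: having produced $(\sigma, V)$ on $\K$ with $V$ an isometric coextension of $X$, regard $\sigma$ as a representation on $\K$ with a commuting contraction $V$; since $\sigma$ is a coextension of $\rho$, $\H$ is still coinvariant for $\sigma(\A)$ and for $V$. We cannot directly apply (i) again with $\sigma$ in the role of $\rho$ unless we know $V$ is again a contraction — which it is — so (i) gives a further coextension $\sigma'$ of $\sigma$ and an isometric coextension $V'$ of $V$ which commute. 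But $V$ was already an isometry, so $V' = V \oplus V''$, and the new space is simply enlarged without disturbing $V$. The point of iterating is instead to drive $\sigma$ toward an extremal coextension: alternate (i) with the existence of extremal (co)extensions from Dritschel--McCullough / Theorem~\ref{T:fully extremal}, and track that at each stage $\H$ remains jointly semi-invariant for the representation and the isometry. The inductive \sot-$*$ limit $\pi$ of the representations is then both an extremal coextension and (after also extending, which one may do since $\H$ stays coinvariant under the isometric part throughout) a maximal dilation, hence Shilov; the \wot-limit of the isometric coextensions is an isometry $\td V$ commuting with $\pi(\A)$. Restricting to the smallest subspace invariant for $\pi(\A)$ and $\td V$ containing $\H$ gives a Shilov coextension of $\rho$ with a commuting isometric coextension of $X$. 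This iteration is the main obstacle: one has to be careful that extending $\sigma$ to fix the extremal/maximal property does not destroy the coextension property for $X$, and that the limits exist and have the right semi-invariance — precisely the bookkeeping already carried out in the proof of Theorem~\ref{T:MCLT}, which can be invoked or imitated.

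\textbf{(ii) $\Rightarrow$ (iii).} Start from a Shilov coextension $\sigma$ of $\rho$ on $\K$ and a commuting isometric coextension $V$ of $X$. Being Shilov, $\sigma$ extends to a maximal dilation: there is a $*$-representation $\pi_0$ of $\cenv(\A)$ on some $\K_0 \supseteq \K$ with $\K$ invariant for $\pi_0(\A)$ and $\pi_0(a)|_\K = \sigma(a)$. The isometry $V$ on $\K$ must be extended to a unitary commuting with the full $*$-representation. First extend $V$ to its minimal unitary dilation $U_0$ on a space $\K_1 \supseteq \K$; the challenge is to extend $\pi_0$ compatibly so that $U_0$ commutes with it. Use the Wold-type structure: $U_0 \simeq V_\infty \oplus (\text{unitary part})$ where $V_\infty$ is a (multiple of the) bilateral extension of the shift part of $V$; one builds $\pi$ on $\K_1$ by letting $\pi$ act as $\pi_0$ on the relevant copies and transporting it along powers of $U_0$, exactly as in the classical deduction of Ando's theorem from commutant lifting. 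Concretely, since $\sigma$ commutes with $V$, it commutes with every coordinate of $V$'s Wold decomposition, and the $*$-representation $\pi_0$ of the C*-envelope can be ampliated and shifted so that the new unitary $U$ commutes with $\pi$; maximality of $\pi$ and the relation $P_\H \pi(a)U^n|_\H = P_\H \sigma(a) V^n|_\H = \rho(a)X^n$ (using $\H$ coinvariant for $\sigma(\A)$ and for $V$, and $n\ge 0$ so we only see $V$, not $V^*$) then give (iii). The mild subtlety is ensuring the extended $\pi$ is still a $*$-representation of all of $\cenv(\A)$ (not just $\A$) and that $U$ commutes with $\pi(\cenv(\A))$, not merely $\pi(\A)$; this follows because $U$ commutes with the self-adjoint algebra generated by a $*$-representation once it commutes with $\pi(\A)$ and $\pi$ is a $*$-map — or one simply appeals to the Fuglede-type observation that a unitary commuting with $\pi(\A)$ commutes with $\ca(\pi(\A))$ when $\pi$ is maximal, which is used elsewhere in the paper.
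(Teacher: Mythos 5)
Your cycle (iii) $\Rightarrow$ (i) $\Rightarrow$ (ii) $\Rightarrow$ (iii) is logically admissible, and the step (iii) $\Rightarrow$ (i) is correct (indeed (iii) $\Rightarrow$ (ii) $\Rightarrow$ (i) are the easy implications: restricting the maximal dilation and the unitary to the smallest jointly invariant subspace containing $\H$ gives a Shilov coextension by definition). But your routing forces the hard content into (i) $\Rightarrow$ (ii) and (ii) $\Rightarrow$ (iii), and both have genuine gaps. In (i) $\Rightarrow$ (ii), ICLT gives you no control over \emph{which} coextension of $\sigma$ appears when you reapply it, so you cannot ``alternate (i) with the existence of extremal coextensions'': after replacing $\sigma$ by an extremal coextension $\tau$ from Theorem~\ref{T:fully extremal}, there is no reason $V$ coextends to an isometry commuting with $\tau(\A)$ --- that is a commutant-lifting statement with the coextension prescribed in advance, precisely the SCLT/WCLT-type hypothesis that the paper warns ICLT does not imply (the extremal condition in ICLT sits on the isometry, not on the coextension of $\rho$). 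For the same reason you cannot imitate the proof of Theorem~\ref{T:MCLT}, which consumes WCLT and WCLT*; your iteration has no mechanism forcing the limit representation to be extremal or Shilov.

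The implication carrying all the content is really (i) $\Rightarrow$ (iii), and your sketch of the corresponding work inside (ii) $\Rightarrow$ (iii) omits its analytic core. Transporting along powers of a unitary extension $U_0$ of $V$ can only carry the restriction $\sigma=\pi_0|_\K$, which is merely a representation of $\A$; it cannot carry the $*$-representation $\pi_0$ of $\cenv(\A)$, since the copies of $\K$ sitting inside the direct limit are not reducing for anything self-adjoint. So after that construction you hold a representation $\tau$ of $\A$ commuting with a unitary $W$, not a $*$-representation of $\cenv(\A)$. Getting from there to (iii) is the real work: one applies the classical Fuglede theorem to see $W$ commutes with the operator system $\ol{\tau(\A)+\tau(\A)^*}$, then Arveson's extension theorem (using that the von Neumann algebra $W'$ is injective) to extend the completely positive unital map to all of $\cenv(\A)$ with range inside $W'$, then Stinespring, and finally Arveson's commutant lifting theorem to obtain a unitary $U$ in $\pi(\cenv(\A))'$ extending $W$. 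None of this appears in your proposal. Your closing remark that a unitary commuting with $\pi(\A)$ commutes with $\ca(\pi(\A))$ is true once you already possess a unitary and a $*$-representation on the same space (conjugation by a unitary is a $*$-automorphism), but it cannot substitute for the construction of that pair. Once (i) $\Rightarrow$ (iii) is established this way, (ii) follows by restriction to an invariant subspace, so the direct attack on (i) $\Rightarrow$ (ii) is unnecessary.
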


\begin{proof}
It is evident that (iii) implies (ii) by restriction to the smallest invariant subspace
containing $\H$.  And (ii) clearly implies (i).  So assume that (i) holds.
We will establish (iii).

First we dilate $\sigma$ and $V$  to $\tau$ and $W$ so that $W$ is unitary
and commutes with $\tau(\A)$.
To accomplish this, consider the system with $\K_n =\K$ and $V$
considered as a map from $\K_n$ into $\K_{n+1}$:
\[
\xymatrix{
\K_1 \ar[r]^{V}  \ar[d]_V &\K_2 \ar[r]^{V}  \ar[d]_V & \K_3 \ar[r]^{V}  \ar[d]_V 
&\dots \ar[r]&\P \ar[d]^W\\
\K_1 \ar[r]^{V}   &\K_2 \ar[r]^{V}   & \K_3 \ar[r]^{V}  &\dots \ar[r]&\P
}
\]
Then $\P$ is the Hilbert space direct limit of copies of $\K$ under $V$.
Let $J_n$ denote the canonical injection of $\K_n$ into $\P$. 
Thus
\[ J_n = J_{n+1}V \qfor n\ge1 . \]
The map $V$ also determines isometries acting on each $\K_n$,
which we also denote by $V$. The direct limit of this system of maps is a 
unitary $W$ on $\P$ such that its restriction to each $\K_n$ coincides with $V$.
Hence
\[ J_n V = W J_n \qfor n\ge1 . \]
In particular, $W$ is an extension of $V$ acting on $\K_1$, which we identify with $\K$.

We define a representation $\tau$ of $\A$ on $\P$ by
\[ \tau(a) J_n k = J_n \sigma(a) k \qfor a\in\A,\ k\in\K_n \AND n \ge 1 .\]
Clearly each subspace $J_n \K_n$ is invariant for $\tau$ and the restriction 
of $\tau$ to $\K_n$ is equivalent to $\sigma$.
In particular,  $\tau$ is an extension of $\sigma$, where we identify $\K$ with $\K_1$.
Additionally, since $\tau$ is completely contractive when restricted to
each $J_n\K_n$, we see that $\tau$ is completely contractive.
Finally, for $a \in \A$ and $k\in\K_n$ for $n\ge2$,
\begin{align*}
 \tau(a) W J_n(a) k &= \tau(a) J_n V k = \tau(a) J_{n-1} k \\
 &= J_{n-1} \sigma(a) k = J_n V \sigma(a) k\\
 &= W J_n \sigma(a) k = W \tau(a) J_n k .
\end{align*}
Therefore, $W$ commutes with $\tau(\A)$.

The completely contractive map $\tau$ extends to a unique
completely positive unital map on the operator system
\[ \ol{\A + \A^*} \subset \cenv(\A) .  \]
By Fuglede's Theorem, $W$ commutes with $\ol{\tau(\A) + \tau(\A)^*}$.   
The commutant $\fN$ of $W$ is a type I von Neumann
algebra, and therefore it is injective.  
Therefore by Arveson's Extension Theorem,  there is a completely positive 
extension  of $\tau$ to $\cenv(\A)$ with range in $\fN$.  
By Stinespring's Dilation Theorem, there is a minimal dilation to a $*$-representation 
$\pi$ of $\cenv(\A)$ on a larger Hilbert space.
Now  a commutant lifting result of Arveson \cite[Theorem 1.3.1]{Arv1} 
shows that there is a unique extension of $W$
to an operator $U$ commuting with  $\pi(\cenv(\A))$.  
This extension map is a $*$-homomorphism, so $U$ is unitary.  
Moreover the fact that the restriction of $\pi|_\A$ to the space $\P$  is a
homomorphism means that $\pi$ is a maximal dilation of $\tau$, and hence of $\rho$.
\end{proof}

The following corollary is a consequence of (i) implies (iii) above.

\begin{cor} \label{C:ICLTtoMSLT}
Property ICLT implies MCLT for $\A$.
\end{cor}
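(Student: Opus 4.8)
The plan is to obtain MCLT directly from Theorem~\ref{T:ICLT}, specifically the implication (i)$\Rightarrow$(iii), so that essentially no new argument is needed. I would start with an arbitrary representation $\rho$ of $\A$ on $\H$ together with a contraction $X$ in the commutant of $\rho(\A)$. Since property ICLT is precisely statement (i) of Theorem~\ref{T:ICLT}, statement (iii) of that theorem applies: there exist a Hilbert space $\K\supset\H$, a maximal dilation $\pi$ of $\rho$ on $\K$ which extends to a $*$-representation of $\cenv(\A)$, and a unitary $U$ on $\K$ commuting with $\pi(\cenv(\A))$, such that $P_\H\pi(a)U^n|_\H=\rho(a)X^n$ for all $a\in\A$ and all $n\ge0$.

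Then I would simply set $Y:=U$. Because $U$ is unitary it is in particular a contraction, and because it commutes with all of $\pi(\cenv(\A))$ it commutes a fortiori with $\pi(\A)$; the displayed identity is exactly the power-intertwining requirement in the definition of MCLT. Hence $\pi$ and $Y$ witness MCLT for the pair $(\rho,X)$, and since this pair was arbitrary, $\A$ has MCLT. I do not expect any genuine obstacle here: all the substance is already contained in Theorem~\ref{T:ICLT}, and what remains is only the trivial observation that a unitary commuting with $\pi(\cenv(\A))$ is, in particular, a contraction commuting with $\pi(\A)$.
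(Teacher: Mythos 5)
Your proposal is correct and follows exactly the paper's own route: the paper derives this corollary as an immediate consequence of the implication (i)$\Rightarrow$(iii) of Theorem~\ref{T:ICLT}, taking the unitary $U$ as the contraction $Y$ witnessing MCLT. Nothing is missing.
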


Another easy corollary is a consequence of the fact that (iii) is symmetric.

\begin{cor} \label{C:ICLTisICLT*}
Property ICLT is equivalent to ICLT*.
So if $\A$ has ICLT, so does $\A^*$.
\end{cor}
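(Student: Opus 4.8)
The plan is to deduce everything from characterization~(iii) in Theorem~\ref{T:ICLT}, which is the one that is visibly symmetric in $\A$ and $\A^*$: it produces a $*$-representation $\pi$ of $\cenv(\A)$ together with a unitary $U$ commuting with $\pi(\cenv(\A))$ and a compression identity, and since $\cenv(\A^*)$ is the very same C*-algebra as $\cenv(\A)$ (with $\A^*$ sitting inside as $\{\iota(a)^*:a\in\A\}$ --- immediate from the universal property together with the adjoint correspondence for completely isometric representations), the ``target data'' in~(iii) is literally the same for the two algebras. By Theorem~\ref{T:ICLT}, $\A$ has ICLT iff $\A$ has~(iii), and $\A^*$ has ICLT iff $\A^*$ has~(iii). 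So it suffices to prove: if $\A$ has~(iii) then $\A^*$ has~(iii); the converse then follows by applying this with $\A$ replaced by $\A^*$, since $(\A^*)^*=\A$.

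So suppose $\A$ has~(iii), and let $\rho'$ be a representation of $\A^*$ on $\H$ commuting with a contraction $X'$. First I would pass to the corresponding representation $\rho$ of $\A$ on $\H$, $\rho(a):=\rho'(a^*)^*$ (the canonical identification of completely contractive representations of $\A^*$ with those of $\A$, used throughout the paper in ``by duality'' arguments). Taking adjoints of $\rho'(b)X'=X'\rho'(b)$ gives $\rho(a)X'^*=X'^*\rho(a)$ for all $a\in\A$, so $\rho$ commutes with the contraction $X'^*$. Applying~(iii) for $\A$ to the pair $(\rho,X'^*)$ yields a Hilbert space $\K\supseteq\H$, a $*$-representation $\pi$ of $\cenv(\A)$ on $\K$, and a unitary $U$ commuting with $\pi(\cenv(\A))$ with
\[ P_\H\pi(a)U^n|_\H=\rho(a)(X'^*)^n \qforal a\in\A \AND n\ge0 .\]
Now set $\pi':=\pi$, viewed as a $*$-representation of $\cenv(\A^*)=\cenv(\A)$, and $U':=U^*$, a unitary commuting with $\pi'(\cenv(\A^*))$. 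Taking adjoints of the displayed identity and using that $\pi$ is a $*$-homomorphism, that $U^*$ commutes with $\pi(a)^*=\pi'(a^*)$, and that $X'$ commutes with $\rho'(a^*)$, one rewrites it as $P_\H\pi'(a^*)U'^n|_\H=\rho'(a^*)X'^n$ for all $a^*\in\A^*$ and $n\ge0$. That is exactly~(iii) for $\A^*$, and Theorem~\ref{T:ICLT} applied to $\A^*$ then gives that $\A^*$ has ICLT; symmetrically, $\A^*$ having ICLT forces $\A$ to have it.

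The only step that is not purely formal --- and the one I would be careful with --- is the order of factors when passing to adjoints: the adjoint of $P_\H\pi(a)U^n|_\H=\rho(a)(X'^*)^n$ is $P_\H U^{*n}\pi(a)^*|_\H=X'^n\rho'(a^*)$, and it is precisely the commutativity of $X'$ with $\rho'(\A^*)$ that converts the right-hand side to the required $\rho'(a^*)X'^n$, while the commutativity of $U$ with $\pi(\cenv(\A))$ is what moves $\pi'(a^*)$ to the left of $U'^n$ on the left-hand side. Everything else is bookkeeping with the adjoint correspondence for representations and the identification $\cenv(\A^*)=\cenv(\A)$.
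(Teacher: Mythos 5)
Your proof is correct and is exactly the argument the paper has in mind: its entire justification is the one-line remark that condition (iii) of Theorem~\ref{T:ICLT} is symmetric under passing to adjoints, and you have simply unwound that symmetry explicitly (the adjoint correspondence $\rho(a)=\rho'(a^*)^*$, the identification $\cenv(\A^*)=\cenv(\A)$, and the use of the two commutation relations to restore the order of factors after taking adjoints). Nothing further is needed.
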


\begin{eg} 
Finite dimensional nest algebras have ICLT by Paulsen and Power 
\cite{PP_nest,PP_tensor}.
They actually prove variant (iii).
They also claim that the minimal $*$-dilation is unique.
This follows because finite dimensional nest algebras are Dirichlet.
So by Theorem~\ref{T:SMCLT}, they have SMCLT.

Dirichlet implies semi-Dirichlet for the algebra and its adjoint.
So there are unique minimal fully extremal (co)extensions.
The proof of ICLT in fact first coextends to an isometry in the
commutant.  Hence finite dimensional nest algebras have SCLT and SCLT*.

Nest algebras have the SCLT, SCLT* and MCLT for weak-$*$ 
continuous completely contractive representations.
\end{eg}

The first part of the following proposition uses exactly the same proof as 
Proposition~\ref{P:MCLT intertwine} with the exception that the dilation 
$\tilde Y$ obtained can be taken to be a unitary operator when ICLT
is invoked.

\begin{prop} \label{P:ICLT intertwine}
Suppose that $\A$ has ICLT. 
Let $\rho_i$ be representations of $\A$ on $\H_i$ for $i=1,2$.
Suppose that $X$ is a contraction in $\B(\H_2,\H_1)$ such that
\[ \rho_1(a) X = X \rho_2(a) \qforal a \in \A .\]
Then there are maximal dilations $\pi_i$ of $\rho_i$
acting on $\K_i \supset \H_i$ for $i=1,2$ and a unitary operator $U$ 
simultaneously dilating $X$ in $\B(\K_2,\K_1)$ so that 
\[ \pi_1(a) U = U \pi_2(a) \qforal a \in \A .\]

Consequently, there exist Shilov coextensions $\sigma_i$ of $\rho_i$ on $\L_i$
and a  coextension of $X$ to an isometry $V \in \B(\L_2,\L_1)$ so that
\[\sigma_1(a) V = V \sigma_2(a) \qforal a \in \A .\]
\end{prop}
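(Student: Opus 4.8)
The plan is to reduce Proposition~\ref{P:ICLT intertwine} to the non-intertwining form of ICLT (Theorem~\ref{T:ICLT}) by the standard $2\times 2$ trick, exactly as in the proof of Proposition~\ref{P:MCLT intertwine}, and then observe that ICLT produces a \emph{unitary} rather than a mere contraction. Concretely, I would set $\rho = \rho_1 \oplus \rho_2$ acting on $\H_1 \oplus \H_2$, and form
\[
 \tilde X = \begin{bmatrix} 0 & X \\ 0 & 0 \end{bmatrix} .
\]
The intertwining relation $\rho_1(a) X = X \rho_2(a)$ is precisely what makes $\tilde X$ commute with $\rho(\A)$, since $\rho(a)\tilde X$ and $\tilde X \rho(a)$ both equal $\begin{sbmatrix} 0 & \rho_1(a)X \\ 0 & 0\end{sbmatrix} = \begin{sbmatrix} 0 & X\rho_2(a) \\ 0 & 0\end{sbmatrix}$. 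Note $\|\tilde X\| = \|X\| \le 1$, so $\tilde X$ is a contraction.

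Next I would invoke Theorem~\ref{T:ICLT}, in its strongest equivalent form (iii): there is a maximal dilation $\pi$ of $\rho$ on some $\K \supset \H_1 \oplus \H_2$ and a \emph{unitary} $U$ on $\K$ commuting with $\pi(\cenv(\A))$ (hence with $\pi(\A)$) that dilates $\tilde X$ in the sense that $P_{\H_1 \oplus \H_2}\, \pi(a)\, U^n|_{\H_1 \oplus \H_2} = \rho(a)\tilde X^n$ for all $a$ and $n \ge 0$; in particular $P_{\H_1\oplus\H_2} U|_{\H_1\oplus\H_2} = \tilde X$. Since $\pi$ is a maximal dilation of $\rho = \rho_1 \oplus \rho_2$, it is simultaneously a maximal dilation of each $\rho_i$ (one restricts $\pi$ to the smallest reducing subspace containing $\H_i$, using that maximality passes to summands). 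I would then take $\pi_i$ to be these maximal dilations on spaces $\K_i$, and extract $U$ as a map intertwining them: writing $U$ as a $4\times 4$ block operator with respect to
\[
 \K = (\K_- \oplus \H_1) \oplus \H_2 \oplus \K_+
\]
and tracking which blocks of $\tilde Y = U$ are forced to vanish exactly as in Proposition~\ref{P:MCLT intertwine}, one finds that the relevant corner of $U$ is a map $\B(\K_2,\K_1)$ dilating $X$ with $\pi_1(a) U = U \pi_2(a)$. The unitarity of the big $U$ is inherited by this corner only in the weaker sense needed; what matters is that it is the restriction/compression of a unitary.

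For the \textbf{consequence}, I would restrict to invariant subspaces. Let $\L_i \subset \K_i$ be the smallest subspace invariant for $\pi_i(\A)$ containing $\H_i$; then $\sigma_i := \pi_i(\cdot)|_{\L_i}$ is a Shilov coextension of $\rho_i$ (Shilov because $\pi_i$ is a $*$-representation of $\cenv(\A)$ restricted to an invariant subspace; a coextension because $\H_i$ is coinvariant, coming from the dilation relation). The key point is that $U$ carries $\L_2$ into $\L_1$: since $U$ intertwines $\pi_1(\A)$ and $\pi_2(\A)$ and $U|_{\H_2}$ is (essentially) $X$ mapping into $\H_1 \subset \L_1$, we get $U\,\pi_2(a)\H_2 = \pi_1(a) U \H_2 \subset \pi_1(a)\L_1 \subset \L_1$, and iterating shows $U\L_2 \subset \L_1$. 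Then $V := U|_{\L_2} : \L_2 \to \L_1$ is an isometry (restriction of a unitary to a subspace mapped inside another subspace), it coextends $X$ because $P_{\H_1} V|_{\H_2} = P_{\H_1} U|_{\H_2} = X$, and $\sigma_1(a) V = V \sigma_2(a)$ by restricting the intertwining relation.

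The \textbf{main obstacle} is bookkeeping rather than conceptual: one must be careful that the block-decomposition argument for $U$ genuinely yields an operator in $\B(\K_2,\K_1)$ with the stated corner equal to $X$ — this is where the precise placement of $\H_1, \H_2$ in the decomposition of $\K$ matters, and the argument mirrors Proposition~\ref{P:MCLT intertwine} line for line with ``contraction'' upgraded to ``unitary''. A secondary subtlety is verifying that the restriction of a unitary $U$ with $U\L_2 \subseteq \L_1$ is an isometry $\L_2 \to \L_1$: this is immediate since $\|V\xi\| = \|U\xi\| = \|\xi\|$, but one should note that it need not be surjective, which is fine — an isometric (not unitary) coextension is all that is claimed.
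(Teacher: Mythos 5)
Your first part follows the paper's route exactly: the paper itself says the first statement ``uses exactly the same proof as Proposition~\ref{P:MCLT intertwine}'' with the contraction $\tilde Y$ upgraded to a unitary by invoking Theorem~\ref{T:ICLT}(iii). One small caution there: if you cut $\pi$ down to the smallest reducing subspaces containing $\H_1$ and $\H_2$, the corresponding corner of the big unitary need no longer be unitary, which is what the proposition asserts; the paper sidesteps this by keeping $\K_1=\K_2=\K$ and reading the same unitary against two decompositions of the one space.

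The second part has a genuine gap. You take $\L_i$ to be the smallest $\pi_i(\A)$-invariant subspace containing $\H_i$ and claim $U\L_2\subseteq\L_1$ because ``$U|_{\H_2}$ is (essentially) $X$ mapping into $\H_1$.'' But $U$ only dilates $X$: $P_{\H_1}U|_{\H_2}=X$, and a unitary dilation generically carries $\H_2$ out of $\H_1$ (and out of $\L_1$). Already for $\A=\bC$ and $X$ a strict contraction your $\L_1$ equals $\H_1$, while $U\H_2\not\subseteq\H_1$; so both the key containment and the choice of $\L_1$ fail, and the rest of the argument collapses since everything hinges on $U\H_2\subseteq\L_1$. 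The correct subspaces come from the \emph{joint} semi-invariance of $\H_1\oplus\H_2$ for the algebra generated by $\pi(\A)$ and the big unitary, which is part of what Theorem~\ref{T:ICLT}(iii) delivers via the relation $P_\H\,\pi(a)U^n|_\H=\rho(a)X^n$ for all $n\ge0$: one writes $\K_i=\K_i^-\oplus\H_i\oplus\K_i^+$ with $\L_i=\H_i\oplus\K_i^+$ invariant for $\pi_i(\A)$, and then $U$ is block lower triangular with respect to these decompositions, with $X$ in the $\H_2\to\H_1$ entry. That triangular form is exactly what forces $U\L_2\subseteq\L_1$, so that $V=U|_{\L_2}$ is an isometry and the intertwining relation restricts. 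This is what the paper does; your restriction strategy is the right idea, but the invariant subspaces must be extracted from the joint dilation structure, not from $\pi_i(\A)$-invariance alone.
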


\begin{proof}
We only discuss the second statement. 
Let $\pi_i$ act on $\K_i = \K_i^- \oplus \H_i \oplus \K_i^+$,
where $\L_i = \H_i \oplus \K_i^+$ and $\K_i^+$ are invariant
subspaces for $\pi_i(\A)$.  With respect to these decompositions,
we have the matrix forms
\[
 \pi_i(a) = 
 \begin{bmatrix} * & 0 & 0 \\ * & \rho_i(a) & 0 \\ * & \pi_{32}(a) & \pi_{33}(a) \end{bmatrix}
 \qand
 U = \begin{bmatrix} * & 0 & 0 \\ *  & X & 0 \\ * & U_{32} & U_{33} \end{bmatrix} .
\]
Set 
\[
 \sigma_i(a) = P_{\L_i} \pi_i(a)|_{\L_i}  = 
 \begin{bmatrix} \rho_i(a) & 0 \\  \pi_{32}(a) & \pi_{33}(a) \end{bmatrix}
 \quad\FOR i = 1,2.
\]
and 
\[ V = P_{\L_1} U|_{\L_2}  = \begin{bmatrix} X & 0 \\ U_{32} & U_{33} \end{bmatrix} \]
Then $\sigma_i$ are Shilov coextensions of $\rho_i$, $V$ is an isometry, and
\begin{align*}
 \sigma_1(a) V &= P_{\L_1} \pi_1(a)|_{\L_1} P_{\L_1} U|_{\L_2} \\
 &= P_{\L_1} \pi_1(a) UP_{\L_2} = P_{\L_1} U \pi_2(a) P_{\L_2} \\
 &= P_{\L_1} U P_{\L_2} \pi_2(a) P_{\L_2} = V P_{\L_2} \pi_2(a)|_{\L_2} \\
 &= V \sigma_2(a) . \qedhere
\end{align*}
\end{proof}

We ask a bit more for what we will call the Ando property.
This is stronger than the classical Ando Theorem for $\AD$.
The weak version for $\AD$ is just Ando's Theorem.

\begin{defn}
A unital operator $\A$ satisfies the \textit{Ando property} if whenever
$\rho$ is a representation of $\A$ on $\H$ and $X\in\B(\H)$ is a 
contraction commuting with $\rho(\A)$, there is a fully extremal
coextension $\sigma$ of $\rho$ on a Hilbert space $\K$ and a
coextension of $X$ to an isometry on $\K$ which commute.

Likewise say that $\A$ satisfies the \textit{weak Ando property} if whenever
$\rho$ is a representation of $\A$ on $\H$ and $X\in\B(\H)$ is a 
contraction commuting with $\rho(\A)$, there is an extremal
coextension $\sigma$ of $\rho$ on a Hilbert space $\K$ and a
coextension of $X$ to an isometry on $\K$ which commute.

If $\A^*$ has the (weak) Ando property, say that $\A$ has the 
\textit{(weak) Ando* property}.
\end{defn}

It is apparent that the Ando property implies CLT and ICLT for $\A$;
and the weak Ando property implies WCLT and ICLT.
The converse of the latter fact follows the same lines as the 
classical deduction of Ando's theorem from CLT.
But the converse for the full Ando property is more difficult.
The difference is that an extremal coextension of a coextension is extremal,
but a fully extremal coextension of a coextension is generally not fully extremal.
So more care has to be taken, and a Schaeffer type construction makes it work.

\begin{prop} \label{P:wAndoWCLT}
A unital operator algebra $\A$ has the weak Ando property 
if and only if $\A$ has WCLT and ICLT.
\end{prop}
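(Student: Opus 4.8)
The plan is to prove the two implications separately. The forward one is essentially immediate, while the reverse one is the classical derivation of Ando's theorem from commutant lifting, carried out for a general $\A$. For the forward direction, assume the weak Ando property, and let $\rho$ be a representation of $\A$ on $\H$ with $X$ a contraction commuting with $\rho(\A)$. The weak Ando property yields an extremal coextension $\sigma$ of $\rho$ together with a commuting isometric coextension $V$ of $X$; since an extremal coextension is a fortiori a coextension, this is already the conclusion of ICLT in the form of Theorem~\ref{T:ICLT}(i). For WCLT, if $X = 0$ take $Y = 0$, and otherwise apply the weak Ando property to the contraction $\|X\|^{-1}X$ to obtain an extremal coextension $\sigma$ and a commuting isometric coextension $V$; then $Y = \|X\|\,V$ is a coextension of $X$ with $\|Y\| = \|X\|$ commuting with $\sigma(\A)$. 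So $\A$ has both WCLT and ICLT.

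For the reverse direction, assume $\A$ has WCLT and ICLT, and let $\rho$ act on $\H_0 := \H$ with $X_0 := X$ a contraction commuting with $\rho(\A)$. I would build an increasing tower of Hilbert spaces $\H = \K_0 \subseteq \K_1 \subseteq \K_2 \subseteq \cdots$ together with pairs $(\rho_n, X_n)$, where $\rho_n$ is a representation of $\A$ on $\K_n$ coextending $\rho_{n-1}$, $X_n$ is a contraction on $\K_n$ coextending $X_{n-1}$, and $\rho_n(a)X_n = X_n\rho_n(a)$ for all $a \in \A$, alternating two moves: at an odd stage $n$, apply WCLT to $(\rho_{n-1}, X_{n-1})$, letting $\rho_n$ be an \emph{extremal} coextension of $\rho_{n-1}$ and $X_n$ a coextension of $X_{n-1}$ of the same norm commuting with $\rho_n(\A)$; at an even stage $n$, apply ICLT to $(\rho_{n-1}, X_{n-1})$, letting $X_n$ be an \emph{isometric} coextension of $X_{n-1}$ and $\rho_n$ a coextension of $\rho_{n-1}$ commuting with $X_n$. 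Both moves apply at every stage, because after each move the new representation and the new contraction again commute.

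Passing to the inductive limit on $\K := \overline{\bigcup_n \K_n}$, exactly as in the proof of Theorem~\ref{T:MCLT}, produces a completely contractive representation $\rho_\infty$ coextending $\rho$ and a contraction $X_\infty$ coextending $X$, and the co-invariant-subspace computation used to close that proof shows $\rho_\infty(a)X_\infty = X_\infty\rho_\infty(a)$. To see $\rho_\infty$ is extremal: if $\tau$ coextends $\rho_\infty$, then for each odd $n$ the space $\K_n$ is co-invariant for $\tau$ with compression $\rho_n$, so by extremality of $\rho_n$ it reduces $\tau$; since the odd indices are cofinal, $\K$ reduces $\tau$, and $\rho_\infty$ splits off. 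To see $X_\infty$ is an isometry: a contractive coextension $\begin{sbmatrix} V & 0 \\ B & C \end{sbmatrix}$ of an isometry $V$ necessarily has $B = 0$, hence equals $V \oplus C$; so once $X_n$ is made isometric at some even stage, every later $X_m$ still agrees with $X_n$ on $\K_n$, whence $\|X_m h\| = \|h\|$ for $h \in \K_n$ and all large $m$, and taking limits gives $\|X_\infty h\| = \|h\|$ on the dense set $\bigcup_n\K_n$. Then $\rho_\infty$ and $X_\infty$ witness the weak Ando property.

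The main obstacle lies in this limit step. One must be sure that $X_\infty$ is a genuine isometry and not merely a contraction --- this is exactly what interleaving the ICLT moves buys, via the observation that a contractive coextension of an isometry splits as a direct sum --- and that the commutation relation survives the passage to the \wot/\sot\ limit, which requires the same bookkeeping with co-invariant subspaces as at the end of the proof of Theorem~\ref{T:MCLT}. Unlike the full Ando property, no Schaeffer-type refinement is needed here, since we only require $\rho_\infty$ extremal rather than fully extremal, and extremality of coextensions is preserved under the increasing limits used above.
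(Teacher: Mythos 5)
Your proof is correct and follows the same route as the paper: the forward implication is the observation (recorded just before the proposition) that a weak Ando coextension already witnesses both WCLT and ICLT, and the reverse implication is the paper's alternating WCLT/ICLT coextension tower with an inductive limit. You have in fact supplied the details (extremality of the limit via cofinality of the odd stages, isometry of the limit via the splitting of contractive coextensions of isometries, and survival of the commutation relation) that the paper's proof leaves as a sketch.
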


\begin{proof}
Let $\rho$ be a representation of $\A$ which commutes with a contraction $X$.
Assume WCLT and ICLT.
Coextend $\rho$ and $X$ to an extremal coextension 
$\sigma_1$ and a commuting contraction $Y_1$ using WCLT.
Then coextend $\sigma_1$ to $\rho_1$ and $Y_1$ to dilate to a commuting isometry
by ICLT.  Iterate these procedures recursively.  
The inductive limit has the desired properties. 
\end{proof}

\begin{thm} \label{T:AndoCLT}
A unital operator algebra $\A$ has the Ando property if and only if
$\A$ has CLT and ICLT. 
\end{thm}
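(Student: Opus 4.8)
The plan is to establish the two implications separately; the forward one is routine and the reverse one carries the weight. Suppose first that $\A$ has the Ando property. Then $\A$ has ICLT at once, since a fully extremal coextension is in particular a coextension, so the fully extremal coextension $\sigma$ of $\rho$ and the commuting isometric coextension of $X$ provided by the Ando property already witness ICLT. For CLT, let $\rho$ be a representation of $\A$ on $\H$ and let $X$ commute with $\rho(\A)$. If $X=0$, take any fully extremal coextension $\sigma$ of $\rho$ — which exists by Theorem~\ref{T:fully extremal} — together with $Y=0$. If $X\neq 0$, apply the Ando property to the contraction $\|X\|^{-1}X$ to get a fully extremal coextension $\sigma$ of $\rho$ on $\K$ and a commuting isometric coextension $V$ of $\|X\|^{-1}X$; then $Y:=\|X\|\,V$ is a coextension of $X$ with $\|Y\|=\|X\|$ commuting with $\sigma(\A)$. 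Hence $\A$ has CLT.

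For the converse, assume $\A$ has CLT and ICLT. A fully extremal coextension of $\rho$ is in particular an extremal coextension of $\rho$, so CLT implies WCLT, and hence by Proposition~\ref{P:wAndoWCLT} the algebra $\A$ has the weak Ando property. Thus, given $\rho$ on $\H$ commuting with a contraction $X$, there already exist an \emph{extremal} coextension $\sigma_0$ of $\rho$ on $\K_0$ and an isometric coextension $V_0$ of $X$ on $\K_0$ commuting with $\sigma_0(\A)$. What remains is to upgrade $\sigma_0$ to a \emph{fully} extremal coextension of $\rho$ while retaining a commuting isometric coextension of $X$. One cannot do this by simply alternating CLT (dilating the $\A$-representation to a fully extremal coextension) with ICLT (dilating the contraction to an isometry), since a fully extremal coextension of a coextension need not be fully extremal; this is exactly where a Schaeffer-type construction is needed.

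The plan for the upgrade is to run a transfinite recursion modelled on the proof of Theorem~\ref{T:fully extremal}. Enumerating a dense subset $\{(a_\beta,h_\beta):\beta<\Lambda\}$ of $\A\times\K_0$, one builds an increasing chain $(\sigma_\alpha,V_\alpha)$, where $\sigma_\alpha$ is a coextension of $\rho$ dilating $\sigma_0$ and $V_\alpha$ is an isometric coextension of $X$ on the Hilbert space of $\sigma_\alpha$ commuting with $\sigma_\alpha(\A)$, arranged so that the identities \eqref{E1} and \eqref{E2} of that proof hold in the limit; as there, this forces the limit representation $\sigma$ to be fully extremal. At a successor stage one must, given $(\sigma_\alpha,V_\alpha)$ and a pair $(a_\beta,h_\beta)$, pass to a dilation $\tau$ of $\sigma_\alpha$ — still a coextension of $\rho$, produced by the Dritschel--McCullough machinery and possibly alternating a coextension and an extension of $\sigma_\alpha$ — which nearly attains the relevant suprema, and simultaneously extend $V_\alpha$ to an isometric coextension of $X$ commuting with $\tau(\A)$. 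The device that makes the simultaneous extension work is a Schaeffer-type construction: instead of invoking ICLT, which would only coextend further and return a contraction, one builds the minimal isometric Schaeffer dilation of the relevant restriction of $V_\alpha$ explicitly and feeds the new coordinates an $\A$-representation obtained from ICLT — equivalently, from the maximal-dilation form in Theorem~\ref{T:ICLT}(iii) — so that the enlarged representation remains a coextension of $\rho$ and the gains at $(a_\beta,h_\beta)$ survive. Interleaving the steps for \eqref{E1} and \eqref{E2} and iterating over the successively enlarged dense sets, exactly as in Theorem~\ref{T:fully extremal}, one passes to the inductive limit: $\sigma$ is fully extremal by the verification there, and the \sot-limit $V$ of the $V_\alpha$ is a well-defined isometry commuting with $\sigma(\A)$ and coextending $X$. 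Hence $\A$ has the Ando property.

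The step I expect to be the main obstacle is the simultaneous extension at the successor stage: re-lifting the commuting isometry $V_\alpha$ through the auxiliary dilation $\tau$ while keeping it a genuine isometry and a coextension of $X$, and without undoing the full-extremality progress already made. Bare ICLT only coextends further and returns a contraction that would have to be re-dilated, so the growth must be controlled by hand via the Schaeffer dilation, and reconciling the bookkeeping of dilating the $\A$-side with that of dilating the $X$-side through the transfinite recursion is the technical heart of the argument.
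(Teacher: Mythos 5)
Your forward direction is fine. The converse, however, has a genuine gap, and it is exactly the one you flag yourself: the simultaneous extension of the commuting isometry at the successor stages of your transfinite recursion is never actually carried out. The dilations needed to push $\|\tau(a_\beta)h_\beta\|$ toward the suprema in (\ref{E1}) and (\ref{E2}) are general dilations of $\sigma_\alpha$ that coextend $\rho$ --- in particular they involve \emph{extensions}, not just coextensions --- and neither ICLT nor a Schaeffer dilation gives you a way to carry $V_\alpha$ along through an extension while keeping it an isometry, a coextension of $X$, and commuting with the enlarged representation. There is also a second, quieter problem: if at each stage you only compete against dilations that admit a compatible commuting isometry, the suprema you approach are taken over a smaller class than in Theorem~\ref{T:fully extremal}, so the limit representation is not obviously fully extremal. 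As written, the proposal is a plan whose technical heart is missing.

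The actual argument is much shorter and avoids the recursion entirely by exploiting the order of operations and Proposition~\ref{P:minimal fully extremal}. Apply CLT \emph{once} to get a fully extremal coextension $\sigma$ of $\rho$ on $\K$ together with a commuting contractive coextension $Y$ of $X$. Then apply ICLT to the pair $(\sigma,Y)$ to get a Shilov coextension $\tau$ on $\L=\K\oplus\L'$ and a commuting isometry $V$. Since $\sigma$ is extremal, $\tau=\sigma\oplus\tau'$ and $V=\begin{sbmatrix}Y&0\\Z&V'\end{sbmatrix}$ with $\tau'(a)Z=Z\sigma(a)$. Because $\tau'$ is Shilov, one can replace it by a maximal dilation $\pi$ and push $Z$ into $\P=\H_\pi$ so that $\pi(a)(PZ)=(PZ)\sigma(a)$. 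Now form $\sigma\oplus\pi^{(\infty)}$, which is still a fully extremal coextension of $\rho$ precisely because appending maximal summands preserves full extremality (Proposition~\ref{P:minimal fully extremal}), and coextend $Y$ to the Schaeffer-type isometry
\[
W=\begin{bmatrix} Y&0&0&\dots\\ PZ&0&0&\dots\\ 0&I&0&\dots\\ \vdots&&\ddots&\ddots\end{bmatrix},
\]
which commutes with $(\sigma\oplus\pi^{(\infty)})(\A)$. The point you missed is that full extremality is achieved at the very first step by CLT and then merely needs to be \emph{preserved}, which the maximal summands do for free; there is no need to build full extremality incrementally while dragging an isometry along.
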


\begin{proof}
Let $\rho$ be a representation of $\A$ which commutes with a contraction $X$.
Assume CLT and ICLT.
Use CLT to coextend $\rho$ and $X$ to a fully extremal coextension $\sigma$ and
a commuting contraction $Y$ on $\K \supset \H$.
Then use ICLT to coextend this to a Shilov dilation $\tau$ 
and commuting isometry $V$ on $\L = \K \oplus \L'$.
Since $\sigma$ is extremal, $\tau = \sigma \oplus \tau'$.
Write 
\[ V = \begin{bmatrix}Y&0\\Z&V'\end{bmatrix} \]
with respect to this decomposition of $\L$.
Then 
\[ \tau'(a) Z = Z \sigma(a) \qfor  a \in \A .\]
Since $\tau$ is Shilov, there is a maximal dilation $\pi$ which is an extension of $\tau$
on a Hilbert space $\M$ containing $\L$ as an invariant subspace. 
So $\L'$ is also invariant, as it reduces $\pi(\A)|_\L = \tau(\A)$.  
Let $P$ be the projection of $\M$ onto $\L'$.
Then 
\[ \pi(a)P = \tau'(a) \]
and thus
\[ \pi(a) (PZ) = P\tau'(a)Z  = (PZ) \sigma(a) \qforal a \in \A .\]

The representation $\sigma \oplus \pi^{(\infty)}$ is a fully extremal 
coextension of $\rho$. Moreover, $X$ coextends to $Y$ which coextends to
\[
 W = 
 \begin{bmatrix} 
 Y&0&0&0&\dots\\
 PZ&0&0&0&\dots\\
 0&I&0&0&\dots\\
 0&0&I&0&\dots\\
 0&0&0&\ddots&\ddots\\
 \vdots& \vdots& \vdots& \ddots&\ddots
 \end{bmatrix} .
\]                    
It is easy to see that $W$ is an isometry. 
The relations established in the previous paragraph ensure that 
$W$ commutes with $(\sigma \oplus \pi^{(\infty)})(\A)$.                                                                                                                                                                                                                                                                                                                                                                                                                                                                                                                                                           
Thus we have verified that $\A$ has the Ando property.
\end{proof}

This yields a strengthening of the classical Ando Theorem.
The usual Ando Theorem verifies the weak Ando property,
and hence ICLT.
But the disk algebra has SCLT.
So by Theorem~\ref{T:AndoCLT}, it has the Ando property.
We provide a direct proof that is of independent interest.

\begin{cor}\label{C:ando}
The disk algebra has the Ando property; i.e.\ 
if $A_1$ and $A_2$ are commuting contractions on $\H$, then they have
commuting isometric coextensions $V_i$ on a common Hilbert space $\K$. 
Moreover, we can arrange for $V_2$ to be a fully extremal coextension
$($i.e. $V_{A_2} \oplus U$, where $V_{A_2}$ is the minimal isometric coextension
and $U$ is unitary$)$.
\end{cor}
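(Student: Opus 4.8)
A quick proof: the disk algebra has CLT by the Sz.Nagy--Foia\c{s} commutant lifting theorem and ICLT by the classical Ando Theorem, so by Theorem~\ref{T:AndoCLT} it has the Ando property, and the asserted form of $V_2$ is part of that conclusion. I give instead a direct construction. Write $V_{A_2}$ for the minimal isometric coextension of $A_2$ on $\K_2\supseteq\H$; it is a fully extremal coextension of the representation $z\mapsto A_2$ by Example~\ref{Eg:disk}. Since $A_1$ commutes with $A_2$, the Sz.Nagy--Foia\c{s} commutant lifting theorem gives a contraction $B_1$ on $\K_2$ with $\|B_1\|\le1$ which commutes with $V_{A_2}$ and is a coextension of $A_1$, i.e.\ $\K_2\ominus\H$ is invariant for $B_1$ and $P_\H B_1|_\H=A_1$.

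I would then apply the classical Ando Theorem to the commuting pair $(V_{A_2},B_1)$ on $\K_2$, obtaining commuting isometric coextensions $\widetilde V_2$ of $V_{A_2}$ and $V_1$ of $B_1$ on a common space $\K_1=\K_2\oplus\M$, with $\M$ invariant for both. Because $V_{A_2}$ is already an isometry sitting as the upper left corner of the contraction $\widetilde V_2$ (with vanishing upper right corner), contractivity forces $\K_2$ to reduce $\widetilde V_2$, so $\widetilde V_2=V_{A_2}\oplus W$ with $W$ an isometry on $\M$; and as $V_1$ is a coextension of $B_1$ we may write $V_1=\begin{sbmatrix}B_1&0\\c&d\end{sbmatrix}$ relative to $\K_2\oplus\M$. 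Isometry of $V_1$ gives $B_1^*B_1+c^*c=I$, $c^*d=0$, $d^*d=I$, and the commutation relation $V_1\widetilde V_2=\widetilde V_2V_1$ gives $cV_{A_2}=Wc$ and $dW=Wd$. Since coextensions compose, $V_1$ is an isometric coextension of $A_1$ and $\widetilde V_2$ one of $A_2$; this already establishes the first statement of the corollary.

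It remains to replace $W$ by a unitary without touching the summand $V_{A_2}$. Let $U$ be the minimal unitary extension of $W$ on $\N\supseteq\M$, and define $\widehat d$ on the dense subspace $\bigcup_{n\ge0}U^{-n}\M$ of $\N$ by $\widehat d(U^{-n}m)=U^{-n}d(m)$ for $m\in\M$; using $dW=Wd$ one verifies that this is well defined and isometric, so $\widehat d$ extends to an isometry on $\N$ with $\widehat dU=U\widehat d$ and $\widehat d|_\M=d$. Set $\widehat V_1=\begin{sbmatrix}B_1&0\\c&\widehat d\end{sbmatrix}$ on $\K_2\oplus\N$. It is an isometry: the only new relation to check is $c^*\widehat d=0$, which follows from $\ip{d\xi, U^{n}c\eta}=\ip{d\xi, cV_{A_2}^{n}\eta}=0$ for $n\ge0$, using $cV_{A_2}=Wc$ and $c^*d=0$. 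Moreover $\widehat V_1$ commutes with $V_{A_2}\oplus U$, the latter is an isometric coextension of $A_2$ of the form $V_{A_2}\oplus(\text{unitary})$ and so is fully extremal by Example~\ref{Eg:disk}, and $\widehat V_1$ is an isometric coextension of $A_1$. Taking $V_1:=\widehat V_1$ and $V_2:=V_{A_2}\oplus U$ finishes the proof.

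The crux is this last step. A fully extremal coextension for $\AD$ has the rigid shape $V_{A_2}\oplus U$ with $U$ genuinely unitary, while Ando's theorem only delivers an isometry $W$ in the complementary summand; one must unitarise $W$ and simultaneously extend the intertwiner $d$ across the new coordinates, all the while keeping the lift of $A_1$ isometric and leaving $V_{A_2}$ untouched. The minimal-unitary-extension trick above is what makes these three requirements compatible.
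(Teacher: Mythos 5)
Your argument is correct, and while your opening ``quick proof'' (CLT from Sz.Nagy--Foia\c{s} plus ICLT from classical Ando, then Theorem~\ref{T:AndoCLT}) is exactly the remark the paper makes immediately before this corollary, your direct construction takes a genuinely different route from the paper's direct proof. The paper runs a variant of Ando's original argument: it takes isometric coextensions $V_1,V_2$ of $A_1,A_2$ on a common space, notes that $V_1V_2$ and $V_2V_1$ are both isometric coextensions of $A_1A_2$ and hence each unitarily equivalent to $V_{A_1A_2}\oplus W_i$, pads $V_1$ with $W_1^{(\infty)}\oplus W_2^{(\infty)}$ and $V_2$ with identities so that the two products become unitarily equivalent via a unitary fixing $\H$, conjugates to force commutation, and only at the very end reads off the form $V_{A_2}\oplus(\text{unitary})$ from the lower corner of $S_2U$. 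You instead use the two classical theorems as black boxes: lift $A_1$ to a contraction $B_1$ commuting with $V_{A_2}$, apply Ando to the pair $(V_{A_2},B_1)$, observe that isometry of $\widetilde V_2$ forces the splitting $\widetilde V_2=V_{A_2}\oplus W$, and then unitarise $W$ by passing to its minimal unitary extension $U$ while extending the corner $d$ of $V_1$ along the backward orbit $\bigcup_{n\ge0}U^{-n}\M$. The verifications you indicate all check out: well-definedness and isometry of $\widehat d$ follow from $dW=Wd$; the orthogonality $c^*\widehat d=0$ follows from $cV_{A_2}=Wc$ together with $c^*d=0$; and $(\K_2\ominus\H)\oplus\N$ is invariant for $\widehat V_1$, so it really is an isometric coextension of $A_1$. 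What the paper's route buys is self-containedness---it essentially reproves Ando's theorem from uniqueness of minimal isometric dilations alone---whereas your route is shorter given the classical results and isolates cleanly the one genuinely new step, namely converting the residual isometry $W$ into a unitary without disturbing either $V_{A_2}$ or the commutation.
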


\begin{proof}
If $A$ is a contraction, let $V_A$ denote the unique minimal isometric coextension of $A$.
Let $V_i$ be isometric dilations of $A_i$, $i=1,2$, acting
on a common Hilbert space $\K = \H \oplus \K'$.
(The minimal dilations may not have additional subspaces of 
the same dimension, for example if one is already an isometry.  
In this case, we just add a unitary summand to one of them.)
Let $V=V_{A_1A_2}$.

Note that both $V_1V_2$ and $V_2V_1$ are isometries of the form
\[ \begin{bmatrix}A_1A_2 & 0 \\ * & * \end{bmatrix} .\]
So by the uniqueness of the minimal dilation, we can
write 
\[ V_1V_2 \simeq V \oplus W_1 \qand  V_2V_1 \simeq V \oplus W_2 ,\]
where $W_i$ is an isometry acting on a Hilbert space $\K_i$ (possibly of dimension 0).

Now we dilate $A_1$ to 
\[ S_1 := V_1 \oplus W_1^{(\infty)} \oplus W_2^{(\infty)} \]
and dilate $A_2$ to 
\[ S_2 := V_2 \oplus I_{\K_1}^{(\infty)} \oplus I_{\K_2}^{(\infty)} \]
on
\[ \K \oplus \K_1^{(\infty)} \oplus \K_2^{(\infty)} .\]
So 
\begin{align*}
 S_1S_2 \simeq V \oplus W_1 \oplus W_1^{(\infty)} \oplus W_2^{(\infty)}
 \simeq V \oplus W_1^{(\infty)} \oplus W_2^{(\infty)}
 \shortintertext{and} 
 S_2S_1 \simeq V \oplus W_2 \oplus W_1^{(\infty)} \oplus W_2^{(\infty)}
 \simeq V \oplus W_1^{(\infty)} \oplus W_2^{(\infty)} .
\end{align*}
These unitary equivalences both fix $\H$.
Therefore there is a unitary operator $U$ that fixes $\H$ so that
\[ S_2S_1 = U^* S_1S_2U .\]
Now define isometric dilations $U^*S_1$ of $A_1$ and $S_2U$ of $A_2$.
Then
\[ (U^*S_1)(S_2U) = U^* S_1S_2U = S_2S_1 = (S_2U)(U^*S_1) . \]
This yields a commuting isometric dilation.

Moreover, if 
\[ V_{A_2}=  \begin{bmatrix} A_2 & 0 \\ D_2 & J \end{bmatrix} , \]
then  $S_2U$ has the form
\[
 S_2U \simeq 
 \begin{bmatrix} A_2 & 0 & 0 \\ D_2 & J & 0 \\ 0 & 0 & I \end{bmatrix}
 \begin{bmatrix} I & 0 & 0 \\ 0 & U_{22} & U_{23} \\ 0 & U_{32} & U_{33} \end{bmatrix} 
 \simeq
 \begin{bmatrix} A_2 & 0 & 0 \\ D_2 & J & 0 \\ 0 & 0 & U' \end{bmatrix}
\]
The basic observation is that $J \oplus I$ is an isometry on $\H^\perp$ with range 
equal to the orthocomplement of $\ran D_2$.  The same is therefore true for the
lower $2\times2$ corner of $S_2U$.  By the uniqueness of the minimal isometric
dilation, this corner splits as a direct sum $J \oplus U'$ where $U'$must map onto
the complement of the range of $D$ and $J$.  So $U'$ is unitary as claimed.
\end{proof}

\begin{eg}
One might ask to dilate both $A_1$ and $A_2$ to commuting isometries
of the form $V_{A_i}\oplus U_i$ with $U_i$ unitary.  This is not possible,
as the following example due to Orr Shalit shows.
Let 
\[ A_1 = 0 \qand  A_2 = S , \]
where $S$ is the unilateral shift on $\H = \ltwo$.
Then 
\[ V_{A_2}=A_2=S\]
and 
\[ V_{A_1} = I \otimes S \quad\text{acting on } \H \otimes \ltwo .\]
Suppose that $U \in \B(\H_0)$ is unitary, and that 
\[ W_1 = U \oplus V_{A_1} \]
commutes with 
\[ W_2 = A_2 \oplus X = S \oplus X \quad\text{(with $\H$ appropriately identified).} \]
Then they can be written as
\[
 W_1 = \begin{bmatrix} 
 U & 0 & 0 & 0 & \dots\\ 0 & 0 & 0 & 0 & \dots\\ 0 & I & 0 & 0 & \dots\\ 0 & 0 & I & 0 & \dots \\
 \vdots & \vdots & \vdots & \ddots & \ddots
 \end{bmatrix}
 \AND W_2 = \begin{bmatrix} 
 X_{00} & 0 & X_{02} & X_{03} & \dots\\ 0 & S & 0 & 0 & \dots\\ 
 X_{20} & 0 & X_{22} & X_{23} & \dots\\ X_{30} & 0 & X_{32} & X_{33} & \dots \\
 \vdots & \vdots & \vdots & \ddots & \ddots
 \end{bmatrix} .
\]
Computing the two products yields
\[
\begin{bmatrix} 
 UX_{00} & 0 & UX_{02} & UX_{03} & \dots\\ 0 & 0 & 0 & 0 & \dots\\ 0& S & 0 & 0 & \dots\\
 X_{20} & 0 & X_{22} & X_{23} & \dots\\ X_{30} & 0 & X_{32} & X_{33} & \dots \\
\vdots & \vdots & \vdots & \ddots & \ddots
 \end{bmatrix}
 = \begin{bmatrix} 
 X_{00}U & X_{02} & X_{03} & X_{04} & \dots\\ 0 & 0 & 0 & 0 & \dots\\ 0 & S & 0 & 0 & \dots\\ 
 X_{20}U  & X_{22} & X_{23} & X_{24} & \dots\\ X_{30}U & X_{32} & X_{33} & X_{34} &\dots \\
 \vdots & \vdots & \vdots & \ddots & \ddots
 \end{bmatrix} .
\]
Equating terms shows that 
\[
 X_{00}U = UX_{00} , \quad  X_{jj}=S \FOR j \ge 2 
 \qand X_{ij} =0 \text{  otherwise}.
\]
Thus 
\[ W_2 \simeq X_{00} \oplus (S \otimes I) , \]
which is not $V_{A_2}$ direct sum a unitary.
\end{eg}

Katsoulis and Kakariadis \cite[Theorem~3.5]{KK} 
(in the special case of the identity automorphism) show that 
every tensor algebra of a C*-correspondence has the weak Ando property.
As noted in Example~\ref{E:SCLTtensor},
the Muhly-Solel Commutant Lifting Theorem \cite{MScenv}
shows that the tensor algebra $\T^+(E)$ of a 
C*-cor\-resp\-on\-dence $E$ satisfies SCLT.
Thus tensor algebras have the Ando property by Theorem~\ref{T:AndoCLT}.
This forms a large class of algebras with this property.
This includes all tensor algebras of graphs.
The case of the non-commutative disk algebra follows from \cite{DavKatsdilation}.
We would like to know if all semi-Dirichlet algebras have this property.

\begin{thm}[Katsoulis-Kakariadis] \label{T:AndoTensor}
The tensor algebra of a C*-correspondence has the Ando property.
\end{thm}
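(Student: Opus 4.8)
The plan is to deduce Theorem~\ref{T:AndoTensor} from the machinery already in place, namely Theorem~\ref{T:AndoCLT}, which says that the Ando property is equivalent to the conjunction of CLT and ICLT. So the whole task reduces to verifying these two properties for $\A = \T^+(E)$, the tensor algebra of a C*-correspondence $E$ over a C*-algebra $\fB$.

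First I would dispatch CLT. By Example~\ref{E:tensor_sD}, $\T^+(E)$ is semi-Dirichlet, since $T(\xi)^*T(\eta) = \sigma(\ip{\xi,\eta})$ gives the required cancellation $\T^+(E)^*\T^+(E) \subset \ol{\T^+(E) + \T^+(E)^*}$. By the Muhly--Solel Commutant Lifting Theorem \cite{MScenv} (see Example~\ref{E:SCLTtensor}), $\T^+(E)$ has SCLT. By Theorem~\ref{T:SCLT}, SCLT implies CLT (a fortiori). So CLT holds immediately.

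The real content is ICLT: given a representation $\rho$ of $\T^+(E)$ on $\H$ commuting with a contraction $X$, one must produce a coextension $\sigma$ of $\rho$ and an isometric coextension $V$ of $X$ on a common space which commute. Here I would appeal to the Katsoulis--Kakariadis result \cite[Theorem~3.5]{KK}, specialized to the identity automorphism, which establishes exactly the \emph{weak} Ando property for $\T^+(E)$: there is an \emph{extremal} coextension $\sigma$ of $\rho$ together with a commuting isometric coextension $V$ of $X$. Since having such a $\sigma$ and $V$ is precisely condition (i) of Theorem~\ref{T:ICLT} (with $\AD$ in the role of $\B$), the weak Ando property immediately yields ICLT. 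Thus $\T^+(E)$ has both CLT and ICLT, and Theorem~\ref{T:AndoCLT} upgrades this to the full Ando property. The only subtlety — and the step I would flag as the crux — is the passage from the \emph{weak} Ando property (extremal coextension) to the \emph{full} Ando property (fully extremal coextension). But this is exactly what Theorem~\ref{T:AndoCLT} does for us: its proof takes the extremal coextension $\sigma$ with commuting contraction $Y$ coming from CLT, then applies ICLT to dilate to a Shilov $\tau = \sigma \oplus \tau'$ with a commuting isometry, and performs a Schäffer-type amplification $\sigma \oplus \pi^{(\infty)}$ (with $\pi$ a maximal dilation extending $\tau$) to build an isometry $W$ on the fully extremal space that still commutes with the representation. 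So no new argument is needed beyond checking the two hypotheses.

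In summary, the proof is three lines: $\T^+(E)$ is semi-Dirichlet and has SCLT by Muhly--Solel, hence CLT; it has the weak Ando property by Katsoulis--Kakariadis \cite{KK}, hence ICLT by Theorem~\ref{T:ICLT}; therefore by Theorem~\ref{T:AndoCLT} it has the Ando property. I would write it essentially in that form, with a sentence recalling why $T(\xi)^*T(\eta)=\sigma(\ip{\xi,\eta})$ gives semi-Dirichlet and a sentence identifying \cite[Theorem~3.5]{KK} as furnishing the extremal-coextension version of Ando for the identity automorphism.
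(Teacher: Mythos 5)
Your proposal is correct and follows essentially the same route as the paper: the paper likewise cites Kakariadis--Katsoulis (Theorem~3.5 of \cite{KK}, specialized to the identity automorphism) for the weak Ando property, hence ICLT, invokes the Muhly--Solel Commutant Lifting Theorem for SCLT (hence CLT), and concludes via Theorem~\ref{T:AndoCLT}. The only superfluous step is your appeal to the semi-Dirichlet property, which is not needed for this chain of implications; the paper also records an alternative source for ICLT via Solel's dilation theorem for product systems over $\bN^2$, but that is an optional variant.
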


\begin{cor} \label{C:AndoGraph}
The tensor algebra of a directed graph has the Ando property.
In particular, the non-commutative disk algebras have this property.
\end{cor}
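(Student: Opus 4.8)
The plan is to deduce this at once from Theorem~\ref{T:AndoTensor}, since the tensor algebra of a directed graph is a special case of the tensor algebra of a C*-correspondence. Given a directed graph $\G$ with vertex set $\Gvertex$ and edge set $\Gedge$, one forms the graph C*-correspondence $E_\G$ over the C*-algebra $c_0(\Gvertex)$: as a right Hilbert $c_0(\Gvertex)$-module, $E_\G$ is the completion of the finitely supported functions on $\Gedge$ under the inner product that gathers coefficients according to the range map, and the left action of $c_0(\Gvertex)$ is by multiplication according to the source map. With respect to the canonical generators of the Fock space of $E_\G$, the images of the point masses $\delta_e$ ($e \in \Gedge$) under $T$ and of $\delta_v$ ($v \in \Gvertex$) under $\sigma$ are precisely the partial isometries and projections that define the tensor (quiver) algebra of $\G$; hence $\T^+(E_\G)$ is exactly the tensor algebra of $\G$. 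Theorem~\ref{T:AndoTensor} then applies verbatim and gives the Ando property. For infinite graphs the same identification holds with $c_0(\Gvertex)$ in place of a finite-dimensional diagonal, so no finiteness hypothesis is needed.

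For the second assertion, recall that the non-commutative disk algebra $\fA_n$ is the tensor algebra of the graph with a single vertex and $n$ loop edges; equivalently, it is $\T^+(E)$ for $E = \bC^n$ viewed as a C*-correspondence over $\bC$, with $\ip{\xi,\eta} = \sum_i \bar\xi_i\eta_i$ and the evident one-dimensional left and right actions. Indeed, the canonical representation of $E$ on its Fock space $\Fock$ sends the standard basis vectors of $\bC^n$ to the left creation operators $\fs_1,\dots,\fs_n$, which generate $\fA_n$. So $\fA_n$ falls under the first part, and the Ando property follows (alternatively from \cite{DavKatsdilation}).

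The only step requiring any attention is the routine identification of the graph tensor algebra with $\T^+(E_\G)$ for the correspondence described above; once that is recorded, there is nothing further to do, because Theorem~\ref{T:AndoTensor} is already stated for arbitrary C*-correspondences. I do not expect a genuine obstacle here: this corollary is purely a specialization, and the substantive content lies entirely in Theorem~\ref{T:AndoTensor} (and, upstream of it, in Theorem~\ref{T:AndoCLT} together with the Muhly--Solel commutant lifting theorem, which supplies SCLT, hence CLT and ICLT, for $\T^+(E)$).
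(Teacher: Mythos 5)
Your proposal is correct and matches the paper's (implicit) argument exactly: the corollary is a direct specialization of Theorem~\ref{T:AndoTensor}, obtained by viewing the graph tensor algebra as $\T^+(E_\G)$ for the graph correspondence and the non-commutative disk algebra as the single-vertex, $n$-loop case. The identification you spell out is the only content, and the paper treats it as immediate.
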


The proof in \cite{KK} proves more, providing a lifting for relations that
intertwine an automorphism. More will be said about this later when we 
discuss semi-crossed products.

Another proof can be based on an Ando type theorem of 
Solel \cite[Theorem~4.4]{SolCP}.
He shows that any representation of a product system over $\bN^2$ 
coextends to an isometric representation.
One can think of a product system over $\bN^2$ as a pair of
C*-cor\-resp\-on\-dences over $\bN$ together with commutation relations.
Here we only need a special case, where there is one C*-correspondence
$E$ over $\bN$, and take the second correspondence to be $F=\bC$
with the relations that $F$ commutes with $E$.
Then a representation $\rho$ of $\T^+(E)$ which commutes with a contraction
$X$ determines a representation of the product system.
Applying Solel's Theorem yields the desired isometric lifting.
This verifies ICLT and, in fact, the weak Ando property.
Now Theorem~\ref{T:AndoCLT} shows that $\T^+(E)$ has the Ando property.

We finish this section by giving the intertwining version of the Ando property.

\begin{thm} \label{T:Ando intertwine}
Suppose that $\A$ has the Ando property.
Suppose also that  $\rho_1$ and $\rho_2$ are representations of $\A$ 
on $\H_1$ and $\H_2$
and $X$ is a contraction in $\B(\H_2,\H_1)$ satisfying 
\[ \rho_1(a) X = X \rho_2(a)  \qforal a\in\A .\]
Given a fully extremal coextension $\sigma_2$ of $\rho_2$,
there exist a fully extremal coextension $\tilde\sigma_1$ of $\rho_1$ on $\K_1$,
a maximal representation $\pi$ determining a fully extremal coextension 
$\tilde\sigma_2 = \sigma_2 \oplus \pi$ of $\rho_2$ on $\K_2$, and 
an isometry $W \in \B(\K_2,\K_1)$ so that 
\[ P_{\H_1}W = XP_{\H_2} , \]
and 
\[ \tilde\sigma_1(a) W = W \tilde\sigma_2(a) \qforal a\in\A .\]
\end{thm}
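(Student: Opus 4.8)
The plan is to derive the theorem from a single application of the Ando property, mirroring the proof of Theorem~\ref{T:CLT intertwine}. I would write the prescribed fully extremal coextension $\sigma_2$ on $\K_2 = \H_2 \oplus \K_2'$, where $\H_2$ is coinvariant with compression $\rho_2$, set $\rho := \rho_1 \oplus \sigma_2$ on $\H := \H_1 \oplus \K_2$, and introduce the contraction
\[
 \tilde X = \begin{bmatrix} 0 & XP_{\H_2}\\ 0 & 0\end{bmatrix} \in \B(\H)
\]
with respect to $\H_1 \oplus \K_2$, where $XP_{\H_2} \in \B(\K_2,\H_1)$. Since $\rho_1(a)X = X\rho_2(a)$ and $\sigma_2(a)$ is lower-triangular with $(1,1)$ entry $\rho_2(a)$ (so $P_{\H_2}\sigma_2(a) = \rho_2(a)P_{\H_2}$), a direct check gives $\rho_1(a)(XP_{\H_2}) = (XP_{\H_2})\sigma_2(a)$; hence $\tilde X$ commutes with $\rho(\A)$ and $\|\tilde X\| = \|X\| \le 1$. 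Applying the Ando property to $(\rho, \tilde X)$ yields a fully extremal coextension $\tau$ of $\rho$ on a Hilbert space $\L \supseteq \H$ and an isometric coextension $\tilde W$ of $\tilde X$ on $\L$ commuting with $\tau(\A)$; put $\L' := \L \ominus \H$.

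Next I would peel $\sigma_2$ off $\tau$. With respect to $\H_1 \oplus \K_2 \oplus \L'$ write $\tau(a) = \begin{sbmatrix}\rho_1(a) & 0 & 0\\ 0 & \sigma_2(a) & 0\\ \tau_{31}(a) & \tau_{32}(a) & \tau_{33}(a)\end{sbmatrix}$. The subspace $\K_2 \oplus \L'$ is invariant for $\tau(\A)$, and the restriction of $\tau$ to it is a coextension of $\sigma_2$ (and of $\rho_2$). Since $\sigma_2$, being fully extremal over $\rho_2$, is an extremal coextension, this forces $\tau_{32} = 0$. Reordering the summands as $\K_1 \oplus \K_2$ with $\K_1 := \H_1 \oplus \L'$, this displays $\tau = \tilde\sigma_1 \oplus \sigma_2$, where $\tilde\sigma_1(a) = \begin{sbmatrix}\rho_1(a) & 0\\ \tau_{31}(a) & \tau_{33}(a)\end{sbmatrix}$ is a coextension of $\rho_1$ on $\K_1$. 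Exactly as at the end of the proof of Theorem~\ref{T:CLT intertwine}, $\tilde\sigma_1$ is fully extremal over $\rho_1$: any $\gamma$ dilating $\tilde\sigma_1$ and coextending $\rho_1$ gives $\gamma \oplus \sigma_2$ dilating $\tau$ and coextending $\rho$, so the full extremality of $\tau$ yields $\gamma \oplus \sigma_2 = \tilde\sigma_1 \oplus \sigma_2 \oplus \gamma'$, whence $\gamma = \tilde\sigma_1 \oplus \gamma'$.

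Then I would extract the intertwiner from $\tilde W$. Because $\tilde W$ coextends $\tilde X$, the subspace $\H = \H_1 \oplus \K_2$ is coinvariant for $\tilde W$, so $\tilde W$ leaves $\L'$ invariant and $P_\H \tilde W|_\H = \tilde X$. Reading the matrix of $\tilde W$ off the ordered decomposition $\H_1 \oplus \L' \oplus \K_2$, the row with range $\K_2$ is zero: its $(\K_2,\H_1)$ and $(\K_2,\K_2)$ entries are the corresponding entries of $\tilde X$, both $0$, and its $(\K_2,\L')$ entry vanishes since $\tilde W(\L') \subseteq \L'$. Hence with respect to $\K_1 \oplus \K_2$, $\tilde W = \begin{sbmatrix}A & W\\ 0 & 0\end{sbmatrix}$, and then $\tilde W^*\tilde W = I$ forces $W^*W = I$, so $W \in \B(\K_2, \K_1)$ is an isometry. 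The $(\H_1,\K_2)$ entry of $\tilde W$ is that of $\tilde X$, so $P_{\H_1}W = XP_{\H_2}$; and equating $(1,2)$ entries of $\tilde W\tau(a)$ and $\tau(a)\tilde W$ gives $\tilde\sigma_1(a)W = W\sigma_2(a)$ for all $a \in \A$. Taking $\pi$ to be the zero representation (trivially maximal) and $\tilde\sigma_2 = \sigma_2 \oplus \pi = \sigma_2$ on $\K_2$ then yields everything claimed; one may replace $\pi$ by any maximal representation, since $\sigma_2 \oplus \pi$ is again a fully extremal coextension of $\rho_2$ (see Proposition~\ref{P:minimal fully extremal}).

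The two steps I expect to be the crux are the two vanishing claims. That $\tau_{32} = 0$ — which keeps $\sigma_2$ intact in $\tau$, makes $\tilde\sigma_1$ fully extremal, and without which one could not even speak of $W \in \B(\K_2,\K_1)$ — uses the full extremality of the prescribed $\sigma_2$; with only an extremal $\sigma_2$ the output of the Ando property need not retain it as a summand. The vanishing of the bottom row of $\tilde W$, which upgrades $W$ from a contraction to an isometry, uses the strictly upper-triangular, square-zero shape of $\tilde X$ together with the coinvariance of $\H$. A more naive approach, first using the intertwining CLT (Theorem~\ref{T:CLT intertwine}) to lift $X$ to a commuting contraction over a fully extremal $\tilde\sigma_1$ and then dilating that contraction to an isometry, would instead meet the familiar obstruction that a fully extremal coextension of a coextension is generally not fully extremal, and would require a Schaeffer-type construction appending a copy $\pi^{(\infty)}$ of a maximal representation, as in the proof of Theorem~\ref{T:AndoCLT}; that is presumably why the statement allows a nontrivial maximal summand $\pi$.
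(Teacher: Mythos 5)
Your proof is correct, and it takes a genuinely different route from the paper's. The paper proves this by chaining the intertwining version of CLT (Theorem~\ref{T:CLT intertwine}), which yields a fully extremal $\sigma_1$ and a contractive intertwiner $Y$, with the intertwining version of ICLT (Proposition~\ref{P:ICLT intertwine}), which upgrades $Y$ to an isometry at the cost of passing to Shilov coextensions $\tau_i = \sigma_i \oplus \tau_i'$; the corner of that isometry leaking into $\tau_1'$ is then absorbed by the Schaeffer-type construction of Theorem~\ref{T:AndoCLT}, and that is precisely where the summand $\pi = (\pi_2\oplus\pi_1)^{(\infty)}$ in the statement comes from — so your closing speculation is exactly right. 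You instead make a single application of the already-packaged Ando property to $\rho_1 \oplus \sigma_2$ with the square-zero contraction $\tilde X$, the same device the paper uses inside Theorem~\ref{T:CLT intertwine}, and your two vanishing claims do check out: extremality of $\sigma_2$ (which follows from full extremality) forces $\tau_{32}=0$ and peels $\sigma_2$ off $\tau$ intact, with $\tilde\sigma_1$ fully extremal by the same splitting argument as in Theorem~\ref{T:CLT intertwine}; and the strictly upper-triangular shape of $\tilde X$ together with invariance of $\L'$ kills the $\K_2$-row of $\tilde W$, so $W^*W = I_{\K_2}$ falls straight out of $\tilde W^*\tilde W = I$, and the $(1,2)$ entry of the commutation relation gives $\tilde\sigma_1(a)W = W\sigma_2(a)$. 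What your argument buys is brevity — no intertwining lemmas and no Schaeffer construction — together with a sharper conclusion: the maximal summand $\pi$ may be taken trivial, so $\tilde\sigma_2 = \sigma_2$ itself; this would in turn simplify the recursion in Theorem~\ref{T:Ando_covariant}, where the representations $\pi_n$ could be dropped. What the paper's proof buys is an explicit description of how the final isometry is assembled from $Y$, the ICLT isometry, and the appended maximal representations.
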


\begin{proof}
The proof parallels the proof of Theorem~\ref{T:AndoCLT} using the
intertwining versions of CLT and ICLT. So we just sketch the plan.

One starts with the fully extremal coextension $\sigma_2$ of $\rho_2$ on $\K_2$.
By Theorem~\ref{T:CLT intertwine}, coextend $X$ to $Y$ and $\rho_1$
to a fully extremal $\sigma_1$ on $\K_1$ so that
\[ \sigma_1(a) Y = Y \sigma_2 (a) \qforal a \in \A .\]
Then use Proposition~\ref{P:ICLT intertwine} to coextend $Y$ to an isometry $V$ 
and $\sigma_i$ to a Shilov representation $\tau_i$ on $\L_i$ so that
\[ \tau_1(a) V = V \tau_2(a) \qforal a \in \A .\]
Since $\sigma_i$ are fully extremal, we have $\tau_i = \sigma_i \oplus \tau'_i$.
If $Z = P_{\L_1 \ominus \K_1} V |_{\K_2}$ is the $2,1$ entry of $V$ 
with respect to the decompositions $\L_i = \K_i \oplus (\L_i \ominus \K_i)$, then
\[ \tau'_1(a) Z = Z \sigma_2(a)  \qforal a \in \A .\]
Since $\tau_i$ are Shilov, so are $\tau'_i$.
So choose maximal representations $\pi_i$ on $\P_i$ which have invariant 
subspaces $\M_i$ so that $\pi_i|_{\M_i} \simeq \tau'_i$.
Set
\[
 \tilde\sigma_1 = \sigma_1 \oplus (\pi_1 \oplus \pi_2)^{(\infty)}
\qand
\tilde\sigma_2 = \sigma_2 \oplus (\pi_2 \oplus \pi_1)^{(\infty)} .
\]
Then the isometry 
\[ W \in \B(\K_2 \oplus (\P_2 \oplus \P_1)^{(\infty)}, \K_1 \oplus (\P_1 \oplus \P_2)^{(\infty)} \]
described in the proof of Theorem~\ref{T:AndoCLT} is the desired intertwiner.
The details are left to the reader.
\end{proof}

\section{Incidence Algebras} \label{S:incidence}

An \textit{incidence algebra} is a subalgebra $\A$ of the $n\times n$ matrices, $\fM_n$,
containing the diagonal algebra $\fD_n$ with respect to a fixed orthonormal basis.
Clearly, $\A$ is spanned by the matrix units $E_{ij}$ that it contains.
One can define a partial order $R$ on $\{1,2,\dots,n\}$ by 
\[ i \prec j  \text{  (or }  (i,j) \in R ) \IF E_{ij} \in \A .\]
This can be identified with a directed graph, but note that generally the algebra 
is not the same as the tensor algebra of the graph.
There is a reduced partial order obtained by identifying equivalent indices
\[ i \equiv j \IF i \prec j \AND j \prec i .\]
The representation theory of the algebra of
a partial order and its reduced partial order are related simply by multiplicity.

The algebra $\A \cap \A^*$ is a C*-algebra containing the diagonal $\fD_n$,
and is spanned by $\{E_{ij} : i \equiv j \}$.
A representation $\rho$ restricts to a completely contractive representation 
of $\A \cap \A^*$, and thus is a $*$-representation.  
So each diagonal matrix unit $E_{ii}$ is sent
to an orthogonal projection $P_i = \rho(E_i)$ onto a subspace $\H_i$.
Since $\rho$ is unital, 
\[ \H = \sum_{1\le i \le n} \!\!\oplus\, \H_i .  \]
In general, there are contractions $T_{ij} \in \B(\H_j,\H_i)$ so that 
\[ \rho(E_{ij}) = P_i T_{ij} P_j . \]
When $i \equiv j$, $T_{ij}$ is a unitary and $T_{ji} = T_{ij}^*$.
The homomorphism property shows that 
\[ T_{ik} = T_{ij}T_{jk} \quad \text{when  }i \prec j \prec k.\]
These relations are sufficient to determine an algebraic homomorphism.

Not all choices of contractions $\{T_{ij} : (i,j) \in R \}$ 
yield a completely contractive representation in general.  
However this does hold in some situations.
Paulsen and Power \cite{PP_nest} establish this for nest algebras.
Davidson, Paulsen and Power \cite{DPP_tree} establish this for
bilateral tree algebras.  These are the algebras where the reduced relation
is generated as a transitive relation by a directed bilateral tree 
(a directed graph with no loops).
Finally the class of all such algebras was determined by Davidson 
in \cite{D_local} as the interpolating graphs.  

Muhly and Solel consider unilateral tree algebras, which are the incidence algebras
$\A$ which are semi-Dirichlet.  They show that graphically means that the relation
is generated by a directed unilateral tree (each vertex is the range of at most one
edge and there are no loops).  
These incidence algebras actually coincide with the tensor algebra of the unilateral
tree because there is always at most one edge into each vertex.
For example, the algebra 
\[ \A = \spn\{E_{11}, E_{22}, E_{33}, E_{13}, E_{23}\} , \]
which is determined by the graph formed by edges from 
$v_3$ to each of $v_1$ and $v_2$, is a unilateral tree algebra.
However $\A^*$, which is determined by edges from $v_1$ and $v_2$ into $v_3$ 
is not a unilateral tree algebra, but it is a bilateral tree algebra.
See \cite[Chapter 5]{MS_hilbert} for a discussion of ``trees and trees''.

If $\rho$ is a representation of an incidence algebra $\A$ as above,
then a coextension $\sigma$ will act on a Hilbert space 
\[ \K = \sum_{1\le i \le n} \!\!\oplus\, \K_i , \]
where $\K_i \supset \H_i$ is the range of the projection $\sigma(E_{ii})$, 
and is determined by coextensions
$V_{ij}$ of $T_{ij}$ in $\B(\K_j,\K_i)$ of the form 
\[ V_{ij} = \begin{bmatrix}T_{ij} & 0\\D_{ij}&S_{ij}\end{bmatrix} \]
with respect to the decompositions $\K_i = \H_i \oplus \K'_i$.
The homomorphism property requires that 
\[ V_{ij}V_{jk}=V_{ik} \quad\text{whenever  } i\prec j \prec k .\]

In general, these are complicated relations to dilate.
One of the simplest examples where things get complicated is the $2k$-cycle 
graph $\C_{2k}$  for $k\ge2$.  This graph has vertices $\{1,2,\dots,2k\}$ and
edges 
\[ 2i+1 \succ 2i , \quad  2i+1 \succ 2i+2 \qand 1 \succ 2k .\]
The algebra for this graph has representations such that $\rho(e_{ij})=T_{ij}$
are all contractions, but $\|\rho\|_{cb}>1$ \cite[Theorem 2.2]{D_local}.
This is related to the famous example of Parrott \cite{Par} for $\rA(\bD^3)$ 
and a similar example due to Paulsen and Power \cite[Theorem 3.1]{PP_tensor} 
for the incidence algebra $\T_2 \otimes \T_2 \otimes \T_2$, where $\T_2$ is the 
algebra of $2\times2$ upper triangular matrices.  
Also see the exposition in \cite[Example 20.27]{DavNest}.

The case of bilateral tree algebras is more condusive to analysis.

\begin{thm} \label{T:tree_extremal}
Let $\A$ be a bilateral tree incidence algebra.
Then a representation $\rho$ is an extremal coextension if and only 
if each edge $E_{ij}$ is mapped to a partial isometry $V_{ij}$ such that
$V_{ij}^*V_{ij} = \rho(E_{ii})$.

If $\rho$ is a representation, then a coextension
$\sigma$ of $\rho$ is fully extremal if and only if it is extremal
and $\K_i = \H_i \vee V_{ij}\K_j$ for all edges of the tree. 
\end{thm}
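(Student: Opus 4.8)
The plan is to prove the two characterizations in Theorem~\ref{T:tree_extremal} separately, exploiting the tree structure to reduce everything to edge-by-edge conditions. For the first part, I would show that a representation $\rho$ is an extremal coextension if and only if each edge operator $V_{ij} = \rho(E_{ij})$ is a partial isometry with $V_{ij}^*V_{ij} = \rho(E_{ii})$. One direction is routine: if every edge operator has this form, then given any coextension with matrix entries $\begin{bmatrix}V_{ij}&0\\D_{ij}&S_{ij}\end{bmatrix}$, the fact that $V_{ij}$ already has the maximal possible ``column norm'' (since $V_{ij}^*V_{ij} = P_i$ forces the columns to be as large as a contraction allows) will force $D_{ij}=0$ for the outermost edges, and then by walking along the tree — which has no loops, so one can peel off leaves inductively — one shows all $D_{ij}=0$, so the coextension splits. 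Conversely, if some edge operator $V_{ij}$ is not a partial isometry with the right initial projection, I would exhibit a proper coextension: coextend $T_{ij}$ via its defect operator $D_{T_{ij}} = (P_i - T_{ij}^*T_{ij})^{1/2}$ to an isometry-like map on a larger space, and extend the remaining edge operators trivially; the tree structure guarantees this is consistent (no cycle condition $V_{ij}V_{jk}=V_{ik}$ is violated because each vertex is touched by a controlled set of edges and the defect space can be allocated freshly).

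**The fully extremal criterion.** For the second statement, suppose $\sigma$ is an extremal coextension of $\rho$ on $\K = \bigoplus \K_i$. The claim is that $\sigma$ is fully extremal if and only if $\K_i = \H_i \vee V_{ij}\K_j$ for every edge. Assume first this spanning condition holds. Suppose $\sigma \prec \tau$ and $\rho \prec_c \tau$; I want $\tau = \sigma \oplus \tau'$. The key point is that $\tau$, restricted to the $i$-th diagonal summand, acts on some $\L_i \supseteq \K_i$, and the edge operators of $\tau$ extend those of $\sigma$. Using $\rho \prec_c \tau$ (so $\H$ is coinvariant in $\L := \bigoplus \L_i$) together with the spanning hypothesis, I would argue vertex by vertex, again inducting along the tree from the leaves, that $\L_i \ominus \K_i$ must be orthogonal to everything reachable from $\H$, hence reduces $\tau$. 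Concretely, any vector in $\L_i \ominus \K_i$ that is moved by an edge operator into a space meeting $\overline{\tau(\A)\H}$ would contradict either extremality of $\sigma$ (via the first part of the theorem applied along the co-invariant structure) or coinvariance of $\H$. Conversely, if the spanning condition fails at some edge, say $\K_{i_0} \supsetneq \H_{i_0} \vee V_{i_0 j}\K_j$, then I would build an extension $\tau$ of $\sigma$ that is still a coextension of $\rho$ but does not split: enlarge $\K_{i_0}$ slightly and route a new isometric piece of an edge operator into the orthogonal complement, exactly as in Example~\ref{Eg:zigzag} and the $2\times2$ analysis; the bilateral tree structure is what makes the construction globally consistent.

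**Main obstacle and organization.** The hard part will be the converse direction of the fully extremal criterion — producing a genuine non-splitting extension when the spanning condition fails — because one must verify that the homomorphism relations $V_{ij}V_{jk}=V_{ik}$ along every path of the tree survive the enlargement. Here the absence of loops is essential: one can orient the argument so that the new subspace is ``downstream'' of the failing edge and only the edges on the unique path through $i_0$ need adjustment, and those can be modified compatibly since a bilateral tree has no competing paths between two vertices. I would therefore structure the proof as: (1) edge-by-edge analysis of a single coextension, establishing the partial-isometry characterization by leaf-induction; (2) the forward implication of the fully extremal criterion, using coinvariance of $\H$ plus part (1); (3) the converse, via an explicit Schaeffer-type enlargement along the unique tree path through the offending vertex. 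Throughout I would lean on Proposition~\ref{P:minimal fully extremal} to reduce to minimal coextensions and on the general existence results of Theorem~\ref{T:fully extremal} to know fully extremal coextensions are available for comparison.
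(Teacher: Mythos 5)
Your plan follows the same route as the paper's proof: characterize extremal coextensions edge-by-edge by the condition that each tree-edge operator is an isometry on the range of the corresponding vertex projection, characterize full extremality by the spanning condition $\K_i = \H_i \vee V_{ij}\K_j$, and prove the converse directions by explicitly enlarging the data at a single offending edge. There is, however, one genuine gap that runs through both of your converse constructions. When you modify one tree-edge operator (coextending $T_{ij}$ by its defect operator, or adjoining a new vector $f$ to the source space and routing it to a leftover vector of $\K_i \ominus (\H_i \vee V_{ij}\K_j)$) and "extend the remaining edge operators trivially," you must check not only that the resulting assignment is multiplicative --- which is what your parenthetical about the cycle condition $V_{ij}V_{jk}=V_{ik}$ addresses, and which does follow from the unique factorization of matrix units through tree paths --- but also that it defines a \emph{completely contractive} representation. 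A coextension or dilation in this paper is by definition completely contractive, and for a general incidence algebra a contractive choice of edge operators need not be completely contractive: this is precisely the point of the $2k$-cycle and $\T_2\otimes\T_2\otimes\T_2$ examples recalled at the start of Section~\ref{S:incidence}. The missing ingredient is the dilation theorem of \cite{DPP_tree}, which says that for bilateral tree algebras every contractive assignment on the tree edges yields a completely contractive representation. Your stated reason for why "the absence of loops is essential" (consistency of the homomorphism relations along unique paths) is true but is not the reason the theorem is restricted to bilateral trees; without the complete contractivity input the constructions only produce contractive representations and the proof does not close.

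Two smaller points. In the forward direction of the first part, no leaf-induction is needed: for each edge separately, the first column $\begin{sbmatrix} T_{ij} \\ D_{ij} \end{sbmatrix}$ of the coextended operator is a contraction whose top entry is already an isometry, so $D_{ij}=0$ edge by edge, and $\H$ reduces at once. In the converse of the fully extremal criterion, the space you enlarge should be the source space $\K_j$ (adjoin $\bC f$ and set $\tilde V_{ij} = V_{ij} + ef^*$ with $e$ the leftover vector in $\K_{i}$), not $\K_{i_0}$ itself; only that single edge operator needs modification, and no adjustment along the rest of the path through $i_0$ is required, since all composite matrix units are then determined by unique factorization.
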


\begin{proof}
The key observation from \cite{DPP_tree} is that every matrix unit in $\A$ 
factors uniquely as a product of matrix units in the tree $\T$, corresponding to the
combinatorial fact that every edge in the transitive relation corresponds to the
unique path on the tree from one vertex to another.
Thus if for each matrix unit in the tree, $T_{ij}=\rho(E_{ij})|_{\H_j}$ is coextended
to $V_{ij}$, then we can extend this definition to every matrix unit in a 
unique way; and the homomorphism property guarantees that each is 
a coextension of $T_{ij}$.
It is possible to coextend each $T_{ij}$ in the tree to an isometry from $\K_j$ to $\K_i$.
If more than one edge is entering a single vertex $i$, then one has to ensure
that $\K_i$ is large enough to accomodate all $T_{ij}$.  (Of course, the ranges
can overlap or even coincide.) 
By \cite{DPP_tree}, this representation is still completely contractive.
Thus to be extremal, each $\rho(E_{ij})$ needs to be isometric from $\K_j$ into $\K_i$.

Conversely, if each $T_{ij} = \rho(E_{ij})|_{\H_j}$ is an isometry on $\H_j$, 
consider any coextension $\sigma$ of $\rho$.
Then each $\sigma(E_{ii})$ has range $\K_i \supset \H_i$,
and each isometry $T_{ij}$ coextends to a contraction $S_{ij} = \sigma(E_{ij})|_{\K_j}$.
Therefore $S_{ij}|_{\H_j} = T_{ij}$ and
$S_{ij}|_{\K_j \ominus\H_j}$ is a contraction with range orthogonal to $\H_i$.  
Hence $\K' = \K \ominus \H$ is reducing. Therefore $\sigma$ decomposes 
as a direct sum of $\rho$ and $\sigma' = \sigma|_{\K'}$.

Fully extremal coextensions of $\rho$ are more complicated.
If we start with $\rho$, and coextend to an extremal $\sigma$, 
so that each $T_{ij}$ is coextended to an isometry $V_{ij}$,
it may be possible to dilate $\sigma$ to $\tau \succ_c \rho$
if there is some `room' left.  More precisely, if for some edge
$E_{ij}$ in the tree, we have 
\[ \K_i \ne \H_i \vee V_{ij}\K_j , \]
then one can extend $\sigma$ to $\tau$, a coextension of $\rho$, 
to use this extra space.  Pick a vector $e \in \K'_i = \K_i \ominus \H_i$
which is orthogonal to the range of $V_{ij}$.  
Form 
\[ \tilde{\K}_j = \K_j \oplus \bC f .\]
Extend $V_{ij}$ to 
\[ \tilde V_{ij} := V_{ij} + ef^* .\]
It is apparent that this is indeed a coextension of $\rho$ and an extension
of $\sigma$ which is not obtained as a direct sum.
So $\sigma$ is not fully extremal.

Conversely, if for all edges $i \prec j$ of the tree, 
\[ \K_i = \H_i \vee V_{ij}\H_j \]
then for any extension $\tau$ of $\sigma$, the operators
$\tilde V_{ij} = \tau(E_{ij})$ will have to 
map any new summand of $\tilde\K_j$, namely $\tilde\K_j \ominus \K_j$, 
to a space orthogonal to $\H_i$ because it is a coextension of $\rho$,
and orthogonal to $V_{ij}\K_j$ because $\tilde V_{ij}$ is a contraction.
Hence by hypothesis, it maps into $\tilde\K_i \ominus \K_i$.
This makes it apparent that $\tau$ splits as a direct sum of $\sigma$ and 
another representation.  Therefore $\sigma$ is fully extremal.
\end{proof}

\begin{cor} \label{C:tree_Shilov}
Every Shilov extension of a bilateral tree algebra $\A$ is extremal.
\end{cor}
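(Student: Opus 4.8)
The plan is to reduce the corollary to the characterization of extremal coextensions in Theorem~\ref{T:tree_extremal}. Given a Shilov representation $\sigma$ of $\A$, I would show that $\sigma$ sends each edge of the tree to a partial isometry which is isometric on its initial space; Theorem~\ref{T:tree_extremal} then forces $\sigma$ to be an extremal coextension, which is the assertion (``extremal'' being understood in the sense of extremal coextension, in parallel with Theorem~\ref{T:sD}, which gives the same conclusion for semi-Dirichlet algebras although not every bilateral tree algebra is semi-Dirichlet).

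First I would unpack the hypothesis: if $\sigma$ is Shilov on $\K$, there is a $*$-representation $\pi$ of $\cenv(\A)$ on a Hilbert space $\L \supseteq \K$ with $\K$ invariant for $\pi(\A)$ and $\sigma(a) = \pi(a)|_\K$. The matrix-unit relations among the $E_{ij}$ with $(i,j) \in R$ and the diagonal idempotents $E_{ii}$ persist in $\cenv(\A)$, since $\cenv(\A)$ is a $*$-quotient of $\ca(\A)$; hence for every $(i,j)\in R$, and in particular for each edge $E_{ij}$ of the tree, $\pi(E_{ij})$ is a partial isometry with $\pi(E_{ij})^*\pi(E_{ij}) = \pi(E_{jj})$. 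Restricting to $\K$: since $E_{jj} \in \fD_n \subseteq \A$, the operator $\sigma(E_{jj}) = \pi(E_{jj})|_\K$ is a projection, and for $x \in \K$
\[
 \|\sigma(E_{ij})x\|^2 = \|\pi(E_{ij})x\|^2 = \langle \pi(E_{jj})x, x\rangle = \langle \sigma(E_{jj})x, x\rangle .
\]
Because $E_{ij}E_{jj} = E_{ij}$ we also have $\sigma(E_{ij}) = \sigma(E_{ij})\sigma(E_{jj})$, so $\sigma(E_{ij})$ annihilates $\K \ominus \ran\sigma(E_{jj})$; together with the displayed identity this says precisely that $\sigma(E_{ij})$ is a partial isometry which is isometric on $\ran\sigma(E_{jj})$. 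Theorem~\ref{T:tree_extremal} then applies to give that $\sigma$ is an extremal coextension.

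The step I expect to carry the weight is the last one: the observation that the Shilov hypothesis, through the partial-isometry structure of the ambient $*$-representation, already forces each edge operator to be isometric on its initial space---after which Theorem~\ref{T:tree_extremal} is invoked as a black box. The remaining ingredients (persistence of the matrix-unit relations in $\cenv(\A)$, and the restriction computation) are routine. I would also remark that the analogue for extensions is false: a Shilov representation need not be an extremal extension, since the edge operators obtained by restricting $\pi$ to a merely invariant subspace need not be surjective onto their final spaces.
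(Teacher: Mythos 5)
Your proposal is correct and follows essentially the same route as the paper: both arguments pass to the ambient $*$-representation of the C*-envelope, observe that the matrix-unit relations force each $\pi(E_{ij})$ to be a partial isometry with initial projection $\pi(E_{jj})$, restrict to the invariant subspace to conclude each edge operator is isometric on its initial space, and then invoke Theorem~\ref{T:tree_extremal}. The only cosmetic difference is that the paper phrases the ambient edge operators as unitaries from $\H_j$ onto $\H_i$ (using both Cuntz-type relations) whereas you use only $\pi(E_{ij})^*\pi(E_{ij})=\pi(E_{jj})$, which is all that is needed.
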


\begin{proof}
A maximal representation $\pi$ of $\A$ extends to a $*$-representation of $\fM_n$.
In particular, each $\pi(E_{ij})$ is a unitary from $\H_j$ onto $\H_i$.
Thus any restriction $\sigma$ to an invariant subspace sends each vertex to a projection onto a 
subspace $\K_i \subset \H_i$ and each edge $E_{ij}$ to an isometry
of $\K_j$ into $\K_i$.
By Theorem~\ref{T:tree_extremal}, $\sigma$ is extremal.
\end{proof}

\begin{rem}
Since $\rho(E_{ii})$ can be $0$, an isometry can be vacuous.
So for the algebra $\A_n$ of Example~\ref{Eg:zigzag}, the coextensions
$\sigma_{2i}$ are all extremal.  But to be fully extremal, each edge has
to be mapped to an isometry with maximal range, so only $\sigma_n$ is fully extremal.
\end{rem}

\subsection*{Uniqueness}
Let us explain why only the unilateral tree algebras have unique minimal
fully extremal coextensions.
The unilateral tree algebras are semi-Dirichlet.  So this property is a
consequence of Theorem~\ref{T:sD}.

A typical example is 
\[ \A=\spn\{E_{21},E_{31}, \fD_3 \} \subset \fM_3 .\]
Consider a representation $\rho$ on $\H=\H_1\oplus\H_2\oplus\H_3$
where $\rho(E_{i1})=T_i$ for $i=2,3$.
Let 
\[ D_i = P_{\D_i}(I-T_i^*T_i)^{1/2} \in \B(\H_1,\D_i) , \]
where 
\[ \D_i = \ol{\ran(I-T_i^*T_i)} \] 
is the closed range of $(I-T_i^*T_i)^{1/2}$.
Coextend $\rho$ to $\sigma$ on $\K=\K_1 \oplus \K_2\oplus\K_3$ where 
\[ \K_1=\H_1 \qand \K_i = \H_i \oplus\D_i \FOR i=2,3 \]
by setting
\[ V_{i1} = \begin{bmatrix}T_i\\ D_i\end{bmatrix} .\]
This is easily seen to be a fully extremal coextension by the previous proposition.

Any other isometric coextension $\tau$ will act on a Hilbert space $\L$ where
$\L_i = \H_i \oplus \L'_i$, and 
\[ \tau(E_{i1}) = \begin{bmatrix}T_i& 0\\ U_iD_i& S_i\end{bmatrix}\qfor i=2,3 .\]
Here $U_i$ is an isometric imbedding of $\D_i$ into $\L_i'$ 
and $S_i$ is an isometry of $\L'_1$ into $\L'_i$ with range orthogonal to 
the range of 
\[ \begin{bmatrix}T_i\\ U_iD_i\end{bmatrix} .\]
A bit of thought shows that this splits as a direct sum of a representation on 
\[ \H_1 \oplus (\H_2\oplus U_2\D_2) \oplus (\H_3 \oplus U_3\H_3) \]
which is unitarily equivalent to $\sigma$ and another piece.

On the other hand, any graph which is a bilateral tree but not a unilateral tree
will have two edges mapping into a common vertex.
The compression to this three dimensional space yields the algebra $\A^*$. 
We explain why $\A^*$ has non-unique minimal fully extremal coextensions.

Fix $\theta\in (0,\pi/4)$, and consider a representation $\rho$ on 
\[ \H=\H_1\oplus\H_2\oplus\H_3 \]
where $\H_i = \bC^2$ given by 
\[
 \rho(E_{1i}) = T = 
 \begin{bmatrix}\cos\theta&0\\0&\sin\theta\end{bmatrix} \qfor i=2,3.
\]
Let 
\[
 D = (I-T^*T)^{1/2} = 
 \begin{bmatrix}\sin\theta&0\\0&\cos\theta\end{bmatrix} .
\]
Then for any unitaries $U_i$ in the $2\times 2$ unitary group $\fU_2$, 
we can coextend $\rho$ to an isometric
representation $\sigma$ on 
\[ \K = \bC^4 \oplus \bC^2\oplus\bC^2 \]
by setting
\[ V_i = \begin{bmatrix}T_i\\U_iD\end{bmatrix} \qfor i=2,3. \]
These are all fully extremal coextensions of $\rho$ by Theorem~\ref{T:tree_extremal}.
Moreover they are evidently minimal.

Consider when two such representations will be unitarily equivalent via 
a unitary $W$ which is the identity on $\H$, and thus has the form 
\[ (I_2 \oplus V) \oplus I_2 \oplus I_2 .  \]
Conjugation by $W$ carries $\sigma$
to the representation which replaces $U_iD$ by $VU_iD$ for $i=1,2$.
It is clear that one can arrange to match up the $1,2$ entry by
appropriate choice of $V$.  But that leaves no control over the $1,3$ entry.
The possible minimal fully extremal coextensions of $\rho$ 
are parameterized by $\fU_2$.

\subsection*{Commutant lifting}
Davidson, Paulsen and Power \cite{DPP_tree} showed that bilateral
tree algebras have ICLT, and hence MCLT.
They do not generally have SMCLT because of failure of unique dilations.
Among finite dimensional incidence algebras, these are precisely the
algebras with ICLT \cite[Theorem~4.6]{D_local}.

Muhly and Solel show that unilateral tree algebras satisfy SCLT.
This now can be seen from the fact that they have ICLT, whence MCLT.
So by Theorem~\ref{T:sD}, we can obtain unique minimal fully extremal  
coextensions, and that every Shilov extension is fully extremal.
So this implies SCLT.

The connected graphs which are unilateral trees and have adjoints which are unilateral
trees as well are evidently chains.  So the incidence algebras with this property
are just direct sums of finite dimensional nest algebras.
Since these algebras are Dirichlet, they have many good
properties from section~\ref{S:sD}.

\begin{prop} \label{P:treeWCLT}
Bilateral tree algebras have WCLT and WCLT*,
and well as the weak Ando and Ando* properties.
\end{prop}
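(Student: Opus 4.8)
The plan is to deduce all four properties of Proposition~\ref{P:treeWCLT} from the structural description of extremal and fully extremal coextensions of bilateral tree algebras given in Theorem~\ref{T:tree_extremal}, together with the commutant lifting for bilateral tree algebras from Davidson--Paulsen--Power \cite{DPP_tree}. Recall from the discussion preceding the proposition that \cite{DPP_tree} establish that bilateral tree algebras have ICLT, and hence also MCLT by Corollary~\ref{C:ICLTtoMSLT}. So the main task is WCLT (and WCLT* by passing to $\A^*$, which is again a bilateral tree algebra, since the class is self-adjoint: reversing every edge of a bilateral tree again gives a bilateral tree). Once WCLT and ICLT are in hand, the weak Ando property follows immediately from Proposition~\ref{P:wAndoWCLT}, and the weak Ando* property follows by applying the same to $\A^*$.

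The heart of the matter is therefore to prove WCLT. First I would set up the data: a representation $\rho$ of the bilateral tree algebra $\A$ on $\H = \bigoplus_i \H_i$, given by contractions $T_{ij} = \rho(E_{ij})$ for edges $i\prec j$ of the tree, and a contraction $X$ on $\H$ commuting with $\rho(\A)$. Because the diagonal idempotents $E_{ii}$ lie in $\A$, commutation with $\rho(E_{ii})$ forces $X$ to be block diagonal, $X = \bigoplus_i X_i$ with $X_i \in \B(\H_i)$, and commutation with each $T_{ij}$ gives the intertwining relations $T_{ij} X_j = X_i T_{ij}$ along every edge. The goal is to coextend $\rho$ to an extremal coextension $\sigma$ — by Theorem~\ref{T:tree_extremal}, this means coextending each $T_{ij}$ to a partial isometry $V_{ij}$ with $V_{ij}^*V_{ij} = \sigma(E_{ii})$ — while simultaneously coextending $X$ to a commuting contraction $Y = \bigoplus_i Y_i$. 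The natural approach is to invoke the \emph{intertwining} version of commutant lifting; but since we only need the weak conclusion here, it is cleaner to run the standard alternating iteration as in the proof of Theorem~\ref{T:MCLT}: use the commutant-lifting result of \cite{DPP_tree} to coextend $X_i$ to a commuting contraction on a Hilbert space where each $T_{ij}$ has been isometrically coextended, then pass to the inductive limit along the tree to arrange that every $V_{ij}$ becomes isometric on the relevant summand (i.e. extremal), and check that the lift $Y$ survives the limit and remains a contraction commuting with $\sigma(\A)$.

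A point requiring care is that a tree has vertices that are the range of more than one edge, so coextending several $T_{ij}$ simultaneously into a common $\K_i$ must be done compatibly; this is exactly the combinatorial device from \cite{DPP_tree} that makes bilateral tree algebras tractable — each matrix unit factors uniquely through the tree path — and one should invoke it rather than re-derive it. The main obstacle, I expect, is precisely keeping the commutant lift $Y$ block-diagonal and norm-one through the inductive limit while the isometric coextensions are being built edge by edge: one must ensure that at each finite stage the newly added summands are mapped by $Y$ in a way compatible with the next stage, so that the \wot--limit exists, is a contraction, and commutes with the limit representation. This is the same bookkeeping that appears in the proof of Theorem~\ref{T:MCLT}, and the tree structure (absence of cycles) is what prevents the compatibility conditions from over-determining $Y$. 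Once WCLT is established, I would state WCLT* by symmetry (replacing $\A$ by $\A^*$), and then conclude the weak Ando and weak Ando* properties by two applications of Proposition~\ref{P:wAndoWCLT}, noting that $\A$ having both WCLT and ICLT is exactly the hypothesis needed there.
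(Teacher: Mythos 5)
Your overall architecture is right: the class of bilateral tree algebras is closed under adjoints, so WCLT* and weak Ando* follow by symmetry; ICLT comes from \cite{DPP_tree}; and weak Ando follows from WCLT together with ICLT via Proposition~\ref{P:wAndoWCLT}. But the central step --- actually proving WCLT --- is where your plan has a gap, and it is also where you miss the observation that makes the paper's proof a one-liner.

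The iteration you sketch asks to ``coextend $X_i$ to a commuting contraction on a Hilbert space where each $T_{ij}$ has been isometrically coextended.'' That is a lifting of the commutant to a \emph{prescribed} coextension of $\rho$, i.e.\ an SCLT-type statement, which bilateral tree algebras do not have (minimal fully extremal coextensions are not unique for these algebras, as the discussion in Section~\ref{S:incidence} shows). What \cite{DPP_tree} actually provides is ICLT: the coextension of $\rho$ and the isometric coextension of $X$ are produced \emph{together}, and you have no control over which coextension of $\rho$ comes out. Your fallback, ``run the standard alternating iteration as in Theorem~\ref{T:MCLT},'' is circular here, since that iteration alternates WCLT and WCLT* --- precisely what you are trying to prove. (A genuinely constructive edge-by-edge argument on the tree is possible --- the paper does one later for the full Ando property in Theorem~\ref{T:treeAndo} --- but it requires the elimination scheme and Lemma~\ref{L:T2Ando}-style control at each vertex, none of which your sketch supplies.)

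The missing idea is Corollary~\ref{C:tree_Shilov}: for a bilateral tree algebra, \emph{every Shilov coextension is already extremal}, because a maximal representation sends each edge to a unitary between the vertex spaces, and restricting to an invariant subspace keeps the edges isometric, which is exactly the characterization of extremal coextensions in Theorem~\ref{T:tree_extremal}. With this in hand the proof collapses: ICLT in the form of Theorem~\ref{T:ICLT}(ii) gives a \emph{Shilov} coextension $\sigma$ of $\rho$ and a commuting isometric coextension $V$ of $X$; Corollary~\ref{C:tree_Shilov} says $\sigma$ is extremal; hence WCLT and the weak Ando property hold simultaneously, with no iteration and no limit argument at all.
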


\begin{proof}
If $\A$ is a bilateral tree algebra, then so is $\A^*$.  
So it suffices to prove WCLT.
Let $\rho$ be a representation of $\A$, 
and let $X$ be a contraction commuting with $\rho(\A)$.
By \cite{DPP_tree}, $\A$ has ICLT.
Hence by Theorem~\ref{T:ICLT} (ii), there is a Shilov coextension $\sigma$
and an isometric coextension $V$ of $X$ on $\K \supset \H$ which commute.
By Corollary~\ref{C:tree_Shilov}, $\sigma$ is extremal.
Thus $\A$ has WCLT and the weak Ando property.
\end{proof}

The goal now is to refine this construction to  obtain fully extremal coextensions
to obtain CLT and hence the Ando property. We begin by establishing the 
Ando property for $\T_2$, the $2\times2$ upper triangular matrices.
Since 
\[ \T_2 = \spn\{E_{11},E_{12},E_{22}\} , \]
a representation $\rho$ is determined by $\rho(E_{ii})=P_i=P_{\H_i}$,
where $\H=\H_1\oplus\H_2$, and a contraction $X \in \B(\H_2,\H_1)$
where $\rho(E_{12}) = P_1XP_2$.
A contraction $A$ commuting with $\rho(\T_2)$ commutes with $P_i$, 
and so has the form $A=A_1\oplus A_2$; plus we have $A_1X=XA_2$.
Thus Ando's Theorem for $\T_2$ can be reformulated as a commutant lifting theorem:

\begin{lem} \label{L:T2Ando}
Suppose that $A_i \in \B(\H_i)$ for $i=1,2$ and $X \in \B(\H_2,\H_1)$ 
are contractions such that 
\[ A_1X = X A_2 . \]
Then there are coextensions of $A_i$, $i=1,2$ and 
$X$ to isometries $\tilde A_i$ in $\B(\K_i)$ and 
$\tilde X$ in $\B(\K_2,\K_1)$ so that 
\[ \tilde A_1 \tilde X = \tilde X \tilde A_2 \qand  \K_1 = \H_1 \vee \tilde X \K_2 .\]
\end{lem}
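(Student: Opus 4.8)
The plan is to recognize the statement as the Ando property for $\T_2=\spn\{E_{11},E_{12},E_{22}\}$ and to deduce it from the results of Section~\ref{S:ando}. Given contractions $A_1,A_2,X$ with $A_1X=XA_2$, set $\H=\H_1\oplus\H_2$ and let $\rho$ be the representation of $\T_2$ with $\rho(E_{ii})=P_{\H_i}$ and $\rho(E_{12})=P_{\H_1}XP_{\H_2}$; then $A:=A_1\oplus A_2$ is a contraction commuting with $\rho(\T_2)$. By Theorem~\ref{T:tree_extremal} applied to the one‑edge tree $\T_2$, a fully extremal coextension $\sigma$ of $\rho$ on $\K=\K_1\oplus\K_2$ is exactly a partial isometry $\tilde X:=\sigma(E_{12})$ with $\tilde X^*\tilde X=P_{\K_2}$ (i.e.\ $\tilde X$ isometric on $\K_2$) together with $\K_1=\H_1\vee\tilde X\K_2$, and a commuting isometric coextension of $A$ is a pair of isometric coextensions $\tilde A_i$ of $A_i$ with $\tilde A_1\tilde X=\tilde X\tilde A_2$. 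Thus the lemma is precisely the assertion that $\T_2$ has the Ando property.

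Now $\T_2$ is a finite‑dimensional nest algebra, so it has ICLT by Paulsen and Power \cite{PP_nest,PP_tensor}; moreover their proof of ICLT first coextends to an isometry in the commutant, which (as recorded in Section~\ref{S:ando}) yields SCLT, and a fortiori CLT, for $\T_2$. By Theorem~\ref{T:AndoCLT}, $\T_2$ therefore has the Ando property, and unwinding the dictionary of the previous paragraph produces isometries $\tilde A_i,\tilde X$ with the required commutation and the fullness $\K_1=\H_1\vee\tilde X\K_2$.

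For use in the bilateral tree case below it is worth recording the construction explicitly; it is just the proof of Theorem~\ref{T:AndoCLT} run in this special case. One first applies CLT — equivalently the Sz.-Nagy–Foia\c{s} commutant lifting theorem \cite{SF_CLT} to the intertwining $A_1X=XA_2$ — to coextend $\rho$ to a fully extremal coextension $\sigma$ and $A$ to a commuting contraction $Z=Z_1\oplus Z_2$. Then ICLT via Theorem~\ref{T:ICLT}(ii) produces a Shilov coextension $\tau$ and a commuting isometry $V=V_1\oplus V_2$; since $\sigma$ is extremal, $\tau=\sigma\oplus\tau'$, so writing $V_i=\begin{sbmatrix}Z_i&0\\W_i&V_i'\end{sbmatrix}$ one reads off that $\begin{sbmatrix}Z_i\\W_i\end{sbmatrix}$ is an isometry and that $(W_1,W_2)$ intertwines $\sigma$ with the Shilov representation $\tau'$. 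Embedding $\tau'$ into a maximal dilation $\pi'$, the representation $\sigma\oplus(\pi')^{(\infty)}$ is again fully extremal by Proposition~\ref{P:minimal fully extremal}, and the Schäffer‑type isometries
\[
 \tilde A_i:=\begin{bmatrix} Z_i&0&0&\cdots\\ W_i&0&0&\cdots\\ 0&I&0&\cdots\\ 0&0&I&\cdots\\ \vdots&&&\ddots\end{bmatrix}
\]
are isometric coextensions of $A_i$ which commute with $(\sigma\oplus(\pi')^{(\infty)})(\T_2)$; with $\tilde X:=(\sigma\oplus(\pi')^{(\infty)})(E_{12})$ (isometric, because $\pi'$ carries $E_{12}$ to a unitary) one obtains all the conclusions, the fullness $\K_1=\H_1\vee\tilde X\K_2$ being built into the fully extremal coextension $\sigma\oplus(\pi')^{(\infty)}$.

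The step I expect to be the real obstacle — and the reason the argument is routed this way rather than quoting classical Ando directly — is the fullness clause $\K_1=\H_1\vee\tilde X\K_2$. Classical commutant lifting yields an isometric $\tilde X$ with no control over how $\H_1$ and $\tilde X\K_2$ sit inside $\K_1$, and the minimal isometric coextension of $A_1$ typically protrudes from $\H_1\vee\tilde X\K_2$, so one cannot simply pass to that subspace. This forces one to start from a fully extremal coextension (where the fullness is automatic), and then, after using ICLT to promote the commuting contraction to an isometry at the cost of enlarging $\sigma$ to a Shilov $\tau$, to restore full extremality via the passage $\tau'\rightsquigarrow\pi'$ and the $(\pi')^{(\infty)}$‑padding. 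The remaining verifications — that the displayed block matrices are isometries, coextend the $A_i$, and satisfy $\tilde A_1\tilde X=\tilde X\tilde A_2$ — are routine and identical in form to those in the proof of Theorem~\ref{T:AndoCLT}.
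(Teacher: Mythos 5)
Your proof is correct, and it is genuinely different from the one in the paper. You deduce the lemma from the abstract equivalence of Theorem~\ref{T:AndoCLT} (Ando $=$ CLT $+$ ICLT), after observing that $\T_2$ is a Dirichlet finite-dimensional nest algebra and therefore has ICLT (Paulsen--Power) and SCLT (via ICLT $\Rightarrow$ MCLT, Corollary~\ref{C:ICLTtoMSLT}, together with Proposition~\ref{P:sD-MCLT}), and then translate the Ando property back into operator language using the dictionary of Theorem~\ref{T:tree_extremal}; the dictionary is applied correctly, and I checked that none of the cited results depend on this lemma, so there is no circularity. The paper instead gives a bare-hands construction: it starts from the \emph{weak} Ando property for $\T_2$ (commuting isometric coextensions $B_i$ of $A_i$ and $Y$ of $X$), compresses $B_1$ to $\L=\H_1\vee Y\L_2$ to force the fullness clause at the cost of losing isometricity, then takes the minimal isometric dilation $\tilde A_1$ of the compression on $\K_1\supset\L$, pads $\K_2=\L_2\oplus(\K_1\ominus\L)$, sets $\tilde X=Y'\oplus I$ and $\tilde A_2=\tilde X^*\tilde A_1\tilde X$. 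What the paper's route buys is that it uses only the weak Ando property plus elementary dilation manipulations, and produces the objects explicitly; what your route buys is brevity, but it exploits the Dirichlet structure of $\T_2$ in an essential way, and so it does not generalize to arbitrary bilateral tree algebras (which are not semi-Dirichlet) --- which is exactly why the paper needs this lemma as a concrete building block for the induction in Theorem~\ref{T:treeAndo}. Two minor quibbles that do not affect correctness: your parenthetical identification of CLT for $\T_2$ with the classical Sz.-Nagy--Foia\c{s} intertwining theorem is loose (the roles of $X$ and the $A_i$ are interchanged relative to the classical statement), and your third paragraph's explicit Schäffer construction is supplementary --- the lemma already follows from your first two paragraphs.
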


\begin{proof}
The algebra $\T_2$ is a tree algebra, and so has the weak Ando property
by the previous proposition. 
Hence there are isometric coextensions $B_i$ of $A_i$ and $Y$ of $X$
so that 
\[ B_1Y = YB_2 , \]
acting on spaces $\L_i \supset \H_i$. 
We will modify this to obtain the desired form.

Observe that the commutation relation implies that the 
range  of $Y$ is invariant for $B_1$.
Let 
\[ \L = \H_1 \vee Y\L_2 ; \qand  B'_1 = P_\L B_1|_\L .\]
Let $Y' \in \B(\L_2,\L)$ be $Y$ considered an an operator into $\L$.
Then 
\[ B'_1 Y' = P_\L B_1 P_\L Y = P_\L B_1Y = P_\L YB_2 = Y'B_2 .\]
Also since $Y'$ is an isometry, 
\[ B_2 = Y^{\prime\ast} B'_1 Y' .\]
In particular, the commutation relations hold, and 
\[ \H_1 \vee Y\L_2 = \L  .\]
The contraction $B'_1$ may no longer be an isometry, but it
is a coextension of $A_1$.

Let $\tilde A_1$ be the minimal isometric dilation of $B'_1$ on $\K_1 \supset \L$.
Write $\K'_1 = \K_1 \ominus \L$. 
With respect to $\K_1 = \L \oplus \K'_1$, we have
\[ \tilde A_1 = \begin{bmatrix} B'_1 & 0 \\ C&D \end{bmatrix} .\]
Define 
\[ \K_2 = \L_2 \oplus \K'_1 \qand \tilde X = Y' \oplus I \in \B(\K_2,\K_1) .\]
Set
\begin{align*}
 \tilde A_2 &= \tilde X^* \tilde A_1 \tilde X \\&= 
 \begin{bmatrix} Y^{\prime\ast} & 0 \\ 0 & I \end{bmatrix}
 \begin{bmatrix} B'_1 & 0 \\ C&D \end{bmatrix}
 \begin{bmatrix} Y' & 0 \\ 0&I \end{bmatrix} \\ &=
 \begin{bmatrix} Y^{\prime\ast}B'_1Y' & 0 \\ CY' &D \end{bmatrix} \\&=
 \begin{bmatrix} B_2 & 0 \\ CY' &D \end{bmatrix}
\end{align*}
Thus $A_2$ is a coextension of $B_2$, and hence of $A_2$.
It is easy to verify that 
\[ \tilde A_1 \tilde X = \tilde X \tilde A_2 .\]
Since $\tilde A_1$ and $\tilde X$ are isometries, so is $\tilde A_2$.
Moreover, we now have 
\[ \H_1 \vee \tilde X \K_2 = (\H_1 \vee Y' \L_2) \oplus \K'_1 = \K_1 . \qedhere\]
\end{proof}

\begin{thm} \label{T:treeAndo}
Bilateral tree algebras have the Ando and Ando* properties.
\end{thm}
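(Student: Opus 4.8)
The plan is to derive this from Theorem~\ref{T:AndoCLT}. Since bilateral tree algebras are known to have ICLT (Davidson--Paulsen--Power \cite{DPP_tree}), it suffices to show that every bilateral tree algebra has CLT; Theorem~\ref{T:AndoCLT} then yields the Ando property. Moreover the adjoint of a bilateral tree algebra is again a bilateral tree algebra (reverse the arrows of the tree), so the same statement gives CLT for $\A^*$, and combined with ICLT for $\A^*$ this produces the Ando property for $\A^*$, that is, Ando* for $\A$. So the whole theorem reduces to the single assertion \emph{every bilateral tree algebra has CLT}, and this is what I would prove.

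Fix a bilateral tree algebra $\A$ with reduced tree graph, a representation $\rho$ on $\H=\bigoplus_i\H_i$ with $T_{ij}=\rho(E_{ij})$, and a contraction $X$ commuting with $\rho(\A)$. Since $X$ commutes with every $\rho(E_{ii})$ it splits as $X=\bigoplus_i X_i$ with $X_iT_{ij}=T_{ij}X_j$ whenever $E_{ij}\in\A$. I would construct a fully extremal coextension $\sigma$ of $\rho$, together with a contractive coextension $Y=\bigoplus_i Y_i$ of $X$ commuting with $\sigma(\A)$, by induction on the number of vertices of the tree, using Theorem~\ref{T:tree_extremal} throughout to recognise fully extremal coextensions (each edge must be sent to an isometry on the relevant vertex space, and one needs the range conditions $\K_i=\H_i\vee V_{ij}\K_j$ for all tree edges $E_{ij}$). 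When the tree has no edges, $\A$ is a direct sum of full matrix algebras, so $\rho$ is a $*$-representation, hence maximal; here I take $\sigma=\rho^{(\infty)}$, which is a fully extremal coextension of $\rho$ by Proposition~\ref{P:minimal fully extremal}, and let $Y$ be a Schaeffer isometric dilation of $X$ realised inside the injective von Neumann algebra $\rho^{(\infty)}(\A)'=\rho(\A)'\mathbin{\overline{\otimes}}\B(\ltwo)$, rescaled to have norm $\|X\|$.

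For the inductive step I would choose a leaf $v$ of the tree with unique neighbour $w$. The unique reduced edge between $v$ and $w$ carries a single contraction $T$ and an intertwining relation ($X_vT=TX_w$ or $X_wT=TX_v$, according to the orientation). Applying Lemma~\ref{L:T2Ando} to this data coextends the spaces at $v$ and $w$ and produces an isometric coextension $\widehat T$ of $T$ intertwining the coextended contractions, with the key additional property that the enlarged space at the target vertex of the edge equals the original space joined with the range of $\widehat T$; by Theorem~\ref{T:tree_extremal} this is exactly the range condition needed for fully extremality across that edge. One then replaces the datum at $w$ by the coextended datum $(\widehat\H_w,\widehat X_w)$ --- the coinvariance of $\widehat\H_w$ for $\widehat X_w$, i.e.\ $P_{\H_w}\widehat X_w=X_wP_{\H_w}$, is what makes the new data a legitimate representation-plus-commuting-contraction for the $(n-1)$-vertex bilateral tree algebra $\A'$ got by deleting $v$ --- applies the inductive hypothesis to $\A'$, and reassembles. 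The unique factorisation of each matrix unit of $\A$ as a product of matrix units along the tree \cite{DPP_tree} guarantees the assembled map is a completely contractive homomorphism.

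The part I expect to be the main obstacle is that Lemma~\ref{L:T2Ando} forces a genuine coextension of the \emph{interior} vertex $w$, and this enlargement of $\K_w$ has to be absorbed consistently: it can break commutation of $Y$ with edges directed into $w$, and, when $w$ is the target of several edges of $\A$, the range conditions Theorem~\ref{T:tree_extremal} demands for those edges must all hold simultaneously, which in general fails after a naive coextension. This is precisely where the ``Schaeffer type construction'' mentioned just before Lemma~\ref{L:T2Ando} is needed: one inflates by infinitely many copies of a maximal dilation (as in the proof of Theorem~\ref{T:AndoCLT}), so that there is always enough spare room to absorb each forced coextension and to line up the various ranges at $w$, and one builds $Y$ with a shift-like action on the added copies so that it remains a contractive coextension commuting with everything. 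With this device the induction terminates after finitely many peels; executing the bookkeeping carefully (or, equivalently, strengthening the inductive hypothesis to permit a prescribed coextended boundary datum at one distinguished vertex) is the only genuinely delicate point.
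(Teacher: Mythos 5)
Your reduction to CLT via Theorem~\ref{T:AndoCLT} and the known ICLT for bilateral tree algebras is sound, and the overall shape of your induction (peel a leaf $v$ with neighbour $w$, use Lemma~\ref{L:T2Ando} on the leaf edge, recurse on the smaller tree) matches the paper's. But there is a genuine gap in the order of operations. You propose to apply Lemma~\ref{L:T2Ando} first and then feed the coextended datum $(\widehat\H_w,\widehat X_w)$ into the inductive hypothesis for $\A'$. As you yourself observe, after this coextension the contraction $\widehat X_w$ no longer commutes with the edge operators of $\A'$ directed into $w$: if $T_{wj}:\H_j\to\H_w$ and $\widehat X_w=\begin{sbmatrix}X_w&0\\ D&*\end{sbmatrix}$, then $\widehat X_wT_{wj}-T_{wj}X_j$ has the nonzero component $DT_{wj}$. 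So the hypothesis of the inductive statement simply fails for the modified data and the induction cannot proceed. The ``Schaeffer type inflation'' you invoke to absorb this is not bookkeeping; it is the entire content of the step, and neither the inflation nor the ``strengthened inductive hypothesis with a prescribed boundary datum'' is something you have proved or that follows from what precedes. Moreover, when $w$ receives several edges, Theorem~\ref{T:tree_extremal} demands the range identity $\K_w=\H_w\vee V_{wj}\K_j$ for \emph{each} such edge simultaneously, and nothing in your construction arranges this.

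The paper resolves exactly this difficulty by reversing the order: first apply the inductive hypothesis to the smaller tree with the \emph{original}, still commuting, data to obtain $\sigma'$ and $B'$; then apply Lemma~\ref{L:T2Ando} to the original leaf-edge data; and finally reconcile the two resulting isometric coextensions of the single contraction $A_w$ at the shared vertex. The mechanism for the reconciliation is the uniqueness of the minimal Sz.Nagy isometric coextension $V_w$ of $A_w$: both coextensions split as $V_w\oplus(\text{isometry})$, and padding each side by a direct sum of the other side's residual summand (carried along the edges by unitaries of the form $E_{ij}\otimes I$) makes the two constructions agree at $w$ while preserving the intertwining relations and the range conditions of Theorem~\ref{T:tree_extremal}. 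This uniqueness-plus-padding step is the missing idea in your proposal; without it the induction does not close.
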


\begin{proof}
Again it suffices to establish the Ando property.
We first assume that the $\A \cap \A^* = \fD_n$, so that the
relation and reduced relation coincide.

Before proceeding, we make a few observations and set some notation.
Suppose that the tree $\T$ has vertices $v_i$ for $1 \le i \le n$.
Let $\rho$ be a representation of the algebra $\A$ commuting with a 
contraction $A$. Then since 
\[ \rho(v_i)= P_i = P_{\H_i} \]
are pairwise orthogonal
projections summing to the identity, we see that 
\[ A = \sum_{1 \le i \le n}\!\!\oplus\, A_i \]
where 
\[ A_i = A|_{\H_i} \in \B(\H_i) .\]
If $e_{ij}$ is an edge of the tree, let 
\[ T_{ij}=\rho(e_{ij})|_{\H_j} \in \B(\H_j,\H_i) .\]
We have 
\[ A_iT_{ij}=T_{ij}A_j , \]
and conversely any $A$ with these two properties commutes with $\A$.

A finite bilateral tree has an elimination scheme, in the sense that 
every bilateral tree has a vertex $v$ which has at most one edge $e$
such that either $s(e)=v$ or $r(v)=v$.
This allows a proof by induction.
So proceed by induction on the number of vertices.

If the graph has a single vertex, then it has no edges and $\A = \bC$.
It is trivial to verify Ando's property in this case.

Now suppose that the result holds for every bilateral tree on fewer
than $n$ vertices, and let $\T$ be a bilateral tree on $n$ vertices.
Let $\rho$ and $A$ be as above.
We may assume that $\T$ is connected; for otherwise 
we may dilate each component by the induction hypothesis.
So every vertex has an edge.
Let $v_{i_0}$ be a vertex with one edge $e$.
Restrict the representation to $\T\setminus\{v_{i_0},e\}$ acting on $\H_{i_0}^\perp$,
called $\rho'$, commuting with $A' = A|_{\H_{i_0}^\perp}$.
Use the induction hypothesis to coextend $\rho'$ to a fully extremal coextension
$\sigma'$ commuting with an isometric coextension $B'$ of $A'$.
Let 
\[\ \ran \sigma'(E_{ii}) =: \L'_i \qfor i \ne i_0 .\]
Then 
\[ B' =  \sum_{i \ne i_0} \oplus\, B'_i .\]

There are two cases: either $s(e)=v_{i_0}$ and $r(e) = j_0$ or
$r(e)=v_{i_0}$ and $s(e) = j_0$. Assume the former.
Let 
\[ \rho(e) = X \in \B(\H_{i_0},\H_{j_0}) .\]
Then 
\[ A_{j_0} X = X A_{i_0} .\]
Use Lemma~\ref{L:T2Ando} to coextend $A_{i_0}$, $A_{j_0}$ and $X$ to
isometries $\tilde A_{i_0}$, $\tilde A_{j_0}$ and $\tilde X$ so that
\[
 \tilde A_{j_0} \tilde X = \tilde X \tilde A_{i_0} \qand 
 \K_{j_0} = \H_{j_0} \vee \tilde X \K_{i_0} .
\]

We can decompose 
\[ \tilde A_{j_0} = V_{j_0} \oplus W_{j_0} , \]
where  $V_{j_0}$ is the minimal isometric coextension of $A_{j_0}$,
with respect to 
\[ \K_{j_0} = \K^0_{j_0} \oplus \K^1_{j_0} .\]
Similarly, decompose the isometry 
\[ B'_{j_0} = V_{j_0} \oplus W'_{j_0} \]
with respect to 
\[ \L'_{j_0} \simeq \K^0_{j_0} \oplus \L^{\prime 1}_{j_0} .\]
Define the coextension $\sigma$ of $\rho$ and isometric coextension
$B$ of $A$ as follows.
Set 
\[
 \L_i = \L'_i \oplus \K^1_{j_0} \quad\FOR  i \ne i_0 \qand
 \L_{i_0} = \K_{i_0} \oplus \L^{\prime 1}_{j_0} .
\]
Define
\begin{alignat*}{2}
 \sigma(e_{ij}) &= \sigma'(e_{ij}) \oplus (E_{ij}\otimes I_{\K^1_{j_0}})
 \FOR  i \ne i_0 &\qand 
 \sigma(e) &= \tilde X \oplus I_{\L^{\prime 1}_{j_0}} \\
 B_i &= B'_i \oplus W_{i_0} \quad \FOR  i \ne i_0 &\qand\ 
 B_{i_0} &=  \tilde A_{i_0} \oplus W_{j_0} .
\end{alignat*}
Here $E_{ij}\otimes I_{\K^1_{j_0}}$ is interpreted as the unitary that
maps the copy of $\K^1_{j_0}$ contained in $\L_j$ to the corresponding
copy in $\L_i$.
One needs only verify that each $\sigma(e_{ij})$ intertwines $B_j$ with $B_i$
and 
\[ \H_i \vee \sigma(e_{ij}) \L_j = \L_i . \]
Both of these facts are routine.
Thus a fully extremal coextension of $\rho$ is produced that 
commutes with an isometric coextension of $A$.
This verifies Ando's property.

The second case, in which the edge $e$ maps $v_{j_0}$ to $v_{i_0}$ 
is handled similarly by first dilating the graph on $n-1$ vertices, and 
producing a dilation of the one edge $e$ using Lemma~\ref{L:T2Ando}.
Then as above, split the two isometries over the vertex $v_{j_0}$ into
the minimal isometric coextension direct summed with another isometry; 
and then define an explicit coextension with the desired properties. 
\end{proof}

The following consequence is immediate.

\begin{cor} \label{C:treeAndo}
Bilateral tree algebras have CLT and CLT*.
\end{cor}

\section{The Fuglede Property} \label{S:FP}

We introduce another property of an abstract unital operator algebra reminiscent
of the classical Fuglede Theorem that the commutant of a normal operator
is self-adjoint.

\begin{defn}
Let $\A$ be a unital operator algebra with C*-envelope $\cenv(\A)$.
Say that $\A$ has the \textit{Fuglede Property} (FP) if for every
faithful unital $*$-representation $\pi$ of $\cenv(\A)$, one has
\[ \pi(\A)' = \pi(\cenv(\A))'. \]
\end{defn}

It is easy to characterize this property among abelian algebras.

\begin{prop} \label{P:FPabelian}
If $\A$ is an abelian operator algebra, then the following are equivalent:
\begin{enumerate}
\item $\A$ has the Fuglede property.
\item $\A$ is a function algebra.
\item $ \cenv(\A)$ is abelian.
\end{enumerate}
\end{prop}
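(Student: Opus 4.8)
I would prove the three equivalences in the cycle (ii) $\Rightarrow$ (iii) $\Rightarrow$ (i) $\Rightarrow$ (ii), which seems the most economical route. The implication (ii) $\Rightarrow$ (iii) is immediate: if $\A$ is a function algebra with compact space $X$ equal to its Shilov boundary, then $\cenv(\A) = \rC(X)$ is abelian. For (iii) $\Rightarrow$ (i), suppose $\cenv(\A)$ is abelian and let $\pi$ be any faithful unital $*$-representation on a Hilbert space $\H$. Then $\pi(\cenv(\A))$ is an abelian C*-algebra, so it is contained in a maximal abelian von Neumann algebra $\M$, whence $\pi(\cenv(\A)) \subseteq \M \subseteq \M' \subseteq \pi(\cenv(\A))'$. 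In particular $\pi(\cenv(\A)) \subseteq \pi(\A)'$, since $\pi(\A) \subseteq \pi(\cenv(\A))$. The reverse inclusion $\pi(\A)' \subseteq \pi(\cenv(\A))'$ is the substantive direction: since $\A$ generates $\cenv(\A)$ as a C*-algebra, we have $\pi(\cenv(\A)) = \ca(\pi(\A))$; any operator commuting with $\pi(\A)$ commutes with $\pi(\A)^*$ because $\pi(\cenv(\A))$ is abelian — indeed $\pi(a)$ and $\pi(a)^*$ generate the same abelian von Neumann algebra, and an operator $T$ with $T\pi(a) = \pi(a)T$ for all $a$ automatically satisfies $T\pi(a)^* = \pi(a)^*T$ by Fuglede's theorem applied to the normal elements, or more simply because $\pi(a) + \pi(a)^*$ and $i(\pi(a) - \pi(a)^*)$ are self-adjoint elements of the abelian algebra $\pi(\cenv(\A))$, hence normal, and $T$ commutes with the real and imaginary parts. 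So $T$ commutes with $\pi(a)^*$ for all $a$, hence with all of $\pi(\cenv(\A))$, giving $\pi(\A)' \subseteq \pi(\cenv(\A))'$. Thus FP holds.

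For (i) $\Rightarrow$ (ii), the cleanest approach is contrapositive: assume $\A$ is abelian but not a function algebra, i.e.\ $\cenv(\A)$ is not abelian, and produce a faithful $*$-representation $\pi$ witnessing $\pi(\A)' \supsetneq \pi(\cenv(\A))'$. Since $\A$ is abelian, $\pi(\A)$ is always contained in $\pi(\A)'$, so whenever $\pi(\cenv(\A))$ is noncommutative we have $\pi(\A) \not\subseteq \pi(\cenv(\A))' $ in general — but I want strictness of the commutant inclusion, which needs a little care. The point is that $\cenv(\A)$ being nonabelian means there exist $a, b \in \A$ with $\pi(a)\pi(b)^* \ne \pi(b)^*\pi(a)$ for a faithful $\pi$; equivalently the weak-operator closed algebra generated by $\pi(\A)$ is nonabelian, yet $\pi(\A)$ itself is abelian, so $\pi(\A)$ is strictly smaller than $\pi(\A)''$... that is not quite the statement either. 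The correct move: take $\pi$ faithful; then $\pi(\A)' \supseteq W^*(\pi(\A))'$, and we want this containment strict. Since $\A$ is abelian, $\pi(\A) \subseteq \pi(\A)'$; if we had $\pi(\A)' = \pi(\cenv(\A))' = W^*(\pi(\A))'$, then taking commutants gives $\pi(\A)'' = W^*(\pi(\A))$, so $\pi(\A)$ is contained in its own commutant, forcing $W^*(\pi(\A)) = \pi(\A)''$ to be abelian (a von Neumann algebra contained in its commutant is abelian), hence $\pi(\cenv(\A)) \subseteq W^*(\pi(\A))$ is abelian — contradiction. So under (i) the von Neumann algebra $W^*(\pi(\A))$ is abelian for every faithful $\pi$, which forces $\cenv(\A)$ abelian and therefore, being a unital abelian C*-algebra generated by $\A$ with $\A$ completely isometrically embedded, $\A$ is a function algebra (a uniformly closed point-separating unital subalgebra of $\rC(X)$).

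\textbf{Main obstacle.} The only delicate point is the last implication: extracting from ``FP holds'' the conclusion that $W^*(\pi(\A))$ must be abelian, and hence $\cenv(\A)$ abelian. The von Neumann algebra double-commutant argument above handles it — the key fact is that a von Neumann algebra $\fN$ with $\fN \subseteq \fN'$ is abelian, applied to $\fN = \pi(\A)''$. Once $W^*(\pi(\A))$ is abelian for one faithful $\pi$, the C*-algebra $\ca(\pi(\A)) = \pi(\cenv(\A))$ it contains is abelian, so $\cenv(\A) \cong \rC(X)$ and $\A$ embeds as a function algebra on $X$; that $X$ can then be taken to be the Shilov boundary is part of the standard theory of the C*-envelope of a function algebra. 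Everything else is routine, and Fuglede's classical theorem is invoked only in the easy direction (iii) $\Rightarrow$ (i), where it could even be sidestepped by the real/imaginary-part observation.
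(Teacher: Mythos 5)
Your proof is correct and follows essentially the same route as the paper: your cycle runs in the opposite direction, but the three ingredients are identical --- (ii)$\Leftrightarrow$(iii) is the definition of the C*-envelope of a function algebra, (iii)$\Rightarrow$(i) is the classical Fuglede theorem applied to the normal elements $\pi(a)$, and (i)$\Rightarrow$(iii) comes from observing that abelianness of $\A$ together with FP forces $\pi(\cenv(\A))$ into its own commutant (the paper gets this in one line: $\pi(\A)\subseteq\pi(\A)'=\pi(\cenv(\A))'$, and the right side is a von Neumann algebra, hence contains $\ca(\pi(\A))=\pi(\cenv(\A))$; your double-commutant version, which correctly uses FP to know that $\pi(\A)''=\pi(\cenv(\A))''$ is self-adjoint, is equivalent). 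One caveat: your proposed shortcut around Fuglede via real and imaginary parts is circular --- to conclude that $T$ commutes with $\tfrac12(\pi(a)+\pi(a)^*)$ you would already need $T$ to commute with $\pi(a)^*$, which is exactly the assertion being proved, so the classical Fuglede theorem cannot be sidestepped this way; since you also give the genuine Fuglede argument, this does not affect the validity of the proof.
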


\begin{proof}
If (1) holds, then for every $a \in \A$, $\pi(a)$ lies in $\pi(\A)'$ and hence in
$\pi(\cenv(\A))'$.  Thus $\pi(\cenv(\A))$ is abelian.  So (3) holds.
If (3) holds, then $\A$ is a function algebra since $\A$ separates points
by the definition of the C*-envelope.
Finally if (2) holds, then $\pi(\A)$ is contained in $\pi(\rC(X))$ which is
an algebra of commuting normal operators.  So the FP property follows
from the usual Fuglede Theorem.
\end{proof}

The following is a useful class of operator algebras which has the FP property.

\begin{prop} \label{P:FP_matrix}
Suppose that there is a family $\big\{ U_k = \big[a_{ij}^{(k)} \big] \big\}$ 
of unitary matrices $U_k \in \fM_{m_k,n_k}(\A)$ such that the set of matrix
coefficients $\{ a_{ij}^{(k)} \}$ generate $\A$ as an operator algebra.  
Then $\A$ has $FP$.
\end{prop}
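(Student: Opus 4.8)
The plan is to reduce the statement to the classical Fuglede theorem applied to each unitary $U_k$, viewed as a unitary operator (or matrix of operators) after representing. First I would fix a faithful unital $*$-representation $\pi$ of $\cenv(\A)$ on a Hilbert space $\H$, and let $T \in \pi(\A)'$. Since the matrix coefficients $a_{ij}^{(k)}$ generate $\A$, it suffices to show that $T$ commutes with $\pi(a_{ij}^{(k)})^*$ for every $i,j,k$; then $T$ will commute with the $*$-algebra generated by $\pi(\A)$, which is dense in $\pi(\cenv(\A))$, and hence $T \in \pi(\cenv(\A))'$, giving $\pi(\A)' \subseteq \pi(\cenv(\A))'$; the reverse inclusion is automatic.

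The key step is this: fix $k$, write $m = m_k$, $n = n_k$, and consider the operator matrix $\widetilde U = \big[ \pi(a_{ij}^{(k)}) \big] \in \fM_{m,n}\big(\B(\H)\big) = \B(\H^{(n)}, \H^{(m)})$. Because $U_k$ is a unitary in $\fM_{m,n}(\A) \subseteq \fM_{m,n}(\cenv(\A))$ and $\pi$ (inflated to matrices) is a $*$-homomorphism, $\widetilde U$ is a unitary operator from $\H^{(n)}$ onto $\H^{(m)}$ — in particular $\widetilde U \widetilde U^* = I_{\H^{(m)}}$ and $\widetilde U^* \widetilde U = I_{\H^{(n)}}$. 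Now $T^{(n)} := T \otimes I_n$ (i.e., $\diag(T,\dots,T)$ on $\H^{(n)}$) commutes with each $\pi(a_{ij}^{(k)})$, hence $T^{(m)} \widetilde U = \widetilde U T^{(n)}$, i.e.\ $\widetilde U$ intertwines the two ampliations of $T$. Taking adjoints and using that $\widetilde U$ is a surjective isometry, one gets $\widetilde U^* T^{(m)} = T^{(n)} \widetilde U^*$, which written out entrywise says precisely that $T$ commutes with each $\pi(a_{ji}^{(k)})^* = \pi\big( (U_k^*)_{ij} \big)$. Thus $T$ commutes with the adjoint of every generator, which is what we needed.

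The one technical point to handle with care is the case $m \ne n$: then $T^{(m)}$ and $T^{(n)}$ act on different spaces, but the argument still goes through since $\widetilde U$ is a unitary between those spaces and genuinely intertwines $T^{(m)}$ and $T^{(n)}$; the relation $\widetilde U^* T^{(m)} = T^{(n)}\widetilde U^*$ follows by multiplying $T^{(m)}\widetilde U = \widetilde U T^{(n)}$ on both sides by $\widetilde U^*$ and using $\widetilde U^*\widetilde U = I$, $\widetilde U\widetilde U^* = I$. I expect the main (minor) obstacle to be bookkeeping: making sure the identification of $\fM_{m,n}(\cenv(\A))$-unitarity with $\widetilde U$ being a Hilbert-space unitary is stated cleanly, and that "generate $\A$ as an operator algebra" is used correctly — namely that the norm-closed (not-necessarily-self-adjoint) algebra generated by the $a_{ij}^{(k)}$ is $\A$, so that the $*$-algebra generated by $\pi(\A)$ coincides with the $*$-algebra generated by the $\pi(a_{ij}^{(k)})$, whose closure is $\pi(\cenv(\A))$ since $\A$ generates $\cenv(\A)$. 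No appeal to the scalar Fuglede theorem is actually needed here — the unitarity of the coefficient matrices does all the work — which is perhaps why the result holds in this generality.
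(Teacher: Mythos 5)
Your proof is correct and follows essentially the same route as the paper: ampliate $T$, use that the operator matrix $\widetilde U = \pi(U_k)$ is a unitary intertwining $T^{(n)}$ and $T^{(m)}$, and read off entrywise that $T$ commutes with the adjoints of the generating coefficients, so that $\pi(\A)'$ is self-adjoint and hence equals $\pi(\cenv(\A))'$. The only difference is cosmetic: the paper cites the Fuglede--Putnam theorem for the step $\widetilde U^{*} T^{(m)} = T^{(n)}\widetilde U^{*}$, whereas you correctly observe that the unitarity of $\widetilde U$ makes this an elementary computation requiring no normality argument at all.
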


\begin{proof}
If $B$ commutes with $\pi(A)$, then 
\[ B^{(m)} \pi(U_k) = \pi(U_k) B^{(n)} .\]
By the Fuglede--Putnam Theorem, we obtain 
\[ B^{*(m)} U_k = U_k B^{*(n)} .\]
Therefore $B^*$ commutes with each $a_{ij}^{(k)}$.  As these generate $\A$,
we deduce that $\pi(\A)'$ is self-adjoint.
\end{proof}

\begin{eg} \label{E:FPncdisk}
The non-commutative disk algebras of Popescu, $\fA_n$, are generated by a 
row isometry $S = [S_1\  \dots\ S_n]$.  The C*-envelope is the Cuntz algebra
$\O_n$.  As an element of $\fM_{1,n}(\O_n)$, $S$ is a unitary operator.
Thus $\fA_n$ has the FP property.  This property has been explicitly observed in 
\cite[Proposition~2.10]{DKP}.
\end{eg}

\begin{eg} \label{E:FPfAinfty}
The algebra $\fA_\infty$ generated by a countable family of isometries with
pairwise orthogonal ranges does not have the Fuglede property.  This is
because $*$-representations of the C*-envelope, $\O_\infty$, are determined
by any countably infinite family of isometries with pairwise orthogonal ranges,
and does not force the sum of these ranges to be the whole space.
In the left regular representation, the commutant of $\fA_\infty$ is the 
\wot-closed algebra generated by the right regular representation,
which is not self-adjoint.
\end{eg}

\begin{eg} \label{E:FPDA}
The algebra $\A_d$ of continuous multipliers on the Drury-Arveson
space $H^2_d$ is abelian, but the norm is not the sup norm.  So this is not a
function algebra.  Arveson \cite{Arv3} identifies the C*-envelope, which contains
the compact operators; so it is clearly not abelian.

Since $\A_d$ is a quotient of $\fA_d$, one sees that FP does not pass to quotients.
One can also see this by noting that there are quotients of functions
algebras which are not themselves functions algebras.
\end{eg}

\begin{eg} \label{E:FPgraph}
The tensor algebra of any finite graph has FP.
It does not follow immediately from Proposition~\ref{P:FP_matrix},
but does follow by a simple modification.
A finite graph $G=(V,E,r,s)$ consists of a finite set $V$ of vertices, 
a finite set $E$ of edges, and range and source maps $r,s:E\to V$.
The graph C*-algebra $\ca(G)$ is the universal C*-algebra generated by
pairwise orthogonal projections $p_v$ for $v\in V$ and partial isometries
$u_e$ for $e \in E$ such that 
\[
 \sum_{v\in V} p_v = 1 ,\quad 
 u_e^* u_e = p_{s(e)} \qand 
 \sum_{r(e) = v} u_e u_e^* = p_v 
\]
unless $v$ is a source, meaning that there are no edges with range $v$.
The tensor algebra of the graph $\T^+(G)$ is the non-self-adjoint subalgebra of 
$\ca(G)$ generated by 
\[ \{p_v, u_e : v\in V,\, e\in E \} .\]
Then $\cenv(\T(G)) = \ca(G)$ \cite{FMR,KatKribs2}.

Suppose that $\pi$ is a $*$-representation of $\ca(G)$ and $T\in \pi(\T(G))'$.
Then $T$ commutes with $\pi(p_v)=:P_v$, and thus 
$T = \oplus\sum_{v\in V} T_v$ where $T_v\in \B(P_v\H)$.
If there are edges with $r(e)=v$, say $e_1,\dots,e_k$, then
let $s(e_i) = v_i$ and consider
\[
 U = \big[ \pi(u_{e_1}) \ \dots \ \pi(u_{e_k}) \big] 
 \in \B(P_{v_1}\H \oplus \dots P_{v_k}\H, P_v\H) .
\]
This is a unitary element, and 
\[ T_v U = U (T_{v_1} \oplus \dots \oplus T_{v_k}) .\]
By the Fuglede-Putnam Theorem, we also obtain 
\[ T_v^* U = U (T_{v_1}^* \oplus \dots \oplus T_{v_k}^*) .\]
If there are no edges with range $v$, there is nothing to check.
We deduce as in Proposition~\ref{P:FP_matrix} that $\T^+(G)$ has FP.
\end{eg}

\begin{eg} The algebra of any finite $k$-graph has FP.
This is established as in the case of a 1-graph.
\end{eg}

\begin{eg}  \label{E:FPnest}
Let $\N$ be a nest (a complete chain of closed subspaces of a Hilbert space).
Set 
\[ \A = \T(\N) \cap \fK^+ , \]
where $\T(\N)$ is the nest algebra \cite{DavNest} and
$\fK^+ = \bC I + \fK$ is unitization of the space of compact operators.
By the Erdos Density Theorem, $\T(\N) \cap \fK$ contains a norm 1 approximate
identity; and thus $\A$ is weak-$*$ dense in $\T(\N)$.
Therefore its commutant is trivial, and coincides with the commutant of $\fK^+$, the
enveloping C*-algebra.
Moreover, $\fK^+$ is the C*-envelope because $\fK$ is the only ideal, and
the quotient $q$ of $\fK^+$ onto $\bC$ is clearly not isometric on $\A$.
A $*$-representation of $\fK^+$ has the form
\[ \pi(A) = q(A) I_{\K_0} \oplus A \otimes I_{\K_1} \]
on a Hilbert space $\K = \K_0 \oplus (\H \otimes\K_1)$.
By the earlier remark, the commutant of $\pi(\A)$ is seen to be
\[ \B(\K_0) \oplus (\bC I_\H \otimes \B(\K_1)) , \]
which is the commutant of $\pi(\fK^+)$.
So $\A$ has FP.

In particular, any finite dimensional nest algebra $\T \subset \fM_n$ has FP.
\end{eg}

\begin{eg}  More generally, let $\L$ be a completely distributive 
commutative subspace lattice (see \cite{DavNest}); 
and let $\Alg(\L)$ be the corresponding CSL algebra.
Let $\fM$ be a masa containing (the projections onto) $\L$.
Also let 
\[ \fN = \Alg(\L \cap \L^\perp) .  \]
Observe that $\L\cap\L^\perp$ is a
completely distributive Boolean algebra, and thus is atomic.
Therefore 
\[ \fN = \oplus\sum_i \B(\H_i) \]
is an $\ell^\infty$ direct sum
with respect to the decomposition $\H = \oplus\sum_i \H_i$, 
where $\H_i$ are the ranges of the atoms of $\L\cap\L^\perp$.

Also by complete distributivity,  $\Alg(\L)\cap\fK$ has a 
norm one approximate identity.
So again $\A = \Alg(\L)\cap\fK^+$ is weak-$*$ dense in $\Alg(\L)$.
It is straightforward to see that 
\[
 \cenv(\A) = 
 \{ \lambda I + \oplus\sum_i A_i : A_i \in \fK^+(\H_i) \AND \lim_i A_i= 0 \} . 
\]
The irreducible representations of $\cenv(\A)$ are unitarily equivalent to
compression to some $\H_i$ and the character that evaluates $\lambda$.
So every representation is a direct sum of these irreducible ones with
multiplicity.  
As in the nest case, it is straightforward to show that 
\[ \pi(\A)'=\pi(\cenv(\A))' .\]
So $\A$ has FP.
\end{eg}

We have one minor result relating FP with commutant lifting.

\begin{prop} \label{P:FP+MCLT}
If an operator algebra $\A$ has FP and MCLT, then it has ICLT.
\end{prop}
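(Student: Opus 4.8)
The plan is to combine the maximal commutant lifting property MCLT with the Fuglede property FP to upgrade the coextension supplied by MCLT into an isometric coextension, thereby verifying ICLT. Recall that ICLT asks: given a representation $\rho$ of $\A$ on $\H$ commuting with a contraction $X$, produce a coextension $\sigma$ of $\rho$ together with an isometric coextension $V$ of $X$ on a common space which commute. By Theorem~\ref{T:ICLT}, it is equivalent (and in fact easier to aim for) to produce a \emph{simultaneous dilation} of $\rho$ to a maximal dilation $\pi$ and of $X$ to a \emph{unitary} $U$ commuting with $\pi(\cenv(\A))$, since (iii) of that theorem implies (i).

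First I would invoke MCLT: there is a maximal dilation $\pi$ of $\rho$ on some $\K \supset \H$ and a contraction $Y \in \B(\K)$ commuting with $\pi(\A)$ such that $P_\H \pi(a) Y^n|_\H = \rho(a) X^n$ for all $a \in \A$ and $n \ge 0$. Since $\pi$ is maximal, it extends to a $*$-representation of $\cenv(\A)$; assume (by taking a faithful summand, or noting the argument only uses $\pi$ on its essential part) that we may apply FP to $\pi$. The next step is to push $Y$ up to a unitary. The contraction $Y$ commutes with $\pi(\A)$, but we want something commuting with $\pi(\cenv(\A))$. Here is where FP enters: if $Y$ were already unitary, FP would immediately give $Y \in \pi(\cenv(\A))'$. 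The issue is that $Y$ need not be unitary. So I would instead dilate $Y$ to a \emph{unitary} while preserving commutation with $\pi(\A)$ — this is exactly the kind of construction carried out in the proof of Theorem~\ref{T:ICLT} (the direct-limit argument producing $\tau$, $W$), or one can use the classical commutant lifting / Sz.Nagy unitary dilation inside the commutant. Concretely: dilate $Y$ to a unitary $W$ on a larger space $\P \supset \K$ and extend $\pi$ to a maximal dilation $\tilde\pi$ on $\P$ so that $W$ still commutes with $\tilde\pi(\A)$, using the intertwining/direct-limit machinery already available. Once $W$ is unitary and commutes with $\tilde\pi(\A)$, FP applied to $\tilde\pi$ (which is maximal, hence a $*$-representation of $\cenv(\A)$) yields $W \in \tilde\pi(\cenv(\A))'$. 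Then $(\tilde\pi, W)$ is precisely the data of Theorem~\ref{T:ICLT}(iii) relative to $\rho$ and $X$, since $\H$ is still semi-invariant and $P_\H \tilde\pi(a) W^n|_\H = \rho(a) X^n$. Therefore $\A$ has ICLT.

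The main obstacle I anticipate is the bookkeeping in the dilation-of-$Y$-to-a-unitary step: one must extend the maximal representation $\pi$ simultaneously so that (a) the extension is still maximal (so FP applies), (b) $W$ commutes with the extended representation of $\A$, and (c) the original space $\H$ remains semi-invariant so the compression formula for $\rho$ and $X$ survives. All three are handled by the inductive-limit construction in the proof of Theorem~\ref{T:ICLT} — indeed that proof already takes a commuting $(\sigma, V)$ pair with $V$ an isometry and produces a maximal $\pi$ with a commuting unitary — so the cleanest route is: from MCLT get $(\pi_0, Y)$; coextend $Y$ to an isometry $V$ commuting with a coextension $\sigma$ of $\rho$ by a Sz.Nagy-type argument in the commutant (the commutant of $\pi_0(\A)$ is a von Neumann algebra, so minimal isometric dilation of $Y$ inside it exists and extends $\sigma$ by the standard dilation of the module); then feed $(\sigma, V)$ into the argument of Theorem~\ref{T:ICLT}, whose conclusion is exactly statement (iii), noting that the Fuglede step there is replaced by our hypothesis FP. This is a short argument once the equivalences in Theorem~\ref{T:ICLT} are in hand, so the emphasis is really on correctly citing those and verifying FP can be applied to the maximal dilation that appears.
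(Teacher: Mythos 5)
Your overall skeleton matches the paper's: invoke MCLT to get a maximal dilation $\pi$ of $\rho$ and a commuting contractive dilation $Y$ of $X$, then upgrade $Y$ to a unitary and land in condition (iii) of Theorem~\ref{T:ICLT}. But there is a genuine logical error in how you deploy FP, and it leaves a gap in the middle of the argument. You write that ``if $Y$ were already unitary, FP would immediately give $Y \in \pi(\cenv(\A))'$. The issue is that $Y$ need not be unitary.'' This inverts the content of the Fuglede property as defined here: FP is the statement that $\pi(\A)' = \pi(\cenv(\A))'$ as sets, for $*$-representations $\pi$ of $\cenv(\A)$. It applies to \emph{every} operator in the commutant with no normality or unitarity hypothesis on that operator (the ``normality'' lives on the side of $\pi(\cenv(\A))$, not on the side of $Y$). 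So FP applies immediately to the contraction $Y$ produced by MCLT, and that is the whole point of the hypothesis.

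Because you postpone FP until after $Y$ has been made unitary, the intermediate step --- ``dilate $Y$ to a unitary $W$ and extend $\pi$ so that $W$ still commutes with $\tilde\pi(\A)$'' --- is unsupported. The Sz.Nagy/Schaeffer dilations of $Y$ have matrix entries involving $Y^*$ and the defect operators $(I-Y^*Y)^{1/2}$, $(I-YY^*)^{1/2}$; to know these commute with $\pi(\A)$ you need $Y^* \in \pi(\A)'$, i.e.\ you need $\pi(\A)'$ to be self-adjoint --- which is exactly what FP supplies and what fails in general (your parenthetical claim that ``the commutant of $\pi_0(\A)$ is a von Neumann algebra'' is false for a general nonself-adjoint algebra; it is a WOT-closed algebra, and it is a von Neumann algebra precisely when it equals $\pi(\cenv(\A))'$). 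As written your argument is therefore either circular or incomplete. The fix is to reverse the order, which is the paper's proof: apply FP to the contraction $Y$ to get $Y \in \pi(\cenv(\A))'$, hence $\ca(Y) \subset \pi(\cenv(\A))'$; then the standard Schaeffer unitary dilation $U$ of $Y$ on $\K^{(\infty)}$ has all entries in $\ca(Y)$, so $U$ commutes with $\pi^{(\infty)}(\cenv(\A))$, and $(\pi^{(\infty)}, U)$ verifies Theorem~\ref{T:ICLT}(iii). No direct-limit machinery or re-maximalization of $\pi$ is needed.
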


\begin{proof}
Suppose that $\rho$ is a representation of $\A$ on $\H$ 
and $X\in \B(\H)$ is a contraction  commuting with $\rho(\A)$.
Then by MCLT, there is a maximal dilation $\pi$ of $\rho$ and 
a contractive dilation $Y$ of $X$ which commutes with $\pi(\A)$
and has $\H$ as a common semi-invariant subspace.
Since $\pi$ is maximal, it extends to a $*$-representation of $\cenv(\A)$
which we also denote by $\pi$.
By the Fuglede property, $Y$ commutes with $\pi(\cenv(\A))$.
Hence $\ca(Y)$ is contained in $\pi(\cenv(\A))'$.

The standard Schaeffer dilation of $Y$ to a unitary $U$ on $\K^{(\infty)}$
has coefficients in $\ca(Y)$. So $U$ commutes with $\pi^{(\infty)}$.
This establishes ICLT.
\end{proof}

\begin{cor} \label{C:WCLT+FP}
If an operator algebra $\A$ has properties FP, WCLT and WCLT*, 
then it has the weak Ando property.

If an operator algebra $\A$ has properties FP, CLT and CLT*, 
then it has the Ando property.
\end{cor}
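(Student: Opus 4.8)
The plan is to reduce the two assertions of Corollary~\ref{C:WCLT+FP} to the combination of Proposition~\ref{P:FP+MCLT} with the characterizations of the (weak) Ando property already established. First I would recall that by Theorem~\ref{T:MCLT}, the hypotheses WCLT and WCLT* together yield MCLT. Since $\A$ also has FP, Proposition~\ref{P:FP+MCLT} then gives ICLT. But now $\A$ has both WCLT and ICLT, so Proposition~\ref{P:wAndoWCLT} immediately furnishes the weak Ando property. This disposes of the first statement.

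For the second statement, I would argue in exactly the same way but using the stronger inputs. The hypotheses CLT and CLT* are a fortiori WCLT and WCLT*, so Theorem~\ref{T:MCLT} again yields MCLT; combined with FP, Proposition~\ref{P:FP+MCLT} gives ICLT. Now $\A$ has CLT and ICLT, and Theorem~\ref{T:AndoCLT} asserts precisely that this is equivalent to the Ando property. Hence $\A$ has the Ando property, completing the proof.

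The main subtlety — and the only place where any thought is required — is verifying that CLT implies WCLT, i.e.\ that a fully extremal coextension is in particular an extremal coextension, so that one is entitled to feed the CLT/CLT* hypotheses into Theorem~\ref{T:MCLT}. This is immediate from Definition~\ref{D:fully extremal}: if $\sigma$ is a fully extremal coextension of $\rho$ and $\tau$ is a coextension of $\sigma$ which is also a coextension of $\rho$, then $\sigma \prec \tau$ and $\rho \prec_c \tau$, so $\tau = \sigma \oplus \tau'$; taking $\tau$ to be an arbitrary coextension of $\sigma$ (which is automatically a coextension of $\rho$) shows $\sigma$ is extremal as a coextension. Everything else is a direct appeal to the three results quoted above, so there is no genuine obstacle; the corollary is essentially a bookkeeping consequence of the machinery already in place.
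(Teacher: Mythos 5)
Your argument is correct and follows exactly the paper's route: Theorem~\ref{T:MCLT} gives MCLT from WCLT and WCLT*, Proposition~\ref{P:FP+MCLT} upgrades this to ICLT using FP, and then Proposition~\ref{P:wAndoWCLT} and Theorem~\ref{T:AndoCLT} yield the weak Ando and Ando properties respectively. Your extra check that a fully extremal coextension is in particular extremal (so CLT implies WCLT) is a sound, if minor, point of care that the paper leaves implicit.
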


\begin{proof}
Theorem~\ref{T:MCLT} shows that WCLT and WCLT* imply MCLT.
So by the preceding proposition, we obtain ICLT.
By Proposition~\ref{P:wAndoWCLT} and Theorem~\ref{T:AndoCLT},
WCLT and ICLT imply the weak Ando property and
CLT and ICLT imply the Ando property.
\end{proof}

\section{Completely Isometric Endomorphisms} \label{S:c.i.endo}

In the category of operator algebras, the natural morphisms are
completely bounded maps.  Among the endomorphisms, those
that work best for semicrossed products are the completely isometric ones.
These are the analogue of the faithful $*$-endomorphisms of C*-algebras.
In this section, we investigate lifting completely isometric endomorphisms
of operator algebras to $*$-endomorphisms of some C*-cover.

Let $\Aut(\A)$ denote the completely isometric automorphisms of an operator
algebra $\A$.  If $\fA$ is a C*-algebra, then (completely) isometric automorphisms
are automatically $*$-automorphisms.  If $\A \subset \fA$, let
\[ \Aut_\A(\fA) = \{ \alpha\in\Aut(\fA) : \alpha(\A) = \A \} . \]
Similarly, let $\End(\A)$ denote the completely isometric unital endomorphisms of $\A$. 
Again, for a C*-algebra, these are faithful unital $*$-endomorphisms.
When $\A \subset \fA$, we let
\[ \End_\A(\fA) = \{ \alpha\in\End(\fA) : \alpha(\A) \subset \A \} . \]

First we begin with an easy result.

\begin{prop}\label{P:liftauto}
Let $\A$ be a unital operator algebra.  
Then every completely isometric automorphism of $\A$ 
lifts to a $*$-automorphism of $\cenv(\A)$ which fixes $\A$ $($as a set$)$.
Thus 
\[ \Aut_\A(\cenv(\A)) \simeq  \Aut(\A) \]
by restriction to $\A$.
\end{prop}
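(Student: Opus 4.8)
The plan is to use the universal property of the C*-envelope, which was recalled in Section~\ref{S:diln}. Let $\alpha \in \Aut(\A)$ and write $\iota : \A \to \cenv(\A)$ for the canonical completely isometric embedding. The composition $\iota \circ \alpha : \A \to \cenv(\A)$ is again a unital completely isometric representation of $\A$ whose image generates $\cenv(\A)$ as a C*-algebra (since $\alpha(\A) = \A$, so $\iota(\alpha(\A)) = \iota(\A)$ generates). Therefore $\ca(\iota(\alpha(\A))) = \cenv(\A)$ is a C*-cover of $\A$ via $\iota\circ\alpha$, and by the minimality (universal) property of the C*-envelope there is a unique $*$-homomorphism $\td\alpha : \cenv(\A) \to \cenv(\A)$ with $\td\alpha \circ \iota = \iota \circ \alpha$ on $\A$.

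Next I would check that $\td\alpha$ is a $*$-automorphism. Applying the same construction to $\alpha^{-1}$ yields a unique $*$-homomorphism $\td{\alpha^{-1}}$ with $\td{\alpha^{-1}} \circ \iota = \iota \circ \alpha^{-1}$. Then $\td{\alpha^{-1}} \circ \td\alpha$ is a $*$-endomorphism of $\cenv(\A)$ which restricts to the identity on $\iota(\A)$; since $\iota(\A)$ generates $\cenv(\A)$ as a C*-algebra, $\td{\alpha^{-1}} \circ \td\alpha = \id$. Symmetrically $\td\alpha \circ \td{\alpha^{-1}} = \id$, so $\td\alpha \in \Aut(\cenv(\A))$. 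Moreover $\td\alpha(\iota(\A)) = \iota(\alpha(\A)) = \iota(\A)$, so identifying $\A$ with $\iota(\A)$ we have $\td\alpha \in \Aut_\A(\cenv(\A))$, and it is a lift of $\alpha$.

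Finally, for the isomorphism statement, I would exhibit the restriction map $r : \Aut_\A(\cenv(\A)) \to \Aut(\A)$, $\beta \mapsto \beta|_\A$, as a group homomorphism (clear) and show it is a bijection. Surjectivity is exactly the lifting just established: $r(\td\alpha) = \alpha$. Injectivity: if $\beta \in \Aut_\A(\cenv(\A))$ restricts to the identity on $\A$, then since $\A$ generates $\cenv(\A)$ as a C*-algebra and $\beta$ is a $*$-homomorphism, $\beta = \id$. Hence $r$ is a group isomorphism. The main obstacle — really the only subtle point — is making sure the uniqueness clause in the universal property of the C*-envelope is invoked correctly so that the various composites of lifts are forced to be identities; everything else is formal, using only that $\iota(\A)$ generates $\cenv(\A)$ and that $*$-homomorphisms agreeing on a generating set agree.
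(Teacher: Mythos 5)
Your proof is correct and follows the same basic strategy as the paper: apply the universal property of $\cenv(\A)$ to the completely isometric representation $\iota\circ\alpha$ (equivalently $\iota\circ\alpha^{-1}$) to obtain a $*$-endomorphism $\tilde\alpha$ of $\cenv(\A)$ extending $\alpha$, and then observe that restriction gives the group isomorphism. The one place where you genuinely diverge is in showing $\tilde\alpha$ is bijective: the paper notes that $\ker\tilde\alpha$ is a boundary ideal (because $\tilde\alpha$ is completely isometric on the generating copy of $\A$) and hence zero, and that $\tilde\alpha$ is onto by construction; you instead build $\widetilde{\alpha^{-1}}$ by the same device and check that both composites restrict to the identity on $\iota(\A)$, so by uniqueness of $*$-homomorphisms agreeing on a generating set they equal $\id$. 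Your route uses only the uniqueness clause of the universal property and is arguably cleaner, since it avoids invoking the fact that $\cenv(\A)$ has no nonzero boundary ideals; the paper's route is marginally shorter but leans on that extra piece of structure theory. One small bookkeeping point: as the paper states the universal property, the induced $*$-homomorphism goes from $\ca(\sigma(\A))$ onto $\cenv(\A)$ satisfying $\pi\circ\sigma=\iota$, so to get $\tilde\alpha\circ\iota=\iota\circ\alpha$ directly you should feed it $\sigma=\iota\circ\alpha^{-1}$ rather than $\iota\circ\alpha$; since $\alpha$ is an automorphism this costs nothing, and your construction of both $\tilde\alpha$ and $\widetilde{\alpha^{-1}}$ already covers it.
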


\begin{proof}
Let $\alpha \in \Aut(\A)$.  Consider $\A$ as a subalgebra of $\cenv(\A)$.
The map $\alpha$ takes $\A$ completely isometrically and isomorphically onto itself,
and the image generates $\cenv(\A)$ as a C*-algebra.
By the basic property of C*-envelopes, $\alpha$ extends to a $*$-homomorphism
$\tilde\alpha$ of $\cenv(\A)$ onto itself.  The kernel of $\tilde\alpha$
is a boundary ideal because the map is completely isometric on $\A$,
and hence is $\{0\}$.
Thus $\tilde\alpha$ is an automorphism which fixes $\A$ as a set.
The converse is evident.

The restriction of $\alpha \in \Aut_\A(\cenv(\A))$ to $\A$ is injective
since $\A$ generates $\cenv(\A)$ as a C*-algebra.
Thus the restriction map is an isomorphism.
\end{proof}

\begin{eg}
It is not true that every $\alpha\in\End(\A)$ lifts to an endomorphism of $\cenv(\A)$.
Take $\A = \AD$ and let $\tau \in \AD$ be the composition of a conformal map of
$\bD$ onto the rectangle 
\[ \{x+iy: -1 < x < 0 \AND |y|\le 10 \} \]
followed by the exponential map.  
Thus $\tau$ maps $\bD$ onto the annulus 
\[ A :=\{z : e^{-1} < |z| < 1 \} .\]
It follows that 
\[ \alpha(f) = f\circ\tau \]
is a completely isometric endomorphism.
However, since $\tau$ maps parts of $\bT$ into the interior of $\bD$, this map does not
extend to an endomorphism of $\CT$.  

Observe that $\alpha$ lifts to an endomorphism of $\rCD$ by 
\[ \tilde\alpha(f) = f \circ \tau .\]
Unfortunately this map is not faithful, as its kernel is 
\[ \ker \tilde\alpha = I(A) = \{f \in\rCD : f|_A = 0 \} .\]

The remedy is a bit subtle.  Let 
\[ X = \bigcap_{n\ge0} \tau^n(\ol{\bD}) .\]
This is a connected compact set with two key properties: 
\[ \tau(X) = X \qand \bT \subset X .\]
The latter holds because $\tau(\bT)$ contains $\bT$.
Now consider $\AD$ as a subalgebra of the C*-algebra $\rC(X)$.
The embedding is isometric because $\bT \subset X$.
This is a C*-cover by the Stone-Weierstrass Theorem.  
Here $\alpha$ extends to 
\[ \bar\alpha(f) = f \circ \tau .\]
This is a faithful $*$-endomorphism because $\tau$ is surjective on $X$.
\end{eg}

\begin{eg}
Here is a different example.
Take
\[ \A = \AD \oplus (c\otimes \T) \]
where $c$ is the space of convergent sequences
and $\T = \ca(T_z)$ is the Toeplitz algebra generated by the shift $T_z$ on $H^2$.
It is easy to see that 
\[ \cenv(\A) = \CT  \oplus (c\otimes \T) .\]

Write an element of $\A$ as $f \oplus (T_n)_{n\ge 1}$, 
where $\lim_{n\to\infty} T_n =: T_0$ exists.
Fix $\lambda \in \ol{\bD}$ and consider the map
\[ \alpha( f \oplus (T_n)_{n\ge1} ) = f(\lambda)I \oplus (T_f,T_{n-1})_{n\ge2} .\]
This is evidently a completely isometric unital endomorphism.
However one can restrict $\alpha$ to a map $\beta$ which
takes $\AD$ to a subalgebra of $\T$ by $\beta(f) = T_f$.
The range of $\beta$ generates $\T$ as a C*-algebra, which is non-abelian.
Therefore there is no extension  of $\beta$ to a homomorphism of
$\CT$ into $\T$.  
Thus $\alpha$ does not lift to a $*$-endomorphism of $\cenv(\A)$.

If $|\lambda|=1$, we can embed $\A$ into 
\[ \fA = \T \oplus (c\otimes \T) \]
in the natural way and extend $\alpha$ to the endomorphism
\[ \tilde\alpha(T\oplus(T_n)_{n\ge1}) = qT(\lambda) I \oplus (T,T_{n-1})_{n\ge2} \]
where $q$ is the quotient map of $\T$ onto $\CT$.

However if $|\lambda|<1$, evaluation at $\lambda$ is not multiplicative on $\T$,
so $\alpha$ does not lift to an endomorphism of $\fA$.
We can instead let 
\[ \fB = \bC \oplus \T \oplus (c\otimes \T)   \]
and imbed $\A$ by
\[ j(f\oplus (T_n)_{n\ge1}) = f(\lambda) \oplus T_f \oplus (T_n)_{n\ge1} .\]
Clearly $j$ is completely isometric.
The C*-algebra generated by $j(\AD)$ is all of $\fB$ because
\[ j(1) - j(z)^*j(z) = (1-|\lambda|^2) \oplus 0 \]
shows that $\bC \oplus 0$ is contained in this algebra.

Observe that evaluation at $\lambda$ is now a character of $\fB$.  
Moreover
\begin{align*}
 j(\alpha(f\oplus (T_n)_{n\ge1})) &= j(f(\lambda)I \oplus (T_f,T_{n-1})_{n\ge2}) \\
 &= f(\lambda) \oplus f(\lambda)I \oplus (T_f,T_{n-1})_{n\ge2} .
\end{align*}
So we may extend $\alpha$ to $\tilde\alpha\in\End(\fB)$ by
\[  \tilde\alpha(w \oplus T\oplus(T_n)_{n\ge1}) = w \oplus wI \oplus (T,T_{n-1})_{n\ge2} .\]
\end{eg}

\smallskip
A modification of Peters' argument \cite[Prop.I.8]{Pet} shows that we can extend
completely isometric endomorphisms to automorphisms of a larger algebra.

\begin{prop}\label{P:inductive}
If $\A$ is a unital operator algebra and $\alpha\in\End(\A)$, then there is
a unital operator algebra $\B$ containing $\A$ as a unital subalgebra and
$\beta\in\Aut(\B)$ such that $\beta|_\A = \alpha$.  
Moreover, $\B$ is the closure of $\bigcup_{n\ge0} \beta^{-n}(\A)$.
\end{prop}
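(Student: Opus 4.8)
The plan is to realize $\B$ as the inductive limit of the constant directed system
\[ \A \xrightarrow{\ \alpha\ } \A \xrightarrow{\ \alpha\ } \A \xrightarrow{\ \alpha\ } \cdots .\]
Write $\A_n = \A$ for $n \ge 0$ with connecting maps $\alpha \colon \A_n \to \A_{n+1}$, and let $\phi_n \colon \A_n \to \B$ be the canonical maps, so that $\phi_{n+1}\circ\alpha = \phi_n$ and $\B = \overline{\bigcup_{n\ge 0}\phi_n(\A_n)}$. First I would check that $\B$ really is a unital operator algebra. Since $\alpha$ is completely isometric, transporting the matrix norms of $\A$ through any $\phi_n$ gives a well-defined matrix norm on the algebraic union $\bigcup_n\phi_n(\A_n)$, under which each $\phi_n$ is a completely isometric unital homomorphism and the $\phi_n(\A_n)$ form an increasing chain of subalgebras, each completely isometrically isomorphic to $\A$. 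That union is then an abstract operator algebra in the sense of \cite{BRS}, and so is its completion $\B$; by the Blecher--Ruan--Sinclair theorem we may regard $\B$ concretely, with $\A \cong \phi_0(\A_0)$ a unital subalgebra of $\B$.

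Next I would produce the automorphism. The maps $\psi_n := \phi_n\circ\alpha \colon \A_n \to \B$ satisfy $\psi_{n+1}\circ\alpha = \phi_{n+1}\circ\alpha\circ\alpha = \phi_n\circ\alpha = \psi_n$, so they are compatible with the system; by the universal property of the inductive limit they induce a unique homomorphism $\beta \colon \B \to \B$ with $\beta\circ\phi_n = \phi_n\circ\alpha$ for all $n$. On each $\phi_n(\A_n)$ the map $\beta$ agrees with $\phi_n\circ\alpha\circ\phi_n^{-1}$, a composite of complete isometries, and these agree on overlaps, so $\beta$ is completely isometric on the dense subalgebra $\bigcup_n\phi_n(\A_n)$ and hence on $\B$. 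Under the identification $\A = \phi_0(\A_0)$, the relation $\beta(\phi_0(a)) = \phi_0(\alpha(a))$ is exactly the assertion $\beta|_\A = \alpha$ (and automatically $\beta(1)=1$).

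Then I would show that $\beta$ is onto. The key computation is that, for $n \ge 1$,
\[ \beta\bigl(\phi_n(\A_n)\bigr) = \phi_n\bigl(\alpha(\A_n)\bigr) = \phi_{n-1}(\A_{n-1}) ,\]
using $\phi_n\circ\alpha = \phi_{n-1}$. Hence $\beta\bigl(\bigcup_{n\ge 1}\phi_n(\A_n)\bigr) = \bigcup_{n\ge 0}\phi_n(\A_n)$, which is dense in $\B$; since $\beta$ is isometric its range is closed, so $\beta$ is surjective and $\beta^{-1}$ is again a complete isometry. Thus $\beta \in \Aut(\B)$. Since $\beta$ is injective, the same computation gives $\beta^{-n}(\A) = \beta^{-n}(\phi_0(\A_0)) = \phi_n(\A_n)$; these increase with $n$ and their union is dense, so $\B = \overline{\bigcup_{n\ge 0}\beta^{-n}(\A)}$, as claimed.

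The only point that is more than routine diagram chasing is the first step: confirming that an inductive limit of operator algebras along completely isometric connecting maps is itself an operator algebra, so that it admits a concrete completely isometric representation. I expect to dispatch this via the abstract characterization of operator algebras \cite{BRS} (the definition used throughout this paper); alternatively one could build a concrete model by fixing a completely isometric representation of $\A$ and forming the corresponding direct limit of Hilbert spaces, in the spirit of Peters' argument \cite{Pet}, but the abstract route is shorter.
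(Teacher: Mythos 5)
Your proof is correct and follows essentially the same route as the paper: both realize $\B$ as the inductive limit of the constant system $\A \xrightarrow{\ \alpha\ } \A \xrightarrow{\ \alpha\ } \cdots$ and take $\beta$ to be the shift automorphism induced by $\alpha$, with surjectivity and the density of $\bigcup_{n\ge0}\beta^{-n}(\A)$ obtained exactly as you do. The only (cosmetic) difference is that the paper first passes to Peters' orbit representation to have a concrete completely isometric copy of $(\A,\alpha)$ before forming the limit, whereas you justify the limit abstractly via the Blecher--Ruan--Sinclair characterization — the alternative you yourself mention at the end.
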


\begin{proof}
First observe that the standard orbit representation makes sense for $\A$.
Let $\sigma$ be a completely isometric representation of $\A$ on a Hilbert space
$\H$ so that $\ca(\sigma(\A)) \simeq \cenv(\A)$.
Form the Hilbert space $\tilde\H = \H \otimes \ltwo$ and define 
\[
 \pi(a) = \sum_{n=0}^\infty\oplus \sigma(\alpha^n(a))
 \qand V = I \otimes S ,
\]
where $S$ is the unilateral shift.
Then it is evident that $(\pi,V)$ is an isometric covariant representation of $(\A,\alpha)$.
In particular, $\pi(\A)$ is completely isometric to $\A$.
Define the corresponding endomorphims $\tilde\alpha$ on $\pi(\A)$ by
\[
 \tilde\alpha(\pi(a)) 
 =\tilde\alpha \big( \sum_{n=0}^\infty\oplus \sigma(\alpha^n(a) \big)
 = \sum_{n=0}^\infty\oplus \sigma(\alpha^{n+1}(a)) 
 = \pi(\alpha(a)) 
\]
for $a \in \A$.

Form the injective system
\[
  \xymatrix{
  \pi(\A) \ar[r]^(.45){\tilde\alpha} \ar[d]^{\tilde\alpha}  &
  \pi(\A) \ar[r]^(.45){\tilde\alpha} \ar[d]^{\tilde\alpha} &
  \pi(\A) \ar[r]^(.45){\tilde\alpha} \ar[d]^{\tilde\alpha} &
  \dots \ar[r] &
  \B \ar[d]^{\beta}\\
  \pi(\A) \ar[r]^(.45){\tilde\alpha} &
  \pi(\A) \ar[r]^(.45){\tilde\alpha} &
  \pi(\A) \ar[r]^(.45){\tilde\alpha} &
  \dots  \ar[r] &
  \B 
  }
\]
Then $\B$ is a unital operator algebra containing $\A$ as a unital subalgebra
and $\beta$ is a completely isometric endomorphism.  
However it is evident that $\beta$ is surjective, so $\beta$ is an automorphism.

Finally, observe that $\B$ is the closure of $\bigcup_{n\ge0} \beta^{-n}(\A)$.
\end{proof}

Now we can use this to lift endomorphisms.

\begin{thm}\label{T:liftendo}
If $\A$ is a unital operator algebra and $\alpha\in\End(\A)$, then there is
a C*-cover $\fA$ of $\A$ and an endomorphisms $\tilde\alpha$ of $\fA$
such that $\tilde\alpha|_\A = \alpha$.
\end{thm}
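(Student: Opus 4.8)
The plan is to combine the two previous results, Proposition~\ref{P:inductive} and Proposition~\ref{P:liftauto}, with a restriction-of-C*-cover argument modelled on the preceding examples (especially the annulus example with the set $X = \bigcap_n \tau^n(\ol\bD)$). By Proposition~\ref{P:inductive}, given $\alpha \in \End(\A)$ there is a unital operator algebra $\B \supset \A$ and an automorphism $\beta \in \Aut(\B)$ with $\beta|_\A = \alpha$, and $\B = \ol{\bigcup_{n\ge0}\beta^{-n}(\A)}$. By Proposition~\ref{P:liftauto}, $\beta$ lifts to a $*$-automorphism $\tilde\beta$ of $\cenv(\B)$ which carries $\B$ (as a set) to itself. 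So $\cenv(\B)$ is a C*-cover of $\A$ (since $\A \subset \B \subset \cenv(\B)$ completely isometrically), and $\tilde\beta|_\A = \beta|_\A = \alpha$. The one thing that must be checked is that $\tilde\beta$ restricts to an endomorphism of the \emph{sub-C*-algebra} $\fA := \ca(\A) \subseteq \cenv(\B)$ generated by $\A$; we do not need $\tilde\beta$ to preserve $\fA$ setwise, only $\tilde\beta(\fA) \subseteq \fA$.

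\textbf{Key steps, in order.} First, invoke Proposition~\ref{P:inductive} to produce $(\B,\beta)$. Second, invoke Proposition~\ref{P:liftauto} to lift $\beta$ to $\tilde\beta \in \Aut_\B(\cenv(\B))$; note $\cenv(\B)$ is a C*-cover of $\A$ because the inclusion $\A \hookrightarrow \B \hookrightarrow \cenv(\B)$ is completely isometric and $\A$ need not generate $\cenv(\B)$ — that is fine, we simply set $\fA$ to be the C*-subalgebra generated by $\A$. Third, verify $\tilde\beta(\fA)\subseteq \fA$: since $\tilde\beta$ is a $*$-homomorphism and $\tilde\beta(\A) = \beta(\A) = \alpha(\A) \subseteq \A \subseteq \fA$, and $\fA$ is the smallest C*-algebra containing $\A$, continuity and multiplicativity of $\tilde\beta$ give $\tilde\beta(\fA) = \tilde\beta(\ca(\A)) \subseteq \ca(\tilde\beta(\A)) \subseteq \ca(\A) = \fA$. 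Fourth, set $\tilde\alpha := \tilde\beta|_\fA$; this is a ($*$-)endomorphism of $\fA$ with $\tilde\alpha|_\A = \beta|_\A = \alpha$. Finally, record that $\fA$ is genuinely a C*-cover of $\A$: the inclusion $\A \subseteq \fA$ is completely isometric (inherited from $\A \subseteq \cenv(\B)$) and $\A$ generates $\fA$ by construction.

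\textbf{Where the work really sits.} There is essentially no new hard estimate here — the substance has already been extracted into Propositions~\ref{P:inductive} and~\ref{P:liftauto}. The only genuine subtlety, and the step I would write most carefully, is the observation that one should \emph{not} demand $\tilde\alpha$ lift to an endomorphism preserving the chosen cover setwise (the examples show that can fail), and that taking $\fA = \ca(\A)$ inside $\cenv(\B)$ rather than $\cenv(\B)$ itself is exactly what makes the statement true in the stated generality — the endomorphism $\tilde\beta$ of the big algebra $\cenv(\B)$ automatically maps the generated subalgebra into itself because it maps $\A$ into $\A$. A secondary point worth a sentence is that $\fA$ may be a proper subalgebra of $\cenv(\B)$ and need not equal $\cenv(\A)$; the theorem only claims \emph{a} C*-cover, consistent with the philosophy of \cite{KK} that one first imbeds into a convenient C*-cover and only afterwards identifies the envelope.

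\begin{proof}
By Proposition~\ref{P:inductive}, there is a unital operator algebra $\B$ containing $\A$ as a unital subalgebra and an automorphism $\beta \in \Aut(\B)$ with $\beta|_\A = \alpha$.  By Proposition~\ref{P:liftauto}, $\beta$ lifts to a $*$-automorphism $\tilde\beta$ of $\cenv(\B)$ which fixes $\B$ as a set; in particular $\tilde\beta|_\A = \beta|_\A = \alpha$.

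View $\A$ as a unital subalgebra of $\cenv(\B)$; this inclusion is completely isometric.  Let $\fA = \ca(\A) \subseteq \cenv(\B)$ be the C*-subalgebra generated by $\A$.  Then $\fA$ is a C*-cover of $\A$: the inclusion $\A \subseteq \fA$ is completely isometric and $\A$ generates $\fA$.

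Since $\tilde\beta$ is a $*$-homomorphism and $\tilde\beta(\A) = \beta(\A) = \alpha(\A) \subseteq \A$, we have
\[ \tilde\beta(\fA) = \tilde\beta\big(\ca(\A)\big) \subseteq \ca\big(\tilde\beta(\A)\big) \subseteq \ca(\A) = \fA . \]
Hence $\tilde\alpha := \tilde\beta|_\fA$ is a unital $*$-endomorphism of $\fA$, and $\tilde\alpha|_\A = \tilde\beta|_\A = \alpha$.
\end{proof}
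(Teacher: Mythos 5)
Your proof is correct and follows essentially the same route as the paper: lift $\alpha$ to an automorphism $\beta$ of a larger algebra via Proposition~\ref{P:inductive}, lift $\beta$ to a $*$-automorphism $\tilde\beta$ of $\cenv(\B)$ via Proposition~\ref{P:liftauto}, and then restrict to the C*-subalgebra $\fA$ generated by $\A$ inside $\cenv(\B)$, which is $\tilde\beta$-invariant because $\A$ is. The only cosmetic difference is that the paper phrases the invariance of $\fA$ in terms of the generators $\A$ and $\A^*$, whereas you phrase it as $\tilde\beta(\ca(\A))\subseteq\ca(\tilde\beta(\A))\subseteq\ca(\A)$; these are the same observation.
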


\begin{proof}
Use Proposition~\ref{P:inductive} to lift $\alpha$ to an automorphism $\beta$
of a larger algebra $\B$.  Then apply Proposition~\ref{P:liftauto} to lift
it again to an automorphism $\tilde\beta$ of the C*-algebra $\fB = \cenv(\B)$.
Let $\fA$ be the C*-subalgebra of $\fB$ generated by $\A$.
Since $\tilde\beta|_\A = \alpha$, we see that $\A$ is invariant under $\tilde\beta$.
Hence so is $\A^*$.  Since $\fA$ is generated by $\A$ and $\A^*$, it is also
invariant under $\tilde\beta$.  So $\tilde\alpha = \tilde\beta|_\fA$ is the
desired $*$-endomorphism.
\end{proof}

While not all endomorphisms of $\A$ lift to $\cenv(\A)$, those that do lift behave 
well when lifted to larger algebras.  
This simplifies the hypotheses in some of the results in \cite{KK} as
explained in the next section.

\begin{prop} \label{P:fix Shilov ideal}
Let $\A$ be a unital operator algebra and let $\fA$ be a C*-cover.
Suppose that $\alpha \in \End_\A(\cenv(\A))$ and that $\beta \in \End_\A(\fA)$
such that $\beta|_\A = \alpha|_\A$.  
Then $\alpha q = q \beta$, and hence  $\beta$ fixes the Shilov ideal $\fJ_\A$, 
where $q$ is the canonical quotient map of $\fA$ onto $\cenv(\A)$.
\end{prop}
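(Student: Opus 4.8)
The statement to establish is: if $\alpha \in \End_\A(\cenv(\A))$ and $\beta \in \End_\A(\fA)$ agree on $\A$, then $\alpha q = q\beta$, and consequently $\beta(\fJ_\A) \subseteq \fJ_\A$, where $q : \fA \to \cenv(\A)$ is the canonical quotient map and $\fJ_\A = \ker q$ is the Shilov ideal.

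\textbf{Main step: the intertwining relation $\alpha q = q\beta$.} The plan is to exploit that $q$ is a $*$-homomorphism that restricts to the identity-like embedding on $\A$ (more precisely, $q|_\A$ is the canonical completely isometric identification of $\A \subseteq \fA$ with $\A \subseteq \cenv(\A)$), combined with the fact that $\A$ generates $\fA$ as a C*-algebra. Consider the two $*$-homomorphisms $\alpha q$ and $q\beta$ from $\fA$ to $\cenv(\A)$. On a generator $a \in \A$ we compute
\[
 (\alpha q)(a) = \alpha(q(a)) = \alpha(a) \qand (q\beta)(a) = q(\beta(a)) = q(\alpha(a)) = \alpha(a),
\]
where the third equality uses $\beta|_\A = \alpha|_\A$ (viewing $\alpha(a) \in \A \subseteq \fA$) and the last uses that $q$ fixes $\A$. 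Hence $\alpha q$ and $q\beta$ agree on $\A$. Taking adjoints, they agree on $\A^*$ as well (both maps are $*$-homomorphisms). Since $\A$ and $\A^*$ together generate $\fA$ as a C*-algebra, and both $\alpha q$ and $q\beta$ are continuous $*$-homomorphisms, they agree on all of $\fA$. This gives $\alpha q = q\beta$.

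\textbf{Consequence: $\beta$ fixes $\fJ_\A$.} Given the intertwining relation, if $x \in \fJ_\A = \ker q$, then $q(\beta(x)) = \alpha(q(x)) = \alpha(0) = 0$, so $\beta(x) \in \ker q = \fJ_\A$. Thus $\beta(\fJ_\A) \subseteq \fJ_\A$, i.e., $\beta$ fixes the Shilov ideal.

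\textbf{Expected obstacle.} There is essentially no deep obstacle here; the only point requiring a little care is the precise sense in which $q$ "fixes $\A$." One should be slightly careful to record that by the defining universal property of the C*-envelope (as recalled in the excerpt's discussion of $\cenv(\A)$), the canonical quotient $q : \fA \to \cenv(\A)$ is the unique $*$-homomorphism making the relevant triangle commute, so that $q$ restricted to the copy of $\A$ inside $\fA$ is exactly the canonical embedding of $\A$ into $\cenv(\A)$; with that identification in place, the computation above is immediate. The rest is a standard "agree on generators, hence agree everywhere" argument for C*-homomorphisms.
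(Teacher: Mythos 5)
Your proof is correct and follows essentially the same route as the paper: both arguments observe that $\alpha q$ and $q\beta$ are $*$-homomorphisms agreeing on the generating set $\A$, hence are equal, and then deduce the invariance of $\fJ_\A = \ker q$. The only cosmetic difference is that the paper additionally uses faithfulness of $\alpha$ to get the slightly stronger identity $\beta^{-1}(\fJ_\A) = \fJ_\A$, whereas your direct kernel-chase already yields the stated conclusion $\beta(\fJ_\A) \subseteq \fJ_\A$.
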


\begin{proof}
Observe that $\alpha q$ and $q \beta$ are $*$-homomorphisms 
of $\fA$ into $\cenv(\A)$ which agree on the generating set $\A$.  
Thus they are equal.
Hence 
\[ \fJ_\A = \ker \alpha q = \ker q \beta .  \]
It follows that
\[ \fJ_\A = \{ a \in \fA : \beta(a) \in \fJ_\A \} = \beta^{-1}(\fJ_\A) .\]
In particular, 
\[ \beta(\fJ_\A) \subset \fJ_\A .\qedhere \]
\end{proof}

Extremal and fully extremal coextensions behave well under automorphisms,
but not for endomorphisms.

\begin{prop} \label{P:extremal of auto}
Let $\rho$ be a representation of a unital operator algebra $\A$ 
with $($fully$)$ extremal coextension $\sigma$.
If $\alpha \in \Aut(\A)$, then $\sigma\circ\alpha$ is a
$($fully$)$ extremal coextension of $\rho\circ\alpha$.
\end{prop}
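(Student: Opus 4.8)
The plan is to exploit the fact that automorphisms are invertible, so that pre-composition with $\alpha$ sets up a bijection on the class of representations of $\A$ which respects the relations ``is a coextension of'', ``dilates'', and ``is a direct summand of''. The key observation is purely formal: if $\tau$ is a representation on a Hilbert space $\K$ and $\H$ is a coinvariant (resp.\ invariant, resp.\ arbitrary) subspace for $\tau(\A)$, then since $\alpha$ is an automorphism of $\A$ we have $\tau\circ\alpha(\A) = \tau(\A)$ as sets of operators, so $\H$ is coinvariant (resp.\ invariant, arbitrary) for $(\tau\circ\alpha)(\A)$ as well, and the compression $P_\H(\tau\circ\alpha)(a)|_\H = (P_\H\tau(\alpha(a))|_\H)$ is just $(\rho\circ\alpha)(a)$ when $P_\H\tau(\cdot)|_\H = \rho$. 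Thus $\tau \succ_c \rho$ if and only if $\tau\circ\alpha \succ_c \rho\circ\alpha$, and similarly for $\prec$, $\prec_e$, and for the property of being a direct sum decomposition.

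First I would record these equivalences explicitly: the map $\tau \mapsto \tau\circ\alpha$ is a bijection from representations of $\A$ to representations of $\A$ (with inverse $\tau \mapsto \tau\circ\alpha^{-1}$), it preserves the underlying Hilbert space and all subspace-inclusion data, and it carries a direct sum decomposition $\tau = \tau_1\oplus\tau_2$ to $\tau\circ\alpha = (\tau_1\circ\alpha)\oplus(\tau_2\circ\alpha)$. In particular $\sigma\circ\alpha$ is a coextension of $\rho\circ\alpha$ because $\sigma$ is a coextension of $\rho$. Then, to check that $\sigma\circ\alpha$ is extremal, I would take an arbitrary coextension $\tau$ of $\sigma\circ\alpha$; then $\tau\circ\alpha^{-1}$ is a coextension of $\sigma$, so by extremality of $\sigma$ it splits as $\sigma \oplus \sigma'$, and applying $\cdot\circ\alpha$ back gives $\tau = \sigma\circ\alpha \oplus \sigma'\circ\alpha$, as required. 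For the fully extremal case, I would take $\tau$ with $\sigma\circ\alpha \prec \tau$ and $\rho\circ\alpha \prec_c \tau$; then $\tau\circ\alpha^{-1}$ satisfies $\sigma \prec \tau\circ\alpha^{-1}$ and $\rho \prec_c \tau\circ\alpha^{-1}$, so by full extremality of $\sigma$ it is of the form $\sigma\oplus\tau'$, and composing with $\alpha$ again yields $\tau = \sigma\circ\alpha \oplus \tau'\circ\alpha$.

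There is essentially no obstacle here — the only thing to be slightly careful about is that ``$\prec$'' and the other orders are defined up to unitary equivalence, but since $\alpha$ is fixed, a unitary implementing $\rho \cong P_\H\tau|_\H$ also implements $\rho\circ\alpha \cong P_\H(\tau\circ\alpha)|_\H$, so nothing is lost. The proof is genuinely just a matter of unwinding the definitions and noting that $\alpha(\A)=\A$ makes $\tau(\A) = (\tau\circ\alpha)(\A)$, which is precisely the point where invertibility of $\alpha$ is needed (and why, as the paper notes, the statement fails for general endomorphisms: if $\alpha$ is not surjective, $(\tau\circ\alpha)(\A) = \tau(\alpha(\A))$ can be a proper subalgebra of $\tau(\A)$, so coinvariance for the smaller algebra no longer controls coinvariance for $\tau(\A)$). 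I would write the proof for the ``fully extremal'' version and remark that the ``extremal'' version is the same argument with $\prec_c$ in place of the pair $(\prec,\prec_c)$, and that the analogous statements for extensions follow identically using $\prec_e$.
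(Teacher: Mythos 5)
Your proof is correct and follows essentially the same route as the paper: both arguments rest on the observation that $\tau\mapsto\tau\circ\alpha^{-1}$ carries coextensions of $\sigma\circ\alpha$ (resp.\ dilations/coextensions of the pair) to coextensions of $\sigma$, apply (full) extremality of $\sigma$, and compose back with $\alpha$ to transport the direct-sum splitting. Your additional remarks on why invertibility of $\alpha$ is essential are accurate and consistent with the paper's Example~\ref{Eg:extremal of endo}.
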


\begin{proof}
Let $\rho$ act on $\H$ and $\sigma$ act on $\K \supset\H$.
Suppose first that $\sigma$ is extremal.
Since $\H$ is coinvariant for $\sigma(\A)$, it is also coinvariant for
$\sigma(\alpha(\A))$.  Thus  $\sigma\circ\alpha$ is a
coextension of $\rho\circ\alpha$.
Suppose that $\tau$ is a coextension of $\sigma\circ\alpha$.
Then $\tau\circ\alpha^{-1}$ is a coextension of $\sigma$.
Hence it splits as 
\[ \tau\circ\alpha^{-1} = \sigma \oplus \phi .\]
Thus 
\[ \tau = \sigma\circ\alpha \oplus \phi\circ\alpha .\]
So $\sigma\circ\alpha$ is extremal.

A similar proof works for fully extremal coextensions of $\rho\circ\alpha$.
\end{proof}

\begin{eg} \label{Eg:extremal of endo}
Let 
\[ \A = \AD \oplus (c \otimes \rCD) \]
with elements $(f,(g_n)_{n\ge1})$
where $f\in\AD$, $g_n\in\rCD$ for $n\ge1$ and 
$\lim_{n\to\infty}g_n =: g_0$ exists.
Define 
\[ \alpha(f,(g_n)_{n\ge1}) = (f(0), (f,g_{n-1})_{n\ge2}) .\]
This is a completely isometric endomorphism.

A representation of $\A$ has the form
\[ \rho(f,(g_n)_{n\ge1}) = f(T) \oplus \sum_{n\ge1} \!\oplus\, \rho_n(g_n) \]
where $T$ is a contraction and $\rho_n$ are $*$-representations of $\rCD$.
It is straightforward to show that a representation of $\A$ is an extremal
coextension if and only if $T$ is an isometry.

Taking $T$ to be an isometry and $\rho_n$ all vacuous (the zero representation
on zero dimensional space), we have an extremal coextension $\rho$
such that 
\[ \rho\circ\alpha(f,(g_n)_{n\ge1}) = f(0) I .  \]
Since $0$ is not an isometry, this is not extremal.
\end{eg}

\section{A Review of Semicrossed Products} \label{S:scp}

Nonself-adjoint crossed products were introduced by Arveson \cite{Arv_meas, ArvJ} as
certain concretely represented operator algebras that encoded the action of a subsemigroup
of a group acting on a measure space, and later a topological space. 
McAsey, Muhly and Sato \cite{MMS, MM}  further analyzed such algebras, again relying on a concrete
representation to define the algebra. 
In \cite{Pet}, Peters gave a more abstract and universal definition of the \textit{semicrossed product}
of a C*-algebra by a single endomorphism. Actually he provides a concrete definition, but
then proves that it has the universal property which has become the de facto definition
of a semicrossed product.

One can readily extend Peter's definition  of the semicrossed product of a C*-algebra 
by a $*$-endomorphism to unital operator algebras and unital completely
isometric endomorphisms.  

\begin{defn}
Let $\A$ be a unital operator algebra and $\alpha \in \End(\A)$.
A \textit{covariant representation} of $(\A,\alpha)$ is a pair $(\rho,T)$ consisting
of a completely contractive representation $\rho : \A \to \B(\H)$ 
and a contraction $T\in \B(\H)$ such that
\[  \rho(a) T =  T \rho(\alpha(a)) \qforal a \in \A .  \]
The \textit{semicrossed product} $\A \times_\alpha \bZ_+$
is the universal operator algebra for covariant representations.
This is the closure of the algebra $\P(\A,\ft)$ of formal polynomials 
of the form $p=\sum_{i=0}^n \ft^i a_i$, where $a_i\in\A$, 
with multiplication determined by 
\[ a\ft = \ft \alpha(a) \]
and the norm
\[ \| p \| = \sup_{(\rho,T) \text{ covariant}} \big\| \sum_{i=0}^n T^i \rho(a_i) \big\| .\]
\end{defn}

This supremum is clearly dominated by $\sum_{i=0}^n \|a_i\|$; so this norm
is well defined.  Since this is the supremum of operator algebra norms, it is also
easily seen to be an operator algebra norm. 
By construction, for each covariant representation $(\rho,T)$, 
there is a unique completely contractive representation 
$\rho \times T$ of $\A \times_\alpha \bZ_+$ into $\B(\H)$ given by
\[ \rho \times T(p) = \sum_{i=0}^nT^i \rho(a_i) .\]
This is the defining property of the semicrossed product.

When $\fA$ is a C*-algebra, the completely isometric endomorphisms are the
faithful $*$-endomorphisms. 
Peters \cite{Pet} shows that when $\alpha$ is a faithful $*$-endomorphism of $\fA$,
the norm of $\fA \times_\alpha \bZ_+$ can be computed by using orbit representations.  
Let $\sigma$ be a faithful $*$-representation of $\fA$ on $\H$.  
Form the $*$-representation $\pi$ on $\H \otimes \ltwo$ by
\[ \pi(a) = \sum_{n=0}^\infty \oplus\, \sigma(\alpha^n(a))\]
and let $V = I \otimes S$,
where $S$ is the unilateral shift on $\ltwo$. 
It is evident that $(\pi,V)$ is a covariant representation of $(\fA,\alpha)$.
The corresponding representation $\pi\times V$ of $\fA \times_\alpha \bZ_+$ 
is known as the \textit{orbit representation} of $\sigma$.

\begin{thm}[Peters]
If $\alpha$ is a faithful $*$-endomorphism of a C*-algebra $\fA$, and
$\sigma$ is a faithful $*$-representation of $\fA$, then the 
orbit representation $\sigma\times V$ provides a completely isometric representation of 
\mbox{$\fA \times_\alpha \bZ_+$.} 
\end{thm}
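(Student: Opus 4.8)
The plan is to establish the two inequalities separately. Since the orbit pair $(\pi,V)$ — with $\pi(a)=\bigoplus_{n\ge0}\sigma(\alpha^n(a))$ on $\H\otimes\ell^2$ and $V=I\otimes S$ — is a covariant representation of $(\fA,\alpha)$, the universal property of $\fA\times_\alpha\bZ_+$ gives $\|(\pi\times V)(p)\|\le\|p\|$ for every polynomial $p=\sum_{i=0}^n\ft^i a_i$ with no work, so the content is the reverse bound: for every covariant representation $(\rho,T)$ one must show $\|(\rho\times T)(p)\|\le\|(\pi\times V)(p)\|$ and then take the supremum over $(\rho,T)$. Complete isometry will come for free at the end: run the scalar argument over the C*-algebra $\fM_k(\fA)$ with the entrywise endomorphism $\alpha^{(k)}$ and the faithful $*$-representation $\sigma^{(k)}$, using $\fM_k(\fA\times_\alpha\bZ_+)\cong\fM_k(\fA)\times_{\alpha^{(k)}}\bZ_+$, under which the orbit representation of $\sigma^{(k)}$ is $(\pi\times V)^{(k)}$.

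First I would reduce to the case that $T$ is an isometry. As $\fA$ is a C*-algebra and $\rho$ is completely contractive, $\rho$ is a $*$-representation; taking adjoints in $\rho(a)T=T\rho(\alpha(a))$ gives $T^*\rho(a)=\rho(\alpha(a))T^*$, so $T^*T$, and hence $D=(I-T^*T)^{1/2}$, commutes with $\rho(\alpha(\fA))$. On $\K_1=\bigoplus_{n\ge0}\K$ set $\rho_1(a)=\bigoplus_{n\ge0}\rho(\alpha^n(a))$ and $W(h_0,h_1,h_2,\dots)=(Th_0;\,Dh_0,h_1,h_2,\dots)$. A direct computation shows $W$ is an isometry, $(\rho_1,W)$ is covariant — the commutation of $D$ with $\rho(\alpha(\fA))$ being exactly what makes the defect coordinate intertwine — and $\K$, embedded as the first summand, is reducing for $\rho_1(\fA)$, coinvariant for $W$, with $P_\K\rho_1(\cdot)|_\K=\rho$ and $P_\K W^i|_\K=T^i$. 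Since compression to a jointly semi-invariant subspace is multiplicative, $(\rho\times T)(p)=P_\K(\rho_1\times W)(p)|_\K$, whence $\|(\rho\times T)(p)\|\le\|(\rho_1\times W)(p)\|$. So we may assume $T=V$ is an isometry and $\rho$ a $*$-representation.

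Next I would Wold-decompose $V=V_s\oplus V_u$ on $\K=\K_s\oplus\K_u$ with $\K_u=\bigcap_n V^n\K$: the relation $\rho(a)V=V\rho(\alpha(a))$ forces $\K_u$ (hence $\K_s$) to reduce $\rho$, so $\rho=\rho_s\oplus\rho_u$ with both pieces covariant and $\|(\rho\times V)(p)\|=\max\{\|(\rho_s\times V_s)(p)\|,\|(\rho_u\times V_u)(p)\|\}$. The shift part is literally an orbit representation: with $\K_s=\L\otimes\ell^2(\bZ_+)$ and $V_s=I\otimes S$, the wandering subspace $\L\otimes e_0$ reduces $\rho_s$ and $(\rho_s,V_s)\cong(\bigoplus_{n\ge0}\rho_0\circ\alpha^n,\,I\otimes S)$ where $\rho_0:=\rho_s|_{\L\otimes e_0}$. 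For the unitary part, conjugating the orbit representation of $\rho_u$ by the unitary $\Phi\big((\xi_n)_n\big)=\big(V_u^{\,n}\xi_n\big)_n$ on $\H_{\rho_u}\otimes\ell^2(\bZ_+)$ turns it into $(\rho_u\otimes I,\,V_u\otimes S)$, so $(\pi_{\rho_u}\times V)(p)$ is the analytic Toeplitz operator with operator-valued symbol $z\mapsto\sum_i z^i V_u^{\,i}\rho_u(a_i)$; its norm equals the $H^\infty$-norm of the symbol, which is at least its value at $z=1$, i.e.\ $\|(\rho_u\times V_u)(p)\|$. Hence in all cases $\|(\rho\times T)(p)\|\le\|(\pi_\tau\times V)(p)\|$ for the orbit representation $\pi_\tau$ of the $*$-representation $\tau:=\rho_0\oplus\rho_u$ of $\fA$.

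The remaining step — the one I expect to be the real obstacle — is to pass from the orbit representation of this auxiliary $\tau$ to the prescribed faithful $\sigma$, namely to prove that the orbit representation of a faithful $*$-representation is maximal among orbit representations: $\|(\pi_\tau\times V)(p)\|\le\|(\pi\times V)(p)\|$ (automatic here, $\tau$ being weakly contained in the faithful $\sigma$). The clean route is to introduce the C*-algebra $\fT$ universal for isometric covariant representations of $(\fA,\alpha)$ — generated by a copy of $\fA$ and an isometry $v$ with $av=v\alpha(a)$ — observe that $\fT$ carries a gauge action $v\mapsto zv$, $a\mapsto a$ implemented on each orbit representation by the diagonal unitaries of $\ell^2(\bZ_+)$, and prove the gauge-invariant uniqueness theorem that the canonical surjection $\fT\to\ca\big(\pi(\fA),V\big)$ is an isomorphism — injectivity being checked on the gauge-fixed core via the conditional expectation coming from the gauge action, with the faithfulness of $\sigma$ and the injectivity of $\alpha$ entering precisely there. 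Composing its inverse with $\fT\to\ca\big(\pi_\tau(\fA),V\big)$ yields a $*$-homomorphism sending $(\pi\times V)(p)$ to $(\pi_\tau\times V)(p)$, hence norm non-increasing. A more hands-on alternative, closer to Peters' original argument, replaces this by a direct Poisson or Ces\`aro averaging, again using that $\sigma$ and $\alpha$ are isometric. Either way, combining the three reductions with this comparison gives $\|(\rho\times T)(p)\|\le\|(\pi\times V)(p)\|$ for all covariant $(\rho,T)$, so $\|p\|=\|(\pi\times V)(p)\|$, and the matricial version upgrades this to complete isometry; the Sch\"affer dilation of the first step, while requiring care, is conceptually routine once one notes that the defect commutes with $\rho(\alpha(\fA))$.
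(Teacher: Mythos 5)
The paper does not prove this theorem; it is quoted from Peters \cite{Pet} as background, so there is no in-paper argument to measure yours against, and I evaluate the proposal on its own terms. Your first three reductions are correct and essentially complete: the Sch\"affer-type coextension $W$ works because, as you note, $D=(I-T^*T)^{1/2}$ commutes with $\rho(\alpha(\fA))$ (a completely contractive unital representation of a C*-algebra being a $*$-representation); the Wold decomposition does split $\rho$, since $\rho(a)V^n\K\subseteq V^n\K$ makes $\bigcap_n V^n\K$ invariant, hence reducing, for the $*$-representation $\rho$; the shift part is literally an orbit representation and the unitary part is dominated by one via the Toeplitz-symbol evaluation at $z=1$. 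So you have correctly reduced everything to the claim that the orbit representation of an arbitrary $*$-representation $\tau$ of $\fA$ is norm-dominated by the orbit representation of the faithful $\sigma$.

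That last step is where the genuine gap sits. The parenthetical ``automatic here, $\tau$ being weakly contained in the faithful $\sigma$'' is not an argument: weak containment only gives $\|\tau(a)\|\le\|\sigma(a)\|$ for $a\in\fA$ and does not by itself transfer to the operators $\sum_i V^i\pi_\tau(a_i)$; and the gauge-invariant uniqueness theorem you then invoke as the ``clean route'' is a substantial result that you only gesture at rather than prove. The step has an elementary proof you should substitute. With respect to $\H_\tau\otimes\ell^2=\bigoplus_{n\ge0}\H_\tau$, the operator $(\pi_\tau\times V)(p)$ is the block lower-triangular band matrix whose $(m,n)$ entry is $\tau\big(\alpha^n(a_{m-n})\big)$ for $0\le m-n\le \deg p$ and $0$ otherwise. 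Because of the band structure, $\|(\pi_\tau\times V)(p)\|=\sup_N\|P_{N+\deg p}\,(\pi_\tau\times V)(p)\,P_N\|$, and each such finite corner is the image under the amplification $\tau^{(k)}$ of a fixed matrix $\big[\alpha^n(a_{m-n})\big]$ over $\fA$ that does not depend on the representation. Since $\tau^{(k)}$ is completely contractive while $\sigma^{(k)}$ is isometric ($\sigma$ being a faithful $*$-representation), each corner for $\tau$ has norm at most the corresponding corner for $\sigma$, giving $\|(\pi_\tau\times V)(p)\|\le\|(\pi\times V)(p)\|$. Applied to matrices of polynomials, the same computation yields complete isometry directly, making the $\fM_k(\fA)\times_{\alpha^{(k)}}\bZ_+$ bootstrapping unnecessary. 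With this replacement the proof closes.
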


Moreover, Peters \cite[Prop.I.8]{Pet} establishes Proposition~\ref{P:fix Shilov ideal}
in the case where $\A$ is a C*-algebra.  
It follows \cite[Prop.II.4]{Pet} that $\fA \times_\alpha \bZ_+$ is 
completely isometrically isomorphic to the subalgebra of the crossed product algebra
$\fB \times_\beta \bZ$ generated as a non-self-adjoint algebra by $j(\fA)$ and the
unitary $\fu$ implementing $\beta$ in the crossed product.  Kakariadis and the
second author \cite[Thm.2.5]{KK} show that this crossed product is the C*-envelope:

\begin{thm}[Kakariadis-Katsoulis] \label{T:KK_C*}
Let $\alpha$ be a faithful $*$-endo\-mor\-phism of a C*-algebra $\fA$
and let $(\fB,\beta)$ be the lifting of $\alpha$ to an automorphism described above.
Then
\[ \cenv(\fA \times_\alpha \bZ_+) = \fB \times_\beta \bZ .\]
\end{thm}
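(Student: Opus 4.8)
**Plan for the proof of Theorem~\ref{T:KK_C*} (Kakariadis--Katsoulis).**

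The plan is to identify $\cenv(\fA \times_\alpha \bZ_+)$ by exhibiting $\fB \times_\beta \bZ$ as a C*-cover of $\fA \times_\alpha \bZ_+$ with no nontrivial boundary ideal. First I would recall from the discussion just above that, by Peters' orbit-representation theorem and the C*-algebra case of Proposition~\ref{P:fix Shilov ideal} (i.e.\ \cite[Prop.I.8, Prop.II.4]{Pet}), the semicrossed product $\fA \times_\alpha \bZ_+$ sits completely isometrically inside $\fB \times_\beta \bZ$ as the nonself-adjoint subalgebra generated by $j(\fA)$ together with the canonical unitary $\fu \in \fB \times_\beta \bZ$ implementing $\beta$. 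Here $(\fB,\beta)$ is the Peters dilation of $(\fA,\alpha)$ from Proposition~\ref{P:inductive}, so $\fB = \ol{\bigcup_{n\ge0} \beta^{-n}(\fA)}$. The first thing to check is that this copy of $\fA \times_\alpha \bZ_+$ actually generates $\fB \times_\beta \bZ$ as a C*-algebra: since $j(\fA)$ and $\fu$ generate it as a nonself-adjoint algebra, it suffices to note $\fu^* = \fu^{-1}$ is a limit of polynomials in $\fu$ (it is not, directly --- rather one uses that $\beta^{-n}(a) = \fu^{*n} j(a) \fu^n$ lies in the generated C*-algebra once $\fu$ and $\fu^*$ are there, and $\fu^* \in \ca(j(\fA),\fu)$ because $\fu = \fu\, 1 = \fu\, j(1_\fA)$ has $\fu^* = $ the adjoint of a generator). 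So $\fB \times_\beta \bZ = \ca(j(\fA \times_\alpha \bZ_+))$, making it a C*-cover.

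Next, the heart of the argument: show the Shilov boundary ideal of $\fA \times_\alpha \bZ_+$ inside $\fB \times_\beta \bZ$ is zero, equivalently, that any completely contractive (hence, on a C*-algebra, completely positive) map $\cenv(\fA \times_\alpha \bZ_+) \to \fB \times_\beta \bZ$ fixing $\fA \times_\alpha \bZ_+$ is faithful; or dually that $\fB \times_\beta \bZ$ has no nonzero ideal $\I$ with $\I \cap (\fA \times_\alpha \bZ_+) = \{0\}$ on which the quotient restricts completely isometrically to the semicrossed product. The standard approach is to use the gauge action: $\bT$ acts on $\fB \times_\beta \bZ$ by $\gamma_z(\fu) = z\fu$, $\gamma_z|_\fB = \id$, and this action is faithful in the sense that the fixed-point algebra is $\fB$ and the Fourier coefficients $E_k(x) = \int_\bT z^{-k}\gamma_z(x)\,dz$ detect everything. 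Any gauge-invariant ideal $\I$ satisfies $\I = \ol{\spn}\{ \I \cap \fB \cdot \fu^k \}$, and $\I \cap \fB$ is a $\beta$-invariant ideal of $\fB$; one shows $\I \cap \fB = \{0\}$ because $\fB = \ol{\bigcup \beta^{-n}(\fA)}$ and $\I \cap \fA \subseteq \I \cap (\fA \times_\alpha \bZ_+) = \{0\}$, then a $\beta$-invariance argument propagates this to each $\beta^{-n}(\fA)$. To reduce a general (not a priori gauge-invariant) boundary ideal to a gauge-invariant one, I would invoke that the gauge action fixes $j(\fA)$ pointwise and scales $\fu$, so it fixes $\fA \times_\alpha \bZ_+$ as a set, hence carries any boundary ideal to a boundary ideal; averaging over $\bT$ then produces a gauge-invariant ideal still trivial on the semicrossed product, and by the above it is $\{0\}$.

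The main obstacle, I expect, is the last reduction together with verifying that the quotient map $q : \fB \times_\beta \bZ \to (\fB \times_\beta \bZ)/\I$ restricts to a \emph{complete isometry} on $\fA \times_\alpha \bZ_+$ --- not merely that $\I$ misses the algebra set-theoretically. For this one uses the universal property of the semicrossed product: the composition of $q$ with the embedding $j$ gives a covariant representation of $(\fA,\alpha)$ (the image of $\fu$ being an isometry, as a unitary stays unitary), and one must show the induced representation of $\fA \times_\alpha \bZ_+$ is completely isometric. Here is where Peters' orbit-representation computation of the norm is essential: a faithful covariant representation built from a faithful $*$-representation of $\fA$ and a shift computes the norm of $\fA \times_\alpha \bZ_+$, and one shows $q \circ j$ restricted to $\fA$ remains faithful because $\I \cap \fA = \{0\}$, while the isometry $q(\fu)$ can be dilated to a bilateral shift, recovering the orbit picture up to a dilation that does not decrease norms. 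Assembling these --- C*-cover, reduction to gauge-invariant boundary ideal via averaging, triviality of gauge-invariant ideals trivial on $\fA$, and the orbit-representation norm computation to upgrade ``misses the algebra'' to ``complete isometry on the quotient'' --- yields that the Shilov ideal is $\{0\}$, hence $\cenv(\fA \times_\alpha \bZ_+) = \fB \times_\beta \bZ$.
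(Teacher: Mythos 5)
The paper does not prove this theorem; it is quoted from Kakariadis--Katsoulis \cite[Thm.~2.5]{KK}, so there is no in-paper argument to compare against. Your outline is nevertheless essentially a correct (and standard) proof. The three load-bearing points are all right: $\fB\times_\beta\bZ$ is a C*-cover because $j(\beta^{-n}(a))=\fu^{*n}j(a)\fu^n$ puts every $j(\beta^{-n}(\fA))$, hence $j(\fB)$, inside $\ca(j(\fA),\fu)$ (your aside about $\fu^*$ being a limit of polynomials in $\fu$ is a red herring --- a C*-algebra generated by a set automatically contains adjoints); a gauge-invariant ideal $\I$ with $\I\cap j(\fB)=0$ is killed by the faithful conditional expectation $E$ onto $\fB$ (here $\bZ$ is amenable, so full $=$ reduced); and $\I\cap\fB=0$ follows from $\I\cap j(\fA)=0$ via two-sided $\beta$-invariance of $\I\cap\fB$ together with the fact that a closed ideal $J$ of the inductive limit $\fB=\ol{\bigcup_n\beta^{-n}(\fA)}$ satisfies $J=\ol{\bigcup_n(J\cap\beta^{-n}(\fA))}$ --- you should make that last inductive-limit lemma explicit. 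Two phrasing corrections. First, one does not ``average'' an ideal over $\bT$; the clean statement is that the gauge automorphisms restrict to completely isometric automorphisms of $j(\fA\times_\alpha\bZ_+)$, hence permute the boundary ideals, so the Shilov ideal (the \emph{largest} boundary ideal) satisfies $\gamma_z(\J)\subseteq\J$ for all $z$ and is therefore gauge-invariant. Second, your closing paragraph about upgrading ``misses the algebra'' to ``complete isometry on the quotient'' is unnecessary and inverts the logic: you are not trying to show that an ideal missing the algebra is a boundary ideal, but only that the Shilov ideal --- already known to be a boundary ideal, hence to meet $j(\fA)$ trivially --- is zero, and the gauge-invariance plus expectation argument already delivers that. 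With those adjustments the proof is complete.
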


Since dilation theorems fail in many classical cases, such as commuting triples
of contractions \cite{Var,Par}, one can circumvent the issue by considering only isometric
covariant relations.  This semicrossed product was introduced by 
Kakariadis and the second author \cite{KK}.  
The results there have immediate application for us.

\begin{defn}
Let $\A$ be an operator algebra and let $\alpha \in \End(\A)$.
A covariant representation $(\rho,T)$ of $(\A,\alpha)$ is called
\textit{isometric} if $\rho$ is a complete isometry and $T$ is an isometry.
The \textit{isometric semicrossed product} $\A \times^{is}_\alpha \bZ_+$
is the universal operator algebra for isometric covariant representations.
This is the closure of the algebra $\P(\A,\ft)$ of formal polynomials 
of the form $p=\sum_{i=0}^n \ft^i a_i$, where $a_i\in\A$, under the norm
\[
 \| p \| = \sup_{\substack{(\rho,T) \text{ isometric}\\[-.5ex] \phantom{(\rho,T) }\text{ covariant}}}
 \big\| \sum_{i=0}^n T^i \rho(a_i) \big\| .
\]
\end{defn}

\begin{thm}[Kakariadis-Katsoulis] \label{T:KKis}
If $\A$ is a unital operator algebra and $\alpha \in \End_\A(\cenv(\A))$, then
$\A \times^{is}_\alpha \bZ_+$ is $($completely isometrically isomorphic to$)$
a subalgebra of $\cenv(\A) \times_\alpha \bZ_+$. 
Moreover, 
\[ \cenv(\A \times^{is}_\alpha \bZ_+) = \cenv(\cenv(\A) \times_\alpha \bZ_+) .\]
\end{thm}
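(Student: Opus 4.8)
The statement has two halves: the completely isometric embedding $\A \times^{is}_\alpha \bZ_+ \hookrightarrow \cenv(\A) \times_\alpha \bZ_+$, and the equality of the two C*-envelopes. The plan is to reduce both to Peters' orbit-representation theorem and to Theorem~\ref{T:KK_C*} applied to the C*-dynamical system $(\cenv(\A),\alpha)$, the only substantial new ingredient being a dilation statement for covariant representations.

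For the embedding, let $\Phi$ be the homomorphism $\P(\A,\ft)\to\cenv(\A)\times_\alpha\bZ_+$ that fixes $\ft$ and the embedded coefficients; I must show $\Phi$ is completely isometric. One inequality is soft: by Peters' theorem the norm of $\cenv(\A)\times_\alpha\bZ_+$ is computed by orbit representations $(\pi_\sigma,V)$ with $\sigma$ a faithful $*$-representation of $\cenv(\A)$ and $V=I\otimes S$, and restricting $\pi_\sigma$ to $\A$ gives an isometric covariant representation of $(\A,\alpha)$; hence $\|\Phi(p)\|_{cb}\le\|p\|_{\A\times^{is}_\alpha\bZ_+}$. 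The reverse inequality is the crux: given an arbitrary isometric covariant representation $(\rho,T)$ of $(\A,\alpha)$ on $\H$, I want to dilate it to a covariant representation $(\pi,U)$ of the C*-system $(\cenv(\A),\alpha)$ -- meaning $\pi$ a $*$-representation of $\cenv(\A)$, $U$ a contraction with $\pi(a)U=U\pi(\alpha(a))$ for all $a\in\cenv(\A)$, and $\H$ semi-invariant for the generated operator algebra with the prescribed compressions -- so that $\|\sum T^i\rho(a_i)\|\le\|\sum U^i\pi(a_i)\|\le\|p\|_{\cenv(\A)\times_\alpha\bZ_+}$.

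I expect the dilation step to be the main obstacle, and would carry it out in stages. First, a Schaeffer-type direct-limit construction over the isometry $T$ replaces $(\rho,T)$ by an isometric covariant representation $(\rho',U)$ on a larger space in which $U$ is a \emph{unitary}, $\rho'$ is still completely isometric, $\H$ remains semi-invariant, and the covariance $\rho'(a)U=U\rho'(\alpha(a))$ persists. Next, apply Proposition~\ref{P:inductive} to lift $(\A,\alpha)$ to $(\B,\beta)$ with $\beta\in\Aut(\B)$ and $\beta|_\A=\alpha$; since $U$ is unitary, the formula $\widetilde\rho(\beta^{-n}(a)):=U^n\rho'(a)U^{-n}$ is well defined and extends $\rho'$ to a completely isometric \emph{homomorphism} of $\B$ with $\widetilde\rho(b)U=U\widetilde\rho(\beta(b))$. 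Then lift $\beta$ to $\widetilde\beta\in\Aut(\cenv(\B))$ via Proposition~\ref{P:liftauto}, dilate the covariant representation $(\widetilde\rho,U)$ of the operator-algebra system $(\B,\beta)$ to a covariant representation $(\Pi,\widetilde U)$ of the C*-system $(\cenv(\B),\widetilde\beta)$ with $\widetilde U$ unitary and $\H$ semi-invariant, and finally compress $\Pi$ to the C*-subalgebra generated by $\A$, which is $\cenv(\A)$, restricting $\widetilde U$ accordingly. The delicate point is this last dilation: the Arveson extension of $\widetilde\rho$ to $\cenv(\B)$ is only a unital completely positive map intertwined by $U$ on the operator system $\overline{\B+\B^*}$, not on all of $\cenv(\B)$, so Stinespring followed by Arveson's commutant lifting does not apply verbatim. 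I would close this either by a covariant-Stinespring argument that exploits that $\widetilde\beta$ is an automorphism (so $\Pi$ and $\Pi\circ\widetilde\beta$ sit on the same minimal Stinespring space), or by reducing to the C*-algebra case through Peters' theorem and Theorem~\ref{T:KK_C*} applied to $(\cenv(\B),\widetilde\beta)$.

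For the C*-envelopes, first invoke Theorem~\ref{T:KK_C*} for the C*-dynamical system $(\cenv(\A),\alpha)$: let $(\fB,\beta)$ be its automorphic lifting, so that $\cenv(\A)\times_\alpha\bZ_+$ embeds completely isometrically in $\fB\times_\beta\bZ$ and $\cenv(\cenv(\A)\times_\alpha\bZ_+)=\fB\times_\beta\bZ$. Composing with the embedding from the first half, $\A\times^{is}_\alpha\bZ_+$ sits completely isometrically inside $\fB\times_\beta\bZ$; since $\A$ generates $\cenv(\A)$ as a C*-algebra and $\cenv(\A)$ together with the unitary $\fu$ generates $\fB\times_\beta\bZ$, the copy of $\A\times^{is}_\alpha\bZ_+$ (which contains $\A$ and $\fu$) generates $\fB\times_\beta\bZ$ as a C*-algebra, so $\fB\times_\beta\bZ$ is a C*-cover of $\A\times^{is}_\alpha\bZ_+$. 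Let $\fJ$ be the corresponding Shilov ideal and $q$ the quotient map. Then $q$ is completely isometric on $\A$, hence faithful on $\cenv(\A)$ because $\cenv(\A)$ is the C*-envelope of $\A$; since the $\beta$-orbit of $\cenv(\A)$ in $\fB$ is implemented by conjugation by $\fu\in\A\times^{is}_\alpha\bZ_+$, $q$ is faithful on each $\fu^n\cenv(\A)\fu^{-n}$ and hence on their closed union $\fB$. Consequently, writing a faithful $*$-representation $\sigma$ of $\cenv(\A)$ as $\sigma'\circ q|_{\cenv(\A)}$, the orbit representation $\pi_\sigma\times V$ depends only on $q|_{\cenv(\A)\times_\alpha\bZ_+}$, so by Peters' theorem $\|p\|_{\cenv(\A)\times_\alpha\bZ_+}\le\|q(p)\|\le\|p\|$ for every polynomial $p$ (and likewise for matrices); thus $q$ is completely isometric on $\cenv(\A)\times_\alpha\bZ_+$, i.e.\ $\fJ$ is a boundary ideal for $\cenv(\A)\times_\alpha\bZ_+$ inside its C*-envelope $\fB\times_\beta\bZ$, and therefore $\fJ=0$. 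Hence $\cenv(\A\times^{is}_\alpha\bZ_+)=\fB\times_\beta\bZ=\cenv(\cenv(\A)\times_\alpha\bZ_+)$, completing the plan.
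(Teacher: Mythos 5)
A preliminary remark: the paper does not prove this theorem but quotes it from \cite{KK}, so your proposal must be judged on its own merits. The overall architecture is the right one, and the second half (exhibiting $\fB\times_\beta\bZ$ as a C*-cover of $\A\times^{is}_\alpha\bZ_+$, showing the quotient $q$ by the Shilov ideal is faithful on $\cenv(\A)$ because its kernel there would be a boundary ideal for $\A$, hence faithful on $\fB=\ol{\bigcup_{n\ge0}\beta^{-n}(\cenv(\A))}$, and concluding $\fJ=0$) is essentially sound. The one step you elide there is why faithfulness of $q$ on $\fB$ forces $q$ to be completely isometric on all of $\cenv(\A)\times_\alpha\bZ_+$: your orbit-representation factorization needs a Fell-absorption comparison of $\sum x_i\otimes S^i$ with $x\otimes I$, or one can instead observe that the Shilov ideal is gauge-invariant and apply the faithful conditional expectation of $\fB\times_\beta\bZ$ onto $\fB$.

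The genuine gap is in the dilation half, which is the entire content of the theorem. Your first step --- unitarizing $T$ by a direct limit \emph{before} enlarging $(\A,\alpha)$ to the automorphic system $(\B,\beta)$ --- fails for a proper endomorphism: with $\P=\varinjlim(\H,T)$, injections $J_n=J_{n+1}T$, and unitary $U$ satisfying $UJ_n=J_nT$, any representation $\rho'$ of $\A$ on $\P$ with $\rho'(a)J_n=J_n\rho(\gamma_n(a))$ is consistent across the identifications only if $\rho(\gamma_{n+1}(a))T=T\rho(\gamma_n(a))$, i.e.\ $\gamma_n=\alpha\circ\gamma_{n+1}$, forcing $\gamma_{n+1}=\alpha^{-n}$, which does not exist when $\alpha$ is not invertible. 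Conversely, extending the representation to $\B$ first via $\widetilde\rho(\beta^{-n}(a))=T^n\rho(a)T^{*n}$ is not well defined when $T$ is a non-unitary isometry (since $T^nT^{*(n-m)}\ne T^m$). So the two reductions each presuppose the other; this is exactly the obstruction the Kakariadis--Katsoulis argument has to break. The step you yourself flag as ``delicate'' --- promoting the covariant u.c.p.\ map on $\ol{\B+\B^*}$ to a covariant $*$-dilation of $\cenv(\B)$ --- is likewise left unresolved, and your final compression assumes that the C*-subalgebra of $\cenv(\B)$ generated by $\A$ is $\cenv(\A)$, which is false in general; one must instead invoke Proposition~\ref{P:fix Shilov ideal} and Theorem~\ref{T:KKcover} to compare the two C*-covers.
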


More generally, they consider an arbitrary C*-cover $\fA$ of $\A$.
Let $\J_\A$ denote the Shilov boundary, 
i.e.\ the kernel of the unique homomorphism of $\fA$
onto $\cenv(\A)$ which is the identity on $\A$.
Suppose that $\alpha \in \End_\A(\fA)$ also leaves $\J_\A$ invariant.
Then it is easy to see that this induces an endomorphism 
$\dot\alpha \in \End_\A(\cenv(\A))$.  Hence $\A \times^{is}_{\dot\alpha} \bZ_+$
is (canonically completely isometrically isomorphic to)
a subalgebra of $\cenv(\A) \times_{\dot\alpha} \bZ_+$.  They show that
the same norm is also induced on $\A \times_\alpha \bZ_+$
as a subalgebra of $\fA \times_\alpha \bZ_+$.
This result should be considered in conjunction with 
Proposition~\ref{P:fix Shilov ideal}.

\begin{thm}[Kakariadis-Katsoulis] \label{T:KKcover}
If $\A$ is a unital operator algebra with C*-cover $\fA$ and 
$\alpha \in \End_\A(\fA)$ fixes $\J_\A$, then
$\A \times^{is}_\alpha \bZ_+$ is also 
$($canonically completely isometrically isomorphic to$)$
a subalgebra of $\fA \times_\alpha \bZ_+$. 
\end{thm}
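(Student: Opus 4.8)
The key idea is to factor the desired inclusion through the semicrossed product of the C*-envelope, where Theorem~\ref{T:KKis} is already available, instead of trying to extend isometric covariant representations of $(\A,\alpha|_\A)$ directly to $\ast$-representations of $\fA$ (which is not generally possible without a further dilation). Let $q\colon\fA\to\cenv(\A)$ be the canonical quotient with kernel $\J_\A$, so $q|_\A$ is the canonical complete isometry of $\A$ into $\cenv(\A)$, and let $\A\times^{is}_\alpha\bZ_+$ abbreviate $\A\times^{is}_{\alpha|_\A}\bZ_+$. Since $\alpha(\J_\A)\subseteq\J_\A$, as recalled just above the theorem $\alpha$ descends to an endomorphism $\dot\alpha\in\End_\A(\cenv(\A))$ with $q\circ\alpha=\dot\alpha\circ q$; moreover $\dot\alpha$ is carried to $\alpha|_\A$ under the identification $\A\cong q(\A)$, so $\A\times^{is}_{\dot\alpha}\bZ_+$ and $\A\times^{is}_\alpha\bZ_+$ are the same operator algebra. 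By Theorem~\ref{T:KKis} there is then a completely isometric homomorphism $\kappa\colon\A\times^{is}_\alpha\bZ_+\to\cenv(\A)\times_{\dot\alpha}\bZ_+$ determined by $\kappa(\ft^i a)=\ft^i q(a)$.

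Next I would produce two completely contractive homomorphisms. First, define $j\colon\A\times^{is}_\alpha\bZ_+\to\fA\times_\alpha\bZ_+$ on polynomials by $j(\ft^i a)=\ft^i a$, with $a\in\A\subseteq\fA$; this is a homomorphism on $\P(\A,\ft)$ since the relation $a\ft=\ft\alpha(a)$ is the same in both algebras. To see it is completely contractive, invoke Peters' theorem \cite{Pet}: the norm of $\fA\times_\alpha\bZ_+$, and all its matrix norms, are attained by an orbit representation $\pi\times V$ with $\pi(\,\cdot\,)=\bigoplus_{n\ge0}\sigma(\alpha^n(\,\cdot\,))$ for a faithful $\ast$-representation $\sigma$ of $\fA$ and $V=I\otimes S$ an isometry. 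Restricting $\pi$ to $\A$ gives an isometric covariant representation $(\pi|_\A,V)$ of $(\A,\alpha|_\A)$, and $(\pi\times V)\circ j$ coincides with $(\pi|_\A)\times V$ on $\P(\A,\ft)$; hence $\|j(p)\|_{\fA\times_\alpha\bZ_+}\le\|p\|_{\A\times^{is}_\alpha\bZ_+}$, and likewise at every matrix level. Second, define $Q\colon\fA\times_\alpha\bZ_+\to\cenv(\A)\times_{\dot\alpha}\bZ_+$ on polynomials by $Q(\ft^i a)=\ft^i q(a)$; this is completely contractive because any covariant representation $(\bar\rho,T)$ of $(\cenv(\A),\dot\alpha)$ pulls back along $q$ to a covariant representation $(\bar\rho\circ q,T)$ of $(\fA,\alpha)$, the covariance relation transporting as $\bar\rho(q(a))T=T\bar\rho(\dot\alpha(q(a)))=T\bar\rho(q(\alpha(a)))$.

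Finally I would conclude by comparing norms. The homomorphisms $Q\circ j$ and $\kappa$ agree on the dense subalgebra $\P(\A,\ft)$ — both send $\ft^i a$ to $\ft^i q(a)$ — hence are equal. For a polynomial $p$ in $\A\times^{is}_\alpha\bZ_+$,
\[
 \|p\|_{\A\times^{is}_\alpha\bZ_+}=\|\kappa(p)\|=\|Q(j(p))\|\le\|j(p)\|_{\fA\times_\alpha\bZ_+}\le\|p\|_{\A\times^{is}_\alpha\bZ_+},
\]
using that $\kappa$ is completely isometric and $Q,j$ are completely contractive; the identical chain of inequalities at every matrix level then forces $j$ to be completely isometric. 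Its image is the closed subalgebra of $\fA\times_\alpha\bZ_+$ generated by $\A$ and the generator $\ft$, and the identification is visibly the canonical one, which is what the theorem asserts. The main obstacle is the well-definedness and complete contractivity of $j$: this is precisely where Peters' orbit-representation theorem is needed, so that the norm of $\fA\times_\alpha\bZ_+$ is governed by genuinely isometric covariant representations of the C*-algebra $\fA$; once that is in hand, the rest is formal manipulation of universal properties.
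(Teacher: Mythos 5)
Your proof is correct. Note that the paper itself offers no proof of this statement --- it is quoted from \cite{KK} --- so there is nothing internal to compare against; but your sandwich argument, squeezing the canonical map $j\colon \A\times^{is}_\alpha\bZ_+\to\fA\times_\alpha\bZ_+$ between the completely isometric $\kappa$ supplied by Theorem~\ref{T:KKis} and the completely contractive $Q$ induced by the quotient $q\colon\fA\to\cenv(\A)$, is the natural route, and you correctly isolate the one genuinely nontrivial input: Peters' orbit-representation theorem, which is what guarantees that the universal norm on $\fA\times_\alpha\bZ_+$ is witnessed by a covariant pair whose restriction to $\A$ is an \emph{isometric} covariant representation (a generic covariant representation of $(\fA,\alpha)$ would only give contractivity against the ordinary, possibly larger, semicrossed product norm on $\A$). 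The only point taken on faith is that $\dot\alpha$ is a faithful $*$-endomorphism of $\cenv(\A)$, so that Theorem~\ref{T:KKis} applies to $\dot\alpha$: well-definedness follows from $\alpha(\J_\A)\subseteq\J_\A$, and faithfulness because $\ker\dot\alpha$ is a boundary ideal (as $\dot\alpha$ restricts to a complete isometry on $\A$) and hence trivial; since the paper asserts this in the paragraph preceding the theorem, you are entitled to use it, but it is worth knowing why it holds.
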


In conjunction with Proposition~\ref{P:fix Shilov ideal}, we obtain:

\begin{cor} \label{C:KKcover}
If $\alpha \in \End_\A(\cenv(\A))$, $\fA$ is a C*-cover of $\A$ and
$\beta \in \End_\A(\fA)$ such that $\beta|_\A = \alpha|_\A$, 
then $\A \times_\beta \bZ_+$ is completely isometrically isomorphic
to $\A \times_\alpha \bZ_+$.
\end{cor}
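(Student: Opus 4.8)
The plan is to combine Proposition~\ref{P:fix Shilov ideal} with the Kakariadis--Katsoulis covering results, Theorem~\ref{T:KKcover} and the discussion surrounding it. The first step is to observe that the hypotheses of Proposition~\ref{P:fix Shilov ideal} are exactly those at hand: $\alpha\in\End_\A(\cenv(\A))$, $\beta\in\End_\A(\fA)$, and $\beta|_\A=\alpha|_\A$. Applying it gives $\alpha q=q\beta$, where $q\colon\fA\to\cenv(\A)$ is the canonical quotient; hence $\beta$ fixes the Shilov ideal $\fJ_\A=\ker q$ and descends to an endomorphism $\dot\beta\in\End_\A(\cenv(\A))$, and since $\dot\beta\,q=q\beta=\alpha q$ with $q$ surjective we get $\dot\beta=\alpha$. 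In particular $\beta|_\A=\alpha|_\A$ is a single completely isometric endomorphism of $\A$, and the formal polynomial algebra $\P(\A,\ft)$ underlying both $\A\times_\beta\bZ_+$ and $\A\times_\alpha\bZ_+$, with its common relation $a\ft=\ft\beta(a)=\ft\alpha(a)$, is literally the same; read purely as universal objects the two semicrossed products are then the universal operator algebra for one and the same family of covariant representations. The content of the corollary is that this is compatible with the concrete Kakariadis--Katsoulis picture, and that is what the remaining steps address.

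The second step uses the covering theorem. Because $\beta$ fixes $\fJ_\A$, Theorem~\ref{T:KKcover}, in the form appropriate to the full (rather than isometric) semicrossed product recalled just before its statement, applies to $(\fA,\beta)$ and realizes $\A\times_\beta\bZ_+$ completely isometrically as the closed subalgebra of $\fA\times_\beta\bZ_+$ generated by the canonical copies of $\A$ and $\ft$, the norm it inherits there being precisely the universal semicrossed product norm of $(\A,\beta|_\A)$. Applying the same results with $\fA=\cenv(\A)$ and $\beta=\alpha$ (where the relevant Shilov ideal is $\{0\}$), $\A\times_\alpha\bZ_+$ is realized completely isometrically inside $\cenv(\A)\times_\alpha\bZ_+$, carrying the universal semicrossed product norm of $(\A,\alpha|_\A)$. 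One may also note that the covariant quotient map $q$, satisfying $\alpha q=q\beta$, induces a surjective $*$-homomorphism $\fA\times_\beta\bZ_+\to\cenv(\A)\times_\alpha\bZ_+$ restricting to the identity on $\P(\A,\ft)$, so these two concrete copies are matched up by a single map.

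The last step is immediate: since $\beta|_\A=\alpha|_\A$, the two universal norms on $\P(\A,\ft)$ coincide, so the identity on $\P(\A,\ft)$ extends to a completely isometric isomorphism of $\A\times_\beta\bZ_+$ onto $\A\times_\alpha\bZ_+$. I do not expect a genuine obstacle: the only point that really needs an argument is that the Kakariadis--Katsoulis covering theorem is available for the pair $(\fA,\beta)$, which is exactly what Proposition~\ref{P:fix Shilov ideal} supplies; after that the argument is formal, the one subtlety being to invoke the covering result in its full-semicrossed-product form rather than the isometric version stated literally in Theorem~\ref{T:KKcover}.
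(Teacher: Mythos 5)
Your overall route is the one the paper intends: Proposition~\ref{P:fix Shilov ideal} shows that $\beta$ fixes the Shilov ideal and induces $\alpha$ on the quotient, and the Kakariadis--Katsoulis theorems then identify the two concrete copies of $\P(\A,\ft)$. But there is one genuine misstep in the middle. You invoke Theorem~\ref{T:KKcover} ``in the form appropriate to the full (rather than isometric) semicrossed product'' and assert that the subalgebra of $\fA\times_\beta\bZ_+$ generated by $\A$ and $\ft$ carries the \emph{universal contractive} semicrossed product norm of $(\A,\beta|_\A)$. No such form of the theorem is available, and the assertion is false in general: it would say that $\A\times_{\gamma}\bZ_+=\A\times^{is}_{\gamma}\bZ_+$ for $\gamma=\beta|_\A$, i.e.\ that every contractive covariant pair dilates to an isometric one. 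That is a dilation theorem whose validity under extra hypotheses (SCLT and ICLT, the Ando property, etc.) is precisely the content of Section~\ref{S:covariance}; it does not come for free, and the sentence preceding Theorem~\ref{T:KKcover} is not claiming it.

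Fortunately the corollary does not need it. What Theorems~\ref{T:KKcover} and~\ref{T:KKis} actually give is that both subalgebras --- the one generated by $\A$ and $\ft$ inside $\fA\times_\beta\bZ_+$ (legitimate because Proposition~\ref{P:fix Shilov ideal} shows $\beta$ fixes $\J_\A$) and the one inside $\cenv(\A)\times_\alpha\bZ_+$ --- are canonically completely isometrically isomorphic to the \emph{same} universal object $\A\times^{is}_{\alpha|_\A}\bZ_+$, via maps that are the identity on $\P(\A,\ft)$. That already yields the complete isometry between them, which is how the corollary should be read; replacing ``universal semicrossed product norm'' by ``isometric semicrossed product norm'' repairs your argument without changing its structure. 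Your auxiliary observation that $\alpha q=q\beta$ induces a homomorphism $\fA\times_\beta\bZ_+\to\cenv(\A)\times_\alpha\bZ_+$ fixing $\P(\A,\ft)$ is correct but only gives one of the two norm inequalities; the reverse inequality is exactly what the isometric versions of the Kakariadis--Katsoulis theorems supply.
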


\section{Dilating Covariance Relations} \label{S:covariance}

We consider the following problem: suppose that $\alpha \in \End(\A)$ 
lifts to a $*$-endomorphism of $\cenv(\A)$.
When is $\A \times_\alpha \bZ_+$ canonically completely
isometrically isomorphic to the subalgebra of $\cenv(\A) \times_\alpha \bZ_+$ 
generated by $\A$ and the element $\ft$ inducing the action $\alpha$.
To simplify statements, we will just say, in this case, that $\A \times_\alpha \bZ_+$ is a 
subalgebra of $\fA \times_\alpha \bZ_+$.
We are seeking general properties of $\A$ which make this true.

Commutant lifting properties can be seen as special cases of dilation theorems
for semicrossed products in the case of the identity automorphism.
The goal of this section is to provide several theorems which allow one to
obtain results about general semicrossed products from the various commutant
lifting properties.

The literature contains a number of dilation theorems for endomorphisms of operator algebras. 
Ling and Muhly \cite{LM} establish an automorphic version of Ando's theorem, which
is a lifting theorem for an action of $\bZ_+^2$. 
Peters \cite{Pet} and Muhly and Solel \cite{MSlift, MSextn} dilate actions of $\bZ_+$ on C*-algebras.
Our first  result in this section uses SCLT and models our dilation theorem for the 
non-commutative disk algebras \cite{DavKatsdilation}.
We wish to contrast the explicit dilation obtained here with the
more inferential one obtained in Theorem~\ref{T:Ando_covariant}
which requires only CLT.

\begin{thm} \label{T:dilate covariant}
Suppose that $\A$ is a unital operator algebra satisfying SCLT and ICLT.
Let $\alpha\in\Aut(\A)$.
Then every covariant representation $(\rho,T)$ of $(\A,\alpha)$
has a coextension $(\sigma,V)$ such that $\sigma$ is a fully extremal 
coextension of $\rho$ and $V$ is an isometry.
\end{thm}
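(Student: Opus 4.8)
The plan is to combine the iterative scheme used in the proof of Theorem~\ref{T:AndoCLT} (the Ando property from CLT plus ICLT) with the intertwining covariance relation, exploiting the fact that $\alpha$ is an automorphism. First I would set up notation: write $(\rho,T)$ for the given covariant representation on $\H$, so that $\rho(a)T = T\rho(\alpha(a))$ for all $a\in\A$. The key reformulation is that $T$ intertwines the representations $\rho$ and $\rho\circ\alpha$: if we put $\rho_1 = \rho$ and $\rho_2 = \rho\circ\alpha$, then $\rho_1(a)T = T\rho_2(a)$ for all $a$. Since $\A$ has SCLT, by Theorem~\ref{T:SCLT} it has CLT and unique minimal fully extremal coextensions, and hence by Theorem~\ref{T:AndoCLT} it has the Ando property.

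\textbf{Main construction.} I would proceed by an inductive limit, analogous to the proof of Theorem~\ref{T:MCLT} and the Ando-property arguments. Start with $\rho$ and use Theorem~\ref{T:Ando intertwine} (the intertwining version of the Ando property): given a fully extremal coextension $\sigma_2$ of $\rho_2 = \rho\circ\alpha$ — which we may take to be $\sigma_0\circ\alpha$ where $\sigma_0$ is a fully extremal coextension of $\rho$, using Proposition~\ref{P:extremal of auto} which says fully extremal coextensions pull back along automorphisms — we obtain a fully extremal coextension $\tilde\sigma_1$ of $\rho_1 = \rho$, a maximal representation $\pi$ giving a fully extremal coextension $\tilde\sigma_2 = \sigma_2\oplus\pi$ of $\rho_2$, and an isometry $W$ with $P_\H W = T P_\H$ and $\tilde\sigma_1(a)W = W\tilde\sigma_2(a)$. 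The subtlety is that to get a genuine covariant pair we need the coextension on the \emph{same} space upstairs and downstairs, with $\sigma$ fully extremal and $V$ an isometry intertwining $\sigma$ with $\sigma\circ\alpha$. So I would iterate: at stage $n$ I have a fully extremal coextension $\sigma_n$ of $\rho$ on $\K_n$ and an isometry $V_n$ coextending $T$ with $\sigma_n(a)V_n = V_n(\sigma_{n+1}\circ\alpha)(a)$ for a suitable fully extremal coextension $\sigma_{n+1}\succ\sigma_n$; feeding this back through Theorem~\ref{T:Ando intertwine} (with $\sigma_2$ chosen to be $\sigma_{n+1}\circ\alpha$, which is legitimate because $\alpha$ is an automorphism, so $\sigma_{n+1}\circ\alpha$ is fully extremal for $\rho\circ\alpha$) produces the next stage. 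Taking the $\textsc{sot}$--$*$ inductive limit as in Theorem~\ref{T:MCLT}, using that a limit of fully extremal coextensions along the natural tridiagonal maps is again a fully extremal coextension (and that the direct limit of the isometries $V_n$ is an isometry), yields the common Hilbert space $\K$, a fully extremal coextension $\sigma$ of $\rho$, and an isometric coextension $V$ of $T$.

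\textbf{Verifying the covariance relation.} The remaining point is that $(\sigma,V)$ is covariant, i.e.\ $\sigma(a)V = V(\sigma\circ\alpha)(a)$ for all $a\in\A$. This follows by the same limiting computation as in the proof of Theorem~\ref{T:MCLT}: for vectors $x_n\in\K_n$ and $y_n$ in an appropriate finite stage, $\sigma(a)x_n = \sigma_k(a)x_n$ and $V$ restricted to $\K_n$ agrees with $V_n$ for $k$ large, so
\[
 \langle (\sigma(a)V - V\,\sigma(\alpha(a)))x_n, y_n\rangle = \lim_{k\to\infty}\langle (\sigma_k(a)V_k - V_k\,\sigma_k(\alpha(a)))x_n, y_n\rangle = 0,
\]
and these vectors are dense. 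That $V$ is a coextension of $T$ in the required sense, $P_\H V = T P_\H$, is preserved at each stage and in the limit.

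\textbf{Main obstacle.} I expect the hard part to be managing the bookkeeping so that at each stage the fully extremal coextension one is handed by Theorem~\ref{T:Ando intertwine} genuinely dominates the previous one and the intertwining isometries are \emph{consistent} across stages (the range of $V_n$ must sit correctly inside $\K_{n+1}$), so that the inductive limit of Hilbert spaces, representations, and isometries all exist simultaneously and fit together. This is exactly the kind of tridiagonal-form argument carried out in Theorem~\ref{T:MCLT}, but here one must additionally track that fully extremality (not merely extremality) survives the limit — which is why SCLT, equivalently uniqueness of minimal fully extremal coextensions plus CLT, is used rather than just WCLT. The use of ICLT is folded into the Ando property (Theorem~\ref{T:AndoCLT}) that makes the single-step intertwining lifting available with an isometry.
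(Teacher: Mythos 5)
Your reduction of the covariance relation to an intertwining problem between $\rho$ and $\rho\circ\alpha$ is the right starting point, but the iterative scheme built on Theorem~\ref{T:Ando intertwine} has a genuine gap, and it is precisely the gap that forces the use of SCLT rather than the Ando property in this theorem. Theorem~\ref{T:Ando intertwine} lets you prescribe the fully extremal coextension only on the domain side (the coextension $\sigma_2$ of $\rho\circ\alpha$); the codomain-side coextension $\tilde\sigma_1$ of $\rho$ is produced by the theorem and comes with no stated relation to the coextension you fed in at the previous stage. So there are no connecting maps making your $\sigma_n$ into a directed system of coextensions of $\rho$: the isometry $V_n$ intertwines $\tilde\sigma_n\circ\alpha$ on one Hilbert space with $\sigma_{n+1}$ on a different one, and at no finite stage do you possess a single representation $\sigma_k$ on a single space satisfying $\sigma_k(a)V_k=V_k\,\sigma_k(\alpha(a))$. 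Consequently your limiting computation does not type-check, and ``a limit of fully extremal coextensions along the natural tridiagonal maps'' has no meaning here (the tridiagonal structure in Theorem~\ref{T:MCLT} arises because each stage genuinely dilates the previous one, which fails in your setup). This is exactly why the paper's Theorem~\ref{T:Ando_covariant}, which runs an iteration of the kind you describe under the weaker hypothesis CLT, only produces an isometric covariant pair coextending $(\rho^{(\infty)},T\otimes S)$ and deduces the norm identity indirectly --- it does not yield a coextension of $(\rho,T)$ itself, which is the whole content of the present statement.

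The intended proof exploits SCLT where it is strongest: since the fully extremal coextension may be prescribed in advance, one fixes a single fully extremal coextension $\sigma$ of $\rho$ once and for all, notes that $\sigma\oplus(\sigma\circ\alpha)$ is a fully extremal coextension of $\rho\oplus(\rho\circ\alpha)$ (Proposition~\ref{P:extremal of auto} and Corollary~\ref{C:extremal sums}), and applies SCLT to the commuting nilpotent $\begin{sbmatrix}0&T\\0&0\end{sbmatrix}$ to obtain a contraction $T_1$ coextending $T$ with $\sigma(a)T_1=T_1\sigma(\alpha(a))$ --- the \emph{same} $\sigma$ on both sides. A single application of ICLT then upgrades $T_1$ to an isometry at the cost of a Shilov tail $\tau_1$ and a defect operator $D$ satisfying $\tau_1(a)D=D\,\sigma(\alpha(a))$; the final step extends $\tau_1$ to a maximal representation $\pi_1$ and writes down an explicit shift-type isometry on $\K\oplus\L^{(\infty)}$ commuting appropriately with the fully extremal coextension $\sigma\oplus\bigoplus_{n\ge0}\pi_1\circ\alpha^{n}$, the twisting of the tail by powers of $\alpha$ being what restores covariance. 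If you wish to salvage your Ando-based route, you would at minimum have to invoke the uniqueness of minimal fully extremal coextensions (Theorem~\ref{T:SCLT}) to realign $\tilde\sigma_1$ with the prescribed $\sigma$ at every stage --- at which point you have essentially rederived the SCLT step.
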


\begin{proof}
Let $\sigma$ be a fully extremal coextension of $\rho$.
Then by Proposition~\ref{P:extremal of auto}, 
$\sigma\circ\alpha$ is also fully extremal.
By Corollary~\ref{C:extremal sums}, 
\[ \sigma \oplus(\sigma\circ\alpha) \] 
is a fully extremal coextension of $\rho \oplus (\rho\circ\alpha)$.
Now the covariance relation implies that for $a \in \A$,
\[
 \begin{bmatrix} \rho(a) & 0 \\ 0 & \rho(\alpha(a)) \end{bmatrix}
 \begin{bmatrix} 0 & T \\ 0 & 0 \end{bmatrix} =
 \begin{bmatrix} 0 & \rho(a) T \\ 0 & 0 \end{bmatrix} =
 \begin{bmatrix} 0 & T \\ 0 & 0 \end{bmatrix} 
 \begin{bmatrix} \rho(a) & 0 \\ 0 & \rho(\alpha(a)) \end{bmatrix}
\]
Using SCLT, we obtain a contractive coextension of 
$\begin{sbmatrix} 0 & T \\ 0 & 0 \end{sbmatrix}$
which commutes with $\sigma \oplus(\sigma\circ\alpha)(\A)$.
The $1,2$ entry is a contractive coextension $T_1$ of $T$ such that
\[ \sigma(a) T_1 = T_1 \sigma(\alpha(a)) \qforal a \in \A .\]
So 
\[ \begin{bmatrix} 0 & T_1 \\ 0 & 0 \end{bmatrix} \]
also commutes with $\sigma \oplus(\sigma\circ\alpha)(\A)$.

Now use ICLT to coextend this operator to an isometry $V$ commuting with
a Shilov coextension $\tau$ of $\sigma \oplus(\sigma\circ\alpha)$.
As $\sigma \oplus(\sigma\circ\alpha)$ is an extremal coextension, 
$\tau$ decomposes as 
\[ \tau = \sigma \oplus(\sigma\circ\alpha) \oplus \tau_1 .\]
With respect to this decomposition $\K \oplus \K \oplus \P$, we can write
\[ V = \begin{bmatrix} 0 & T_1 &0\\ 0 & 0 &0 \\ * & D & *\end{bmatrix} .\]
In particular, the commutation relation shows that
\[ \tau_1(a) D = D \,\sigma(\alpha(a)) \qforal a \in \A .\]
A direct summand of a Shilov extension is Shilov, so $\tau_1$ is Shilov.
Let $\pi_1$ be a maximal representation on $\L \supset \P$ so that
$\P$ is invariant, and $\pi_1|_\P = \tau_1$.
Then consider $P_\P D$ as an operator in $\B(\K,\L)$, and note that
\[ \pi_1(a) P_\P D = P_\P D \,\sigma(\alpha(a))  \qforal a \in \A .\]

Define a coextension of $\rho$ by
\[ \pi = \sigma \oplus \sum_{n\ge0}\strut^\oplus \pi_1\circ\alpha^n \]
acting on $\M = \K \oplus \L^{(\infty)}$.
Since $\sigma$ is a fully extremal coextension of $\rho$ 
and each $\pi_1\circ\alpha^n$ is maximal,
it follows that $\pi$ is a fully extremal coextension of $\rho$.

Now define an isometry $W$ on $\M$ by
\[
 W = \begin{bmatrix} T_1 & 0 & 0 & 0 & \dots\\
                               P_\P D & 0 & 0 & 0 & \dots\\
                               0 & I & 0 & 0 & \dots\\
                               0 & 0 & I & 0 & \dots\\
                               \vdots & & & \ddots &\ddots
        \end{bmatrix} .
\]
Then one readily verifies that 
\[ \pi(a) W = W \pi(\alpha(a)) \qforal a \in \A .  \]
This is the desired isometric coextension of the covariance relations.
\end{proof}

Observe that Theorem~\ref{T:dilate covariant} shows that if $\A$ has SCLT and ICLT,
then 
\[ \A\times_\alpha\bZ_+ = \A \times^{is}_\alpha \bZ_+ .\]
Thus by applying Theorem~\ref{T:KKis} to see that it imbeds into $\cenv(\A) \times_\alpha \bZ_+$.
Since $\alpha$ is an automorphism, the C*-envelope of this is just the full
crossed product $\cenv(\A) \times_\alpha \bZ$.

\begin{thm} \label{T:ISCLT_cenv}
Suppose that $\A$ has properties SCLT and ICLT, and $\alpha\in \Aut(\A)$.
Then $\A \times_\alpha \bZ_+$ is $($canonically completely isometrically isomorphic to$)$
a subalgebra of $\cenv(\A) \times_\alpha \bZ_+$.  Moreover,
\[ \cenv(\A \times_\alpha \bZ_+) = \cenv(\A) \times_\alpha \bZ .\]
\end{thm}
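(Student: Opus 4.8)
The plan is to prove the equality $\A \times_\alpha \bZ_+ = \A \times^{is}_\alpha \bZ_+$ first, and then to combine this with the Kakariadis--Katsoulis results and Proposition~\ref{P:liftauto}. Fix a polynomial $p = \sum_{i=0}^n \ft^i a_i$ in $\P(\A,\ft)$. Its norm in $\A \times_\alpha \bZ_+$ is the supremum of $\big\|\sum_i T^i\rho(a_i)\big\|$ over all covariant representations $(\rho,T)$, and I would show this equals the supremum over the isometric covariant representations only; the inequality $\ge$ is automatic, so only $\le$ needs argument. Given an arbitrary covariant representation $(\rho,T)$ on $\H$, I would first enlarge it to one with $\rho$ completely isometric: pick a completely isometric representation of $\A$ and build the orbit pair $\pi_0 = \bigoplus_{m\ge 0}\rho_0\circ\alpha^m$ on $\H_0\otimes\ltwo$ with $V_0 = I\otimes S$, $S$ the unilateral shift, as in the proof of Proposition~\ref{P:inductive}; then $(\pi_0,V_0)$ is covariant, $\pi_0$ is completely isometric, and replacing $(\rho,T)$ by $(\rho\oplus\pi_0,\,T\oplus V_0)$ does not decrease $\big\|\sum_i T^i\rho(a_i)\big\|$. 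So assume $\rho$ is completely isometric.

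Now I would apply Theorem~\ref{T:dilate covariant} to $(\rho,T)$ to obtain a coextension $(\sigma,V)$ with $\sigma$ a fully extremal coextension of $\rho$ and $V$ an isometry. Since $\sigma$ is a completely contractive coextension of the completely isometric $\rho$, each matrix $[\sigma(a_{ij})]$ has $[\rho(a_{ij})]$ as a compression, forcing $\|[\sigma(a_{ij})]\| = \|[a_{ij}]\|$; hence $\sigma$ is completely isometric and $(\sigma,V)$ is an isometric covariant representation. As $(\sigma,V)$ coextends $(\rho,T)$, the $(1,1)$ block of $\sum_i V^i\sigma(a_i)$ is exactly $\sum_i T^i\rho(a_i)$, so $\big\|\sum_i V^i\sigma(a_i)\big\| \ge \big\|\sum_i T^i\rho(a_i)\big\|$. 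This gives the two suprema agree, i.e.\ the identity map on $\P(\A,\ft)$ is a complete isometry $\A\times_\alpha\bZ_+ \to \A\times^{is}_\alpha\bZ_+$.

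For the remaining assertions I would argue as follows. By Proposition~\ref{P:liftauto}, $\alpha$ extends to an automorphism of $\cenv(\A)$ fixing $\A$, so $\alpha \in \End_\A(\cenv(\A))$ and Theorem~\ref{T:KKis} applies: it provides a canonical complete isometry of $\A\times^{is}_\alpha\bZ_+$ onto a subalgebra of $\cenv(\A)\times_\alpha\bZ_+$, together with the identity $\cenv(\A\times^{is}_\alpha\bZ_+) = \cenv(\cenv(\A)\times_\alpha\bZ_+)$. Combined with the first stage, this embeds $\A\times_\alpha\bZ_+$ completely isometrically into $\cenv(\A)\times_\alpha\bZ_+$, which is the first claim. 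For the C*-envelope, note that since $\alpha$ is already an automorphism of the C*-algebra $\cenv(\A)$, the automorphic dilation $(\fB,\beta)$ attached to it in Theorem~\ref{T:KK_C*} is just $(\cenv(\A),\alpha)$, so that theorem yields $\cenv(\cenv(\A)\times_\alpha\bZ_+) = \cenv(\A)\times_\alpha\bZ$. Chaining the identifications, $\cenv(\A\times_\alpha\bZ_+) = \cenv(\A\times^{is}_\alpha\bZ_+) = \cenv(\cenv(\A)\times_\alpha\bZ_+) = \cenv(\A)\times_\alpha\bZ$.

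The heart of the matter is the identity $\A\times_\alpha\bZ_+ = \A\times^{is}_\alpha\bZ_+$, but its weight has essentially been absorbed into Theorem~\ref{T:dilate covariant}; so within this proof the only points requiring care are the reduction to a completely isometric $\rho$ via an orbit representation and the elementary remark that a completely contractive coextension of a completely isometric representation remains completely isometric. The one genuinely external fact used is that the C*-envelope of a semicrossed product of a C*-algebra by an automorphism is the full crossed product, and this comes for free from Theorem~\ref{T:KK_C*} once one observes that the automorphic dilation is trivial in the automorphism case.
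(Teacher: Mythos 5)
Your proof is correct and follows essentially the same route as the paper: Theorem~\ref{T:dilate covariant} yields $\A\times_\alpha\bZ_+ = \A \times^{is}_\alpha \bZ_+$, and then Theorem~\ref{T:KKis} (via Proposition~\ref{P:liftauto}) together with the observation that the automorphic dilation in Theorem~\ref{T:KK_C*} is trivial when $\alpha$ is already an automorphism gives both the embedding and the identification of the C*-envelope. Your preliminary reduction to a completely isometric $\rho$ by direct-summing with an orbit representation is a detail the paper leaves implicit, but it is genuinely needed because an isometric covariant representation requires $\rho$ to be a complete isometry and Theorem~\ref{T:dilate covariant} alone does not supply that; this is a worthwhile clarification rather than a deviation.
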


We wish to improve this theorem so that we require only CLT, not SCLT.
This requires a different approach, and does not yield a direct construction
of the isometric coextension of a covariant representation.

\begin{thm} \label{T:Ando_covariant}
Suppose that an operator algebra $\A$ has the Ando property, 
and $\alpha\in \Aut(\A)$.
Then 
\[ \A\times_\alpha\bZ_+ = \A \times^{is}_\alpha \bZ_+ .\]
Hence $\A \times_\alpha \bZ_+$ is $($canonically completely 
isometrically isomorphic to$)$a subalgebra of $\cenv(\A) \times_\alpha \bZ_+$.  
Moreover,
\[ \cenv(\A \times_\alpha \bZ_+) = \cenv(\A) \times_\alpha \bZ .\]
\end{thm}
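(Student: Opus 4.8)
The plan is first to reduce the equality of semicrossed products to a dilation statement. The identity on $\P(\A,\ft)$ induces a completely contractive surjection $\A\times_\alpha\bZ_+\to\A\times^{is}_\alpha\bZ_+$, since isometric covariant representations form a subclass of all covariant representations. So it suffices to show that every covariant representation $(\rho,T)$ of $(\A,\alpha)$ on $\H$ has a covariant coextension $(\sigma,V)$ with $V$ an isometry: a representation $\sigma$ of $\A$ and a contraction $V$ on some $\K\supset\H$ with $\H^\perp$ invariant for both $\sigma(\A)$ and $V$, $P_\H\sigma(a)|_\H=\rho(a)$, $P_\H V|_\H=T$, $\sigma(a)V=V\sigma(\alpha(a))$, and $V$ isometric. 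Direct-summing $(\sigma,V)$ with an orbit representation $\big(\bigoplus_{n\ge0}\mu\circ\alpha^n,\,I\otimes S\big)$ of a completely isometric representation $\mu$ of $\A$ — itself an isometric covariant representation — upgrades $\sigma$ to a completely isometric representation without disturbing any of the coextension data, so $(\sigma,V)$ becomes a bona fide isometric covariant representation. Then for $p=\sum_i\ft^i a_i$ the operator $(\sigma\times V)(p)=\sum_i V^i\sigma(a_i)$ is block lower triangular for $\K=\H\oplus\H^\perp$ with $(1,1)$ corner $(\rho\times T)(p)$; hence $\|(\rho\times T)(p)\|\le\|(\sigma\times V)(p)\|\le\|p\|_{\A\times^{is}_\alpha\bZ_+}$, and likewise at all matrix levels, which with the trivial reverse inequality gives $\A\times_\alpha\bZ_+=\A\times^{is}_\alpha\bZ_+$ completely isometrically.

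\textbf{Step 2 (the dilation).} Here I would follow the proof of Theorem~\ref{T:dilate covariant}, substituting for the use of SCLT. Since $\alpha\in\Aut(\A)$, it lifts to $\tilde\alpha\in\Aut(\cenv(\A))$ fixing $\A$ (Proposition~\ref{P:liftauto}); consequently composition with $\alpha$ preserves fully extremal coextensions (Proposition~\ref{P:extremal of auto}) and, through $\tilde\alpha$, maximal representations. The covariance relation $\rho(a)T=T\rho(\alpha(a))$ says precisely that $T$ intertwines $\rho\circ\alpha$ with $\rho$. Where the proof of Theorem~\ref{T:dilate covariant} fixes a fully extremal coextension $\sigma$ of $\rho$ and uses SCLT to coextend $T$ to a contraction commuting with $\sigma\oplus(\sigma\circ\alpha)$, I would instead apply the intertwining CLT (Theorem~\ref{T:CLT intertwine}) with $\rho_1=\rho$, $\rho_2=\rho\circ\alpha$, $X=T$, and input a fully extremal coextension of $\rho\circ\alpha$ of the form $\sigma\circ\alpha$; the output is a \emph{new} fully extremal coextension of $\rho$ together with a contractive coextension of $T$ intertwining it with the $\alpha$-twist of $\sigma$, and the fully extremal coextension of $\rho\circ\alpha$ that reappears is again of the form $\sigma'\circ\alpha$ with $\sigma'$ fully extremal over $\rho$, obtained by pulling its maximal summand back through $\alpha^{-1}$. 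I would then make $V$ isometric by a Schaeffer-type construction, exactly as in the proofs of Theorems~\ref{T:AndoCLT} and~\ref{T:MCLT}, using the intertwining ICLT (Proposition~\ref{P:ICLT intertwine}) and a maximal dilation to fill in. Iterating, arranging the successive Hilbert spaces to nest and the intertwining isometries to be mutually compatible, and passing to the \sot-$*$ (resp.\ \wot) inductive limit then produces a single fully extremal coextension $\sigma$ of $\rho$ and a single isometry $V$ on the limit space with $\sigma(a)V=V\sigma(\alpha(a))$ and $\H$ jointly semi-invariant with compressions $(\rho,T)$ — the coextension required in Step~1.

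\textbf{Step 3 (the C*-envelope).} Since $\alpha\in\End_\A(\cenv(\A))$, Theorem~\ref{T:KKis} gives that $\A\times^{is}_\alpha\bZ_+$ is canonically completely isometrically a subalgebra of $\cenv(\A)\times_\alpha\bZ_+$ and that $\cenv(\A\times^{is}_\alpha\bZ_+)=\cenv\big(\cenv(\A)\times_\alpha\bZ_+\big)$. Because $\tilde\alpha$ is an automorphism of the C*-algebra $\cenv(\A)$, the inductive-limit pair of Theorem~\ref{T:KK_C*} collapses to $(\cenv(\A),\tilde\alpha)$ itself, so $\cenv\big(\cenv(\A)\times_\alpha\bZ_+\big)=\cenv(\A)\times_\alpha\bZ$. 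Together with Step~1 this yields $\A\times_\alpha\bZ_+=\A\times^{is}_\alpha\bZ_+\hookrightarrow\cenv(\A)\times_\alpha\bZ_+$ and $\cenv(\A\times_\alpha\bZ_+)=\cenv(\A)\times_\alpha\bZ$.

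\textbf{The main obstacle.} The hard part will be the iteration in Step~2. Under SCLT one fixes a single fully extremal coextension once and for all; with only CLT one cannot, and — as the Remark after Theorem~\ref{T:CLT intertwine} stresses — fully extremal coextensions are not stable under direct sums, so one cannot even replace $\rho$ by $\rho\oplus(\rho\circ\alpha)$ and double up. The real work is to organize the alternating applications of the intertwining CLT and ICLT so that the intermediate Hilbert spaces nest, the intertwining isometries converge, the joint semi-invariance of $\H$ survives the limit, and the limit representation is again fully extremal (as a \sot-$*$ limit of fully extremal coextensions) — all while tracking the $\alpha$-twist correctly. There is also the small but genuinely needed verification that composition with an automorphism preserves both fully extremal coextensions and maximal representations.
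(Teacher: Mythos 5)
Your Step 1 (the reduction to a dilation statement) and Step 3 (identifying the C*-envelope via Theorems~\ref{T:KKis} and~\ref{T:KK_C*}) are fine and agree with what the paper does. The gap is in Step 2, and it is not a technical loose end but the crux: you propose to iterate the intertwining CLT/ICLT and pass to an inductive limit so as to produce a single isometric covariant coextension of $(\rho,T)$ itself. But each application of the intertwining lifting (Theorem~\ref{T:Ando intertwine}) with prescribed input $\sigma_n\circ\alpha$ returns a \emph{new} fully extremal coextension $\sigma_{n+1}$ of $\rho$ on a Hilbert space $\K_{n+1}$ that bears no containment relation to $\K_n$; the isometry $V_n$ intertwines $\sigma_{n+1}$ with $(\sigma_n\circ\alpha)\oplus\pi_n$, it does not coextend $\sigma_n$ to $\sigma_{n+1}$. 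So there is no nested system of spaces and no \sot-$*$ inductive limit to take --- this is exactly the failure mode that SCLT would repair (by letting you fix one fully extremal coextension once and for all, as in Theorem~\ref{T:dilate covariant}) and that CLT alone does not. The paper is explicit that its proof ``does not yield a direct construction of the isometric coextension of a covariant representation''; your plan, if completed, would deliver precisely such a direct construction, which is a strong sign the completion is not available by these means.

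What the paper does instead is sidestep the limit entirely: after absorbing the maximal summands $\pi_n$ into each stage (via $\tilde\sigma_n=\sigma_n\oplus\pi$ with $\pi=\sum_{n,k}\oplus(\pi_n\circ\alpha^k)^{(\infty)}$, so that $\tilde\sigma_{n+1}(a)\tilde V_n=\tilde V_n(\tilde\sigma_n\circ\alpha)(a)$ exactly), it assembles the whole chain into one isometric covariant pair: $\tilde\sigma=\bigoplus_{n\ge0}\tilde\sigma_n$ on $\bigoplus_n\tilde\K_n$ together with the block subdiagonal isometry $\tilde V$ with entries $\tilde V_n$. This pair coextends not $(\rho,T)$ but $(\rho^{(\infty)},T\otimes S)$, and the norm comparison is finished by observing that $(\rho^{(\infty)},T\otimes S)$ induces the same seminorm as the bilateral pair $(\rho^{(\infty)},T\otimes U)$ (an inductive limit of copies of the unilateral one), whose block-Laurent form dominates the seminorm of $(\rho,T)$. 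You would need to replace your inductive-limit step with this direct-sum/Toeplitz device (or something equivalent) for the argument to close.
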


\begin{proof}
Suppose that  $(\rho,T)$ is a covariant representation of $(\A,\alpha)$.
Let $\sigma_0$ be a fully extremal coextension of $\rho$ on a Hilbert space $\K_0$.
Then by Proposition~\ref{P:extremal of auto}, $\sigma_0\circ\alpha$ is also fully extremal.
Theorem~\ref{T:Ando intertwine} yields a fully extremal coextension $\sigma_1$
of $\rho_1$ on $\K_1$, an isometry  $V_0$ and a maximal dilation $\pi_0$ so that
\[ \sigma_1(a) V_0 = V_0 ((\sigma_0 \circ\alpha) \oplus \pi_0)(a) .\]
Recursively, we obtain fully extremal coextension $\sigma_{n+1}$
of $\rho_1$ on $\K_n$, an isometry  $V_n$ and a maximal dilation $\pi_n$ so that
\[ \sigma_{n+1}(a) V_n = V_n ((\sigma_n \circ\alpha) \oplus \pi_n)(a) .\]
Let
\[ \pi = \sum_{n\ge0} \sum_{k\in\bZ} \oplus (\pi_n \circ \alpha^k )^{(\infty)} .\]
Then set $\tilde\sigma_n = \sigma_n \oplus \pi$ acting on $\tilde\K_n$. 
Extending $V_n$ appropriately to an isometry $\tilde V_n$, we obtain
\[ \tilde\sigma_{n+1}(a) \tilde V_n = \tilde V_n (\tilde\sigma_n\circ\alpha)(a) .\]

Now define a representation on 
\[ \tilde\K = \sum_{n\ge0} \oplus \tilde\K_n \]
by
\[
 \tilde\sigma(a) = \begin{bmatrix}
  \tilde\sigma_0(a) & 0 & 0 & 0 & \dots\\
  0 & \tilde\sigma_1(a) & 0 &0 & \dots\\
  0 & 0 & \tilde\sigma_2(a) & 0 & \dots\\
  0 & 0 & 0 & \tilde\sigma_3(a) & \ddots\\
  \vdots & \vdots & \ddots & \ddots & \ddots
\end{bmatrix}
\]
and
\[
 \tilde V = \begin{bmatrix}
  0 & 0 & 0 & 0 & \dots\\
  \tilde V_0 & 0 & 0 & 0 & \dots\\
  0 & \tilde V_1 & 0 & 0 & \dots\\
  0 & 0 & \tilde V_2 & 0 & \ddots\\
  \vdots & \vdots & \ddots & \ddots & \ddots
\end{bmatrix}  .
\]
This is an isometric covariant representation which coextends the
contractive covariant representation on $\H \otimes \ltwo$:
\[
 \rho^{(\infty)}(a) = \begin{bmatrix}
   \rho(a) & 0 & 0 & 0 & \dots\\
  0 &  \rho(a) & 0 &0 & \dots\\
  0 & 0 &  \rho(a) & 0 & \dots\\
  0 & 0 & 0 &  \rho(a) & \ddots\\
  \vdots & \vdots & \ddots & \ddots & \ddots
\end{bmatrix}
\]
and
\[
 T \otimes S = \begin{bmatrix}
  0 & 0 & 0 & 0 & \dots\\
  T & 0 & 0 & 0 & \dots\\
  0 & T & 0 & 0 & \dots\\
  0 & 0 & T & 0 & \ddots\\
  \vdots & \vdots & \ddots & \ddots & \ddots
\end{bmatrix}  .
\]
This latter representation induces the same seminorm on $\A \times_\alpha \bZ_+$
as the covariant pair $(\rho^{(\infty)}, T\otimes U)$ on $\H \otimes \ltwo(\bZ)$,
where $U$ is the bilateral shift, because this representation is an inductive
limit of copies of \mbox{$(\rho^{(\infty)}, T\otimes S)$.} 
However $(\rho^{(\infty)}, T\otimes U)$ has a seminorm which clearly dominates
the seminorm arising from $(\rho,T)$.

It follows that 
\[ \A\times_\alpha\bZ_+ = \A \times^{is}_\alpha \bZ_+ .\]
The rest follows as in Theorem~\ref{T:ISCLT_cenv}.
\end{proof}

In particular, one gets a dilation theorem that we cannot find
by a direct construction. Indeed, even the isometric dilation is not explicitly
obtained, unlike the proof of Theorem~\ref{T:dilate covariant}.
This result considerably expands the class of operator algebras for
which we can obtain these results.

\begin{cor}\label{C:Ando_covariant}
Suppose that an operator algebra $\A$ has the Ando property, 
and $\alpha\in \Aut(\A)$.  
Then every covariant representation $(\rho,T)$ of $(\A,\alpha)$ dilates to
a covariant representation $(\pi,U)$ of $(\cenv(\A),\alpha)$
where $\pi$ is a $*$-representation of $\cenv(\A)$ and $U$ is unitary.
\end{cor}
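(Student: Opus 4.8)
The plan is to deduce Corollary~\ref{C:Ando_covariant} directly from Theorem~\ref{T:Ando_covariant} together with the structure theory already developed. By Theorem~\ref{T:Ando_covariant}, the Ando property for $\A$ gives
\[ \A \times_\alpha \bZ_+ = \A \times^{is}_\alpha \bZ_+ , \]
so a contractive covariant pair $(\rho,T)$ of $(\A,\alpha)$ and an \emph{isometric} covariant pair $(\sigma, V)$ that coextends it induce the same seminorm; in fact the construction in that proof produces such an isometric coextension $(\tilde\sigma, \tilde V)$ explicitly, with $\tilde\sigma$ a fully extremal coextension of $\rho^{(\infty)}$ and $\tilde V$ an isometry. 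So the first step is to start from $(\rho,T)$, apply Theorem~\ref{T:Ando_covariant} to obtain an isometric covariant coextension $(\sigma,V)$ of $(\rho,T)$, where $\sigma$ is a fully extremal coextension of $\rho$. Here $\rho(a) = P_\H \sigma(a)|_\H$ and $T = P_\H V|_\H$ in the compression sense, with $\H$ coinvariant for $\sigma(\A)$ and (after passing to a suitable coextension) for $V$ as well.

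The second step is to promote $(\sigma,V)$ to a covariant pair $(\pi,U)$ of $(\cenv(\A),\alpha)$ with $\pi$ a $*$-representation and $U$ unitary. For this I would first apply Theorem~\ref{T:ICLT} (or rather its proof) to the commuting pair $\sigma$, $V$: since $\A$ has the Ando property it has ICLT, so the covariance relation $\sigma(a) V = V \sigma(\alpha(a))$ — which is exactly the statement that $V$ intertwines $\sigma$ and $\sigma\circ\alpha$ — allows us to dilate $V$ to a unitary and $\sigma$ to a maximal dilation. More precisely, following the argument of Theorem~\ref{T:dilate covariant} and Theorem~\ref{T:Ando_covariant}, one first forms the inductive limit over copies of $V$ to replace the isometry $V$ by a unitary $W$ commuting with a representation $\tau$ which is still a dilation of $\sigma$ (an extension, in fact). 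Then $\tau$ extends to an operator system representation on $\ol{\A + \A^*}$, and by the classical machinery invoked in the proof of Theorem~\ref{T:ICLT} — Fuglede's theorem (so $W$ commutes with $\ol{\tau(\A)+\tau(\A)^*}$), injectivity of the type I von Neumann algebra $\{W\}'$, Arveson's extension theorem, Stinespring, and Arveson's commutant lifting result \cite[Theorem 1.3.1]{Arv1} — one obtains a $*$-representation $\pi$ of $\cenv(\A)$ and a unitary $U$ commuting with $\pi(\cenv(\A))$ extending $W$, with $\pi$ a maximal dilation of $\tau$ and hence of $\rho$. The covariance relation $\pi(a) U = U \pi(\alpha(a))$ for $a\in\A$ follows since it holds for $\tau$ and $W$ and $\pi, U$ restrict correctly; and since $\A$ generates $\cenv(\A)$ and $U$ is unitary, the relation $\pi(x)U = U\pi(\alpha(x))$ propagates from $\A$ to all of $\cenv(\A)$ (using that $\alpha$ is an automorphism, which lifts to $\cenv(\A)$ by Proposition~\ref{P:liftauto}, and that $U \pi(a)^* = \pi(\alpha(a))^* U$ follows by taking adjoints of the intertwining relation together with unitarity of $U$). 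Finally, since $\H$ is semi-invariant for the algebra generated by $\pi(\A)$ and $U$, we get $P_\H \pi(a) U^n|_\H = \rho(a) T^n$, so $(\pi,U)$ is genuinely a dilation of $(\rho,T)$.

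The main obstacle, and the step requiring the most care, is verifying that the unitary $U$ commutes with \emph{all} of $\pi(\cenv(\A))$ and not merely with $\pi(\A)$, and correspondingly that the covariance relation extends from $\A$ to $\cenv(\A)$. The first point is handled by the cited commutant lifting result of Arveson, which guarantees $U$ extends $W$ into the commutant of $\pi(\cenv(\A))$; the second point needs the observation that the map $x \mapsto U^* \pi(\alpha^{-1}(x)) U$ (well defined since $\alpha$ lifts to an automorphism of $\cenv(\A)$) is a $*$-representation of $\cenv(\A)$ agreeing with $\pi$ on the generating set $\A$, hence equal to $\pi$; this gives $\pi(\alpha(x)) = U^* \pi(x) U$ for all $x$, i.e. the full covariance relation. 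Everything else is bookkeeping with the direct-limit constructions already spelled out in Theorems~\ref{T:dilate covariant} and~\ref{T:Ando_covariant}, so the proof is short: one simply strings together those two constructions and quotes the Fuglede/Arveson package from the proof of Theorem~\ref{T:ICLT}.
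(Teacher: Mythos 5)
Your overall strategy is considerably more complicated than what is needed, and two of its steps do not go through. The paper obtains this corollary as an immediate consequence of Theorem~\ref{T:Ando_covariant}: since $\cenv(\A\times_\alpha\bZ_+)=\cenv(\A)\times_\alpha\bZ$, the representation $\rho\times T$ of $\A\times_\alpha\bZ_+$ has a maximal dilation, which extends to a $*$-representation $\Pi$ of the full crossed product $\cenv(\A)\times_\alpha\bZ$; setting $\pi=\Pi|_{\cenv(\A)}$ and $U=\Pi(\fu)$, where $\fu$ is the implementing unitary, gives the desired covariant pair, and semi-invariance of $\H$ yields $P_\H\pi(a)U^n|_\H=\rho(a)T^n$. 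No direct construction is required --- nor, as the remark immediately preceding the corollary points out, is one available.

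The first gap in your argument: Theorem~\ref{T:Ando_covariant} does \emph{not} produce an isometric covariant coextension of the given pair $(\rho,T)$. Its construction yields an isometric covariant pair $(\tilde\sigma,\tilde V)$ coextending $(\rho^{(\infty)},T\otimes S)$, and $(\rho^{(\infty)},T\otimes S)$ is not a dilation of $(\rho,T)$: the compression of $T\otimes S$ to the first summand is $0$, not $T$. The passage back to $(\rho,T)$ in that proof happens only at the level of seminorms, via the bilateral pair $(\rho^{(\infty)},T\otimes U)$. So your step~1 begins from a pair $(\sigma,V)$ that you do not actually possess. The second, more serious gap is in step~2: the Fuglede/Arveson package in the proof of Theorem~\ref{T:ICLT} requires that $W$ \emph{commute} with $\tau(\A)$, so that the classical Fuglede theorem upgrades this to commutation with $\tau(\A)^*$ and the commutant of $W$ becomes an injective von Neumann algebra into which $\tau$ can be extended. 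In the covariant setting you only have the intertwining relation $\tau(a)W=W\tau(\alpha(a))$ with $\tau(a)$ non-normal, so Fuglede--Putnam does not apply, and the adjoint relation $\tau(a)^*W=W\tau(\alpha(a))^*$ is exactly what you cannot obtain for free. Supplying it is the entire purpose of the separate Fuglede Property hypothesis in Theorem~\ref{T:FS_dilate}; the Ando property alone does not give it. Your closing observation that $x\mapsto U\pi(\alpha(x))U^*$ agrees with $\pi$ on the generating set $\A$ and hence on $\cenv(\A)$ is correct, but it is only usable after $\pi$ and $U$ have been produced, which is the step that fails.
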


The following result applies to endomorphisms, not just automorphisms.
This result was only recently established for the disk algebra \cite{DKdisk}.
The hypotheses are quite strong.

\begin{thm} \label{T:FS_dilate} 
Suppose that $\A$ is a unital operator algebra with SMCLT and FP.
Let $\alpha\in \End_\A(\cenv(\A))$.
Then $\A \times_\alpha \bZ_+$ is $($canonically completely isometrically 
isomorphic to$)$ a subalgebra of $\cenv(\A) \times_\alpha \bZ_+$.
\end{thm}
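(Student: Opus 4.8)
The plan is to show that every covariant representation $(\rho,T)$ of $(\A,\alpha)$ dilates to an isometric (indeed unitary) covariant representation of $(\cenv(\A),\alpha)$, from which the completely isometric embedding and the C*-envelope statement follow by the Kakariadis--Katsoulis machinery (Theorems~\ref{T:KKis} and the orbit-representation argument). The key is to combine SMCLT, which supplies a \emph{unique} minimal maximal dilation and lets us prescribe the maximal dilation in advance, with FP, which forces anything commuting with the image of $\A$ under a maximal (i.e. $*$-) representation to commute with the whole enveloping C*-algebra.

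First I would fix a covariant pair $(\rho,T)$ on $\H$, so $\rho(a)T = T\rho(\alpha(a))$ for all $a\in\A$. Let $\pi_0$ be a minimal maximal dilation of $\rho$ on $\K_0\supset\H$; since $\alpha$ lifts to a $*$-endomorphism $\dot\alpha$ of $\cenv(\A)$, the composite $\pi_0\circ\dot\alpha$ is again a $*$-representation of $\cenv(\A)$, hence a maximal representation of $\A$. Form the orbit-type representation $\Pi = \sum_{n\ge 0}^{\oplus} \pi_0\circ\dot\alpha^{\,n}$ on $\K_0\otimes\ell^2$; this is maximal. The covariance relation means that $T\otimes S$ (with $S$ the shift) intertwines the relevant compressions, and on the nose $\Pi$ satisfies $\Pi(a)(I\otimes S) = (I\otimes S)\Pi(\dot\alpha(a))$ in the appropriate sense once we set things up as in Theorem~\ref{T:dilate covariant}. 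More precisely I would work with $\rho\oplus(\rho\circ\alpha)$ on $\H\oplus\H$, note that $\begin{sbmatrix}0&T\\0&0\end{sbmatrix}$ commutes with $(\rho\oplus(\rho\circ\alpha))(\A)$, and apply SMCLT to the \emph{prescribed} maximal dilation $\pi_0\oplus(\pi_0\circ\dot\alpha)$: SMCLT yields a contraction $Y$ commuting with that maximal dilation and having $\H\oplus\H$ as a common semi-invariant subspace with $P Y^n P = \begin{sbmatrix}0&T\\0&0\end{sbmatrix}^n$. Extract the $1,2$-corner $T_1$, a contraction with $\pi_0(a)T_1 = T_1\,(\pi_0\circ\dot\alpha)(a) = T_1\,\pi_0(\dot\alpha(a))$.

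Now invoke FP. Because $\pi_0$ is a $*$-representation of $\cenv(\A)$ and $Y$ commutes with $(\pi_0\oplus(\pi_0\circ\dot\alpha))(\A)$, the Fuglede property gives that $Y$ commutes with $(\pi_0\oplus(\pi_0\circ\dot\alpha))(\cenv(\A))$; in particular $T_1$ satisfies $\pi_0(x)T_1 = T_1\,\pi_0(\dot\alpha(x))$ for every $x\in\cenv(\A)$, i.e. $(\pi_0,T_1)$ is a covariant representation of the C*-system $(\cenv(\A),\dot\alpha)$ with $T_1$ a contraction. Then apply the standard Schaeffer-type dilation of a contractive covariant representation of a C*-dynamical system to a unitary one (the coefficients of the Schaeffer dilation lie in the C*-algebra generated by $T_1$ and $\pi_0(\cenv(\A))$, so they again commute appropriately): this produces a Hilbert space $\K\supset\H$, a $*$-representation $\pi$ of $\cenv(\A)$ and a unitary $U$ on $\K$ with $\pi(x)U = U\pi(\dot\alpha(x))$ and $P_\H\pi(a)U^n|_\H = \rho(a)T^n$ for all $a\in\A$ and $n\ge 0$ --- exactly the dilation asserted. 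Consequently the seminorm on $\A\times_\alpha\bZ_+$ is dominated by the one coming from isometric covariant pairs over $\cenv(\A)$, so $\A\times_\alpha\bZ_+ = \A\times^{is}_\alpha\bZ_+$ completely isometrically, and Theorem~\ref{T:KKis} identifies the latter with a subalgebra of $\cenv(\A)\times_\alpha\bZ_+$.

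The main obstacle I anticipate is the bookkeeping at the SMCLT step: SMCLT as stated dilates $\rho$ and a commuting \emph{contraction} $X$ on the same space to $\pi$ and $Y$ with $P_\H\pi(a)Y^n|_\H = \rho(a)X^n$, whereas here the "commuting" object is the nilpotent $\begin{sbmatrix}0&T\\0&0\end{sbmatrix}$ on $\H\oplus\H$ and the dilation must be chosen to respect the block structure $\pi_0\oplus(\pi_0\circ\dot\alpha)$ so that the $1,2$-corner $T_1$ is genuinely a coextension of $T$ intertwining $\pi_0$ with $\pi_0\circ\dot\alpha$ --- this is the same rearrangement-of-matrix-blocks argument used in the proof of Theorem~\ref{T:SCLT} and Proposition~\ref{P:SCLT intertwine}, and it must be checked that the uniqueness of minimal maximal dilations (the extra content of SMCLT over MCLT) is what lets the prescribed $\pi_0$ be used rather than some auxiliary maximal dilation one has no control over. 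The second, more routine, point to get right is that the Schaeffer dilation of the C*-covariant pair $(\pi_0,T_1)$ preserves the property of being a representation of $\cenv(\A)$ (not just of $\A$) and that its defining unitary still commutes with $\pi(\dot\alpha(\cdot))$ --- this is essentially Arveson's commutant-lifting result cited in the proof of Theorem~\ref{T:ICLT}, applied to the C*-dynamical system.
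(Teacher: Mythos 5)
Your proposal matches the paper's proof in all essentials: dilate $\rho$ to a maximal ($*$-)dilation $\pi$, apply SMCLT to $\rho\oplus(\rho\circ\alpha)$ with the \emph{prescribed} maximal dilation $\pi\oplus(\pi\circ\alpha)$ to lift the nilpotent $\begin{sbmatrix}0&T\\0&0\end{sbmatrix}$, and then use FP to promote the resulting intertwiner to a covariant contraction for the full C*-system $(\cenv(\A),\alpha)$. The only divergence is at the very end: the paper stops there, since a contractive covariant representation of $(\cenv(\A),\alpha)$ already gives $\|\rho\times T(p)\|\le\|p\|_{\cenv(\A)\times_\alpha\bZ_+}$ directly from the defining supremum, so your further Schaeffer/unitary dilation and the detour through $\A\times^{is}_\alpha\bZ_+$ are not needed for this theorem (that extra dilation is essentially the content of the subsequent corollary).
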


\begin{proof}
To establish that $\A \times_\alpha \bZ_+$ is completely isometric to a 
subalgebra of $\cenv(\A) \times_\alpha \bZ_+$, 
it suffice to show that if $(\rho,T)$ is a covariant representation
of $(\A,\alpha)$, then $\rho$ has a $*$-dilation $\pi$ of $\cenv(\A)$ on a Hilbert
space $\K \supset \H$ and a contraction $S$ dilating $T$ to $\K$ such that
$(\pi,S)$ is a covariant representation of $(\cenv(\A),\alpha)$ with
$\H$ as a coinvariant subspace.
For this then shows that 
\[
 \| \rho\times T(p) \| \le \| \pi \times S(p) \| \le \|p\|_{\cenv(\A) \times_\alpha \bZ_+} .
\]
The reverse inequality is evident.
Hence the norm on $\A \times_\alpha \bZ_+$ will coincide with the norm as a
subalgebra of $\cenv(\A) \times_\alpha \bZ_+$.

First dilate $\rho$ to a maximal dilation $\pi$.
This extends to a $*$-repre\-sent\-ation of $\cenv(\A)$ which we also denote by $\pi$.  
We may write:
\[
 \pi(a) = \begin{bmatrix} *&0&0\\ *&\rho(a) & 0 \\ * & * & *\end{bmatrix}
 \qfor a \in \A .
\]
Observe that the covariance condition is equivalent to
\[ 
 \begin{bmatrix} \rho(a) & 0 \\ 0 & \rho(\alpha(a)) \end{bmatrix}
 \begin{bmatrix} 0 & T \\ 0 & 0 \end{bmatrix} =
 \begin{bmatrix} 0 & T \\ 0 & 0 \end{bmatrix} 
 \begin{bmatrix} \rho(a) & 0 \\ 0 & \rho(\alpha(a)) \end{bmatrix} .
\]
Now $\rho \oplus (\rho\circ\alpha)$ dilates to a $*$-dilation 
\[ \pi\oplus(\pi\circ\alpha) .\]
So by SMCLT, 
there is a contraction dilating 
\[ \begin{bmatrix}0&T\\0&0\end{bmatrix} \]
which commutes with $(\pi\oplus(\pi\circ\alpha))(\A)$ of the form
\[ \tilde S = \begin{bmatrix}*&S\\ *&*\end{bmatrix} \]
so that  $\H\oplus\H$ is simultaneously semi-invariant for $(\pi\oplus\pi\circ\alpha)(\A)$ 
and $\tilde S$.
There is no loss in assuming that the $*$ entries are all $0$.
Commutation again means that $(\pi,S)$ is a covariant representation of $\A$.

Now by the Fuglede property, the adjoint 
\[ \begin{bmatrix}0&0\\S^*&0\end{bmatrix} \]
also commutes with $(\pi\oplus(\pi\circ\alpha))(\A)$.  
That means that
\[ \pi(\alpha(a)) S^* = S^* \pi(a) \qforal a \in \A . \]
Equivalently, since $\pi$ is a $*$-representation, 
\[  \pi(a^*) S = S \pi(\alpha(a^*)) \qforal a \in \A .\]
But the set of operators in $\cenv(\A)$ satisfying the covariance relations
is a closed algebra, and contains $\A$ and $\A^*$, whence it is all of $\cenv(\A)$.
Thus we have obtained the desired dilation to covariance relations for
$(\cenv(\A),\alpha)$.
Hence $\A \times_\alpha \bZ_+$ is (canonically completely isometrically isomorphic to) 
a  subalgebra of $\cenv(\A) \times_\alpha \bZ_+$.
\end{proof}

The following is immediate from the dilation theory for $\cenv(\A) \times_\alpha \bZ_+$
and Theorem~\ref{T:KKis}.
Combining this with Theorem~\ref{T:KK_C*}, one obtains an explicit description
of this C*-envelope.

\begin{cor}\label{C:FS_dilate}
Suppose that a unital operator algebra $\A$ has FP and SMCLT, 
and $\alpha\in \End_\A(\cenv(\A))$.
Then every covariant representation $(\rho,T)$ of $(\A,\alpha)$ dilates to
a covariant representation $(\pi,U)$ of $(\cenv(\A),\alpha)$
where $\pi$ is a $*$-representation of $\cenv(\A)$ and $U$ is unitary.
Moreover,
\[ \cenv(\A \times_\alpha \bZ_+) = \cenv(\cenv(\A) \times_\alpha \bZ_+) .\]
\end{cor}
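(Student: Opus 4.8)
The plan is to deduce this from Theorem~\ref{T:FS_dilate}, the dilation theory of C*-dynamical systems, and the C*-envelope computations of Theorems~\ref{T:KK_C*} and~\ref{T:KKis}. Since $\alpha\in\End_\A(\cenv(\A))$, its action on the C*-algebra $\cenv(\A)$ is a faithful unital $*$-endomorphism, so Peters' theory and the Kakariadis--Katsoulis results apply verbatim to the C*-dynamical system $(\cenv(\A),\alpha)$.

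First I would produce the dilation of a covariant pair. Let $(\rho,T)$ be a covariant representation of $(\A,\alpha)$ on $\H$. The proof of Theorem~\ref{T:FS_dilate} already delivers a covariant representation $(\pi_1,S)$ of $(\cenv(\A),\alpha)$ with $\pi_1$ a $*$-representation of $\cenv(\A)$, $S$ a contraction, and $\H$ jointly semi-invariant for $\pi_1(\A)$ and $S$, so that $(\pi_1,S)$ dilates $(\rho,T)$. To replace $S$ by a unitary I would view $\pi_1\times S$ as a completely contractive representation of $\cenv(\A)\times_\alpha\bZ_+$; by Dritschel--McCullough it dilates to a maximal representation, which extends to a $*$-representation of $\cenv(\cenv(\A)\times_\alpha\bZ_+)$. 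By Theorem~\ref{T:KK_C*} this C*-envelope is $\fB\times_\beta\bZ$, where $(\fB,\beta)$ is the automorphic lifting of $(\cenv(\A),\alpha)$ with $\beta|_{\cenv(\A)}=\alpha$. A $*$-representation of the crossed product $\fB\times_\beta\bZ$ is a covariant pair $(\Pi,U)$ with $\Pi$ a $*$-representation of $\fB$ and $U$ unitary; restricting $\Pi$ to $\cenv(\A)\subseteq\fB$ yields a covariant representation $(\pi,U)$ of $(\cenv(\A),\alpha)$ with $\pi$ a $*$-representation and $U$ unitary, which dilates $(\pi_1,S)$ and hence $(\rho,T)$. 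This proves the first assertion. (Equivalently, one may simply invoke the classical fact that a contractive covariant representation of a C*-dynamical system dilates to one with unitary ``$U$''.)

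For the C*-envelope statement I would pass through the isometric semicrossed product. Restricting the covariant pair $(\pi,U)$ just constructed to $\A$ gives an isometric (in fact unitary) covariant representation $(\pi|_\A,U)$ of $(\A,\alpha)$, and since $\H$ is jointly semi-invariant, $\|\rho\times T(p)\|=\|P_\H\,((\pi|_\A)\times U)(p)\,|_\H\|\le\|((\pi|_\A)\times U)(p)\|\le\|p\|_{\A\times^{is}_\alpha\bZ_+}$ for every $p\in\P(\A,\ft)$, with the analogous estimate at every matrix level. Taking the supremum over all covariant pairs $(\rho,T)$ gives $\|p\|_{\A\times_\alpha\bZ_+}\le\|p\|_{\A\times^{is}_\alpha\bZ_+}$, and the reverse inequality is automatic; hence the canonical surjection is a complete isometry, so $\A\times_\alpha\bZ_+=\A\times^{is}_\alpha\bZ_+$ canonically. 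Theorem~\ref{T:KKis} then yields $\cenv(\A\times_\alpha\bZ_+)=\cenv(\A\times^{is}_\alpha\bZ_+)=\cenv(\cenv(\A)\times_\alpha\bZ_+)$, and combining with Theorem~\ref{T:KK_C*} identifies this explicitly as $\fB\times_\beta\bZ$.

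The only point requiring care beyond bookkeeping is the unitary dilation of the contractive C*-covariant pair $(\pi_1,S)$ in the second paragraph; but this is standard C*-dynamics and is in any case subsumed by the C*-envelope identification of Theorem~\ref{T:KK_C*}. Everything else is an assembly of Theorems~\ref{T:FS_dilate}, \ref{T:KK_C*} and~\ref{T:KKis}, so I expect no genuine obstacle.
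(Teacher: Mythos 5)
Your proposal is correct and follows essentially the same route as the paper, which states that the corollary is immediate from Theorem~\ref{T:FS_dilate}, the dilation theory for $\cenv(\A)\times_\alpha\bZ_+$ (via Theorem~\ref{T:KK_C*}), and Theorem~\ref{T:KKis}; you have simply written out that assembly in detail. The only point worth noting is that $\pi|_\A$ need not be completely isometric, so to compare with the $\A\times^{is}_\alpha\bZ_+$ norm you should either direct sum with a faithful isometric covariant pair or pass through $\|p\|_{\cenv(\A)\times_\alpha\bZ_+}$, which Theorem~\ref{T:KKis} identifies with the isometric semicrossed product norm — a routine adjustment, not a gap.
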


\section{Further Examples} \label{S:examples}

\subsection*{The disk algebra.}
The first application yields a recent result about semicrossed products by
completely isometric endomorphisms for the disk algebra \cite{DKdisk}.
We note that endomorphisms which are not completely isometric are also 
treated there, but our results do not apply in that case.

The C*-envelope of the disk algebra $\AD$ is $\CT$, 
which is generated by the unitary element $z$.
The classical Fuglede Theorem shows that $\AD$ has FP.
Also the classical Sz.Nagy--Foia\c{s} Commutant Lifting Theorem yields
the properties SCLT and SMCLT. 
Ando's property is Corollary~\ref{C:ando}, which was
a strengthening of Ando's theorem.
As $\AD$ is Dirichlet, we have uniqueness of extremal coextensions and
of extremal extensions, which are also consequences of the original
Sz.Nagy theory.  As $\AD \simeq \AD^*$, we have SCLT* as well.
Indeed, $\AD$ has all of the properties studied in this paper.

Suppose that $\alpha \in \End_{\AD}(\CT)$.
Then $b = \alpha(z) \in \AD$; and has spectrum 
\[
 \sigma_{\AD}(b) = \sigma_{\AD}(z) = \ol{\bD} 
 \qand \sigma_{\CT}(b) = \sigma_{\CT}(z) = \bT.
\]
Thus $\ran(b)= \ol{\bD}$ and $\ran(b|_\bT)=\bT$.
It follows that $b$ is a non-constant finite Blaschke product.
We have $\alpha(f) = f \circ b$ for all $f \in \CT$.

\begin{thm}
Let $b$ be a non-constant finite Blaschke product, 
and let $\alpha(f) = f \circ b$ in $\End_{\AD}(\CT)$.
Then $\AD \times_{\alpha} \bZ_+$ is $($canonically completely isometrically 
isomorphic to$)$ a subalgebra of $\CT \times_\alpha \bZ_+$; and
\[
 \cenv(\AD \times_{\alpha} \bZ_+) =\cenv(\CT \times_\alpha \bZ_+) .
\]
This is explicitly described as $\rC(\S_{\alpha}) \times_{\beta} \bZ$
where $\S_{\alpha}$ is the solenoid
\[ \S_{\alpha} = \{ (z_n)_{n\ge1 } : z_n = b(z_{n+1}), z_n \in \bT, n \ge1 \} \]
and $\beta$ is the backword shift on $\S_{\alpha}$.
\end{thm}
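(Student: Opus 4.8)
The plan is to combine three ingredients: the abstract machinery already developed in this paper, the explicit computation of the C*-envelope of a C*-dynamical system from Theorem~\ref{T:KK_C*}, and the classical description of the C*-envelope of a composition operator by a finite Blaschke product in terms of solenoids. First I would verify that $\AD$ satisfies the hypotheses of Theorem~\ref{T:FS_dilate}: it has FP by the classical Fuglede Theorem, and it has SMCLT by Corollary~\ref{C:D-MCLT} together with the Sz.Nagy--Foia\c{s} Commutant Lifting Theorem (equivalently, since $\AD$ is Dirichlet with SCLT and SCLT*, and has unique minimal maximal dilations). Given a non-constant finite Blaschke product $b$, the map $\alpha(f) = f \circ b$ is a $*$-endomorphism of $\CT = \cenv(\AD)$ restricting to a completely isometric endomorphism of $\AD$, so $\alpha \in \End_{\AD}(\CT)$. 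Then Theorem~\ref{T:FS_dilate} immediately gives that $\AD \times_\alpha \bZ_+$ embeds canonically completely isometrically into $\CT \times_\alpha \bZ_+$, and Corollary~\ref{C:FS_dilate} gives $\cenv(\AD \times_\alpha \bZ_+) = \cenv(\CT \times_\alpha \bZ_+)$.

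The next step is to identify $\cenv(\CT \times_\alpha \bZ_+)$ concretely. Here I would invoke Theorem~\ref{T:KK_C*}: for the faithful $*$-endomorphism $\alpha$ of the C*-algebra $\fA = \CT$, we have $\cenv(\fA \times_\alpha \bZ_+) = \fB \times_\beta \bZ$ where $(\fB,\beta)$ is the direct limit (automorphic dilation) of $(\CT, \alpha)$ constructed in Proposition~\ref{P:inductive} and Proposition~\ref{P:liftauto}. So the task reduces to computing this direct limit. Since $\CT \cong \rC(\bT)$ and $\alpha$ is the pullback by $z \mapsto b(z)$, the dual (Gelfand) picture turns the inductive system of algebras into a projective system of spaces
\[
 \bT \xleftarrow{\ b\ } \bT \xleftarrow{\ b\ } \bT \xleftarrow{\ b\ } \cdots,
\]
whose inverse limit is exactly the solenoid
\[
 \S_\alpha = \{ (z_n)_{n \ge 1} : z_n = b(z_{n+1}),\ z_n \in \bT \}.
\]
Hence $\fB = \varinjlim (\CT, \alpha) \cong \rC(\S_\alpha)$, and the automorphism $\beta$ dual to the shift on the inverse system is the backward shift $(z_n)_{n \ge 1} \mapsto (z_{n+1})_{n \ge 1}$ on $\S_\alpha$ (one should double-check the direction convention against the definition of $\beta$ in Proposition~\ref{P:inductive}, adjusting ``backward'' vs.\ ``forward'' terminology accordingly). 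Therefore $\cenv(\CT \times_\alpha \bZ_+) = \rC(\S_\alpha) \times_\beta \bZ$, which chains together with the previous paragraph to yield the displayed formula.

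The remaining point is the observation, already noted in the excerpt preceding the theorem, that $b$ \emph{must} be a non-constant finite Blaschke product: if $\alpha \in \End_{\AD}(\CT)$ then $b = \alpha(z) \in \AD$ satisfies $\sigma_{\AD}(b) = \ol{\bD}$ and $\sigma_{\CT}(b) = \bT$, i.e.\ $b$ maps $\ol{\bD}$ onto $\ol{\bD}$ and $\bT$ onto $\bT$, forcing $b$ to be a finite Blaschke product; so the theorem in fact covers all completely isometric endomorphisms of $\AD$ that lift to $\CT$. I expect the main obstacle to be the careful bookkeeping in the second step: matching the direction of the inverse limit and the shift $\beta$ with the conventions of Propositions~\ref{P:inductive} and~\ref{P:liftauto}, and confirming rigorously that the Gelfand transform carries the $C^*$-algebraic direct limit of $(\CT,\alpha)$ onto $\rC$ of the topological inverse limit (continuity of $b$ and compactness make this routine, but it needs to be stated). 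Everything else is a direct concatenation of results already proved: Theorem~\ref{T:FS_dilate}, Corollary~\ref{C:FS_dilate}, and Theorem~\ref{T:KK_C*}.
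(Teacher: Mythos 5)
Your proposal is correct and follows exactly the route the paper takes: Theorem~\ref{T:FS_dilate} and Corollary~\ref{C:FS_dilate} (using FP and SMCLT for $\AD$) give the embedding and the equality of C*-envelopes, and Theorem~\ref{T:KK_C*} gives the explicit crossed-product description. The only difference is that you spell out the Gelfand-dual identification of the direct limit with $\rC(\S_\alpha)$, which the paper leaves implicit; your caution about the forward/backward shift convention is harmless since the two crossed products are isomorphic anyway.
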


\begin{proof}
The first statement follows from Theorems~\ref{T:FS_dilate}.  
The detailed description of the C*-envelope comes from the
Kakariadis-Katsoulis Theorem~\ref{T:KK_C*}.
\end{proof}

It is worth restating this theorem as a dilation result.

\begin{cor}
Let $b$ be a non-constant finite Blaschke product and 
suppose that $S$ and $T$ are contractions satisfying $ST=T\, b(S)$. 
Then there exist unitary operators $U$ and $V$, dilating $S$ and $T$ respectively, 
so that $UV=V\, b(U)$.
\end{cor}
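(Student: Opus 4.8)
The plan is to recognize this corollary as the pure-operator-theoretic restatement of the preceding theorem, so almost all of the work has already been done. Given contractions $S$ and $T$ on a Hilbert space $\H$ with $ST = T\,b(S)$, I would first observe that $S$ determines a completely contractive representation $\rho$ of $\AD$ by $\rho(f) = f(S)$ (von Neumann's inequality), and that the relation $ST = T\,b(S)$ says precisely that $\rho(z)T = T\rho(\alpha(z))$ where $\alpha(f) = f\circ b$. Since the set of $f \in \AD$ for which $\rho(f)T = T\rho(\alpha(f))$ is a norm-closed subalgebra containing $z$ and $1$, it is all of $\AD$; hence $(\rho,T)$ is a covariant representation of $(\AD,\alpha)$.

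Next I would invoke Corollary~\ref{C:FS_dilate}: since $\AD$ has the Fuglede property and SMCLT (both recalled in the discussion just above, from the classical Fuglede and Sz.Nagy--Foia\c{s} theorems), and $\alpha \in \End_{\AD}(\CT)$, the covariant representation $(\rho,T)$ dilates to a covariant representation $(\pi,U)$ of $(\CT,\alpha)$ with $\pi$ a $*$-representation of $\CT$ and $U$ unitary, acting on some $\K \supset \H$. Writing $V := U$ and setting $W := \pi(z)$, unitarity of $\pi$ forces $W = \pi(z)$ to be a unitary operator dilating $S = \rho(z) = P_\H\pi(z)|_\H$, and the covariance relation $\pi(z)U = U\pi(\alpha(z)) = U\,\pi(b\circ z) = U\,b(\pi(z))$ gives exactly $WV = V\,b(W)$. (Here I use that $\pi$ is a unital $*$-homomorphism, so $\pi(\alpha(z)) = \pi(b) = b(\pi(z))$, $b$ being a polynomial in $z$ and $\bar z$ evaluated on a unitary — in fact $b$ is a finite Blaschke product, a rational function with poles off $\bT$, so $b(\pi(z))$ makes sense via the continuous functional calculus.)

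The only genuinely careful point is that the dilations $U$ and $\pi(z)$ should be called $V$ and $U$ to match the statement, and that one should confirm $U$ (renamed $V$) genuinely dilates $T$ and $W$ (renamed $U$) genuinely dilates $S$ in the sense of compression to $\H$: this is built into the conclusion of Corollary~\ref{C:FS_dilate}, where $\H$ is coinvariant for the covariant pair, so in particular $P_\H \pi(z)|_\H = \rho(z) = S$ and $P_\H U|_\H = T$. I do not anticipate a real obstacle here — the substance is entirely in Theorem~\ref{T:FS_dilate} and its corollary — so the proof is essentially a two-line translation, and I would present it as such: set up the covariant representation, apply the corollary, and read off $UV = V\,b(U)$ from the covariance relation for $*$-representations. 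The one thing to be slightly watchful about is the direction of the covariance convention ($\rho(a)T = T\rho(\alpha(a))$ versus the reverse), so I would double-check that the relation $ST = T\,b(S)$ matches the convention in the definition of covariant representation as stated in Section~\ref{S:scp}; with that convention it matches on the nose.

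\begin{proof}
Let $S$ and $T$ be contractions on a Hilbert space $\H$ with $ST = T\,b(S)$.
By von Neumann's inequality, $\rho(f) := f(S)$ defines a completely contractive
representation of $\AD$ on $\H$.  Set $\alpha(f) = f\circ b$, which lies in
$\End_{\AD}(\CT)$ as noted above.  The set of $f \in \AD$ with
$\rho(f)T = T\rho(\alpha(f))$ is a norm-closed subalgebra of $\AD$ containing
$1$ and $z$ (the latter because $\rho(z)T = ST = T\,b(S) = T\rho(\alpha(z))$),
hence is all of $\AD$.  Thus $(\rho,T)$ is a covariant representation of
$(\AD,\alpha)$.

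Since $\AD$ has FP and SMCLT, Corollary~\ref{C:FS_dilate} provides a Hilbert
space $\K \supset \H$, a $*$-representation $\pi$ of $\CT$ on $\K$, and a
unitary $U$ on $\K$ such that $(\pi,U)$ is a covariant representation of
$(\CT,\alpha)$ and $\H$ is coinvariant; in particular
$P_\H\,\pi(z)|_\H = \rho(z) = S$ and $P_\H U|_\H = T$.
Put $V := U$ and $W := \pi(z)$.  Then $W$ is unitary and dilates $S$, $V$ is
unitary and dilates $T$.  Since $\pi$ is a unital $*$-homomorphism and $b$ is a
finite Blaschke product, $\pi(\alpha(z)) = \pi(b) = b(\pi(z)) = b(W)$, and the
covariance relation $\pi(z)U = U\,\pi(\alpha(z))$ reads
\[ WV = V\, b(W) . \]
This is the asserted unitary dilation.
\end{proof}
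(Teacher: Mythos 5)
Your proposal is correct and follows the same route the paper intends: the corollary is just the operator-theoretic restatement of the preceding theorem, obtained by packaging $(S,T)$ as a covariant representation of $(\AD,\alpha)$ with $\alpha(f)=f\circ b$ and invoking Corollary~\ref{C:FS_dilate} (via FP and SMCLT for $\AD$), then reading off $UV=Vb(U)$ from the covariance relation for the $*$-representation. Your extra checks (that the covariance relation propagates from $z$ to all of $\AD$, and that $\pi(b)=b(\pi(z))$ on the unitary $\pi(z)$) are exactly the routine verifications the paper leaves implicit.
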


\bigbreak
\subsection*{The non-commutative disk algebras.}
For $n\ge2$ finite, the non-commutative disk algebra $\fA_n$ has
the Cuntz algebra $\O_n$ as its C*-envelope. 
The Frazho-Bunce-Popescu dilation theorem \cite{Fra,Bun,Pop1} shows that
the minimal extemal coextension of a representation is unique.
This also follows because $\fA_n$ is semi-Dirichlet.
Popescu \cite{Pop3} proves the SCLT property in a similar manner to the
original proof of Sz.Nagy and Foia\c{s}.
The FP property follows from Proposition~\ref{P:FP_matrix}.

There are many distinct ways to extend the left regular representation to
a maximal representation (see \cite[\secsymb 3]{DP1}).
In particular, the minimal fully extremal extensions are not unique.
Nevertheless, $\fA_n$ has ICLT and MCLT.  
This follows from our paper \cite{DavKatsdilation} specialized to the identity automorphism.

The completely isometric automorphisms of $\fA_n$ are the analogues
of the conformal automorphisms of the ball $\bB_n$ of $\bC^n$.
These were first described by Voiculescu \cite{Voic} as $*$-automorphisms
of $\O_n$ which fix the analytic part.
These are the only such automorphisms of $\fA_n$ \cite{DP2}.
See also \cite{Pop4}.
Thus we recover our results on semicrossed products of $\fA_n$ in 
\cite{DavKatsdilation} as a consequence of Theorem~\ref{T:ISCLT_cenv}.

\begin{thm}[Davidson-Katsoulis]
If $\alpha\in\Aut(\fA_n) = \Aut_{\fA_n}(\O_n)$, then 
\[ \cenv(\fA_n\times_\alpha \bZ_+) =  \O_n\times_\alpha\bZ .\]
\end{thm}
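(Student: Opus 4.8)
The strategy is to verify that $\fA_n$ satisfies the hypotheses of Theorem~\ref{T:ISCLT_cenv}, namely SCLT and ICLT, and then to identify the completely isometric automorphisms. First I would recall that $\cenv(\fA_n) = \O_n$, which is established in the work of Popescu and the Wold decomposition of \cite{DP1}; this is already discussed in Example~\ref{Eg:ncdisk}. The SCLT property for $\fA_n$ is Popescu's noncommutative commutant lifting theorem \cite{Pop1}, noted in Example~\ref{E:SCLTtensor}; alternatively, since $\fA_n$ is semi-Dirichlet (Example~\ref{E:tensor_sD}) and has MCLT, one may invoke Proposition~\ref{P:sD-MCLT}. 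The ICLT property follows from Corollary~\ref{C:AndoGraph}, which states that the tensor algebra of a directed graph, and in particular $\fA_n$, has the Ando property; the Ando property implies both CLT and ICLT.

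Next I would pin down $\Aut(\fA_n)$. By Proposition~\ref{P:liftauto}, every completely isometric automorphism of $\fA_n$ lifts to a $*$-automorphism of $\cenv(\fA_n) = \O_n$ fixing $\fA_n$ as a set, so $\Aut(\fA_n) \simeq \Aut_{\fA_n}(\O_n)$ by restriction. (These are the analogues of the conformal automorphisms of the ball, described by Voiculescu \cite{Voic} and shown to exhaust $\Aut(\fA_n)$ in \cite{DP2}, but the identification $\Aut(\fA_n) = \Aut_{\fA_n}(\O_n)$ is all that is needed here and comes directly from Proposition~\ref{P:liftauto}.) Thus for any $\alpha \in \Aut(\fA_n)$, the lifted map $\alpha \in \End_{\fA_n}(\O_n)$ is in fact an automorphism of $\O_n$.

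Now apply Theorem~\ref{T:ISCLT_cenv}: since $\fA_n$ has SCLT and ICLT and $\alpha \in \Aut(\fA_n)$, we conclude that $\fA_n \times_\alpha \bZ_+$ is canonically completely isometrically isomorphic to a subalgebra of $\cenv(\fA_n) \times_\alpha \bZ_+ = \O_n \times_\alpha \bZ_+$, and moreover
\[
 \cenv(\fA_n \times_\alpha \bZ_+) = \cenv(\O_n) \times_\alpha \bZ = \O_n \times_\alpha \bZ ,
\]
where the last equality uses that the C*-envelope of a C*-algebraic semicrossed product by a $*$-automorphism is the full crossed product (as in the discussion following Theorem~\ref{T:dilate covariant}, since $\alpha$ is invertible on $\O_n$). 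This is exactly the asserted identity.

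\textbf{Main obstacle.} The substantive content is entirely packaged into the cited results; the only real point requiring care is confirming that $\fA_n$ genuinely has \emph{both} SCLT and ICLT in the precise senses used in Theorem~\ref{T:ISCLT_cenv}. SCLT is immediate from Popescu's commutant lifting theorem, but one should be slightly careful that the extremal coextensions there (row isometric representations, cf.\ Example~\ref{Eg:ncdisk}) are handled correctly relative to the fully extremal ones; since every row isometric coextension splits as the minimal one direct sum a row unitary, the uniqueness of minimal fully extremal coextensions holds and Theorem~\ref{T:SCLT} applies. ICLT is the less classical input and rests on Corollary~\ref{C:AndoGraph} (equivalently \cite{DavKatsdilation}). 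Once these two facts are in hand, the theorem is a direct citation of Theorem~\ref{T:ISCLT_cenv}, so I expect no serious difficulty beyond assembling the references.
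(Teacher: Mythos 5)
Your proposal is correct and follows essentially the same route as the paper: verify that $\cenv(\fA_n)=\O_n$, that $\fA_n$ has SCLT (Popescu's commutant lifting) and ICLT (via the Ando property for tensor algebras, equivalently \cite{DavKatsdilation} at the identity automorphism), identify $\Aut(\fA_n)$ with $\Aut_{\fA_n}(\O_n)$, and then invoke Theorem~\ref{T:ISCLT_cenv}. The only cosmetic slip is writing $\cenv(\O_n)\times_\alpha\bZ$ where $\cenv(\fA_n)\times_\alpha\bZ$ is meant, which is harmless since $\cenv(\O_n)=\O_n$.
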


It is also easy to determine $\End(\O_n)$.  
Every $n$-tuple of isometries $\ft_i \in \O_n$ such that $\sum_{i=1}^n \ft_i \ft_i^* = 1$
determines an endomorphism with $\alpha(\fs_i) = \ft_i$ by the universal property
of the Cuntz algebra.  
For the endomorphism $\alpha$ to leave $\fA_n$ invariant, 
it is then necessary and sufficient that $\ft_i$ belong to $\fA_n$.
Given that $\End(\O_n)$ is so rich, the following result seems surprising.

\begin{thm}\label{T:ncdiskendo}
For $n\ge2$ finite,
\[ \End_{\fA_n}(\O_n) = \Aut_{\fA_n}(\O_n) .\]
\end{thm}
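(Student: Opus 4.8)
The plan is to show that any $\alpha \in \End_{\fA_n}(\O_n)$ is automatically surjective, hence an automorphism, and then invoke Proposition~\ref{P:liftauto}. Such an $\alpha$ is determined by the images $\ft_i := \alpha(\fs_i)$, which must be isometries in $\fA_n$ with pairwise orthogonal ranges summing to the identity, i.e.\ $\ft = [\ft_1\ \dots\ \ft_n]$ is a row unitary with entries in $\fA_n$. So $\ft$ defines a completely contractive representation $\rho$ of $\fA_n$ on the Fock space $\Fock$ by $\rho(\fs_i) = \ft_i$ (using the left regular representation to realize $\fA_n \subset \O_n$ concretely, as in Example~\ref{Eg:ncdisk}); the key point is that this representation, being given by a row \emph{unitary}, is in fact a $*$-representation of $\O_n$. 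Thus $\alpha$, viewed as the restriction of this $*$-representation to $\fA_n$, extends to a $*$-endomorphism of $\O_n$, and the real content is that $\ft$ generates all of $\O_n$.

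First I would record the structure of row-unitary entries in $\fA_n$: writing $\ft_i = \sum_{w \in \Fn} c_{i,w} L_w$ (Fourier expansion on Fock space, $L_w$ the creation operators), the relations $\ft_i^*\ft_j = \delta_{ij}I$ and $\sum_i \ft_i\ft_i^* = I$ force strong constraints. The natural approach is to look at the ``lowest degree'' term. Let $k_i$ be the least length of a word $w$ with $c_{i,w} \ne 0$. Since $\ft_i$ is an isometry in the nonself-adjoint algebra $\fA_n$ whose abelianization/character theory is governed by the ball $\bB_n$, I expect that the scalar point evaluations (characters of $\fA_n$ correspond to points of $\overline{\bB_n}$) applied to the covariance force each $\ft_i$ to have the form of a ``non-commutative M\"obius''-type map. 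Concretely, I would argue that the induced map on characters $\lambda \mapsto (\chi_\lambda(\ft_1),\dots,\chi_\lambda(\ft_n))$ must carry $\overline{\bB_n}$ onto $\overline{\bB_n}$ and the sphere $\partial \bB_n$ onto itself (the analogue of the Blaschke-product argument used for $\AD$ earlier in this section), which pins down the $\ft_i$ tightly enough that the C*-algebra they generate contains the generators $\fs_i$, hence is all of $\O_n$. An alternative, more algebraic route: use that $\ca(\ft_1,\dots,\ft_n)$ is a quotient of $\O_n$ by a gauge-invariant ideal and $\O_n$ is simple, so $\ca(\ft) = \O_n$ automatically, provided the $\ft_i$ are genuine isometries with orthogonal ranges summing to $I$ — which they are. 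This simplicity argument in fact makes the whole thing short: $\alpha$ extends to a $*$-endomorphism $\tilde\alpha$ of $\O_n$ by universality, $\ker\tilde\alpha$ is an ideal of the simple algebra $\O_n$, so $\tilde\alpha$ is injective; and an injective $*$-endomorphism of $\O_n$ whose range contains a set generating $\O_n$ as a C*-algebra — but here I still need surjectivity of $\tilde\alpha$, not just of $\alpha$ restricted to $\fA_n$.

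So the crux is surjectivity. I would handle it as follows: $\tilde\alpha(\O_n) = \ca(\ft_1,\dots,\ft_n)$ is a C*-subalgebra of $\O_n$ containing $1$, and it is isomorphic to $\O_n$ (by injectivity). It suffices to show $\fs_i \in \ca(\ft)$ for each $i$. Here I would use the gauge action and the conditional expectation onto the core UHF algebra, together with the fact that $\fA_n$ is the norm-closed algebra generated by $\fs_1,\dots,\fs_n$: the image $\alpha(\fA_n)$ is a norm-closed subalgebra of $\fA_n$ containing $1$ and the $\ft_i$, and I claim $\alpha(\fA_n) = \fA_n$. To see this one analyzes the semicrossed-product side is overkill; instead, I would invoke the classification of completely isometric \emph{endomorphisms} of $\fA_n$ directly via the known description of $\End(\fA_n)$ in terms of row-unitary $n$-tuples together with the rigidity of the character space: the map on $\overline{\bB_n}$ must be a proper holomorphic self-map that is also a bijection on the Shilov boundary $\partial\bB_n$, forcing it to be a biholomorphic automorphism of $\bB_n$ (proper self-maps of $\bB_n$ that are boundary bijections are automorphisms for $n\ge2$ — this is where dimension $n\ge2$ is essential, contrasting with the finite-Blaschke-product case $n=1$). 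Then $\alpha$ is invertible on $\fA_n$ with completely isometric inverse, hence $\alpha \in \Aut(\fA_n)$. I expect the main obstacle to be exactly this last rigidity step — ruling out proper but non-injective ``covering-type'' self-maps of the noncommutative ball — and I would cite the analysis of $\Aut(\fA_n)$ via $\Aut(\bB_n)$ from \cite{DP2} (and \cite{Voic,Pop4}) to close it, noting that the $n=1$ obstruction disappears precisely because $\partial\bB_n$ is connected and a nontrivial covering would violate the boundary-bijectivity that row-unitarity imposes.
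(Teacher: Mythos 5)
You correctly identify the crux --- injectivity of the induced $*$-endomorphism $\tilde\alpha$ is free from simplicity of $\O_n$, and everything hinges on surjectivity --- but the argument you offer for surjectivity does not work. The Blaschke-product mechanism from the disk algebra has no analogue here: for $n\ge2$ the algebra $\O_n$ has no characters, so there is nothing playing the role of ``$\sigma_{\CT}(b)=\bT$'' that would force the scalar symbol map $\lambda\mapsto(\chi_\lambda(\ft_1),\dots,\chi_\lambda(\ft_n))$ to carry the sphere onto itself. The characters $\chi_\lambda$ of $\fA_n$ are not multiplicative on $\fA_n^*\fA_n$, so the relations $\ft_i^*\ft_j=\delta_{ij}1$ and $\sum_i\ft_i\ft_i^*=1$ give no control whatsoever on $\sum_i|\chi_\lambda(\ft_i)|^2$; you have no source for properness or boundary bijectivity of the symbol map. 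Note also that in the $n=1$ case this very reasoning produces a finite Blaschke product, which is proper and boundary-surjective yet generally \emph{not} an automorphism, so ``proper plus boundary control'' is the wrong dividing line; what you would need is Alexander's theorem (proper holomorphic self-maps of $\bB_n$ are automorphisms when $n\ge2$), and even granting that, the scalar symbol does not determine $\ft$, so $\ca(\ft_1,\dots,\ft_n)=\O_n$ still would not follow. Finally, the aside ``$\ca(\ft)$ is a quotient of the simple algebra $\O_n$, so $\ca(\ft)=\O_n$ automatically'' conflates being isomorphic to $\O_n$ with being equal to it as a subalgebra; you half-retract this, but nothing replaces it.

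The missing idea is operator-theoretic and lives on Fock space. Represent $\fA_n$ concretely by the left regular representation, with creation operators $L_i$, and set $T_i$ equal to the image of $\ft_i$; these lie in the Cuntz--Toeplitz algebra, and the quotient onto $\O_n$ kills exactly the compacts. Since the image of $\sum_iT_iT_i^*$ in $\O_n$ is $1$, the defect $P=I-\sum_iT_iT_i^*$ is a \emph{finite rank} projection. Its range $N$ is coinvariant for the commutant $\fR_n$, while $N^\perp=\sum_i\ran T_i$ is $\fR_n$-invariant with wandering subspace $W=\spn\{T_i\xi_\mt\}$ of dimension $n$. The identity $W=(N+\sum_iR_iN)\ominus N$ yields $n\ge 1+(n-1)\dim N$, forcing $\dim N=1$; this dimension count is precisely where $n\ge2$ enters, and it replaces your appeal to several-variable function theory. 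A one-dimensional coinvariant subspace is spanned by an eigenvector $\nu_\mu$ with $\mu\in\bB_n$, and composing with the M\"obius automorphism $\theta_\mu$ reduces to $N=\bC\xi_\mt$, where $\{T_i\xi_\mt\}$ is an orthonormal basis of $\spn\{\xi_1,\dots,\xi_n\}$ and a gauge unitary carries each $L_i$ to $T_i$; hence $\alpha$ is an automorphism. Without the finite-rank-defect and wandering-space step, your proposal does not close.
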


\begin{proof}
We represent $\fA_n$ on $\Fock$ by the left regular representation $\lambda$
with generators $L_i = \lambda(\fs_i)$, where 
\[ L_i \xi_w = \xi_{iw} .\]
Note that 
\[ \ca(\lambda(\fA_n)) = \E_n \]
is the Cuntz-Toeplitz algebra,
and that 
\[ q: \E_n \to \E_n/\fK = \O_n \]
is the quotient by the compact operators.
Let $\fR_n$ denote the \wot-closed right regular representation algebra generated by
$R_i$, $1 \le i \le n$, where 
\[ R_i \xi_w = \xi_{wi} .\]
Then $\lambda(\fA_n)' = \fR_n$ \cite{AP, DP1}.
We use the notation $R_v\xi_w = \xi_{wv}$ for words $v\in\Fn$.

Suppose that $\ft_i \in \fA_n$ are isometries such that 
\[ \sum_{i=1}^n \ft_i \ft_i^* = 1 .\]
Let 
\[ T_i = \lambda(\ft_i) \qand  T = \big[ T_1 \ \dots \ T_n \big] .\]
Since $q$ is a complete isometry on $\fA_n$, we have $\|T\| = 1$.
Also each $T_i$ is an isometry: because if $\|\zeta\|=1$ and $\|T\zeta\| \ne 1$,
then 
\[ \|T(R_v\zeta)\| = \|R_v T\zeta\| = \|T\zeta\| .  \]
Since $R_1^n$ tends to $0$ weakly, we see that $T$ is not an 
essential isometry, contrary to assumption.
So $T_1,\dots,T_n$ are isometries in $\lambda(\fA_n)$ with pairwise orthogonal range.
Since 
\[ q(\sum_{i=1}^n T_i T_i^*) = 1 , \]
we deduce that 
\[ P=I-\sum_{i=1}^n T_i T_i^* \]
is a finite rank projection.

The range of each $T_i$ is a cyclic invariant subspace for $\fR_n$, with cyclic vector
$\zeta_i = T_i \xi_\mt$.  
Let $N = \ran P$. 
Then $N^\perp$ is the sum of the ranges of the $T_i$, and so 
it is invariant for $\fR_n$ with wandering space 
\[ W = \spn\{ \zeta_i : 1 \le i \le n\} .\]
Thus $N$ is coinvariant.  Let 
\[ A_i = P_N R_i|_N  \qand A = \big[ A_1 \ \dots \ A_n \big].\]
Then $A$ is a row contraction with a row isometric dilation 
\[ R = \big[ R_1 \ \dots \ R_n \big] .\]
The minimal row isometric dilation is unique \cite{Pop1}, and any other is the direct sum
of the minimal dilation with another row isometry.
Since $R$ is irreducible, this is the minimal dilation of $A$.

By \cite{DKS}, the wandering space $W$ of $N^\perp$ is given by
\[ W = (N + \sum_{i=1}^n R_i N ) \ominus N .\]
Note that 
\[
 I_N - \sum_{i=1}^n A_iA_i^* 
 = P_N( I - \sum_{i=1}^n R_iR_i^*)|_N 
 = (P_N\xi_\mt)(P_N\xi_\mt)^*  .
\]
This is non-zero because if $N$ were orthogonal to $\xi_\mt$, 
then $\xi_\mt$ would also be orthogonal to the invariant subspace it generates, 
which is the whole space.
Thus $N$ is not contained in $\sum_{i=1}^n R_i N$ because this space is
orthogonal to $\xi_\mt$.
So now we compute
\begin{align*}
 n = \dim W &= \dim(N + \sum_{i=1}^n R_i N ) - \dim N \\
 &\ge (1 + n\dim N) - \dim N \\
 &= 1 + (n-1) \dim N .
\end{align*}
Therefore $\dim N \le 1$; whence $\dim N = 1$ because no $n$-tuple of 
isometries $T_i$ in $\fL_n$ is of Cuntz type.

The only coinvariant subspaces of dimension one are $\bC \nu_\lambda$,
where $\nu_\lambda$ is an eigenvector of $\fR_n^*$ \cite{AP, DP1}.
These are indexed by points $\lambda$ in the open unit ball $\bB_n$ of $\bC^n$.
It now follows from the analysis in \cite{DP2} that $\alpha$ is an automorphism.
Briefly, one can compose $\alpha$ with an automorphism $\theta_\lambda$ 
so that $\lambda = 0$ and so $N = \bC \xi_\mt$.  Then 
\[ W = \spn\{\xi_i:1 \le i \le n\} .\]
So $\{\zeta_i\}$ form an orthonormal basis for $W$.
The unitary $U \in \U_n$ which takes $\xi_i$ to $\zeta_i$ induces a gauge unitary
$\tilde U$ which takes each $L_i$ to $T_i$, as this is the unique element of
$\fA_n$ with $T_i\xi_\mt = \zeta_i$.  
Hence $\theta_\lambda \alpha = \ad \tilde U$ is an automorphism;
whence so is $\alpha$.
\end{proof}

\bigbreak
\subsection*{Finite dimensional nest algebras}
A finite dimensional nest algebra can be described as the block upper
triangular matrices with respect to a decomposition 
$\H = \H_1 \oplus \dots\oplus\H_k$ of a finite dimensional Hilbert space into
a direct sum of subspaces.
These are the incidence algebras which are Dirichlet.
They have SCLT, SCLT*, ICLT, ICLT*, MCLT and the Ando property.
By Example~\ref{E:FPnest}, finite dimensional nest algebras have FP.

The only isometric endomorphisms are isometric automorphisms. 
These are unitarily implemented, and the unitary preserves the nest.
(Ringrose \cite{Ring} characterizes the isomorphisms between nest
algebras in infinite dimensions, and includes the more elementary
finite dimensional case.  See \cite{DavNest}.)
Hence the unitary has the form $U = U_1 \oplus \dots \oplus U_k$
with respect to the decomposition of $\H$.  Clearly $\ad U$ extends to
a $*$-automorphism of the C*-envelope $\B(\H) \simeq \fM_n$,
where $n = \dim \H$.

\bigbreak
\subsection*{Graph Algebras and Tensor algebras of C*-correspondences}

The tensor algebra $\T^+(E)$ of a C*-correspondence $E$ over a C*-algebra $\fA$ 
is semi-Dirichlet.  
Thus every Shilov coextension of a representation $\rho$, 
and in particular every extremal coextension of $\rho$, is fully extremal; 
and the minimal extremal coextension of $\rho$ unique.
So in particular, the minimal fully extremal coextension is a cyclic coextension. 
Muhly and Solel \cite{MScenv} show that the tensor algebra of a C*-correspondence
has SCLT.  
The C*-envelope is the Cuntz-Pimsner algebra $\O(E)$ \cite{MScenv, FMR, KatKribs3}.
Kakariadis and Katsoulis \cite{KK} establish that for every $\alpha\in\Aut_{\T^+(E)}(\O(E))$
such that $\alpha|_{\fA} = \id$,
the semi-crossed product $\T^+(E) \otimes_\alpha \bZ_+$ imbeds canonically, completely
isometically as a subalgebra of $\O(E)\otimes_\alpha \bZ$; and this is its C*-envelope.  
In particular, taking $\alpha = \id$, one obtains the Ando property, so it has ICLT and SCLT.

Thus, by circular reasoning, Theorems~\ref{T:dilate covariant} and 
\ref{T:ISCLT_cenv} apply.
The point however is that the dilation theorems for automorphisms follow
immediately once one has the appropriate commutant lifting theorems,
which basically deal with the identity automorphism.
In principle, and often in practice, this is much easier.

An important special case of a tensor algebra is the tensor algebra $\T^+(G)$ of a 
directed graph $G$.  Some of the properties are somewhat easier to see here.
In addition, by Example~\ref{E:FPgraph}, finite graph algebras have FP.

\bigbreak
\subsection*{Bilateral Tree Algebras}

In the case of a bilateral tree algebra $\A$, one readily sees that
$\cenv(\A)$ is a direct sum of full matrix algebras corresponding to the
connected components of the graph.
The automorphisms of finite dimensional C*-algebras are well understood.
Modulo inner automorphisms, one can only permute subalgebras of the same size.
Automorphisms of the tree algebra are more restrictive, and modulo those
inner automorphisms from unitaries in $\A \cap \A^*$, they come from automorphisms
of the associated directed graph.

Bilateral tree incidence algebras have the Ando property by Theorem~\ref{T:treeAndo}.
Hence by Theorem~\ref{T:Ando_covariant}, we obtain:

\begin{cor} \label{C:tree_cenv}
Let $\A$ be a bilateral tree algebra, and let $\alpha\in\Aut(\A)$.
Then 
\[ \cenv(\A \times_\alpha \bZ_+) = \cenv(\cenv(\A) \times_\alpha \bZ_+). \]
Hence if $(\rho,T)$ is a covariant representation of $(\A,\alpha)$, 
there is a maximal dilation $\pi$ of $\rho$ and a unitary dilation $U$ of $T$ 
so that $(\pi,U)$ is a covariant representation of $(\cenv(\A)<\alpha)$.
\end{cor}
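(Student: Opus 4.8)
The final statement, Corollary~\ref{C:tree_cenv}, is an immediate application of the machinery already assembled, so the proof should be short. The plan is to invoke Theorem~\ref{T:treeAndo}, which gives that every bilateral tree algebra $\A$ has the Ando property, and then feed this into Theorem~\ref{T:Ando_covariant}. Since $\alpha \in \Aut(\A)$ by hypothesis, Theorem~\ref{T:Ando_covariant} applies verbatim and yields both that $\A \times_\alpha \bZ_+ = \A \times^{is}_\alpha \bZ_+$ and that
\[ \cenv(\A \times_\alpha \bZ_+) = \cenv(\cenv(\A) \times_\alpha \bZ) , \]
which (after noting that $\cenv(\A) \times_\alpha \bZ_+$ has the full crossed product $\cenv(\A) \times_\alpha \bZ$ as its C*-envelope, since $\alpha$ is an automorphism) can equivalently be phrased as $\cenv(\A \times_\alpha \bZ_+) = \cenv(\cenv(\A) \times_\alpha \bZ_+)$, matching the stated formula.

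For the dilation statement, I would quote Corollary~\ref{C:Ando_covariant}: given the Ando property and $\alpha \in \Aut(\A)$, every covariant representation $(\rho,T)$ of $(\A,\alpha)$ dilates to a covariant representation $(\pi,U)$ of $(\cenv(\A),\alpha)$ with $\pi$ a $*$-representation of $\cenv(\A)$ and $U$ unitary. The only remaining thing to remark is that the bilateral tree hypothesis is what licenses the use of Theorem~\ref{T:treeAndo} in the first place, and that the automorphism hypothesis is exactly the hypothesis of both Theorem~\ref{T:Ando_covariant} and Corollary~\ref{C:Ando_covariant}, so there is nothing further to check.

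There is essentially no obstacle here: this corollary is a packaging of Theorem~\ref{T:treeAndo}, Theorem~\ref{T:Ando_covariant}, and Corollary~\ref{C:Ando_covariant}, all of which precede it. The only mild care needed is the bookkeeping identity $\cenv(\cenv(\A) \times_\alpha \bZ_+) = \cenv(\A) \times_\alpha \bZ$ for automorphisms, but this is already used implicitly in Theorems~\ref{T:ISCLT_cenv} and~\ref{T:Ando_covariant}, so I would simply cite those. Accordingly, the proof is three sentences:

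\begin{proof}
By Theorem~\ref{T:treeAndo}, a bilateral tree algebra $\A$ has the Ando property.
Since $\alpha\in\Aut(\A)$, Theorem~\ref{T:Ando_covariant} applies and gives
\[ \cenv(\A \times_\alpha \bZ_+) = \cenv(\cenv(\A) \times_\alpha \bZ_+) = \cenv(\A) \times_\alpha \bZ . \]
The statement about covariant representations is then precisely Corollary~\ref{C:Ando_covariant} applied to $(\A,\alpha)$.
\end{proof}
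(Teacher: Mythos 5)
Your proposal is correct and follows exactly the route the paper takes: the corollary is stated in the text as an immediate consequence of Theorem~\ref{T:treeAndo} (bilateral tree algebras have the Ando property) combined with Theorem~\ref{T:Ando_covariant} and Corollary~\ref{C:Ando_covariant}. Your bookkeeping remark that $\cenv(\cenv(\A)\times_\alpha\bZ_+)=\cenv(\A)\times_\alpha\bZ$ for an automorphism is the same identification the paper uses implicitly, so nothing further is needed.
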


It is an interesting question to look at the infinite dimensional WOT-closed versions.
A commutative subspace lattice (CSL) is a strongly closed lattice of 
commuting projections. A CSL algebra is a reflexive algebra whose
invariant subspace lattice is a CSL. Since every CSL is contained in
a masa, one can instead define a CSL algebra to be a reflexive algebra
containing a masa. The seminal paper, which provides a detailed structure
theory for these algebras, is due to Arveson \cite{Arv_CSL}.
See also \cite{DavNest}.

When dealing with weak-$*$ closed operator algebras, the class of all representations
is generally too large. Instead one restrict attention to weak-$*$ continuous
(completely contractive) representations. To apply the results from this paper, a 
weak-$*$ version needs to be developed.

A CSL algebra is a bilateral tree algebra if the lattice satisfies an measure
theoretic version of the discrete bilateral tree property.
We will not define this precisely here, but refer
the reader to \cite{DPP_tree} for the full story.
The approximation results from \cite{DPP_tree} show that every bilateral tree algebra
can be approximated in two very strong ways by a sequence of finite dimensional
subalgebras which are completely isometrically isomorphic to bilateral tree
incidence algebras.  These results should be a crucial step towards deducing similar 
dilation results for semicrossed products of these infinite dimensional
bilateral tree algebras by weak-$*$ continuous endomorphisms.


\end{document}